


\documentclass[10pt,reqno]{amsart}



\usepackage{amsmath,amsfonts,amsthm,amssymb,amsxtra}
\usepackage{amsxtra, amssymb, mathrsfs, pstricks}
\usepackage[english]{babel}
\usepackage{amsmath,amsfonts,amstext,amsbsy,amsopn,amssymb,amsthm,amscd}
\usepackage{amsxtra,latexsym}
\usepackage{exscale}
\usepackage{graphicx,mathrsfs}
\usepackage{psfrag}
\usepackage{paralist,eucal,enumerate}
\usepackage{color}


\definecolor{dred}{rgb}{.8,0,0}
\definecolor{ddmagenta}{rgb}{0.7,0,0.9}
\definecolor{ddcyan}{rgb}{0,0.2,1.0}
\definecolor{dblue}{rgb}{0,0,0.7}
\definecolor{ddorange}{rgb}{1,0.5,0}
\definecolor{ddgreen}{rgb}{0,0.4,0.4}


\setlength{\voffset}{-.7truein}
\setlength{\textheight}{8.8truein}
\setlength{\textwidth}{6.05truein}
\setlength{\hoffset}{-.7truein}

\addtolength{\hoffset}{-0,5cm}
\addtolength{\textwidth}{1,5cm}

\newtheorem{theorem}{Theorem}[section]
\newtheorem{proposition}[theorem]{Proposition}

\newtheorem{lemma}[theorem]{Lemma}

\theoremstyle{definition}

\newtheorem{definition}[theorem]{Definition}

\newtheorem{notation}[theorem]{Notation}
\newtheorem{problem}[theorem]{Problem}
\newtheorem{maintheorem}{Theorem}

\theoremstyle{remark}
\newtheorem{remark}[theorem]{\bf Remark}

\numberwithin{equation}{section}


\def\trait #1 #2 #3 {\vrule width #1pt height #2pt depth #3pt}
\def\fin{
    \trait .3 5 0
    \trait 5 .3 0
    \kern-5pt
    \trait 5 5 -4.7
    \trait 0.3 5 0
\medskip}

\newcommand{\QED}{\mbox{}\hfill\rule{5pt}{5pt}\medskip\par}


 \def\bbN{{\mathbb N}} 
  \def\bbR{{\mathbb R}}

\newcommand{\R}{\bbR}
\newcommand{\N}{\bbN}



\def\BS{\boldsymbol} 

    \def\bfmu{{\BS\mu}}

\newcommand{\bele}{\begin{lemm}\begin{sl}}
\newcommand{\enle}{\end{sl}\end{lemm}}
\newcommand{\bedef}{\begin{defi}\begin{sl}}
\newcommand{\eddef}{\end{sl}\end{defi}}
\newcommand{\bete}{\begin{teor}\begin{sl}}
\newcommand{\ente}{\end{sl}\end{teor}}
\newcommand{\beos}{\begin{osse}\begin{rm}}
\newcommand{\eddos}{\end{rm}\end{osse}}
\newcommand{\bepr}{\begin{prop}\begin{sl}}
\newcommand{\empr}{\end{sl}\end{prop}}
\newcommand{\bepro}{\begin{prob}\begin{rm}}
\newcommand{\empro}{\end{rm}\end{prob}}
\newcommand{\bede}{\begin{defin}\begin{sl}}
\newcommand{\edde}{\end{sl}\end{defin}}
\newcommand{\beco}{\begin{coro}\begin{sl}}
\newcommand{\enco}{\end{sl}\end{coro}}
\newcommand{\behy}{\begin{hypo}\begin{sl}}
\newcommand{\enhy}{\end{sl}\end{hypo}}

\newcommand{\thspace}{\hspace{3mm}}

\newcommand{\RR}{\mathbb{R}}

\newcommand{\beeq}[1]{\begin{equation}\label{#1}}
\newcommand{\eddeq}{\end{equation}}
\newcommand{\beeqa}[1]{\begin{eqnarray}\label{#1}}
\newcommand{\eddeqa}{\end{eqnarray}}
\newcommand{\beal}[1]{\begin{align}\label{#1}}
\newcommand{\eddal}{\end{align}}
\newcommand{\bespl}[1]{\begin{split}\label{#1}}
\newcommand{\edspl}{\end{split}}
\newcommand{\bega}[1]{\begin{gather}\label{#1}}
\newcommand{\edga}{\end{gather}}
\newcommand{\beeqax}{\begin{eqnarray*}}
\newcommand{\eddeqax}{\end{eqnarray*}}

\newcommand{\no}{\nonumber}

\newcommand{\weaksto}{{\rightharpoonup^*}}
\newcommand{\weakto}{\rightharpoonup}

\newcommand{\dt}{\partial_t}

\newcommand{\itt}{\int_0^t}

\newcommand{\io}{\int_\Omega}

\newcommand{\e}{\varepsilon}

 \DeclareMathOperator{\dive}{div}

\let\TeXchi\chi
\def\chi{{\setbox0 \hbox{\mathsurround0pt
$\TeXchi$}\hbox{\raise\dp0 \copy0 }}}

\newcommand{\ub}{\mathbf{u}}
\newcommand{\uu}{\mathbf{u}}
\newcommand{\vv}{\mathbf{v}}
\newcommand{\ww}{\mathbf{w}}
\newcommand{\vb}{\mathbf{v}}

\newcommand{\teta}{\vartheta}

\newcommand{\aein}{\text{a.e. in\,}}

\newcommand{\zzeta}{{\mbox{\boldmath$\zeta$}}}
\newcommand{\tensore}{\varepsilon({\bf u})}
\newcommand{\tensoret}{\varepsilon({\mathbf{u}_t})}

\newcommand{\forae}{\text{for a.a.}}
\newcommand{\foraa}{\text{for a.a.}}

\def\fine{\hfill\kern4pt \vrule height4pt depth0pt width4pt }

\numberwithin{equation}{section}
\numberwithin{equation}{section}



\newcommand{\Ha}{L^2 (\Omega;\R^d)}
\newcommand{\V}{H^1 (\Omega)}

\newcommand{\boZ}{H_{0}^1(\Omega;\R^d)}
\newcommand{\boY}{H_{\mathrm{Dir}}^2(\Omega;\R^d)}

\newcommand{\dd}{\, \mathrm{d}}
\newcommand{\pairing}[4]{ \sideset{_{#1 }}{_{ #2}}  {\mathop{\langle #3 , #4  \rangle}}}
\newcommand{\oph}[2]{\mathcal{E}\left({#1}{#2}\right)}
\newcommand{\ophname}{\mathcal{E}}
\newcommand{\opjname}{\mathcal{V}}
\newcommand{\opj}[2]{\mathcal{V}\left({#1}{#2}\right)}
\newcommand{\opchi}{\mathcal{B}}

\newcommand{\eps}{\varepsilon}
\newcommand{\condu}{\mathsf{K}}
\newcommand{\ental}{\teta}
\newcommand{\w}{\ental}

\newcommand{\utau}[1]{\uu_{\tau}^{#1}}
\newcommand{\wtau}[1]{\teta_{\tau}^{#1}}
\newcommand{\btau}[1]{\mathfrak{h}_{\tau}^{#1}}
\newcommand{\vtau}[1]{v_{\tau}^{#1}}
\newcommand{\chitau}[1]{\chi_{\tau}^{#1}}

\newcommand{\utaumu}[1]{\uu_{\tau,\nu}^{#1}}
\newcommand{\wtaumu}[1]{\teta_{\tau,\nu}^{#1}}
\newcommand{\chitaumu}[1]{\chi_{\tau,\nu}^{#1}}
\newcommand{\xitaumu}[1]{\xi_{\tau,\nu}^{#1}}

\newcommand{\zetaumu}[1]{\zeta_{\tau,\nu}^{#1}}

\newcommand{\elltaum}[1]{\ell_{\tau,M}^{#1}}
\newcommand{\jtaum}[1]{j_{\tau,M}^{#1}}

\newcommand{\xitau}[1]{\xi_{\tau}^{#1}}

\newcommand{\zetau}[1]{\zeta_{\tau}^{#1}}
\newcommand{\ftau}[1]{\mathbf{f}_{\tau}^{#1}}
\newcommand{\phitau}[1]{\varphi_{\tau}^{#1}}
\newcommand{\gtau}[1]{g_{\tau}^{#1}}
\newcommand{\htau}[1]{h_{\tau}^{#1}}

\newcommand{\dtau}[2]{\mathrm{D}_{\tau,{#1}}(#2)}
\newcommand{\duetau}[2]{\mathrm{D}_{\tau,{#1}}^2(#2)}

\newcommand{\pwc}[2]{\overline{#1}_{#2}}
\newcommand{\pwl}[2]{{#1}_{#2}}
\newcommand{\pwwll}[2]{\widehat{#1}_{#2}}
\newcommand{\upwc}[2]{\underline{#1}_{#2}}


\newcommand{\down}{\downarrow}
\newcommand{\ciro}{\mathcal{C}_\rho}
\newcommand{\BV}{\mathrm{BV}}
\newcommand{\rmC}{\mathrm{C}}


\newcommand{\vism}{\mathbb{V}}
\newcommand{\elm}{\mathbb{E}}
\newcommand{\vib}{\mathsf{v}}
\newcommand{\elb}{\mathsf{e}}
\newcommand{\bilh}[3]{\elb({#1}#2,#3)}
\newcommand{\bilj}[3]{\vib({#1}#2,#3)}
\newcommand{\frakh}{\mathfrak{h}}

\newenvironment{rcomm}{\color{dred} \textsf{R:}\,}{\color{black}}
\newenvironment{bcomm}{\color{dblue} \textsf{B:}\,}{\color{black}}

\newenvironment{new}{\color{violet}}{\color{black}}
\newcommand{\bne}{\begin{new}}
\newcommand{\ene}{\end{new}}
\newcommand{\beric}{\begin{rcomm}}
\newcommand{\eric}{\end{rcomm}}
\newcommand{\bebe}{\begin{bcomm}}
\newcommand{\ebe}{\end{bcomm}}
\newenvironment{rickynew}{\color{ddmagenta}}{\color{black}}
\newcommand{\berin}{\begin{rickynew}}
\newcommand{\erin}{\end{rickynew}}

\newenvironment{revuno}{\color{black}}{\color{black}}
\newcommand{\beruno}{\begin{revuno}}
\newcommand{\eruno}{\end{revuno}}

\newenvironment{revdue}{\color{black}}{\color{black}}
\newcommand{\berdue}{\begin{revdue}}
\newcommand{\erdue}{\end{revdue}}

\newenvironment{revunodue}{\color{black}}{\color{black}}
\newcommand{\berunodue}{\begin{revunodue}}
\newcommand{\erunodue}{\end{revunodue}}

\newenvironment{ourrev}{\color{black}}{\color{black}}
\newcommand{\beo}{\begin{ourrev}}
\newcommand{\eo}{\end{ourrev}}

\newcommand{\EEE}{\color{black}}
\newcommand{\RRR}{\color{black}}
\newcommand{\RRRNEW}{\color{black}}


\begin{document}

\title[A PDE system for phase transitions and damage in  thermoviscoelasticity]
{``Entropic''
solutions to a thermodynamically consistent PDE system for phase
transitions and damage}

\author{Elisabetta Rocca}
\address{Elisabetta Rocca\\ Weierstrass Institute for Applied
Analysis and Stochastics\\ Mohrenstr.~39 \\ D-10117 Berlin \\
Germany\\ and \\
Dipartimento di Matematica \\ Universit\`a di Milano\\ Via Saldini 50 \\ I-20133 Milano\\ Italy}
\email{rocca@wias-berlin.de and elisabetta.rocca@unimi.it}

\author{Riccarda Rossi}
\address{Riccarda Rossi\\  DICATAM - Sezione di Matematica \\ Universit\`{a} di Brescia\\ Via Valotti 9\\ I-25133 Brescia\\ Italy}
\email{riccarda.rossi@unibs.it}

\date{19.11.2014}

\begin{abstract}
In this paper  
 we analyze a PDE system
modelling (non-isothermal) phase transitions
 and damage phenomena
in thermoviscoelastic materials.   The model is thermodynamically consistent: in particular,
no  {\em small perturbation
assumption} is  adopted, which results in the presence of
quadratic terms on  the right-hand side of the temperature equation, only estimated in $L^1$. The whole system has a highly nonlinear character.

We address the existence   of     a weak notion of solution, referred to as ``entropic'', where the temperature equation is formulated with the aid
of an entropy inequality, and of a total energy inequality.  This solvability concept
reflects the basic principles of
thermomechanics, as well as   the thermodynamical consistency of the model.  It
allows us to obtain \emph{global-in-time} existence theorems
 without imposing any restriction on the size
of the initial data.

We prove our results by passing to the limit in   a   time-discretization  scheme,
 carefully tailored to the nonlinear features of the PDE system (with its ``entropic'' formulation), and  of the a priori estimates performed on it.
Our  time-discrete  analysis  could be useful towards  the numerical study of this model.
\end{abstract}

\maketitle


\noindent {\bf Key words:}\thspace damage, phase transitions,  thermoviscoelasticity, global-in-time weak solutions, time discretization.

  \vspace{4mm}

\noindent {\bf AMS (MOS) subject clas\-si\-fi\-ca\-tion:}\thspace
35D30,  74G25, 93C55, 82B26, 74A45.

\section{\bf Introduction}
We consider the following PDE system
\begin{align}
& 
\teta_t +\chi_t \teta +\rho\teta \dive
(\ub_t) -\dive(\condu( \teta) \nabla\teta) = g + a(\chi) \tensoret
\vism \tensoret + |\chi_t|^2 \quad\hbox{in }\Omega\times
(0,T),\label{eq0}
\\
&\ub_{tt}-\dive(a(\chi)\vism\tensoret+b(\chi)\elm\tensore-\rho\teta\mathbf{1})={\bf
f}\quad\hbox{in }\Omega\times (0,T),\label{eqI}
\\
&\chi_t +\mu \partial I_{(-\infty,0]}(\chi_t) -\mathrm{div}(|\nabla
\chi|^{p-2} \nabla \chi)  +W'(\chi) \ni - b'(\chi)\frac{\tensore
\elm \tensore}2 + \teta \quad\hbox{in }\Omega \times
(0,T),\label{eqII}
\end{align}
supplemented with the boundary conditions (here $n$ denotes the outward unit normal  to $\partial\Omega$)
\begin{equation}
\label{intro-b.c.}
\condu(\teta) \nabla \teta \cdot n  = h, \qquad \uu =0, \qquad  \partial_{n} \chi  =0 \qquad \text{on } \partial \Omega \times (0,T).
\end{equation}
Equations \eqref{eq0}--\eqref{eqII} were derived in \cite{rocca-rossi-deg} according to
\textsc{M.\ Fr\'emond}'s modeling approach  (see \cite{fremond,fre-newbook}). There, it was shown that
this PDE system describes (non-isothermal) phase transitions, or  (non-isothermal) damage,  in a material body
 occupying a reference
domain $\Omega \subset \R^d$, $d \in \{2,3\}$. We refer to
\cite{rocca-rossi-deg} for a quite detailed survey on the literature
on phase transition and damage problems in thermoviscoelasticity.
  In  \eqref{eq0}--\eqref{eqII},   the symbols $\teta$ and $\uu$ respectively denote the
absolute temperature of the system and the  small  displacement vector, while  $\chi$  is  an internal parameter:
its meaning depends on the phenomenon described by  \eqref{eq0}--\eqref{eqII}, which also determines the choices of
the coefficients $a$ and $b$ in the momentum equation \eqref{eqI}, and of the constant $\mu \in \{0,1\}$ in \eqref{eqII}.
  More precisely,
 \begin{compactitem}
 \item[-]
 the choices $a(\chi)=1-\chi$ and
$b(\chi)=\chi$ correspond to the case of
  \emph{phase transitions} in thermoviscoelastic materials: in this case, $\chi$ is the order parameter,
   standing for the local proportion of one of the two phases. We assume that $\chi$  takes values between $0$ and $1$, choosing $0$ and $1$
as reference values: in  the  case of phase transitions, $\chi=1$ stands for the liquid phase while $\chi=0$ for the solid one and
 one has $0 < \chi< 1$ in  the so-called \emph{mushy regions}.
 Unidirectionality, or irreversibility, of the phase transition process may be encompassed in the model
by taking $\mu=1$ in \eqref{eqII}, which ``activates'' the term  $\partial I_{(-\infty,0]}(\chi_t)$
(i.e.\ the subdifferential in the sense of convex analysis of the indicator function $ I_{(-\infty,0]}$, evaluated at $\chi_t$), yielding the constraint $\chi_t \leq 0$
 a.e.\ in $\Omega \times (0,T)$.
 The meaning  of  $a(\chi)=1-\chi$ and
$b(\chi)=\chi$ in  \eqref{eqI} is that,   in the \emph{purely} solid phase $\chi=0$ only the elastic energy, in addition to the thermal expansion
energy, contributes to the stress
$\sigma = a(\chi)\vism\tensoret+b(\chi)\elm\tensore-\rho\teta\mathbf{1}$
(where $\elm$ and $\vism$ are the elasticity and viscosity tensors, respectively).
 Instead,
in the \emph{purely} liquid, or ``viscous'', phase  $\chi=1$ only the viscosity contribution
 remains,  whereas
 in mushy regions both elastic and viscous effects are present.
\item[-]
The choices $a(\chi)=b(\chi)=\chi$ correspond to   \emph{damage}.
 In this case, $\chi$ is the damage parameter,  assessing the soundness of the material  microscopically, around a point
 in the material domain $\Omega$.   In fact, we have   $\chi=0$ in the presence of complete damage,
  while  $\chi$ takes the value $1$
  when the material is fully sound,
  and $0 < \chi < 1$  describes
\emph{partial damage}.
\end{compactitem}
\beruno The function \eruno $\condu$ in \eqref{eq0} is  the heat conductivity, $W$ in \eqref{eqII} is a  mixing   energy density, which we assume of the form
\[
W= \widehat \beta + \widehat \gamma \qquad \text{with } \widehat{\beta}: \mathrm{dom} (\widehat{\beta}) \to \R \text{ convex, possibly nonsmooth, and }
\widehat\gamma \in \mathrm{C}^2(\R),
\]
while $\mathbf{f}$ is a given bulk force,  and
$g$ and $h$ heat sources.  \beruno The $p$-Laplacian term in  \eqref{eqII} reflects the fact that
we are within a \emph{gradient theory} for phase transitions and damage, like in, e.g.,  \cite{abels,BoBo,bosch,fremond,fne,hk1,la,MieRou06,mrz,mt} where gradient regularizations
are adopted  in different contexts.  \eruno

 Observe that,  in the case when both coefficients $a(\chi)$ and $b(\chi)$ in the momentum equation degenerate to zero
  (which happens, for instance, with $a(\chi)=b(\chi)=\chi$, when complete damage occurs),  the
equation for $\uu$ \beruno looses \eruno its elliptic character. This leads to serious troubles as, for instance, no control of the term
$b'(\chi)\frac{\tensore
\elm \tensore}2 $ on the right-hand side of
\eqref{eqII} is possible.
That is why, in what follows we shall confine our analysis of system \eqref{eq0}--\eqref{eqII}
  only to  the case in which the functions $a,\, b \in \mathrm{C}^1(\R)$ are
 bounded from below away from 0 (cf.~\eqref{data-a} in Sec.\ \ref{s:main}).
  \berdue We refer the reader \erdue to our previous contribution \cite{rocca-rossi-deg}, where we deal
  with  complete damage  and  elliptic degeneracy  of the momentum equation,
  in a \emph{simplified} case.
  In fact, in  \cite{rocca-rossi-deg} we analyzed
   the following \emph{reduced} system
   \begin{equation}
   \label{reduced-intro}
   \begin{aligned}
& 
\teta_t +\chi_t \teta +\rho\teta \dive
(\ub_t) -\dive(\condu( \teta) \nabla\teta) = g 
 \quad\hbox{in }\Omega\times
(0,T),
\\
&\ub_{tt}-\dive(a(\chi)\vism\tensoret+b(\chi)\elm\tensore-\rho\teta\mathbf{1})={\bf
f}\quad\hbox{in }\Omega\times (0,T),
\\
&\chi_t +\mu \partial I_{(-\infty,0]}(\chi_t) -\mathrm{div}(|\nabla
\chi|^{p-2} \nabla \chi)  +W'(\chi) \ni - b'(\chi)\frac{\tensore
\elm \tensore}2 + \teta \quad\hbox{in }\Omega \times
(0,T),
\end{aligned}
\end{equation}
    where the quadratic  contributions in the velocities on the right-hand side in the internal energy balance \eqref{eq0} are neglected
    by means of the \emph{small perturbation assumption} (cf. \cite{germain}).

\beruno Let us also mention that, like in  \cite{rocca-rossi-deg} we confine our analysis
 to the case in which  the  thermal expansion  contribution to the free energy is a linear function of the
 temperature $\teta$,   and the thermal expansion coefficient $\rho$  is independent of $\chi$.
  The case of a  $\chi$-dependent coefficient has been treated for similar PDE systems,
  e.g.\  in \cite{BoBo}, where local-in-time results were obtained, and more recently in \cite{HR} where the existence of global-in-time weak solutions has been proved, but  under the \emph{small perturbation assumption}.  \eruno \beruno Nonetheless, let us mention
   that, especially in case of phase transition phenomena,
the choice of a constant $\rho$  is quite reasonable  (cf., e.g., \cite{KreRoSprWilm} for further comments on this topic).
\eruno
\paragraph{\bf Mathematical difficulties.}
 In this paper, instead,  we address the \emph{full} system \eqref{eq0}--\eqref{eqII}. Let us stress that,
since we keep
the quadratic terms  $a(\chi) \tensoret
\vism \tensoret$  and  $|\chi_t|^2$ on the right-hand side of \eqref{eq0}, the model
is \emph{thermodynamically consistent}, as shown in \cite{rocca-rossi-deg}. However,
\begin{compactitem}
\item[-] the highly nonlinear character of the whole system, with  the multivalued term $\partial I_{(-\infty,0]}(\chi_t)$
and the possibly nonsmooth contribution $\widehat \beta$ to  the energy $W$;
\item[-] the quadratic terms on the right-hand side of \eqref{eq0}, which make
it difficult to get suitable estimates on $(\teta,\uu,\chi)$,
\end{compactitem}
bring about severe difficulties in the analysis of  \eqref{eq0}--\eqref{eqII}. This is the reason why we are going to
develop an existence analysis only for a suitable weak solution concept for
\eqref{eq0}--\eqref{eqII}, which we illustrate in the following lines.
\paragraph{\bf The ``entropic'' formulation.}
 We resort to a weak solution notion  for  \eqref{eq0}--\eqref{eqII} partially mutuated from \cite{fpr09}. There,  a thermodynamically consistent
model for phase transitions, consisting of the temperature and of the phase parameter equations, was analyzed:  the
temperature equation, featuring quadratic terms on its right-hand side, was weakly formulated in terms
of an \emph{entropy inequality}
and of a \emph{total energy inequality}.
In the present  framework,  the pointwise  internal energy balance \eqref{eq0}
is thus replaced
 by this  \emph{entropy inequality}
  \begin{equation}
\label{entropy-ineq-intro}
\begin{aligned}
& \int_s^t \int_\Omega (\log(\teta) + \chi) \varphi_t  \dd x \dd r +
\rho \int_s^t \int_\Omega \dive(\uu_t) \varphi  \dd x \dd r
 - \int_s^t \int_\Omega  \condu(\teta) \nabla \log(\teta) \cdot \nabla \varphi  \dd x \dd r
\\ &
\leq \berdue \int_\Omega (\log(\teta(t))+\chi(t))\varphi(t) \dd x
 -
 \int_\Omega (\log(\teta(s))+\chi(s))\varphi(s) \dd x \erdue-  \int_s^t \int_\Omega \condu(\teta) \frac{\varphi}{\teta}
\nabla \log(\teta) \cdot \nabla \teta  \dd x \dd r
 \\ & \quad -
  \int_s^t  \int_\Omega \left( g +a(\chi) \eps(\uu_t)\vism \eps(\uu_t) + |\chi_t|^2 \right)
 \frac{\varphi}{\teta} \dd x \dd r-  \int_s^t \int_{\partial\Omega} h \frac\varphi\teta  \dd S \dd r,
\end{aligned}
\end{equation}
where $\varphi$ is a sufficiently regular,  \emph{positive}  test function, 
  coupled with   the following \emph{total energy  inequality}
\begin{equation}
\label{total-enid-intro} \mathscr{E}(\teta(t),\uu(t), \uu_t(t), \chi(t))
\leq
 \mathscr{E}(\teta(s),\uu(s), \uu_t (s), \chi(s))  + \int_s^t
\int_\Omega g \dd x \dd r
+ \int_0^t
\int_{\partial\Omega} h \dd S \dd r
  + \int_s^t \int_\Omega \mathbf{f} \cdot
\mathbf{u}_t \dd x \dd r \,,
\end{equation}
  where
\begin{equation}
 \label{total-energy-intro}
 \mathscr{E}(\teta,\uu,\uu_t,\chi):= \int_\Omega \teta \dd x +
  \frac12 \int_\Omega |\uu_t |^2 \dd x + \frac12\int_\Omega b(\chi(t)) \eps(\uu(t))\elm\eps(\uu(t))  \dd x
+\frac1p \int_\Omega |\nabla \chi|^p \dd x  + \int_\Omega W(\chi) \dd
x\,.
 \end{equation}
Both \eqref{entropy-ineq-intro} and \eqref{total-enid-intro}
are, in the general case (cf.\ Remark \ref{rmk:total-eniq} later on),  required to hold  for almost all $t \in (0,T]$ and almost
all $s\in (0,t)$, and for $s=0$.
This  formulation of the  heat equation  has been first developed in \cite{fei, boufeima} in the framework
 of  heat  conduction  in fluids, and then applied to a  phase transition model,
  also derived according to \textsc{Fr\'emond}'s approach
  \cite{fremond}, firstly in \cite{fpr09}. Successively, the so-called \emph{entropic} notion of solution has been
    used  to
prove global-in-time existence results  in  models for special materials like liquid crystals (cf. \cite{ffrs}, \cite{frsz1}, \cite{frsz2}),
 and more recently   in the analysis of  models for the evolution of non-isothermal  binary incompressible immiscible fluids (cf. \cite{ers}).
This notion of solution   for the temperature equation corresponds
exactly to the physically meaningful   requirement   that the system
should satisfy the second and first principle of Thermodynamics.
Indeed, one of the main advantages of this formulation  resides  in
the fact that the thermodynamically consistency of the model
immediately follows from the existence proof. It can be also shown
that this solution concept is consistent with the standard one,
 (cf.\ the discussion in Sec.\ \ref{glob-irrev}, in particular Remark \ref{rmk:consistent},   and in \cite{fpr09}).
\par
 From an analytical viewpoint, observe that the entropy
inequality \eqref{entropy-ineq-intro} has the advantage that all the
troublesome quadratic terms on the right-hand side of \eqref{eq0}
feature as  multiplied  by a negative test  function. This,  and the fact that \eqref{entropy-ineq-intro}
is an inequality,   allows
for upper semicontinuity arguments in the limit passage in  a
suitable approximation of
\eqref{entropy-ineq-intro}--\eqref{total-energy-intro}.

 In addition to
\eqref{entropy-ineq-intro}--\eqref{total-energy-intro}, the
  \emph{entropic formulation} of system \eqref{eq0}--\eqref{eqII} also consists of the momentum balance \eqref{eqI}, given pointwise a.e.\ in
  $\Omega \times (0,T)$, and of the internal variable equation  \eqref{eqII}. The latter is
required to hold pointwise almost everywhere in the  reversible case
$\mu=0$. In the irreversible case $\mu=1$,  we shall confine the
analysis to the case  in which $\widehat \beta $ is the indicator
function $ I_{[0,+\infty)} $ of $[0,+\infty)$,   hence
$W(\chi) = I_{[0,+\infty)}(\chi) + \widehat\gamma(\chi)$.
 For reasons expounded in Sec.\ \ref{glob-irrev}, we shall have to weakly formulate \eqref{eqII}
in terms of
the requirement $\chi_t \leq 0$ a.e.\ in $\Omega\times (0,T)$, of
  the \emph{one-sided variational inequality}
\begin{align}
 \label{ineq-system2-intro}
 \int_\Omega \left( \chi_t-\mathrm{div}(|\nabla \chi|^{p-2} \nabla \chi)+
 \xi + \gamma(\chi)  + b'(\chi)\frac{\varepsilon(\ub)
\elm\varepsilon(\ub)}{2} -\teta\right) \psi \dd x  \geq 0  \text{ for all } \psi \in W^{1,p}(\Omega) \text{ with }\psi \leq 0,
\end{align}
almost everywhere in $(0,T)$  (where $\gamma:=\widehat\gamma
'$),  and of  the  \berunodue \emph{energy-dissipation inequality (for the internal variable $\chi$)} \erunodue
\begin{align}
 \label{ineq-system3-intro}
 \begin{aligned}
 \int_s^t   \int_{\Omega} |\chi_t|^2 \dd x \dd r  & +\io\left(
  \frac1p  |\nabla\chi(t)|^p +  W(\chi(t))\right)\dd x\\ & \leq\io\left(
  \frac1p |\nabla\chi(s)|^p+ W(\chi(s))\right)\dd x
  +\int_s^t  \int_\Omega \chi_t \left(- b'(\chi)
  \frac{\varepsilon(\ub)\elm\varepsilon(\ub)}2
+\teta\right)\dd x \dd r
\end{aligned}
\end{align}
for all $t \in (0,T]$ and almost all  $s \in (0,t)$,  with
$\xi$  a selection in the  (convex analysis) subdifferential  $\partial\widehat \beta (\chi)= \partial
I_{[0,+\infty)}(\chi)$
of $I_{[0,+\infty)}$.
In \cite[Prop.\ 2.14]{rocca-rossi-deg} (see also \cite{hk1}), we prove that,
under additional regularity properties
any weak solution in fact fulfills
\eqref{eqII} pointwise.

 Let us also mention that other approaches to the weak
solvability of coupled PDE systems with an  $L^1$-right-hand side
are available in the literature: in particular, we refer  here to
\cite{zimmer} and \cite{roubiSIAM10}.    In \cite{zimmer},
the notion of {\em renormalized solution} has been used in order to
prove a global-in-time  existence  result for a nonlinear system in
thermoviscoelasticity.   In \cite{roubiSIAM10} the focus
is on rate-independent processes  coupled  with viscosity and inertia
in the displacement equation, and  with  the temperature equation. There
the internal variable equation is not of gradient-flow type as
\eqref{eqII},  but instead features a $1$-positively homogeneous
dissipation potential. For the resulting PDE system,  a weak
solution concept partially mutuated from the theory of
rate-independent processes  by \textsc{A.\ Mielke} (cf., e.g.,
\cite{MieTh04}) is analyzed. An existence result is proved combining
  techniques for rate-independent evolution, with
Boccardo-Gallou\"et type estimates of the temperature gradient in
the heat equation with  $L^1$-right-hand side.

  \paragraph{\bf Our existence results.}
    The main results of this paper,  \underline{Theorems \ref{teor3}} and \underline{\ref{teor1}},  
state  the existence of {\em entropic solutions} for system
(\ref{eq0}--\ref{eqII}), \beruno supplemented with the boundary
conditions \eqref{intro-b.c.} (cf.\ Remark \ref{rmk:discussion-bc}),
\eruno in the irreversible ($\mu=1$) and reversible ($\mu=0$) cases.

More precisely,  in the case of unidirectional evolution for
$\chi$
 we can prove the existence of a global-in-time
entropic solution  (i.e.\ satisfying the  {\em entropy}
\eqref{entropy-ineq-intro} and the \emph{total energy} 
\eqref{total-enid-intro} inequalities,
 the   (pointwise)  momentum balance
\eqref{eqI}, the    {\em one-sided variational inequality}
\eqref{ineq-system2-intro} and the \emph{energy}
\eqref{ineq-system3-intro} inequalities   for $\chi$).
 We work under fairly general assumptions on the nonlinear
functions in \eqref{eq0}--\eqref{eqII}. More precisely, we require
that $a$ and $b$ are sufficiently smooth and bounded from below by a
positive constant, 
 $b$ convex,
 and we
standardly assume that $W =  I_{[0,+\infty)} + \widehat\gamma$,
 with $\widehat\gamma$ smooth and $\lambda$-convex.  A
crucial role is played by the requirement that the heat
 conductivity function $\condu = \condu(\teta)$  grows at least  like
$\teta^\kappa$ with
  $\kappa>1$.
  \beruno
The reader
 may consult \cite{zr} for various examples in which a superquadratic growth in $\teta$ for the heat
conductivity $\condu$ is imposed,  whereas  \cite{kle12}  discusses experimental findings
according to which a class of polymers exhibit a subquadratic growth for $\condu$.
 Another crucial hypothesis is that the exponent $p$ in the gradient regularization of the
  equation for $\chi$ fulfills $p>d$.   Gradient regularizations of $p$-Laplacian type, with $p>d$,
  have been adopted for several damage models, cf.\ e.g.\ \cite{bmr,MieRou06,mrz,krz2}.
   This, mathematically speaking, ensures that $\chi $ is
  estimated in $W^{1,p}(\Omega) \subset \mathrm{C}^0
  (\overline\Omega)$.
   From the viewpoint of physics, since the gradient of  $\chi$
accounts for
interfacial energy effects in phase transitions, and  for the influence of damage at a material point,
undamaged in its neighborhood, in  damage models, we may observe that the term $\frac{1}{p}|\nabla\chi|^p$ models
nonlocality of the phase transition or the damage process. 
 \eruno

Moreover, under some restriction on $\kappa$  (i.e.\ $\kappa\in
(1,5/3)$ for space dimension $d=3$), we can also obtain an enhanced
regularity for $\teta$ and that  conclude that the {\em total energy
inequality }  actually holds as an {\em equality}.

In the reversible case ($\mu=0$), instead,   under the same
assumptions above described (but with a general $\widehat\beta$), we
improve the estimates, hence the regularity, of the internal
variable $\chi$. Therefore, we prove the existence of a weak
formulation of \eqref{eq0}--\eqref{eqII}, featuring, in addition to
\eqref{entropy-ineq-intro}, \eqref{total-enid-intro}, and
\eqref{eqI}, a \emph{pointwise} formulation of
 equation \eqref{eqII}. Again, in the case of the aforementioned
 restriction on $\kappa$, we enhance the time-regularity of $\teta$. What is more,   also exploiting the improved formulation of the equation for
 $\chi$, we are able to conclude existence for a stronger
  formulation of the
heat equation \eqref{eqI}, of variational type.
Instead,  a  uniqueness result  seems to be out of reach, at the
moment, not only in the irreversible  but also in the reversible
 cases (cf. Remarks~\ref{rmk:uni-irr}  and   \ref{rmk:uni-rev}). Only
for  the \emph{isothermal} reversible system  a continuous
dependence result, yielding uniqueness,   can be proved exactly
like in \cite[Thm.3]{rocca-rossi-deg}.

Finally, in the last Section~\ref{s:final}  we address the
analysis of system \eqref{eq0}--\eqref{eqII}, with $\mu=1$, in the
case the $p$-Laplacian regularization in \eqref{eqII} is replaced by
the standard Laplacian operator. We approximate it by adding a
$p$-Laplacian term, modulated by a small parameter $\delta$, on the
left-hand side of \eqref{eqII}, so that Thm.\ \ref{teor1} guarantees
the existence of approximate solutions
$(\teta_\delta,\uu_\delta,\chi_\delta)$.  Then, we  let $\delta$
tend to zero.
 In this context, the enhanced elliptic
regularity estimates on the momentum equation exploited in the proof
of Thm.\ \ref{teor3}, and
 which  would here yield some suitable compactness
for the quadratic term $a(\chi_\delta) \eps(\partial_t
\uu_\delta)\vism \eps(\partial_t\uu_\delta) $ on the right-hand side
of \eqref{eq0}, are no longer available. In fact,  they rely on the
requirement $p>d$. A crucial step for proving the existence of (a
slightly weaker notion of) entropic solutions to system
\eqref{eq0}--\eqref{eqII} (cf.\ \underline{Theorem \ref{teorSec6}}), then
consists in  deriving  some suitable strong convergence for
$(\partial_t \uu_\delta)_\delta$ with an ad hoc technique, strongly
relying on the
  fact that $\mu=1$, and
 on the additional assumption that  $b$ is  non-decreasing.

 Our main existence results Thms.\ \ref{teor3} and \ref{teor1}
are proved  by passing to the limit in a
 time-discretization  scheme, unique for  the
reversible and the irreversible cases,
 carefully tuned to the nonlinear features of the PDE system. In
 particular, it is devised in such a way as to obtain  that
the piecewise constant and piecewise linear interpolants of the
discrete
 solutions satisfy  the discrete versions of the entropy inequality \eqref{entropy-ineq-intro}, of   total energy inequality
 \eqref{total-energy-intro}, and of the energy inequality
 \eqref{ineq-system3-intro} in the case $\mu=1$. Moreover, with
 delicate calculations we are also able to translate on the
 time-discrete level a series of a priori estimates on the
 heat equation,  having  a nonlinear character.
%
This   detailed  time-discrete  analysis could be interesting in view of  further
numerical studies of this model.

\beo For the limit passage we resort to various compactness results
available in the literature, and additionally prove the compactness
Theorem \ref{th:mie-theil},  based on the theory of  Young measures
with values in infinite-dimensional (reflexive) Banach spaces. \eo


\noindent \textbf{Plan of the paper.} In Section~\ref{s:main} we fix
some notation, state some preliminaries that will be used in the
rest of the paper, list our assumptions on the data as well as our
  main global-in-time existence results.
In Section~\ref{s:aprio} we perform a series of \emph{formal}
a-priori estimates on our system. We render them rigorously  
in Section~\ref{s:time-discrete}, where we set up our time-discrete
scheme. \underline{Theorems \ref{teor3}} and \underline{\ref{teor1}}
are proved by passing to the limit in the approximated entropy and
energy inequality, as well as in the discretized versions of
\eqref{eqI} and \eqref{eqII}, throughout Sec.\ \ref{s:pass-limit}.
Section \ref{s:final} is then devoted to the proof of
\underline{Theorem \ref{teorSec6}}. \beo Finally, the Appendix
contains  a short recap of the theory of Young measures in
infinite-dimensional Banach spaces, and the proof of Theorem
\ref{th:mie-theil}. \eo


\section{\bf Setup and results}
\label{s:main}
After fixing some notation and results which shall be used throughout the paper, in Section \ref{ss:assumptions}
 we collect
 our working assumptions on the nonlinear functions  $\condu$, $a$, $b$, and  $W$ in the PDE system \eqref{eq0}--\eqref{eqII},  and on the data. Then,
 in Secs.\ \ref{glob-irrev} and \ref{ss:glob-rev} we discuss the weak formulations of
 (the
initial-boundary value problem  for)
 \eqref{eq0}--\eqref{eqII} in the irreversible and reversible cases, respectively
  corresponding to
$\mu=1$ and $\mu=0$ in  \eqref{eqII}.

 \subsection{Preliminaries}
\label{ss:prelims}
\begin{notation}
\label{not:2.1} \upshape
 Throughout the paper, given a
Banach space $X$
we shall
denote by $\|\cdot\|_{X}$ 
its norm,
and  use the symbol $\pairing{}{X}{\cdot}{\cdot}$ for the duality
pairing between $X'$ and $X$.
 Moreover,  we shall denote by
  ${\rm BV}([0,T];X)$ (by $\mathrm{C}^0_{\mathrm{weak}}([0,T];X)$, respectively),
 the space
of functions from $[0,T]$ with values in $ X$ that are defined at
every  $t \in [0,T]$ and  have  bounded variation on  $[0,T]$  (and
are \emph{weakly} continuous   on  $[0,T]$, resp.).

Let $\Omega \subset \R^d$ be a bounded domain,
 $d \in \{2,3\}$. We set $Q:= \Omega \times (0,T)$.
 We   identify both
$L^2 (\Omega)$ and $\Ha$ with their dual spaces, and denote by
$(\cdot,\cdot)$ the scalar product in $\R^d$, by
$(\cdot,\cdot)_{L^2(\Omega)}$ both the scalar product in
 $L^2(\Omega)$,
and  in $\Ha$, and by
 $\boZ$ and $\boY$
the  spaces
$$
\begin{aligned}
& \boZ:=\{\vv \in H^1(\Omega;\R^d) \,:\ \vv= 0 \ \hbox{ on
}\partial\Omega \,\}, \text{ endowed with the  norm } \|
\vv\|_{H_0^1(\Omega;\R^d)}^2: = \int_{\Omega} \e(\vv) \colon \e(\vv)\,
\dd x,
\\
&\berdue  \boY:=\{\vv \in H^2(\Omega;\R^d)\,:\ \vv ={0} \ \hbox{ on
}\partial\Omega \,\}. \erdue
\end{aligned}
$$
 Note that   $\|\cdot\|_{H_0^1(\Omega;\R^d)}$ is a  norm  equivalent to the standard one on $H^1(\Omega;\R^d)$.
We will use the symbol $\mathcal{D} (\overline Q)$ for the space of
the $C^\infty$-functions with compact support on $Q:= \Omega \times
(0,T)$ and for $q>1$ we will adopt the notation \begin{equation}
\label{label-added}
 W_+^{1,q}(\Omega):= \left\{\zeta \in
W^{1,q}(\Omega)\, : \ \zeta(x) \geq 0  \quad \foraa\, x \in
\Omega \right\}, \quad \text{ and analogously for }
W_-^{1,q}(\Omega).
\end{equation}

We denote by $A_p$ the $p$-Laplacian operator with zero Neumann
boundary conditions, viz.\
\[
\text{$ A_p: W^{1,p}(\Omega) \to W^{1,p}(\Omega)'$ given by
$\pairing{}{W^{1,p}(\Omega)}{A_pu}{v}:= \int_\Omega |\nabla u|^{p-2}
\nabla u \cdot \nabla v \dd x$\,.}
\]
In the weak formulation of the momentum equation \eqref{eqI},
besides $\opjname$ and $\ophname$
 we will also make use of the
operator
\begin{equation}
\label{op_ciro} \mathcal{C}_\rho: L^2 (\Omega) \to
H^{-1}(\Omega;\R^d) \quad \text{defined by} \quad
\pairing{}{H^1(\Omega;\R^d)}{\ciro(\theta)}{\vv}:= - \rho \int_\Omega
\theta \dive (\vv)\, \mathrm{d}x.
\end{equation}



Finally, throughout the paper we shall denote by the symbols
$c,\,c',\, C,\,C'$  various positive constants depending only on
known quantities. Furthermore, the symbols $I_i$,  $i = 0, 1,... $,
will be used as place-holders for several integral terms popping in
the various estimates: we warn the reader that we will not be
self-consistent with the numbering, so that, for instance, the
symbol $I_1$ will occur several times with different meanings.
\end{notation}
\noindent \textbf{Recaps of mathematical elasticity.}
The elasticity and viscosity  tensors fulfill
\begin{equation}
\label{funz_g-l} \elm=(e_{ijkh}), \,   \vism=(v_{ijkh})
  \in \mathrm{C}^{1}(\Omega;\R^{d \times d \times d \times d})\,
\end{equation}
with coefficients satisfying the classical symmetry and ellipticity
conditions (with the usual summation convention)
\begin{equation}
\label{ellipticity}
\begin{aligned}
& e_{ijkh}=e_{jikh}=e_{khij}\,,\quad  \quad
v_{ijkh}=v_{jikh}=v_{khij}
\\
&  \exists \, \alpha_0>0 \,:  \qquad e_{ijkh} \xi_{ij}\xi_{kh}\geq
\alpha_0\xi_{ij}\xi_{ij} \quad   \forall\, \xi_{ij}\colon \xi_{ij}=
\xi_{ji}
\\
& \exists \, \beta_0>0 \,:  \qquad v_{ijkh} \xi_{ij}\xi_{kh}\geq
\beta_0\xi_{ij}\xi_{ij} \quad   \forall\, \xi_{ij}\colon \xi_{ij}=
\xi_{ji}.
\end{aligned}
\end{equation}
Observe that with \eqref{ellipticity} we also encompass in our analysis the case of
an anisotropic and inhomogeneous material.

In order to give the variational formulation of the momentum equation, we need to introduce the bilinear forms
related to the $\chi$-dependent elliptic operators appearing
in~\eqref{eqI}.
 Hence,  given a \emph{non-negative} function $\eta
\in L^\infty (\Omega)$ (later, $\eta= a(\chi)$ or $\eta = b(\chi)$),
  let us consider the  bilinear
symmetric forms $\bilh{\eta}{\cdot}{\cdot}, \,
\bilj{\eta}{\cdot}{\cdot}: \, \boZ \times \boZ \to \RR$ defined for
all $ \ub, \vb \in \boZ$ by
\begin{equation}
\label{bilinear-forms}
\begin{aligned}
& \bilh {\eta}{\uu}{\vv}:= \pairing{}{H^1(\Omega;\R^d)}{-\dive(\eta
\elm \tensore)}{\vv} =\sum_{i,j, k, h=1}^d \int_\Omega\eta\,
e_{ijkh}\,\varepsilon_{kh}(\ub)\varepsilon_{ij}(\vb),
\\
& \bilj {\eta}{\uu}{\vv}:= \pairing{}{H^1(\Omega;\R^d)}{-\dive(\eta
\vism \tensore)}{\vv}= \sum_{i,j, k, h=1}^d \int_\Omega\eta\,
v_{ijkh}\,\varepsilon_{kh}(\ub)\varepsilon_{ij}(\vb).
\end{aligned}
\end{equation}
Thanks to \eqref{ellipticity}
and Korn's inequality (see
eg~\cite[Thm.~6.3-3]{ciarlet}), the forms
$\bilh{\eta}{\cdot}{\cdot}$ and $\bilj{\eta}{\cdot}{\cdot}$ fulfill
\begin{equation}
\label{korn}
\exists\, C_1>0 \ \forall\, \ub,\,\vb \in  \boZ\, : \quad
\begin{cases}
\bilh{\eta}{\uu}{\uu}
 \geq \inf_{x \in \Omega}(\eta(x))\,C_1\Vert{\bf
u}\Vert^2_{\V}, \\  \bilj{\eta}{\uu}{\uu}\geq  \inf_{x \in
\Omega}(\eta(x))\,C_1\Vert{\bf u}\Vert^2_{\V}.
\end{cases}
\end{equation}
It follows from \eqref{funz_g-l} that they are also continuous, namely
\begin{align}
\label{a:conti-form}
\exists\, C_2>0 \ \forall\, \ub,\,\vb \in  \boZ\, : \quad
 |\bilh{\eta}{\uu}{\vv}| +
|\bilj{\eta}{\uu}{\vv} |
 \leq C_2 \| \eta \|_{L^\infty (\Omega)} \| \mathbf{u} \|_{\V} \| \mathbf{v}
 \|_{\V}.
\end{align}
 We  shall denote by $\ophname(\eta\, \cdot):\boZ\to
H^{-1}(\Omega;\R^d)$ and $\opjname(\eta\, \cdot):\,\boZ\to
H^{-1}(\Omega;\R^d)$ the linear operators associated with the forms
$\bilh{\eta}{\cdot}{\cdot}$
 and $\bilj{\eta}{\cdot}{\cdot}$, respectively,  that is
\begin{equation}
\label{operator-notation-quoted}
\pairing{}{H^1(\Omega;\R^d)}{\oph{\eta}{\vv}}{\ww}:=
\bilh{\eta}{\vv}{\ww},
 \quad
\pairing{}{H^1(\Omega;\R^d)}{\opj{\eta}{\vv}}{\ww}:=
\bilj{\eta}{\vv}{\ww} \qquad \text{for all }{\bf v},\,{\bf w}\in
\boZ.
\end{equation}
\begin{remark}[A caveat on notation]
\label{rmk:caveat} \berdue Actually, it would be more appropriate to
use the symbols $\mathsf{e}_\eta(\cdot,\cdot)$ and $
\mathsf{v}_\eta(\cdot,\cdot)$
 in place of $\bilh{\eta}{\cdot}{\cdot}, \,
\bilj{\eta}{\cdot}{\cdot}$
 to signify that for fixed
$\eta \in L^\infty(\Omega)$, the bilinear forms defined in
\eqref{bilinear-forms} act on the pair $(\mathbf{u}, \mathbf{v})$.
 However,
 in most occurrences, we would use this notation with   $\eta$
 replaced by the ``heavier''  symbols $a(\chi)$ or $b(\chi)$. Thus,
 for notational simplicity we prefer to stay with the less correct
 notation from \eqref{bilinear-forms}.
The same considerations apply to the operators defined in
\eqref{operator-notation-quoted}.  \erdue
\end{remark}
It can be checked via an approximation argument
  that the following
regularity results hold
\begin{subequations}
\begin{align}
& \label{reg-pavel-a} \text{if $\eta \in L^\infty(\Omega)$ and
${\uu}\in \boZ$, \ then  \, $\oph{\eta}{\uu}, \, \opj{\eta}{\uu} \in
H^{-1} (\Omega;\R^d)$,} \\
& \label{reg-pavel-b} \text{if \berdue $\eta \in  W^{1,d+\epsilon}
(\Omega)$ for some $\epsilon>0$ \erdue and ${\uu}\in \boY$, \ then  \,
$\oph{\eta}{\uu}, \, \opj{\eta}{\uu} \in \Ha$.}
\end{align}
\end{subequations}

\beo Finally, let us recall
 the following elliptic regularity result, holding in the case $\Omega$ has a $\mathrm{C}^2$-boundary (cf.\
 \eqref{smoothness-omega} below) and also due to \eqref{funz_g-l}, namely
\begin{align}
\label{cigamma}
\begin{aligned}
\exists \, C_3,\, C_4>0 \quad \forall\,  \uu \in
\boY\, : \qquad
\begin{cases}
 C_{3} \| \uu \|_{H^2(\Omega)}  \leq \|\dive (\elm\eps
(\uu))\|_{L^2(\Omega)} \leq C_{4} \| \uu \|_{H^2(\Omega)}\,,\\
C_{3} \| \uu \|_{H^2(\Omega)}  \leq \|\dive (\beo\vism\eo\eps
(\uu))\|_{L^2(\Omega)}\leq C_{4} \| \uu \|_{H^2(\Omega)}\,.
\end{cases}
\end{aligned}
\end{align}
 For this, we refer
e.g.~\cite[Lemma~3.2,  p.\
260]{necas}) or \cite[Chap.\ 6, p.\ 318]{Hughes}.
\eo

\noindent
\textbf{Useful  inequalities.} In order to make the paper as self-contained as possible,
we recall  here  the   Gagliardo-Nirenberg inequality
(cf.~\cite[p.~125]{nier}) in a particular case: for
 all $r,\,q\in [1,+\infty],$ and for all $v\in L^q(\Omega)$ such that
$\nabla v \in L^r(\Omega)$, there holds
\begin{equation}\label{gn-ineq}
\|v\|_{L^s(\Omega)}\leq C_{\mathrm{GN}}
\|v\|_{W^{1,r}(\Omega)}^{\theta} \|v\|_{L^q(\Omega)}^{1-\theta} \qquad
\text{ with } \frac{1}{s}=\theta
\left(\frac{1}{r}-\frac{1}{d}\right)+(1-\theta)\frac{1}{q}, \ \  0
\leq \theta \leq 1, \end{equation}
 the positive constant $C_{\mathrm{GN}}$ depending only on
$d,\,r,\,q,\,\theta$. Combining the compact embedding
\begin{equation}
\label{dstar}
 \boY \Subset W^{1,d^\star{-}\eta}(\Omega;\R^d),
\quad \text{with } d^{\star}=
\begin{cases} \infty & \text{if }d=2,
\\
6 & \text{if }d=3,
\end{cases}
 \quad \text{for all $\eta >0$},
\end{equation}
(where for $d=2$ we mean that $\boY \Subset W^{1,q}(\Omega;\R^d)$ for all $1 \leq q <\infty$),
   with \cite[Thm. 16.4, p. 102]{LM}, we have
\begin{equation}
\label{interp} \forall\, \varrho>0 \ \ \exists\, C_\varrho>0 \ \
\forall\, \uu \in \boY\,: \ \
\|\e(\uu)\|_{L^{d^\star{-}\eta}(\Omega)}\leq \varrho
\|\uu\|_{H^2(\Omega)}+C_\varrho\|\uu\|_{L^2(\Omega)}.
\end{equation}
We will also use the following \emph{nonlinear}  Poincar\'{e}-type inequality
 (cf.\  e.g.\ \cite[Lemma 2.2]{gmrs}), with  $m(w)$ the mean value of $w$:
 \begin{equation}
 \label{poincare-type}
 \forall\, q>0 \quad \exists\, C_q >0 \quad \forall\, w \in H^1(\Omega)\, : \qquad
 \| |w|^{q} w \|_{H^1(\Omega)} \leq C_q (\| \nabla (|w|^{q} w )\|_{L^2(\Omega)} + |m(w)|^{q+1})\,.
\end{equation}

\subsection{Assumptions}
\label{ss:assumptions} \noindent In most of this paper, we shall
\beo work under the following
\par
\noindent
\textbf{Hypothesis (0).}
We suppose that
\begin{equation}
\label{smoothness-omega}
\Omega\subset\RR^d, \quad d\in \{2,3\} \ \ \text{is
 a bounded connected domain,   with $\mathrm{C}^2$-boundary $\partial\Omega$}
\end{equation}
and that
the viscosity tensor is given by
\begin{align}
\label{eqn:visc}
\vism=\omega \elm  \qquad \text{for a  constant } \omega>0.
\end{align}
\begin{remark}
\label{rmk:smoothness}
\upshape
The smoothness requirement   \eqref{smoothness-omega}
will allow us to apply the elliptic regularity results
in~\eqref{cigamma}.

Concerning   \eqref{eqn:visc},   let us mention in advance
that  it will only be used
in the proof of the $H^2(\Omega;\R^d)$-regularity for the discrete displacements, cf.\ Lemma \ref{lemma:ex-discr} ahead ensuring the existence of solutions
to the time-discretization scheme for system \eqref{eq0}--\eqref{eqII}. As we will see,  this regularity property is crucial for the rigorous proof of the elliptic regularity estimate
for the displacements, see the \emph{Fifth  estimate} (formally) derived in Sec.\ \ref{s:aprio}, which is in turn  essential in the proof of our main results, Theorems
\ref{teor3} and \ref{teor1}.

 Instead, in the proof of Thm.\ \ref{teorSec6} we  will not need to perform the aforementioned  elliptic regularity argument, at the price of proving the existence of a  weaker
notion of solution for the irreversible system (cf.\  \eqref{reg-uSec6} and  Remark~\ref{omegaLip}  in Sec.\ \ref{s:final}).
Hence, we will be able to dispense with
 conditions \eqref{smoothness-omega} and \eqref{eqn:visc}. This is the reason why, despite $\vism$ and $\elm$ are a multiple of each other
by \eqref{eqn:visc}, we have kept the two symbols $\vism$ and $\elm$ throughout the paper.
\end{remark}
\eo

\noindent  We  list  below
our basic assumptions on the functions $\condu$, $a$, $b$, and  $W$ in system
\eqref{eq0}--\eqref{eqII}. 

\noindent \textbf{Hypothesis (I).} We suppose that
\begin{align}
\label{hyp-K}
\begin{aligned}
& \text{the function }   \condu:[0,+\infty)\to(0,+\infty)  \  \text{
is
 continuous and}
\\
& \exists \, c_0, \, c_1>0 \quad   \kappa>1   \ \
\forall\teta\in[0,+\infty)\, :\quad
c_0 (1+ \teta^{\kappa}) \leq \condu(\teta) \leq c_1 (1+\teta^{\kappa})\,.
\end{aligned}
\end{align}
We will  denote by $\widehat{\condu}$ the primitive $\widehat{\condu} (x):= \int_0^x \condu(r) \dd r $ of $\condu$.

\noindent \textbf{Hypothesis (II).} We require
\begin{align}
& \label{data-a} a \in \mathrm{C}^1(\R), \ b \in \mathrm{C}^2(\R)  \
\text{ and } \exists\, c_2>0 \, : \quad a(x), \ b(x) \geq c_2 \ \text{for all } x
\in\RR
\end{align}
 and that the function $b$ is convex.  The latter requirement could
 be weakened to $\lambda$-convexity, i.e.\
 that $b{''}$ is bounded from below (cf.\ also \eqref{lambda-convex}), see.\ Remark  \ref{b-l-convex} later on.
\par
\noindent \textbf{Hypothesis (III).} We suppose that the potential
$W$ in~\eqref{eqII} is given by $ W= \widehat{\beta} +
\widehat{\gamma},$ where
\begin{align}
 \label{databeta}  &
 \berdue \widehat\beta: \R \to \R \cup \{ +\infty\}  \,
  \text{ has nonempty domain $\text{\rm dom}(\widehat{\beta})$,  is l.s.c. and  convex },  \erdue 
  \quad \widehat{\gamma} \in
 {\rm C}^{2}(\R), \quad
  \\
  \label{data-W}
  &\exists\, c_W \in \R \qquad
 \beo W(r)\geq 
 c_W \eo\quad \forall r\in {\rm dom}(\widehat\beta)\,.
\end{align}
 Moreover, we impose that
\begin{equation}
\label{lambda-convex}
\exists\, \lambda >0 \ \ \forall\, r \in \R\, : \ \ \widehat{\gamma}{''}(r) \geq -\lambda.
\end{equation}
Hereafter, we shall use the notation
\[
\beta:=\partial\widehat\beta, \qquad
 \gamma:=
\widehat\gamma'.
\]
Observe that, we have not required that  $\text{\rm dom}(\widehat{\beta}) \subset [0,+\infty)$, which would  enforce the
(physically feasible) positivity of the  phase/damage variable $\chi$. In fact, for the analysis of the irreversible case (i.e.\ with
$\mu=1$), we will have to confine the discussion to the case $\widehat{\beta} = I_{[0,+\infty)}$, cf.\ Hypothesis (IV)
later on.
Instead, in the reversible case $\mu=0$, we will allow for  a general $\widehat\beta$ (complying with Hypothesis (III)).
\begin{remark}[A generalization of the $p$-Laplacian]
\label{rmk:general-p-Lapl}
\upshape
In fact, our analysis of system \eqref{eq0}--\eqref{eqII}  extends to the case \berdue that \erdue the
$p$-Laplacian operator $-\mathrm{div} (|\nabla \chi|^{p-2} \nabla \chi)$, with $p>d$, is replaced
by an  elliptic operator
$\opchi :W^{1,p} (\Omega ) \to W^{1,p} (\Omega )^*$ of the form
\begin{equation}
\label{p-Lapl-general}
\pairing{}{W^{1,p}(\Omega)}{\opchi (\chi)}{v} := \io \nabla_\zeta \phi(x,\nabla \chi(x))\cdot\nabla
v(x ) \dd x,
\end{equation}
where $\phi:\Omega\times \RR^d\to
[0,+\infty) $ is a Carath\'eodory integrand such that
\[
\begin{aligned}
 &  \text{the map }\phi(x,\cdot):  \R^d\to [0,+\infty) \ \text{ is convex, with $\phi(x,0)=0$,  and in
$ \mathrm{C}^1(\RR^d)$ for a.a.\ $x \in \Omega$},
\\
&
\exists\, c_3,\,c_4,\,c_5>0  \ \ \forae\, x \in \Omega \ \
\forall\, \zzeta \in \R^d\, : \ \
\left\{
\begin{array}{ll} &
\phi(x,{\zzeta})\geq
c_3|{\zzeta}|^p-c_4,
\\
 &  |\nabla_\zeta \phi(x,{\zzeta})|\leq c_5(1+|{
\zzeta}|^{p-1})\,.
\end{array}
\right.
\end{aligned}
\]
This more general framework was analyzed in \cite{rocca-rossi-deg}, to which we refer the reader for all details.
\end{remark}

\paragraph{\bf Problem and Cauchy data.}
We suppose that  the data $\mathbf{f}$, $g$, and $h$
fulfill
\begin{align}
\label{bulk-force} & \mathbf{f}\in L^2(0,T;\Ha),
\\
 \label{heat-source} &  g \in L^1(0,T;L^1(\Omega)) \cap L^2 (0,T; H^1(\Omega)'),\quad g\geq 0 \quad\hbox{a.e.  in }\Omega\times (0,T)\,,
 \\
 \label{dato-h}
 & h \in L^1 (0,T; L^2(\partial \Omega)), \quad h \geq 0 \quad\hbox{a.e.  in }\partial \Omega\times (0,T)\,,
\end{align}
and that the initial data comply with
\begin{align}
& \label{datoteta} \teta_0 \in L^{1}(\Omega), \quad \exists\,
\teta_*>0\,: \quad\inf_\Omega \teta_0\geq\teta_*>0\,, \quad
\log\teta_0\in L^1(\Omega),
\\
 \label{datou}
&\ub_0\in  \boY,\quad \vb_0\in \beo H_0^1(\Omega;\RR^d)\,, \eo
\\
 & \label{datochi}
 \chi_0\in   W^{1,p}(\Omega),\quad \widehat{\beta}(\chi_0)\in
L^1(\Omega).
\end{align}
\berdue Let us mention in advance that the strict positivity
requirement on $\teta_0$ and the non-negativity of $g$ and $h$ serve
to the purpose of ensuring the existence of an entropic solution
$(\teta,\uu,\chi)$ to (the initial-boundary value problem for
system) \eqref{eq0}--\eqref{eqII}, with $\teta$ \emph{strictly
positive}. The latter property has a crucial physical meaning, as
$\teta$ is the \emph{absolute temperature} of the system. It also
underlies our notion of weak solution for the heat equation,
involving the term $\log(\teta)$. \erdue

\subsection{A global existence  result for the  irreversible system}
\label{glob-irrev}
Before stating precisely
our notion of weak solution to (the initial-boundary value problem for) system \eqref{eq0}--\eqref{eqII}
 in  the case of  \emph{unidirectional evolution}, 
 let us briefly motivate the  weak formulations for the heat balance equation \eqref{eq0}, and for the
 phase/damage parameter subdifferential inclusion
 \eqref{eqII} (with $\mu=1$). They will be coupled with the  pointwise (in time and space) formulation of the
 momentum equation \eqref{eqI}  (cf.\ \eqref{weak-momentum}  later on).
\paragraph{\bf Entropy and total energy inequalities for the heat equation.}
For \eqref{eq0}, we adopt the weak formulation of  proposed in
\cite{boufeima, fei, fpr09}. It consists of a so-called ``entropy
inequality", and of a  \berdue ``total energy (in)equality". \erdue The
former is obtained by formally dividing \eqref{eq0} by $\teta$, and
testing it by a smooth test function $\varphi$. Integrating over
space and time leads to
\begin{equation}
\label{later-4-comparison}
\begin{aligned}
\int_0^T \int_\Omega \big(\partial_t \log(\teta) + \chi_t  &  + \rho \mathrm{div}(\uu_t) \big) \varphi \dd x \dd t
 + \int_0^T \int_\Omega \mathsf{K}(\teta) \nabla \log(\teta) \nabla \varphi  \dd x \dd t
\\ &
  -  \int_0^T \int_\Omega  \mathsf{K}(\teta) \frac{\varphi}{\teta}  \nabla \log(\teta)  \nabla \teta  \dd x \dd t
 \\
 &
= \int_0^T \int_\Omega  (g+a(\chi) \eps(\uu_t)\vism \eps(\uu_t) + |\chi_t|^2) \frac\varphi\teta  \dd x \dd t
+ \int_0^T \int_{\partial\Omega} h \frac\varphi\teta  \dd S \dd t
\end{aligned}
\end{equation}
for all $\varphi \in \mathcal{D}(\overline Q)$. Then, the entropy
inequality \eqref{entropy-ineq} later on follows.  The total energy
inequality   \eqref{total-enid}  associated with system
\eqref{eq0}--\eqref{eqII} is obtained by testing \eqref{eq0} by $1$,
\eqref{eqI} by $\uu_t$, and \eqref{eqII} by $\chi_t$. 

  Let us  mention in advance  that
the entropy inequality
\eqref{entropy-ineq} below has the advantage that all the troublesome quadratic quantities on the right-hand side of
\eqref{eq0} are tested by the \emph{negative} function $-\varphi$.
 This will  allow for upper semicontinuity arguments in the limit
passage for proving the existence of weak solutions, cf.\ Sec.\
\ref{s:pass-limit} later on. Let us also mention in advance that,
when dropping the unidirectionality constraint (i.e., in the case $\mu=0$), under an additional
condition (cf.\ Hypothesis (V)), we will be able to get an existence
result for
 an improved formulation of \eqref{eq0}, cf.\
 Theorem \ref{teor1} below.
\paragraph{\bf Weak formulation of the flow rule for $\chi$.}
A significant  difficulty in the analysis of system \eqref{eq0}--\eqref{eqII}
is due to the triply nonlinear character of \eqref{eqII}, featuring, in addition to the $p$-Laplacian and to
$\beta=\partial\widehat\beta$ which contributes to $W'$, the
 (maximal monotone) operator
 $\partial I_{(-\infty, 0]}$. Since the latter is unbounded, it is not possible to  perform
  comparison estimates in \eqref{eqII} and  an estimate for the terms $A_p \chi$ and $\beta(\chi)$
  (treated as single-valued in the context of this \beruno heuristic \eruno discussion) could be obtained only by testing
  \eqref{eqII} by $\partial_t (A_p\chi +\beta(\chi))$. However, the related calculations, involving an integration
  by parts in time on the right-hand side of \eqref{eqII},  cannot be carried out
  in the present case. That is why, we need to resort to a weak formulation of   \eqref{eqII}
  which does not feature the term $A_p\chi +\beta(\chi)$. We draw it from \cite{hk1,hk2}, and as therein
we confine the analysis  to the particular case in which \par
\noindent \textbf{Hypothesis (IV).}
\begin{equation}
\label{e:particular-widehatbeta}
 \widehat{\beta}=I_{[0,+\infty)}.
\end{equation}
This still ensures
 the constraint
 \begin{equation}
 \label{physical-admissib}
 \chi \in [0,1] \quad \text{ a.e.\ in $\Omega\times
(0,T)$}
\end{equation}
 provided we  start
from an initial datum $\chi_0 \leq 1$ a.e.\ in $\Omega$, we will
obtain by irreversibility that
 $\chi(t) \leq \chi_0 \leq 1$
  a.e.\ in $\Omega$, for almost all $t \in (0,T)$.

To motivate the weak formulation of \eqref{eqII}
from \cite{hk1,hk2}, we observe that \eqref{eqII}
rephrases as
\begin{subequations}
\label{ineq-system}
\begin{align}
 \label{ineq-system1}  & \chi_t \leq 0 \qquad
 \text{ in } \Omega \times (0,T),
 \\
 \label{ineq-system2}  &
 \left( \chi_t-\mathrm{div}(|\nabla \chi|^{p-2} \nabla \chi)+
 \xi + \gamma(\chi)  + b'(\chi)\frac{\varepsilon(\ub)
\elm\varepsilon(\ub)}{2} -\teta\right) \psi \geq 0  \text{ for all } \psi \leq 0 \qquad
 \text{ in } \Omega \times (0,T),
 \\
 \label{ineq-system3}  &
 \left( \chi_t-\mathrm{div}(|\nabla \chi|^{p-2} \nabla \chi)+
 \xi + \gamma(\chi)  + b'(\chi)\frac{\varepsilon(\ub)
\elm\varepsilon(\ub)}{2} -\teta\right) \chi_t \leq 0  \qquad
 \text{ in } \Omega \times (0,T),
\end{align}
\end{subequations}
with $\xi \in \partial I_{[0,+\infty)}(\chi)$ in $\Omega
\times (0,T)$.
Our weak formulation of \eqref{eqII} in fact consists of \eqref{ineq-system1},
of the integrated version of \eqref{ineq-system2}, with negative test functions from
$W^{1,p}(\Omega)$, and of the energy inequality obtained by integrating \eqref{ineq-system3}.
In \cite[Prop.\ 2.14]{rocca-rossi-deg} (see also \cite[Thm.\ 4.6]{hk1}), we prove that,
under additional regularity properties,
any weak solution in the sense of \eqref{constraint-chit}--\eqref{energ-ineq-chi} in fact fulfills
\eqref{eqII} pointwise.

We are now
in the position to specify our weak solution concept, for  which we borrow the terminology
from~\cite{fpr09}.
\begin{definition}[Entropic solutions to the irreversible system]
\label{prob-rev1}
{\sl
Let $\mu=1$. Given initial data $(\teta_0,\uu_0,\vv_0,\chi_0)$
fulfilling \eqref{datoteta}--\eqref{datochi},
we call a triple $(\teta,\uu,\chi)$ an \emph{entropic solution} to the (initial-boundary value problem)
for system \eqref{eq0}--\eqref{eqII}, with the
boundary conditions \eqref{intro-b.c.},
if
\begin{align}
\label{reg-teta}  & \teta \in  L^2(0,T; H^1(\Omega))\cap L^\infty(0,T;L^1(\Omega)),
\\
& \label{reg-log-teta}
\log(\teta) \in  L^2(0,T; H^1(\Omega)),
\\
& \label{reg-u} \uu \in    H^1(0,T;\boY) \cap W^{1,\infty}
(0,T;\boZ)  \cap H^2 (0,T;L^2(\Omega; \RR^d))\,,
\\
& \label{reg-chi} \chi \in L^\infty (0,T;W^{1,p} (\Omega)) \cap H^1
(0,T;L^2 (\Omega)),
\end{align}
$(\teta,\uu,\chi)$ complies with
 the initial conditions
\begin{align}
 \label{iniu}  & \uu(0,x) = \uu_0(x), \ \ \uu_t (0,x) = \vb_0(x) & \forae\, x \in
 \Omega,
 \\
 \label{inichi}  & \chi(0,x) = \chi_0(x) & \forae\, x \in
 \Omega,
\end{align}
 (while the initial condition for $\teta$ is implicitly formulated in \eqref{total-enid} below),
and with  the {\em entropic formulation} of \eqref{eq0}--\eqref{eqII}, consisting of
\begin{itemize}
\item[-]
the \emph{entropy}  inequality for almost all $t \in (0,T]$ and almost all
$s\in (0,t)$, and for $s=0$:
\begin{equation}
\label{entropy-ineq}
\begin{aligned}
& \int_s^t \int_\Omega (\log(\teta) + \chi) \varphi_t  \dd x \dd r  -
\rho \int_s^t \int_\Omega \dive(\uu_t) \varphi  \dd x \dd r
 - \int_s^t \int_\Omega  \condu(\teta) \nabla \log(\teta) \cdot \nabla \varphi  \dd x \dd r
\\ &
\leq
\beo\int_\Omega (\log(\teta(t))\beo+\chi(t)\eo){\varphi(t)} \dd x
 -  \int_\Omega (\log(\teta(s))\beo+\chi(s)\eo){\varphi(s)} \dd x
 \eo 
 -  \int_s^t \int_\Omega \condu(\teta) \frac{\varphi}{\teta}
\nabla \log(\teta) \cdot \nabla \teta  \dd x \dd r
\\
&
-
  \int_s^t  \int_\Omega \left( g +a(\chi) \eps(\uu_t)\vism \eps(\uu_t) + |\chi_t|^2 \right)
 \frac{\varphi}{\teta} \dd x \dd r
 -  \int_s^t \int_{\partial\Omega} h \frac\varphi\teta  \dd S \dd r
\end{aligned}
\end{equation}
for all $\varphi $ in $\mathrm{C}^0 ([0,T]; W^{1,d+\epsilon}(\Omega))$ for some $\epsilon>0$, and  $\varphi \in H^1 (0,T; L^{6/5}(\Omega))$,  with $\varphi \geq 0$;
\item[-] the \emph{total energy inequality}  for almost all $t \in (0,T]$ and almost all
$s\in (0,t)$, and for $s=0$:
\begin{equation}
\label{total-enid} \mathscr{E}(\teta(t),\uu(t), \uu_t(t), \chi(t))
\leq
 \mathscr{E}(\teta(s),\uu(s), \uu_t (s), \chi(s))  + \int_s^t
\int_\Omega g \dd x \dd r
+ \int_s^t
\int_{\partial\Omega} h \dd S \dd r
  + \int_s^t \int_\Omega \mathbf{f} \cdot
\mathbf{u}_t \dd x \dd r \,,
\end{equation}
where
 for $s=0$ we read $\teta_0$,
and
\begin{equation}
 \label{total-energy}
 \mathscr{E}(\teta,\uu,\uu_t,\chi):= \int_\Omega \teta \dd x +
  \frac12 \int_\Omega |\uu_t |^2 \dd x + \frac12\beo \bilh{b(\chi)}{\uu}{\uu}\eo
+ \frac1p \int_\Omega |\nabla \chi|^p \dd x + \int_\Omega W(\chi) \dd
x\,;
 \end{equation}
\item[-] the momentum equation
\begin{equation}
\label{weak-momentum}
\ub_{tt}+\opj{a(\chi)}{\ub_t}+\oph{b(\chi)}{\uu} +
\ciro( \teta) =\mathbf{f} \quad
\text{  a.e.\  in } \Omega \times (0,T);
\end{equation}
\item[-] the weak formulation of \eqref{eqII}, viz.\
\begin{align}
\label{constraint-chit}
&
\chi_t(x,t) \leq 0 \qquad \foraa\, (x,t) \in \Omega \times (0,T),
\\
 \label{ineq-chi}
 &
\begin{aligned}
  \int_\Omega  \Big( \chi_t (t) \psi     +
|\nabla\chi(t)|^{p-2} \nabla \chi(t) \cdot \nabla \psi  + \xi(t) \psi +
\gamma(\chi(t)) \psi 
 & + b'(\chi(t))\frac{\varepsilon(\ub(t))
\elm\varepsilon(\ub(t))}{2}\psi  -\teta(t) \psi \Big)
\,
\mathrm{d}x 
  \geq 0 \\ &  \text{for all }  \psi
\in 
W_-^{1,p}(\Omega), \quad \foraa\, t \in (0,T),
\end{aligned}
\\
&
\nonumber
\text{where $\xi \in
\partial I_{[0,+\infty)}(\chi)$ in the  sense that}
\\
&
\label{xi-def}  \xi \in L^1(0,T;L^1(\Omega)) \qquad\text{and}\qquad
\pairing{}{W^{1,p}(\Omega)}{\xi(t)}{\psi-\chi(t)} \leq 0 \  \
\forall\, \psi \in W_+^{1,p}(\Omega), \ \foraa\, t \in (0,T),
\\
&
 \nonumber \text{ as well as  the {\sl \berunodue energy-dissipation
inequality\erunodue} \berunodue (for $\chi$) \erunodue for all $t \in (0,T]$, for $s=0$,  and for almost all $0
< s\leq t$}
\\
&
\label{energ-ineq-chi}
\begin{aligned}
 \int_s^t   \int_{\Omega} |\chi_t|^2 \dd x \dd r  & +\io\left(
  \frac1p  |\nabla\chi(t)|^p +  W(\chi(t))\right)\dd x\\ & \leq\io\left(
  \frac1p |\nabla\chi(s)|^p+ W(\chi(s))\right)\dd x
  +\int_s^t  \int_\Omega \chi_t \left(- b'(\chi)
  \frac{\varepsilon(\ub)\elm\varepsilon(\ub)}2
+\teta\right)\dd x \dd r.
\end{aligned}
\end{align}
\end{itemize}
}
\end{definition}
\begin{remark}[Consistency of the entropic and the classical formulations of \eqref{eq0}]
\upshape \label{rmk:consistent} It  can be checked that, in case the
functions $\teta$ and $\chi $ are \emph{sufficiently smooth},
inequalities \eqref{entropy-ineq}--\eqref{total-enid}, combined with
\eqref{eqI} and \eqref{eqII}, yield the (pointwise formulation
of the)  heat equation \eqref{eq0}.

\berunodue To check this, by contradiction suppose that (the weak
formulation of) \eqref{eq0} does not hold. Since \eqref{eq0} is
equivalent to \eqref{entropy-ineq} with identity sign, we then would
have
 that
\eqref{entropy-ineq} holds
 with a  strict inequality sign.
Hence, we  could test \eqref{eqI} by $\uu_t$, \eqref{eqII} by
$\chi_t$, and choose $\varphi=\teta$ (which is admissible for a
sufficiently smooth $\teta$) in \eqref{entropy-ineq} (with a strict
inequality). Summing up the relations thus obtained, we would
conclude the total energy balance \eqref{total-enid}  is not
satisfied. \erunodue

However, at the moment the necessary regularity for $\teta$ and
$\chi $  to carry out this argument  is out of reach.
    \end{remark}
\berdue
\begin{remark}[Validity of the total energy inequality]
\label{rmk:total-eniq}
 Let us mention here that originally in \cite{fei, boufeima} the total energy inequality
 \eqref{total-enid} was required
  to be hold as an \emph{equality}
on \emph{every} sub-interval $(s,t) \subset [0,T]$.
   However, in the present setting, in general we will be
  able to  obtain it only as an \emph{inequality} on $(s,t)$
\emph{for almost all} $s, \, t \in (0,T)$.

Indeed, we will prove \eqref{total-enid} by passing to the limit in
its time-discrete version, involving an approximate total energy
functional evaluated at approximate solutions.
 Due to the lack of
suitable estimates on the  latter sequences, and to the presence of
nonlinear and nonsmooth terms in the energy (related to the  high
order and non-smooth nonlinearities
    in the $\chi$-equation \eqref{eqII}), we will be able
to prove the pointwise convergence of the approximate total energy
functional only almost everywhere on $(0,T)$.

Yet,  in the   irreversibile case $\mu=1$ we will slightly improve
\eqref{total-enid} under a further condition on $\mathsf{K}$ (see
Thm.\ \ref{teor3}). We will considerably enhance it
 in the reversible case $\mu=0$ and under suitable growth conditions on the heat conductivity $\condu$ (cf. Thm.~\ref{teor1}).
\end{remark}
\begin{remark}[Total energy inequality and energy-dissipation inequality for $\chi$]
\label{rmk:total-vs-endiss} As already pointed out, the total energy
inequality \eqref{total-enid} (formally) results from testing the
heat equation by $1$, the momentum equation by $\uu_t$,  the flow
rule for $\chi$ by $\chi_t$, and integrating in time. The latter
test also gives rise to the energy-dissipation inequality
\eqref{energ-ineq-chi}.

 However, let us stress that, in the present
setting, \eqref{total-enid} and \eqref{energ-ineq-chi} cannot be
obtained one from another. Indeed, to do so, it would be necessary
to test the entropy inequality by $\teta$ (which would correspond to
testing the heat equation by $1$), which is not an admissible test
function  for \eqref{entropy-ineq} due to its low regularity
\eqref{reg-teta}.
\end{remark}
\erdue
We now state our existence result for  system \eqref{eq0}--\eqref{eqII} in the case $\mu=1$.
As far as the time-regularity of $\teta$ goes, observe that we will just prove $\BV$-in-time regularity for $\log(\teta)$ (cf.\ \eqref{BV-log} below).
Indeed,
   we will obtain $\mathrm{BV}$-in-time
regularity for $\teta$, as well,
 under an additional restriction
on the exponent $\kappa$ in Hypothesis (I) (note that the range of the admissible values below depends on the space dimension), viz.\

\noindent
 \textbf{Hypothesis (V).} The exponent $\kappa$ in
\eqref{hyp-K} satisfies
\begin{equation}
\label{range-k-admissible}
\kappa \in (1, 5/3) \quad\hbox{if $d=3$ and } \kappa \in (1, 2) \quad\hbox{if $d=2$ }.
\end{equation}
\begin{maintheorem}[Existence of entropic solutions, $\mu=1$]
\label{teor3} Let $\mu=1$. Assume  \beo \textbf{Hypotheses (0)--(III)}  \eo and, in addition, \textbf{(IV)}
(i.e., $\widehat{\beta}=I_{[0,+\infty)}$), as well as
 conditions
\eqref{bulk-force}--\eqref{datochi}  on the data $\mathbf{f},$ $g$, $h$,
$\teta_0,$ $\uu_0,$ $\vv_0,$ $\chi_0$.
 Then, there exists an entropic solution (in the sense of Definition \ref{prob-rev1})
  $(\teta,\uu,\chi)$ to the initial-boundary value problem for system
\eqref{eq0}--\eqref{eqII},
such that
\begin{equation}
\label{BV-log}
 \beo \log(\teta) \in L^\infty(0,T;W^{1,d+\epsilon}(\Omega)^*) \qquad \text{for all } \epsilon >0, \eo
\end{equation}
and:
\begin{enumerate}
 \item
 $\xi$ in \eqref{xi-def} is given by
\begin{equation}
\label{xi-specific}
\xi(x,t) = -  \mathcal{I}_{\{\chi=0\}}  (x,t) \left(\gamma(\chi(x,t)) + b'(\chi(x,t)) \frac{\eps(\uu(x,t)) \elm(x)  \eps(\uu(x,t))  }{2}  - \teta(x,t)\right)^+,
\end{equation}
 for almost all $(x,t) \in \Omega \times (0,T)$,
where  $\mathcal{I}_{\{\chi=0\}}$  denotes the characteristic function of
the set  $\{(x,t) \in \Omega \times (0,T)\, : \  \chi(x,t)=0\}$,
\item
$\exists\,\underline{\teta}>0$  such that
\begin{equation}\label{strictpos}
\teta(x,t)\geq  \underline{\teta}>0   \quad \forae\, (x,t) \in \Omega \times
(0,T).
\end{equation}
\end{enumerate}

Furthermore, if   in addition $\condu$ satisfies \textbf{Hypothesis (V)},
there holds
\begin{equation}
\label{furth-reg-teta} \teta\in \BV([0,T];
W^{2,d+\epsilon}(\Omega)^*) \qquad \text{for every } \epsilon>0,
\end{equation}
and the total energy inequality \eqref{total-enid} holds \berdue \underline{for all} \erdue $t \in [0,T]$, for $s=0$, and for almost all $s \in (0,t)$.
 \end{maintheorem}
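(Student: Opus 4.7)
The plan is to construct an entropic solution as the limit of a suitably designed time-discretization scheme, following the roadmap outlined by the authors in Sections 3--5. Fix $\tau = T/N$ and, given $(\teta^{n-1},\uu^{n-1},\mathbf{v}^{n-1},\chi^{n-1})$, I would define $(\teta^n,\uu^n,\chi^n)$ by a partially decoupled semi-implicit scheme: first solve a constrained minimization problem in $\{\chi \in W^{1,p}(\Omega) : \chi \leq \chi^{n-1}\}$ to obtain $\chi^n$ (thereby encoding unidirectionality through the admissible set and the subdifferential of $I_{[0,+\infty)}$); then solve the linear elliptic system for $\uu^n$ coming from a backward Euler discretization of the momentum balance (with $\mathbf{v}^n=(\uu^n-\uu^{n-1})/\tau$), exploiting \eqref{eqn:visc} and the $\mathrm{C}^2$-regularity of $\partial\Omega$ to gain $\boY$-regularity via \eqref{cigamma}; finally solve a nonlinear elliptic equation for $\teta^n$ in enthalpy form, using the primitive $\widehat{\condu}$ and a truncation/maximum principle argument to guarantee $\teta^n \geq \teta_*^n > 0$. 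A discrete version of the Moser/De Giorgi argument would propagate the strict positivity uniformly in $n$ and $\tau$, yielding \eqref{strictpos} in the limit.

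The crucial next step is to extract, at the discrete level, the analogues of the three structural inequalities: \textbf{(i)} test the discrete heat equation by $\varphi/\teta^n$ with $\varphi \geq 0$ and exploit the convexity inequality $\log(\teta^n)-\log(\teta^{n-1}) \leq (\teta^n-\teta^{n-1})/\teta^n$ to obtain a discrete entropy inequality, in which all quadratic right-hand side terms appear with the \emph{negative} factor $-\varphi/\teta^n$; \textbf{(ii)} test the discrete heat equation by $1$, the momentum equation by $\mathbf{v}^n$, and the $\chi$-inclusion by $(\chi^n-\chi^{n-1})/\tau$, sum and telescope to get a discrete total energy inequality; \textbf{(iii)} use the minimality of $\chi^n$ against the competitor $\chi^{n-1}$ and $\lambda$-convexity of $W$ to obtain the discrete energy-dissipation inequality for $\chi$. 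From (ii) and Korn's inequality one extracts $\uu_\tau \in W^{1,\infty}(0,T;\boZ)$, $\chi_\tau \in H^1(0,T;L^2) \cap L^\infty(0,T;W^{1,p})$ and $\teta_\tau \in L^\infty(0,T;L^1)$; testing the discrete heat equation by $-1/\teta^n$ gives the entropy production bound $\log(\teta_\tau) \in L^2(0,T;H^1)$. Crucially, combining $p>d$ and $\chi_\tau \in W^{1,p} \hookrightarrow C^0(\overline\Omega)$ with the elliptic regularity \eqref{reg-pavel-b}--\eqref{cigamma} upgrades $\uu_\tau$ to $L^2(0,T;\boY)$ and hence gives, via \eqref{interp} and a comparison estimate, $\partial_{tt}\uu_\tau \in L^2(0,T;\Ha)$ and strong compactness for $\e(\partial_t \uu_\tau)$. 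Additional Gagliardo--Nirenberg interpolation on $\teta_\tau$ (using \eqref{gn-ineq} and \eqref{poincare-type}) yields $\teta_\tau \in L^2(0,T;H^1)$.

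The limit passage $\tau\to 0$ combines standard weak compactness with Aubin--Lions and the Young-measure compactness result announced as Theorem \ref{th:mie-theil} to extract a limit triple $(\teta,\uu,\chi)$ with the regularity \eqref{reg-teta}--\eqref{reg-chi}. In the discrete entropy inequality, the quadratic dissipation terms appear as $-\int \varphi \teta^{-1}(a(\chi)\e(\dot\uu)\vism\e(\dot\uu)+|\dot\chi|^2)$ with $\varphi\geq 0$, hence by Fatou/weak lower semicontinuity they pass to the limit as an inequality in the right direction, yielding \eqref{entropy-ineq}; convergence of the $\log(\teta)$ and $\condu(\teta)\nabla\log(\teta)$ terms follows from strong $L^2$-convergence of $\teta_\tau$ (Aubin--Lions) and Vitali. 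The discrete total energy inequality passes to \eqref{total-enid} for a.a.\ $s,t$ using lower semicontinuity of $\mathscr{E}$ and the established strong convergences. For the momentum balance, the $\boY$-estimates plus strong convergence of $\chi_\tau$ in $C^0$ yield \eqref{weak-momentum}. The variational inequality \eqref{ineq-chi} and constraint \eqref{constraint-chit} pass to the limit by testing with negative $\psi\in W^{1,p}$; \eqref{energ-ineq-chi} follows from the discrete one by lower semicontinuity and the strong convergence of $\e(\uu_\tau)$ in $C^0$ (which handles $b'(\chi)\e(\uu)\elm\e(\uu)$).

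The explicit formula \eqref{xi-specific} for $\xi$ follows by noting that $\xi \in \partial I_{[0,+\infty)}(\chi)$ forces $\xi=0$ on $\{\chi>0\}$, while on $\{\chi=0\}$ the constraint $\chi_t \leq 0$ combined with \eqref{ineq-chi} (tested by approximations of $\mathcal{I}_{\{\chi=0\}}$) reduces the inclusion to the pointwise algebraic identity stated. Finally, under Hypothesis (V), the bound $\kappa < 5/3$ permits a Boccardo--Gallou\"et style estimate of $\condu(\teta)\nabla\teta$ in a reflexive space, giving $\teta_\tau \in \BV(0,T;W^{2,d+\epsilon}(\Omega)^\ast)$ uniformly, whence \eqref{furth-reg-teta}; this BV-in-time regularity provides pointwise-everywhere values of $\teta$ and allows the total energy inequality to be upgraded to hold for every $t$ and a.a.\ $s$. \textbf{The main obstacle} I anticipate is obtaining strong $L^2$-convergence of $\e(\partial_t \uu_\tau)$, which is indispensable to identify the limit of the viscous dissipation term in the (pointwise) momentum balance and to close the energy inequality; this hinges on the interplay between the elliptic regularity \eqref{cigamma}, the assumption \eqref{eqn:visc}, the embedding $W^{1,p}\subset C^0$, and the acceleration estimate $\ddot\uu_\tau\in L^2(0,T;\Ha)$ extracted from the discrete momentum equation --- a chain which is tight and, in absence of any one link, breaks down (as indeed happens in Section \ref{s:final}, where a different technique becomes necessary).
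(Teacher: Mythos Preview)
Your overall strategy---time discretization, discrete entropy/energy inequalities, a priori estimates, compactness, limit passage---is the paper's. Two points, however, deserve correction.

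First, the paper's scheme (Problem~\ref{prob:rhoneq0}) is \emph{fully implicit and coupled}: the discrete $\chi$-equation \eqref{eq-discr-chi} carries $\wtau{k}$ (not $\wtau{k-1}$) on its right-hand side, and the three unknowns are solved simultaneously by a pseudo-monotone operator/fixed-point argument (Lemma~\ref{lemma:ex-discr}), not sequentially. Remark~\ref{rmk:discrete-features} explains that this implicit coupling is what makes the strict-positivity comparison argument (a discrete ODE subsolution, not Moser--De~Giorgi) go through; Remark~\ref{rmk:still-pos} shows that your semi-decoupled variant yields only $\wtau{k}\geq 0$. More subtly, the exact cancellations \eqref{cancellation} behind the discrete total energy inequality \eqref{total-enid-discr} require that the coupling term $\chi_t\teta$ in the heat equation and the source $\teta$ in the $\chi$-equation refer to the \emph{same} discrete temperature, which a sequential scheme breaks. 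The paper also augments the scheme with artificial $\sqrt{\tau}$-terms in \eqref{eq-discr-w}--\eqref{eq-discr-chi} to absorb the $\lambda$-nonconvexity of $\widehat\gamma$ in the minimum problem \eqref{min-prob-chi}; without them the key inequality \eqref{to-be-quoted-later} fails.

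Second, two limit-passage steps are more delicate than your sketch indicates. The energy-dissipation inequality \eqref{energ-ineq-chi} and the one-sided variational inequality \eqref{ineq-chi} both require \emph{strong} convergence $\pwc\chi{\tau_k}\to\chi$ in $L^p(0,T;W^{1,p}(\Omega))$, which does not follow from weak compactness; the paper recovers it via a ``recovery sequence'' argument imported from \cite{hk1} (see \eqref{strong-w1p-chi}), and the explicit formula \eqref{xi-specific} for $\xi$ is obtained only after first establishing the refined optimality condition \eqref{eq-chi-ineq-better} and the intermediate inequality \eqref{ineq-ssy2} with restricted test functions---not by testing with approximations of $\mathcal{I}_{\{\chi=0\}}$. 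Also, the $\mathrm{BV}$-type bound on $\log(\teta)$ is \emph{not} attainable at the discrete level: only the weaker ``scalar'' variation estimate \eqref{weaker-sixth} can be proved (because on the discrete side one has the entropy \emph{inequality} rather than the rescaled heat \emph{equation}), and this is precisely why the Young-measure compactness Theorem~\ref{th:mie-theil} is indispensable rather than merely convenient. Your identification of the strong $L^2(0,T;H^1)$-convergence of $\partial_t\uu_\tau$ as the crux, and its resolution via the $\boY$-estimate plus Aubin--Lions, is correct.
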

Observe that  \eqref{furth-reg-teta}
yields that there exists $D \subset [0,T]$, at most infinitely countable, such that $\teta \in \mathrm{C}^0 ([0,T]\setminus D; W^{2,d+\epsilon}(\Omega)^*)$.
We will develop the proof
in Section \ref{s:pass-limit},
 by passing to the limit in the time-discretization scheme  carefully devised in Section \ref{s:time-discrete}.
\begin{remark}[Uniqueness and extensions]\label{rmk:uni-irr}
\begin{enumerate}
\item
Uniqueness of solutions for the \emph{irreversible} system,
even in the isothermal case, is still an open problem. This is
mainly due to the doubly nonlinear character of \eqref{eqII} (cf.\
also \cite{CV} for non-uniqueness examples for a general doubly
nonlinear equation).
\item
Theorem \ref{teor3} could be easily extended to the case in which the indicator function $I_{(-\infty,0]}$ in \eqref{eqII}
is replaced by
\begin{equation}
\label{1-homog}
\begin{gathered}
 \widehat{\alpha}: \R \to [0,+\infty] \quad \text{convex, $1$-positively homogeneous, with } \mathrm{dom}(\widehat\alpha) \subset (-\infty,0] \text{ and } 0 \in \alpha(0).
\end{gathered}
\end{equation}
\end{enumerate}
\end{remark}
\subsection{A global existence  result for the  reversible system}
\label{ss:glob-rev}
In the case $\mu=0$, we are able to cope with a weak solvability notion for
system \eqref{eq0}--\eqref{eqII}  stronger than the one from Definition \ref{prob-rev1}. Indeed, it
features a  \emph{pointwise} formulation for the internal parameter equation \eqref{eqII}, while keeping the entropic formulation
for the heat equation \eqref{eq0}.
Under the additional Hypothesis (V),
we will also improve the weak formulation
of the heat equation
(cf.\ \eqref{eq-teta} below). As a byproduct,
we will manage to prove the total energy \emph{identity} \text{for all} $t \in [0,T]$.
\begin{definition}[Entropic solutions to the reversible system]
\label{prob-rev2}
{\sl
Let $\mu=0$. Given initial data $(\teta_0,\uu_0,\vv_0,\chi_0)$
fulfilling \eqref{datoteta}--\eqref{datochi},
we call a triple $(\teta,\uu,\chi)$ an \emph{entropic solution} to the (initial-boundary value problem)
for system \eqref{eq0}--\eqref{eqII}, with the
boundary conditions \eqref{intro-b.c.},  if
it has the regularity
\eqref{reg-teta}--\eqref{reg-chi}, it complies
 with
  the initial conditions \eqref{iniu}--\eqref{inichi},
    and
with
\begin{itemize}
\item[-]
the \emph{entropy}  inequality
\eqref{entropy-ineq};
\item[-] the \emph{total energy inequality}
\eqref{total-enid}
 {for  almost all  $t \in (0,T]$, for $s=0$, and for almost all $s \in (0,t)$};
\item[-] the momentum equation \eqref{weak-momentum};
\item[-] the internal parameter equation
\begin{equation}
\label{weak-phase}
\chi_t + A_p\chi + \xi + \gamma(\chi)= -b'(\chi) \frac{\eps(\uu)\elm \eps(\uu)}2 +\teta \qquad \aein\, \Omega \times (0,T),
\end{equation}
with
\begin{equation}
\label{xi-def-reversible}
 \xi \in L^2(0,T;L^2(\Omega))
 \text{ s.t. } \quad
 \xi(x,t) \in
\beta(\chi(x,t))  \ \foraa\, (x,t)\in \Omega \times (0,T).
 \end{equation}
 \end{itemize}
}
\end{definition}

Our second
 main result states the existence of  an entropic solution
$(\teta,\uu,\chi)$ (in the sense of
Definition \ref{prob-rev2}) to the PDE system \eqref{eq0}--\eqref{eqII}. Furthermore,
we show that, under the additional Hypothesis (V), the formulation of the heat equation \eqref{eq0} improves to a standard
variational formulation (cf.\ \eqref{eq-teta} below), albeit with suitably smooth test functions,  and the total energy inequality \eqref{total-enid} holds as an equality.
 We shall refer to the solutions thus obtained as \emph{weak}.
\begin{maintheorem}[Existence of entropic and weak solutions, $\mu=0$]
\label{teor1} Let  $\mu=0$. Assume  \beo  \textbf{Hypotheses (0)--(III)}, \eo  and
condi\-tions \eqref{bulk-force}--\eqref{datochi} on the data
$\mathbf{f}, \, g, \, h,
\, \teta_0,\, \uu_0, \, \vv_0, \, \chi_0$. Then,
 there exists an entropic solution (in the sense of Definition \ref{prob-rev2})
 $(\teta,\uu,\chi)$ to the initial-boundary value problem for system
\eqref{eq0}--\eqref{eqII}, such that the strict positivity property
\eqref{strictpos} holds for $\teta$,
 and such that $\chi$ has the enhanced regularity
\begin{equation}
\label{enhanced-chi}
\chi \in L^2(0,T; W^{1+\sigma,p}(\Omega))
\qquad \text{for all } 0 <\sigma <\frac1p.
\end{equation}

Moreover, if   $\condu$ also complies with  \textbf{Hypothesis (V)},
then $\teta$ has the enhanced regularity \beo
\begin{equation}
\label{better-4-w}
\teta\in W^{1,1}(0,T;
W^{2,d+\epsilon}(\Omega)^*) \qquad \text{for every } \epsilon>0
\end{equation}
 \eo (cf.\ \eqref{furth-reg-teta}), and
 the heat equation   \eqref{eq0}  is fulfilled in the following  \beo\emph{improved} form
 for almost all $t\in (0,T)$
\begin{align}
&
\begin{aligned}
\label{eq-teta}   &
\pairing{}{W^{2,d+\epsilon}(\Omega)}{\partial_t\teta}{\varphi}
  + \io \chi_t
\teta\varphi \dd x  + \rho \io
\hbox{\rm div}(\ub_t) \teta\varphi \dd x
+ \io \condu(\teta) \nabla \teta\nabla\varphi \dd
x
\\
& = \io \left(g+\frac{\eps(\uu_t) \vism \eps(\uu_t)}2
+|\chi_t|^2\right) \varphi  \dd x  + \int_{\partial\Omega} h \varphi   \dd S
\quad \text{for all }
\varphi \in   W^{2,d+\epsilon}(\Omega)) \text{ for some $\epsilon>0$}. 
\end{aligned}
\end{align}
\eo
In this case, the triple $(\teta,\uu,\chi)$ complies with the  \emph{total energy equality}
\begin{equation}
\label{total-enid-asequality}
\mathscr{E}(\teta(t),\uu(t), \uu_t(t), \chi(t))
=
 \mathscr{E}(\teta(s),\uu(s), \uu_t (s), \chi(s))  + \int_s^t
\int_\Omega g \dd x \dd r
+ \int_0^t
\int_{\partial\Omega} h \dd S \dd r
  + \int_s^t \int_\Omega \mathbf{f} \cdot
\mathbf{u}_t \dd x \dd r \,,
\end{equation}
\underline{for all} $0 \leq s\leq t \leq T$.
\end{maintheorem}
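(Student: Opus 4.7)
The plan is to follow the same time-discretization strategy developed for the irreversible case in Theorem \ref{teor3}, taking advantage of two simplifications proper to $\mu=0$: the absence of the multivalued term $\partial I_{(-\infty,0]}(\chi_t)$, and the fact that a general $\widehat\beta$ (not just $I_{[0,+\infty)}$) is allowed. First I would set up a semi-implicit/implicit time discretization with step $\tau$ of the whole system: at each step solve, sequentially or coupled, for $(\teta_\tau^k,\uu_\tau^k,\chi_\tau^k)$ using a variational formulation whose Euler--Lagrange equations mimic \eqref{eq0}--\eqref{eqII}, regularizing $\widehat\beta$ by its Moreau--Yosida approximation $\widehat\beta_\nu$ in a first step if needed and letting $\nu\downarrow 0$ after $\tau\downarrow 0$ (or jointly). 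Solvability of the discrete scheme would be obtained by a Schauder/fixed-point argument, exactly as for Theorem \ref{teor3}, exploiting Hypothesis (0) (smoothness of $\partial\Omega$ and $\vism=\omega\elm$) to get the elliptic $H^2$-regularity of the discrete displacement.

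On the discrete level I would reproduce the \emph{formal} a priori estimates sketched in Section \ref{s:aprio}: the discrete entropy inequality, the discrete total energy inequality, and the discrete energy-dissipation inequality for $\chi$. A decisive additional estimate, not available when $\mu=1$, is obtained by testing the discrete $\chi$-equation by the increment of $A_p\chi + \beta_\nu(\chi)$: since the troublesome term $\partial I_{(-\infty,0]}(\chi_t)$ is now absent, the chain-rule/monotonicity argument goes through and yields uniform bounds on $A_p\chi_\tau$ and on $\xi_\tau:=\beta_\nu(\chi_\tau)$ in $L^2(0,T;L^2(\Omega))$, together with the Nikolskii-type bound needed for the fractional-in-space regularity \eqref{enhanced-chi}. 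This is exactly what will produce a \emph{pointwise} formulation \eqref{weak-phase} in the limit, together with \eqref{xi-def-reversible}.

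The limit passage $\tau\downarrow 0$ then proceeds as in Theorem \ref{teor3}. Using Aubin--Lions and the compactness tool Theorem \ref{th:mie-theil}, one extracts a subsequence converging in the topologies dictated by \eqref{reg-teta}--\eqref{reg-chi}, with strong convergence of $\uu_t$ and $\chi_t$ sufficient to identify $a(\chi)\vism\eps(\uu_t)$, $b(\chi)\elm\eps(\uu)$, $\gamma(\chi)$ and the quadratic damage source $b'(\chi)\eps(\uu)\elm\eps(\uu)/2$. Strict positivity \eqref{strictpos} of $\teta$ is inherited from the discrete level via a discrete maximum-principle-type argument in the heat step, exploiting $g,h\geq 0$ and $\teta_0\geq\teta_*>0$. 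Passage to the limit in the entropy inequality \eqref{entropy-ineq} is the standard delicate step: the key observation (already exploited for Theorem \ref{teor3}) is that the quadratic right-hand side is multiplied by the \emph{non-positive} factor $-\varphi/\teta$, so a Fatou/upper-semicontinuity argument suffices even if only weak convergence of $\eps(\uu_{t,\tau})$ and $\chi_{t,\tau}$ is available. The total energy inequality \eqref{total-enid} is obtained by lower semicontinuity from its discrete counterpart, and \eqref{weak-phase}--\eqref{xi-def-reversible} follow from the improved estimates combined with Minty's trick to identify $\xi\in\beta(\chi)$.

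The main obstacle, and the point where Hypothesis (V) enters, is the upgrade to the variational heat equation \eqref{eq-teta} and to the total energy equality \eqref{total-enid-asequality}. Under \eqref{range-k-admissible}, a Boccardo--Gallou\"et type argument in the discrete heat equation, combined with the Gagliardo--Nirenberg inequality \eqref{gn-ineq}, upgrades the $L^1$-in-space right-hand side of \eqref{eq0} into a $W^{2,d+\epsilon}(\Omega)^*$ bound on $\partial_t\teta$ uniformly in $\tau$; this yields \eqref{better-4-w}. With this additional regularity, $\teta$ itself becomes an admissible test function in the limit entropy inequality, which allows one to go from the entropic formulation to the pointwise variational identity \eqref{eq-teta}, and, by then testing \eqref{eq-teta} by $1$, \eqref{weak-momentum} by $\uu_t$, and \eqref{weak-phase} by $\chi_t$ and summing, to obtain the total energy \emph{identity} on every sub-interval $[s,t]$. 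The hardest technical point will be controlling the quadratic terms $a(\chi)\eps(\uu_t)\vism\eps(\uu_t)$ and $|\chi_t|^2$ well enough (via the elliptic regularity \eqref{cigamma} for $\uu$ and the $W^{1+\sigma,p}$-regularity of $\chi$) so that they are in a sufficiently good space to allow the test by $\varphi\in W^{2,d+\epsilon}(\Omega)$ in \eqref{eq-teta}; the constraint $\kappa<5/3$ in $d=3$ is dictated precisely by this balance.
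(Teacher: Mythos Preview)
Your overall strategy---time discretization, the Eighth-estimate comparison to bound $A_p\chi_\tau$ and $\xi_\tau$ in $L^2(Q)$, passage to the limit via Lemma~\ref{l:compactness}, and recovering \eqref{weak-phase} pointwise---is correct and matches the paper. The identification $\xi\in\beta(\chi)$ does not even need Minty: you have \emph{strong} convergence of $\pwc\chi{\tau_k}$ in $L^2(0,T;W^{1,p}(\Omega))$ from \eqref{aprio8-discr}, so strong--weak closedness of the graph of $\beta$ suffices.

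There is, however, a genuine gap in your route to the improved heat equation \eqref{eq-teta}. You propose to use the $W^{1,1}(0,T;W^{2,d+\epsilon}(\Omega)^*)$-regularity of $\teta$ to insert $\teta$ itself as a test function in the entropy inequality \eqref{entropy-ineq}. This does not work: admissible test functions for \eqref{entropy-ineq} must lie in $\mathrm{C}^0([0,T];W^{1,d+\epsilon}(\Omega))\cap H^1(0,T;L^{6/5}(\Omega))$, and the regularity \eqref{better-4-w} is nowhere near this (cf.\ Remark~\ref{rmk:consistent}). The paper instead passes to the limit \emph{directly} in the discrete heat equation \eqref{eq-w-interp}, tested by a fixed smooth $\varphi$. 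For this one must pass to the limit in the quadratic terms on the right-hand side, and in particular in $\int_\Omega|\partial_t\pwl\chi{\tau_k}|^2\varphi\,\dd x$. Your claim that strong $L^2$-convergence of $\chi_t$ follows from Aubin--Lions is unjustified: there is no estimate on $\chi_{tt}$ or any other time-compactness information on $\chi_t$.

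The paper closes this gap by a $\limsup$ argument that is only available when $\mu=0$. Once the \emph{pointwise} limit equation \eqref{weak-phase} is in hand, test it by $\chi_t$ and integrate; on the discrete side, test \eqref{eq-chi-interp} by $\partial_t\pwl\chi{\tau_k}$ and pass to the limit using the already established convergences for the remaining terms. Comparing the two yields
\[
\limsup_{k\to\infty}\int_0^T\!\!\int_\Omega|\partial_t\pwl\chi{\tau_k}|^2\,\dd x\,\dd t\;\leq\;\int_0^T\!\!\int_\Omega|\chi_t|^2\,\dd x\,\dd t,
\]
whence $\partial_t\pwl\chi{\tau_k}\to\chi_t$ strongly in $L^2(Q)$. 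This, together with the strong convergence \eqref{cnv-4} for $\eps(\partial_t\pwl\uu{\tau_k})$, lets you pass to the limit in the right-hand side of \eqref{eq-w-interp} and obtain \eqref{eq-teta}; the $W^{1,1}$-regularity \eqref{better-4-w} then follows \emph{a posteriori} from \eqref{eq-teta}, not the other way around. A minor correction: the restriction $\kappa<5/3$ in $d=3$ is needed to place $\condu(\teta)\nabla\teta$ in $L^{1+\bar\delta}(Q)$ (so that the flux term makes sense against $\nabla\varphi\in L^\infty$), not to control the quadratic dissipation terms, which are merely in $L^1(Q)$ but are tested against $\varphi\in L^\infty(\Omega)$.
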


The  \emph{proof} will be given in Section \ref{s:pass-limit},   passing to the
limit in the time-discretization scheme set up in Sec.\
\ref{s:time-discrete}.
We mention in advance that the argument for \eqref{eq-teta} and for the total energy identity \eqref{total-enid-asequality}
\emph{for all} $t \in [0,T]$ relies on obtaining, for the sequence $(\uu_k,\chi_k)$ of
approximate solutions, the \emph{strong} convergences
\begin{equation}
\label{strong-convergences}
\uu_k \to \uu  \quad \text{ in $H^1 (0,T; \boZ)$,} \qquad
\chi_k \to \chi \quad \text{ in $H^1 (0,T; L^2(\Omega))$.}
\end{equation}
 This allows us to pass to the limit on the right-hand side of the approximate version of
\eqref{eq-teta}. In turn,   the proof of \eqref{strong-convergences}  is based on a $\limsup$-argument,
 for which it is
essential to have preliminarily obtained the \emph{pointwise} formulation \eqref{weak-phase} of the equation for $\chi$.
This is the reason why we have not been able to obtain the improved formulation \eqref{eq-teta}
in the irreversible case $\mu=1$.
\begin{remark}[Uniqueness in the reversible case]\label{rmk:uni-rev}
 As in the irreversible case, a uniqueness result for the full system seems to be out of reach.
Instead,   for the \emph{isothermal} case in \cite[Thm.
3]{rocca-rossi-deg} we have proved
uniqueness  and continuous
dependence of the solutions on the data. This result has been obtained in the case \berdue that \erdue the $p$-Laplacian operator $-\mathrm{div} (|\nabla \chi|^{p-2}\nabla \chi)$ is replaced by an elliptic operator of the type described in Remark \ref{rmk:general-p-Lapl},
fulfilling an additional non-degeneracy condition, cf.\ Hypothesis (VII) in \cite{rocca-rossi-deg}: for instance,
we may consider $-\mathrm{div} ((1+|\nabla \chi|^{2})^{(p-2)/2}\berdue\nabla\chi\erdue) $.
\end{remark}
\begin{remark}[Alternative boundary conditions for the displacement]
\label{rmk:discussion-bc}
\upshape
\beruno Our existence results Theorems \ref{teor3}
and \ref{teor1} carry over to the case of a time-dependent Dirichlet loading $g$
(in place of the homogeneous Dirichlet condition in \eqref{intro-b.c.})
 for the displacement $\uu$, under suitable conditions on $g$. The latter have
 to ensure the validity of the elliptic regularity estimate on $\uu$
 (cf.\ the forthcoming \emph{Fifth estimate} in Sec.\ \ref{s:aprio}), which leads to the regularity
 \eqref{reg-u} and  plays a crucial role for our analysis.

 Moreover, the proofs of Thms.\   \ref{teor3}  and  \ref{teor1}    could be carried out with
suitable modifications    in the case of Neumann boundary conditions   for $\uu$
on the whole of $\partial\Omega$,  as well. We would also be able to handle the case of
Neumann conditions on a portion $\Gamma_0$ of $\partial \Omega$  and
Dirichlet conditions on $\Gamma_1:=\partial \Omega \setminus
\Gamma_0$ ($|\Gamma_0|, \, |\Gamma_1|>0$), provided that the
closures of the sets $\Gamma_0$ and $\Gamma_1$ do not intersect.
 Indeed, without the latter  geometric condition,
  the elliptic regularity results
  at the core of the Fifth estimate and thus of  \eqref{reg-u}
   may fail to hold, see~\cite[Chap.~VI,~Sec.~6.3]{ciarlet}.

   Nonetheless,  in Sec.\ \ref{s:final}, where we will address the analysis of system
   \eqref{eq0}--\eqref{eqII}, with unidirectional evolution ($\mu=1$), in the case the $p$-Laplacian regularization in  \eqref{eqII} is replaced by the
   Laplace operator,
   more general boundary conditions on $\uu$ could be considered. Indeed, therein we will not be in the position
  to perform any
elliptic regularity estimate on $\uu$ (and therefore we will conclude the existence of a \emph{weaker} notion of solution). Hence,
mixed Dirichlet-Neumann conditions on $\uu$  could be taken into account in that setting  (cf. also Remark~\ref{omegaLip}).
\eruno
\end{remark}
\section{\bf (Formal) A priori estimates}
\label{s:aprio}
\noindent
In this section, we perform a series of  \emph{formal} estimates on
system \eqref{eq0}--\eqref{eqII}. All of these estimates
will  be rigorously justified on the
time-discrete approximation scheme proposed in
Section~\ref{s:time-discrete},
\beo with the exception of the \emph{Sixth estimate}, to be rendered in a \emph{weaker} version, cf.\ the comments
prior to the statement of Proposition \ref{prop:discrete-aprio}, and Remark \ref{rmk:after6}. \eo

 Yet, we believe that, in order to enhance the readability of the paper,
 it is worthwhile to
develop all the significant calculations on the (easier) time-continuous level. This is  especially useful
 for the Second and the Third a priori estimates,
which have a non-standard character and are in fact tailored to handle the quadratic terms on the right-hand side of~\eqref{eq0}.

More in detail, we start by showing the strict positivity of the temperature $\teta$,  via a comparison argument in the same lines
as the one for proving positivity in \cite[Subsection~4.2.1]{fpr09}.  All the ensuing estimates rely on this property, starting from the basic energy estimate (i.e.\ the one corresponding to the total energy inequality \eqref{total-enid}).  After this, we test  \eqref{eq0} by $\teta^{\alpha-1}$, with $\alpha \in (0,1)$. This
enables us 
 somehow to confine the troublesome quadratic terms to the left-hand side.
Carefully using the Gagliardo-Nirenberg inequality, we infer a bound for $\teta^\alpha$ in $ L^2(0,T;H^1(\Omega))$.
Ultimately, exploiting the fact that the heat flux $\condu$
controls $\teta^\kappa$ (cf.\  \eqref{hyp-K}) we conclude an estimate for $\teta$ in $ L^2(0,T;H^1(\Omega))$.
This \beruno being \eruno done,  we are in the position to perform all the remaining estimates, i.e.\ subtracting the temperature equation tested by $1$ from the total energy inequality \eqref{total-enid};
performing an elliptic regularity estimate on the momentum equation \eqref{eqII}, and comparison estimates in \eqref{eq0} and \eqref{eqII}.

We mention in advance that, with the exception of the last one, all
of the ensuing estimates hold both in the reversible ($\mu=0$), and
in the irreversible ($\mu=1$) cases. \berdue We warn the reader
that   in what follows we will use the same symbol  $C$  for several
different constants, even varying  from line to line and depending
only on the data of the problem, on $\Omega$ and on $T$. \erdue
\paragraph{\bf Positivity of $\teta$ [$\mu\in \{0,1\}$].}
\beruno Moving \eruno all the quadratic terms in \eqref{eq0} to the  right-hand   side, we obtain
\[
\begin{aligned}
\teta_t-\dive(\condu (\teta)\nabla\teta)
 & = g + a(\chi)\eps(\uu_t)\vism \eps(\uu_t) +|\chi_t|^2
 -\chi_t\teta - \rho \teta \mathrm{div}(\uu_t)
\\ & \geq
g+ c |\eps(\uu_t)|^2 +  \berunodue \frac12 |\chi_t|^2 \erunodue -C
\teta^2
\geq -C\teta^2 \quad \aein \, \Omega \times (0,T),
\end{aligned}
\]
where we have written \eqref{eq0} in a formal way, disregarding the (positive) boundary datum $h$.
Indeed, for the first inequality we have used
that $\vism$ is positive definite, that $a$ is strictly positive, and the fact that
\begin{equation}
\label{eps-estim}
| \dive(\uu_t)  | \leq c(d)
|\tensoret|  \quad \text{a.e.\ in $\Omega \times (0,T)$}
\end{equation}
 with $c(d)$ a positive
constant only depending on the space dimension $d$. The second estimate also relies on the fact that $g \geq 0$ a.e.\ in $\Omega \times (0,T)$.
Therefore we conclude that
  $v$  solving  the Cauchy problem
\[
v_t=-\frac12 v^2, \quad v(0)=\teta_*>0
\]
is a subsolution of \eqref{eq0}. Hence, a comparison argument yields
\begin{equation}\label{teta-pos}
\teta(\cdot,t)\geq v(t)>\teta_*>0\quad \hbox{for all }t\in [0,T]\,.
\end{equation}
\paragraph{\bf First estimate [$\mu\in \{0,1\}$].}
Test \eqref{eq0} by 1, \eqref{eqI} by $\uu_t$, \eqref{eqII} by $\chi_t$ and integrate over $(0,t)$, $t\in (0,T]$.
Adding the resulting equations and taking into account cancellations, we obtain
\begin{equation}\label{calc1}
\begin{aligned}
 & \int_\Omega \teta(t) \dd x + \frac12 \int_\Omega |\uu_t (t)|^2 \dd x
+ \frac12 \bilh{b(\chi(t))}{\uu(t)}{\uu(t)} +\frac1p
\int_\Omega |\nabla \chi(t)|^p \dd x + \int_\Omega W(\chi(t)) \dd x
\\&
 =
\int_\Omega \teta_0 \dd x + \frac12 \int_\Omega |\vv_0|^2 \dd x +
\frac12 \bilh{b(\chi_0)}{\uu_0}{\uu_0}
+\frac1p \int_\Omega |\nabla \chi_0|^p \dd x + \int_\Omega W(\chi_0) \dd x \\
&\quad + \int_0^t \int_\Omega g \dd x \dd s + \int_0^t \int_{\partial\Omega} h \dd S \dd s  +  \int_0^t \int_\Omega
\mathbf{f} \cdot \mathbf{u}_t \dd x \dd s \,,
\end{aligned}
\end{equation}
viz.  the total energy  equality \eqref{total-enid-asequality}. For \eqref{calc1},
 we have also used the integration-by-parts formula
\begin{align}\label{int-parts}
&\int_0^t \bilh{b(\chi(t))}{\uu(s)}{\uu_t(s)}  \dd s+ \frac12 \int_0^t \int_\Omega  b'(\chi)\chi_t \tensore \elm\tensore \dd x \dd s =  \frac12 \bilh{b(\chi(t))}{\uu(t)}{\uu(t)} -
\frac12\bilh{b(\chi_0)}{\uu_0}{\uu_0}
\end{align}
as well as the
fact that
 $\int_0^t \int_\Omega  \partial
I_{(-\infty,0]}(\chi_t) \chi_t \dd x \dd s =  \int_0^t \int_\Omega I_{(-\infty,0]}(\chi_t) \dd x \dd s=  0 $
(where we have formally written $ \partial
I_{(-\infty,0]}(\chi_t) $ as a single-valued operator). Using  \eqref{bulk-force}--\eqref{datochi} for the  data
$f$, $g$, $h$ and the initial data
$(\teta_0,\uu_0,\chi_0)$,
the positivity of $\teta$
(cf. \eqref{teta-pos}), and
the 
\beo fact that  $W$ is bounded from below (cf.\ \eqref{data-W}  from
Hypothesis (III)), \eo also in view of the Poincar\'e inequality
  we conclude the following estimate
\begin{equation}\label{est1}
\| \teta \|_{L^\infty (0,T;L^1(\Omega))} + \| \uu\|_{W^{1,\infty}
(0,T;L^2(\Omega;\R^d))} + \|b(\chi)^{1/2} \tensore \|_{L^\infty
(0,T;L^2(\Omega;\R^{d\times d}))} + \| \nabla\chi \|_{L^p
(0,T;L^{p}(\Omega))}\leq C\,,
\end{equation}
as well as
\begin{equation}\label{est1-ter}
\|W(\chi)\|_{L^\infty(0,T; L^1(\Omega))}\leq C\,.
\end{equation}

\paragraph{\bf Second estimate [$\mu\in \{0,1\}$].}
Let $F(\teta) = \teta^\alpha/\alpha$, with $\alpha \in (0,1)$.
 We test \eqref{eq0} by $F'(\teta):= \teta^{\alpha-1}$ ,
and integrate on $(0,t)$ with $t \in (0,T]$.
 We thus have
\[
\begin{aligned}
  &
   \int_\Omega F(\teta_0)\dd x+
\int_0^t \int_\Omega  g F'(\teta) \dd x \dd s
+ \int_0^t \int_{\partial \Omega} h F'(\teta) \dd S \dd s
+ \int_0^t \int_\Omega
a(\chi) \tensoret \vism \tensoret F'(\teta) \dd x \dd s\\
&+
\int_0^t \int_\Omega |\chi_t|^2 F'(\teta) \dd x \dd s
=
   \int_\Omega F(\teta(t))\dd x + \int_0^t \int_\Omega \chi_t \teta F'(\teta) \dd x \dd s +\rho \int_0^t \int_\Omega \teta \dive(\uu_t) F'(\teta)  \dd x \dd  s \\
&+
  \int_0^t \int_\Omega \condu(\teta) \nabla \teta \nabla (F'(\teta)) \dd x \dd s  \end{aligned}
\]
whence (cf.\ \eqref{korn}, \berdue \eqref{ellipticity}, \erdue and the positivity \eqref{heat-source} and \eqref{dato-h} of $g$ and $h$)
\[
\begin{aligned}
 & \frac{4(1-\alpha)}{\alpha^2} \int_0^t \int_\Omega
\condu(\teta) |\nabla (\teta^{\alpha/2})|^2 \dd x \dd s + c_2 \berdue \beta_0\erdue\int_0^t
\int_\Omega|\tensoret|^2
F'(\teta) \dd x \dd s+ \int_0^t \int_\Omega |\chi_t|^2 F'(\teta) \dd
x \dd s \\ &  \leq   \int_\Omega |F(\teta_0)|\dd x +I_1
+I_2+I_3,
\end{aligned}
\]
where we have used \eqref{korn} \berdue (with $\beta_0$ from
\eqref{ellipticity}), \erdue and \eqref{data-a}.
 We estimate
\[
\begin{aligned}
I_1= \int_\Omega |F(\teta(t))|\dd x \leq \frac1{\alpha}\int_\Omega \max\{
\teta(t), 1\}^\alpha \dd x \leq  \frac1{\alpha}\int_\Omega \max\{  \teta(t), 1\}
\dd x \leq C
\end{aligned}
\]
since $\alpha <1$ and taking into account the previously obtained
\eqref{est1}. Analogously we can estimate $\int_\Omega
|F(\teta_0)|\dd x$; moreover,
\[
I_2 = \int_0^t \int_\Omega |\chi_t \teta F'(\teta)| \dd x \dd s
\leq \frac14 \int_0^t \int_\Omega |\chi_t|^2 F'(\teta) \dd x \dd s +
 \int_0^t \int_\Omega F'(\teta)\teta^2 \dd x \dd s.
\]
\berdue Using \eqref{data-a},  inequality \eqref{eps-estim}, and
Young's inequality, we have that
\[\begin{aligned}
I_3 =|\rho|  \int_0^t \int_\Omega | \teta \dive(\uu_t) F'(\teta)|  \dd
x \dd  s
   \leq   \frac {\berdue \beta_0\erdue c_2} 4 \int_0^t \int_\Omega |\tensoret|^2 F'(\teta) \dd x \dd s +
C  \int_0^t \int_\Omega F'(\teta)\teta^2 \dd x \dd s\,.
\end{aligned}
\]

All in all, we conclude
\begin{equation}
\label{calc2.1}\begin{aligned} &
 \frac{4(1-\alpha)}{\alpha^2} \int_0^t  \int_\Omega  \condu(\teta)  |\nabla (\teta^{\alpha/2})|^2 \dd x \dd s
+ \frac{\berdue 3\beta_0\erdue c_2}{4}\int_0^t \int_\Omega
|\tensoret|^2 F'(\teta) \dd x \dd s+ \berdue\frac34\erdue \int_0^t \int_\Omega
|\chi_t|^2 F'(\teta) \dd x \dd s\\ &  \leq C + C  \int_0^t
\int_\Omega \teta^{\alpha+1} \dd x \dd s.
\end{aligned}
\end{equation}
Now, we  fix $q \geq 4$ and introduce the auxiliary quantity $\eta:=
\max\{  \teta, 1 \}$. Observe that $\eta $ is still in
$H^1(\Omega)$, and that, for $q$ sufficiently big  (see below) we have
\begin{equation}
\label{stima-aux1} \frac\alpha 2 \geq \frac{\alpha +1}{q} \text{
whence } \eta^{(\alpha+1)/q} \leq   \eta^{\alpha/2} \doteq w.
\end{equation}
\erdue
Therefore, taking into account that
\[
  \int_0^t \int_\Omega \condu(\teta) |\nabla (\teta^{\alpha/2})|^2 \dd x \dd s  \geq c_1 \iint_{\{  \teta \geq 1\}}
  |\nabla (\teta^{\alpha/2})|^2 \dd x \dd s = c_1 \int_0^t \int_\Omega |\nabla w |^2 \dd x \dd s,
\]
thanks to \eqref{hyp-K},
we infer from \eqref{calc2.1} and \eqref{stima-aux1}   that
\begin{equation}\label{calc2.2}\begin{aligned}
 &  \int_0^t \int_\Omega |\nabla w|^2 \dd x \dd s
 \leq C + C  \int_0^t \| w \|_{L^q(\Omega)}^q\dd s. \end{aligned}
\end{equation}
We now apply the Gagliardo-Nirenberg inequality  for $d=3$ (for $d=2$
even better estimates hold true), yielding
\begin{equation}\label{gagliardo}
\| w \|_{L^q(\Omega)} \leq c_1  \| \nabla w\|_{L^2(\Omega;\R^d)}^\theta
\| w \|_{L^r(\Omega)}^{1-\theta} + c_2   \| w \|_{L^r(\Omega)}
\end{equation}
with $ 1 \leq  r  \leq  q $ and $\theta $ satisfying $1/q= \theta/6
+ (1-\theta)/r$. Hence $\theta= 6 (q-r)/q (6-r)$.
 Observe that  $\theta \in (0,1)$ if $q<6$ and \berdue that \erdue this restriction on $q$
 implies that, for \eqref{stima-aux1} we need to have $\alpha \in  (1/2,
 1)$.
 Plugging the Gagliardo-Nirenberg estimate  into \eqref{calc2.2}, using Young's inequality
  \berdue with exponents $2/\theta q$ and $2/(2-\theta q)$
and with suitable weights in such a way as to absorb the term $
\| \nabla w\|_{L^2(\Omega;\R^d)}^2$ into the  left-hand side of
\eqref{calc2.2},
   we
 ultimately conclude\erdue
\begin{equation}
\label{calc2.3} \frac c2  \int_0^t \int_\Omega |\nabla w|^2 \dd x
\dd s \leq   C + C  \int_0^t \| w  \|_{L^r(\Omega)}^{2q
(1-\theta)/(2-q\theta)}\dd s +  C'  \int_0^t    \| w
\|_{L^r(\Omega)}^q  \dd s \,.
\end{equation}
Now, choosing $r \leq 2/\alpha$, we have that
\[
  \| w \|_{L^r(\Omega)} = \left(\int_\Omega  \eta^{r\alpha/2}  \dd x\right)^{1/r} \leq \left(\int_\Omega\eta \dd x\right)^{1/r} \leq  C  \| \teta\|_{L^\infty (0,T;L^1(\Omega))} + |\Omega|\leq C\,,
\]
 where the latter inequality is  due to estimate \eqref{est1}.  Combining
the above estimate with \eqref{calc2.3} we infer a bound for
$w=\eta^{\alpha/2}$ in $L^2 (0,T;H^1(\Omega)) \cap L^\infty
(0,T;L^r(\Omega))$. Ultimately, also in view of \eqref{calc2.2}, we
conclude  that
\begin{equation}
\label{est2} \| \teta^{\alpha/2} \|_{L^2 (0,T;H^1(\Omega)) \cap
L^\infty (0,T;L^r(\Omega))} \leq C.
\end{equation}

\paragraph{\bf Third estimate  [$\mu\in \{0,1\}$].}
It follows from \eqref{calc2.1} and \eqref{hyp-K}  that
\begin{equation}
\label{additional-info}
\begin{aligned}
C \geq   \int_0^t \int_\Omega \condu(\teta) |\nabla (\teta^{\alpha/2})|^2 \dd x \dd s
 & \geq
 c_1 \int_0^t \int_\Omega \teta^\kappa  |\nabla (\teta^{\alpha/2})|^2 \dd x \dd s
 \\
 & =
 \int_0^t \int_\Omega |\teta^{\kappa+\alpha - 2}| |\nabla  \teta|^2   \dd x \dd s
 \\
  & = \int_0^t \int_\Omega  |\nabla   (\teta^{(\kappa+\alpha)/2})|^2   \dd x \dd s
 \end{aligned}
  \end{equation}
  with $\alpha \in [1/2, 1)$ arbitrary.

 From \eqref{additional-info}
and the strict positivity of $\teta$ \eqref{teta-pos}
it follows that
\[
\int_0^t \int_\Omega |\nabla \teta|^2 \dd x \dd s \leq C,
\]
provided that $\kappa +\alpha-2 \geq 0$. Observe that, since $\kappa>1$ we can choose
$\alpha \in [1/2, 1)$  such that this inequality holds.
  Hence,  taking into account  estimate \eqref{est1} and
 applying Poincar\'e inequality,
  we deduce
 \begin{equation}
\label{crucial-est3.2} \| \teta   \|_{L^{2}  (0,T; H^1(\Omega))}
\leq C .
\end{equation}

\berdue We now interpolate between estimate \eqref{crucial-est3.2} and
estimate \eqref{est1} for $\|\teta\|_{L^\infty (0,T; L^1(\Omega))}$,
using the Gagliardo-Nirenberg inequality \eqref{gn-ineq} that gives
$\|\teta\|_{L^h(\Omega)} \leq \|\teta\|_{H^1(\Omega)}^{\theta}
\|\teta\|_{L^1(\Omega)}^{1-\theta} $ with $\theta \in (0,1)$ and
$h\in [1,\infty]$ related by $\tfrac1h = \theta(\tfrac12{-}\tfrac1d)
+1-\theta$. Hence, we get \erdue
\begin{equation}\label{estetainterp}
\|\teta\|_{L^h(\Omega\times (0,T))}\leq C\quad\hbox{with }h=8/3
\quad \hbox{if } d=3, \quad h=3 \quad \hbox{if } d=2\,.
\end{equation}
For later use, we also point out that \berdue
\begin{equation}
\label{add-ref} \| \nabla \teta^{(\kappa -\alpha)/2}\|_{L^2 (0,T;
L^2(\Omega))} \leq C. \end{equation} Indeed, it suffices to observe
that
\[
\int_\Omega |\nabla \teta^{(\kappa -\alpha)/2}|^2 \dd x =
\int_\Omega \teta^{\kappa-\alpha-2} |\nabla \teta|^2 \dd x \leq
\frac1{\teta_*^{2\alpha}}\int_{\Omega} \teta^{\kappa+\alpha-2}
|\nabla \teta|^2 \dd x \leq C,
\]
where the first inequality derives from the positivity property
\eqref{teta-pos}, and the last one from
 estimate
\eqref{additional-info}. Combining \eqref{additional-info} and
\eqref{add-ref}
  with estimate \eqref{est1}, \erdue
and using a nonlinear version of the Poincar\'e inequality (cf.\
e.g.\ \eqref{poincare-type}), we infer
\begin{equation}
\label{necessary-added}
\| \teta^{(\kappa -\alpha)/2} \|_{L^2 (0,T; H^1(\Omega))}, \, \| \teta^{(\kappa +\alpha)/2} \|_{L^2 (0,T; H^1(\Omega))}  \leq C.
\end{equation}

\paragraph{\bf  Fourth  estimate [$\mu\in \{0,1\}$].} We test \eqref{eq0} by $1$, integrate over $(0,t)$, and subtract the resulting identity from the total energy balance
\eqref{calc1}. We thus obtain
\begin{align}\no
& \frac 12\io |\uu_t(t)|^2\dd x
+\int_0^t \bilj{a(\chi)}{\uu_t}{\uu_t}\dd s+\frac12 \bilh{b(\chi(t))}{\uu(t)}{\uu(t)}
+\itt\io |\chi_t|^2\dd x\dd s+\io \frac1p |\nabla\chi(t)|^p + W(\chi(t))\dd x
\no
\\
& =
\frac12 \io|\uu_0|^2\dd x
+ \frac12\bilh{b(\chi_0)}{\uu_0}{\uu_0} +\io \frac1p|\nabla\chi_0|^p+\io  W(\chi_0)\dd x+ \itt\io\teta \left(\rho \dive \uu_t+\chi_t\right)\dd x \dd
s\\
\no
&\quad+\itt \io {\bf f}\, \uu_t\dd x \dd s.
\end{align}
 Using now \eqref{datou}--\eqref{datochi} to estimate the initial data $(\uu_0,\chi_0)$,
\eqref{bulk-force}  on $\mathbf{f}$,   Hyp.\ (III) (which in particular yields that $W$ is bounded from below), and combining  estimate
\eqref{crucial-est3.2}
 on $\teta$ with \eqref{eps-estim},
 we obtain
\begin{equation}\label{est5}
\|\chi_t\|_{L^2(\Omega\times(0,T))}+
\|a(\chi)^{1/2}\tensoret\|_{L^2(\Omega\times(0,T); \RR^{d\times d})} \leq
C\,,
\end{equation}
 whence $\|\uu_t\|_{L^2(0,T; H^1_0(\Omega;\RR^d))}\leq C$, by
 \eqref{data-a}. \beo Furthermore, in view of \eqref{est1} we also
 gather
 \begin{equation}
 \label{est5-added}
\| \chi \|_{L^\infty (0,T;W^{1,p}(\Omega))} \leq C.
 \end{equation}
 \eo

\paragraph{\bf  Fifth  estimate  [$\mu\in\{0,1\}$].}
We use here the crucial assumption that $p>d$.  \berdue We test \eqref{eqI} by $-\mathrm{div}(\vism\eps(\uu_t))$ and integrate on time (cf. also \cite[Sec.~3]{rocca-rossi-deg}),
thus obtaining
\begin{equation}
\label{added-4-clarity}
\begin{aligned}
 & -\int_0^t  \uu_{tt}\,\cdot\, \mathbf{\dive} (\vism\eps(\uu_t)) \dd
x \dd s  + \int_0^t \int_{\Omega} \dive(a(\chi)\vism\eps(\uu_t))
\,\cdot\, \mathbf{\dive} (\vism\eps(\uu_t)) \dd x \dd s
\\
&
 =-
 \int_0^t \int_{\Omega}
\dive(b(\chi)\elm\eps({\uu}))  \,\cdot\, \mathbf{\dive}
(\vism\varepsilon(\uu_t)) \dd x \dd s + \rho \int_0^t \int_\Omega
\nabla \teta \cdot \mathbf{\dive} (\vism\eps(\uu_t)) \dd x \dd s -
\int_0^t \int_\Omega \mathbf{f}\,\cdot \mathbf{\dive}
(\vism\eps(\uu_t)) \dd x \dd s\,.
\end{aligned}
\end{equation}
Then, we consider the occurring terms individually. The kinetic term gives
$$\int_0^t \io  \uu_{tt} \mathbf{\dive} (\vism\eps(\uu_t)) \dd
x \dd s =  \io \frac12 \eps(\uu_t (t))\vism \eps(\uu_t (t)) \dd x
-\io \frac12 \eps(\uu_t (0))\vism \eps(\uu_t (0)) \dd x.
$$
  For the viscous term
  on the left-hand side of \eqref{added-4-clarity}
   we
 rely  on   \eqref{data-a}  and  the elliptic regularity result in
   \eqref{cigamma} and we obtain
   \begin{equation}
   \label{est-to-fill-2}
\begin{aligned}
\int_0^t \int_{\Omega} \mathbf{\dive} (a(\chi)\vism\eps(\uu_t))
\,\cdot\, \mathbf{\dive} (\vism\eps(\uu_t)) \dd x \dd
s=&\int_0^t\int_\Omega a(\chi) \mathbf{\dive} (\vism
\varepsilon(\uu_t))\,\cdot\,\mathbf{\dive} (\vism\varepsilon(\uu_t))
\dd x \dd s
 \\
&+
 \int_0^t \int_\Omega \nabla a(\chi)\vism \tensoret\,\cdot\,{\mathbf{\dive} (\vism\varepsilon(\uu_t))}
\dd x \dd s
 \\
&\geq C  \int_0^t \io |\mathbf{\dive} (\vism\eps(\uu_t))|^2 \dd
x \dd s +I_1  \\
&\geq c\int_0^t \| \uu_t \|_{H^2(\Omega;\RR^d)}^2 \dd s+I_1.
\end{aligned}
\end{equation}
We then move
$I_1$ to the right-hand side of \eqref{added-4-clarity}
  and estimate
  \begin{equation}
  \label{est-to-fill-bis}
\begin{aligned}
|I_1|&=\left| \int_0^t\int_\Omega\nabla a(\chi)\vism \tensoret\,\cdot\,\mathbf{\dive} (\vism\eps(\uu_t)) \right| \dd x \dd s \\
& \leq C\int_0^t\|\nabla a(\chi)\|_{L^{d+\zeta}(\Omega;\R^d)}\|\tensoret\|_{L^{d^{\star}-\zeta}(\Omega;\RR^{d\times d})}\|{\mathbf{\dive} (\vism\varepsilon(\uu_t))}\|_{L^2(\Omega; \RR^d)}
\dd s
 \\
& \leq \delta \int_0^t\|\uu_t\|_{H^2(\Omega;\RR^d)}^2 \dd s +C_\delta\int_0^t
\|\nabla a(\chi)\|_{L^{d+\zeta}(\Omega;\R^d)}^2  \|\tensoret\|_{L^{d^{\star}-\zeta}(\Omega;\RR^{d\times d})}^2 \dd s \\
&\leq \delta \int_0^t\|\uu_t\|_{H^2(\Omega;\RR^d)}^2 \dd s +C_\delta\varrho^2\int_0^t\|\chi\|_{W^{1,p}(\Omega)}^2\|\uu_t\|_{H^2(\Omega;\RR^d)}^2 \dd s +C_\delta C_\varrho\int_0^t\|\chi\|_{W^{1,p}(\Omega)}^2\|\uu_t\|_{L^2(\Omega;\RR^d)}^2 \dd s,
\end{aligned}
\end{equation}
  exploiting \eqref{data-a} and  \eqref{interp},  for some
positive constants $\delta $ and $\varrho$ that we will choose later
and for some $C_\delta, \, C_\varrho>0$. For the purely elastic
contribution on the right-hand side of \eqref{added-4-clarity} we
argue in this way: using the assumption $p>d$, we can fix $\zeta>0$
such that $p\geq d+\zeta$ and we get, due also to \eqref{data-a},
\begin{equation}
\label{est-to-fill-1}
\begin{aligned}
 & -
 \int_0^t \int_{\Omega}
\dive(b(\chi)\elm\eps({\uu}))  \,\cdot\, \mathbf{\dive}
(\vism\varepsilon(\uu_t)) \dd x \dd s
\\ &
= -\int_0^t\int_\Omega\nabla b(\chi)\elm \tensore {\mathbf{\dive}
(\vism\varepsilon(\uu_t))}
\dd x \dd s -\int_0^t\int_\Omega b(\chi){\mathbf{\dive} (\elm \varepsilon(\uu))}{\mathbf{\dive}(\vism\varepsilon(\uu_t))} \dd x \dd s \\
&\leq C\int_0^t\|\nabla b(\chi)\|_{L^{d+\zeta}(\Omega;\R^d)}\|\tensore\|_{L^{d^{\star}-\zeta} (\Omega;\RR^{d\times d})}\|{\mathbf{\dive} (\vism\varepsilon(\uu_t))}\|_{L^2(\Omega;\RR^d)}
\dd s+C\int_0^t\|\ub\|_{H^2(\Omega;\RR^d)}\|\ub_t\|_{H^2(\Omega;\RR^d)} \dd s\EEE \\
&\leq  \sigma\int_0^t\|\ub_t\|_{H^2(\Omega;\RR^d)}^2 \dd s +C_\sigma \int_0^t  \left(
\|\chi\|_{W^{1,p}(\Omega)}^2\|\ub\|_{H^2(\Omega; \RR^d)}^2+\|\ub\|_{H^2(\Omega;\RR^d)}^2\right)  \dd s\,.
\end{aligned}
\end{equation}
 Here,
$d^{\star}$ is from  \eqref{dstar} and we have exploited inequality \eqref{interp}
with a constant $\sigma$ that we will choose later, and some $C_\sigma>0$. Moreover,
 we have used \eqref{cigamma} and the fact that $\|\nabla(b(\chi))\|_{L^{d+\zeta}(\Omega)}\leq C\|\chi\|_{W^{1,p}(\Omega)}$.
 For the thermal expansion term we have that
\begin{equation}
\label{est-to-fill-3} \left| \rho \int_0^t \int_\Omega \nabla \teta
\cdot \mathbf{\dive} (\vism\eps(\uu_t)) \dd x \dd s \right|
\leq \eta \int_0^t\|\uu_t\|_{H^2(\Omega;\RR^d)}^2 \dd s+C_\eta\int_0^t
\|\nabla\teta\|_{L^2(\Omega;\R^d)}^2 \dd s
\end{equation}
holds true for  some   positive constant $\eta$   to be fixed  later
and for some $C_\eta>0$. Collecting
\eqref{est-to-fill-2}--\eqref{est-to-fill-3},
the previously proved  estimates
 \eqref{est1},
\eqref{crucial-est3.2},  and exploiting \eqref{bulk-force}  on
$\mathbf{f}$ to estimate the last term on the right-hand side of
\eqref{added-4-clarity},
 we conclude  that
\begin{align}\nonumber
\frac{\beta_0}{2} \io |\eps(\uu_t (t) ) |^2  \dd x +  c \int_0^t \|
\uu_t \|_{H^2(\Omega;\RR^d)}^2 \dd s \leq &\,  C \io |\eps(\vv_0 )
|^2  \dd x + C \| \mathbf{f}\|_{L^2 (0,T;\Ha)}^2  +
\frac{c}2 \int_0^t \| \uu_t \|_{H^2(\Omega;\RR^d)}^2 \dd s \\
\nonumber
&+C\left(1+\|\uu_0\|_{H^2(\Omega;\RR^d)}^2+\int_0^t\int_0^s\|\uu_t\|_{H^2(\Omega;\RR^d)}^2\, \dd r \dd s\right)\,,
\end{align}
with $\beta_0$ from \eqref{ellipticity}, \erdue
 where we have used the fact that $\int_0^t\|\uu\|_{H^2(\Omega;\RR^d)}^2 \dd s \leq \|\uu_0\|_{H^2(\Omega;\RR^d)}^2+\itt\int_0^s \|\uu_t\|_{H^2(\Omega;\RR^d)}^2 \dd r \dd s $ and chosen $\sigma$, $\delta$, $\varrho$ and $\eta$ sufficiently small.
Taking into account condition \eqref{bulk-force} on $\mathbf{f}$, the assumptions on the initial data \eqref{datou}, and using a standard Gronwall lemma, we
conclude
\begin{equation}
\label{palla} \| \uu_t \|_{ L^{2}(0,\berdue T\erdue; \boY)\cap
L^{\infty}(0,\berdue T\erdue; \boZ) } \leq C.
\end{equation}
 By comparison in \eqref{eqI}, taking into account the regularity property \eqref{reg-pavel-b},
we also get
\begin{equation}
\label{utt-comparison} \| \uu_{tt} \|_{L^{2}(0,\berdue T\erdue; \Ha)}
\leq C.
\end{equation}

\paragraph{\bf Sixth  estimate  [$\mu\in\{0,1\}$]} 
We multiply \eqref{eq0} by
$\frac{w}{\teta}$, with $w$ a test function in $W^{1,d}(\Omega)  \cap L^\infty(\Omega)$  (in particular, this is true for $w \in W^{1,d+\epsilon}(\Omega)$ with $\epsilon>0$).
We integrate
in space, only.  \berdue Thus, using  the place-holders $H :=   - \chi_t -\rho
\mathrm{div}(\uu_t)$ and $ J:= \frac1\teta (g+ a(\chi) \eps(\uu_t)
\vism \eps(\uu_t) + |\chi_t|^2)$, \erdue we obtain (cf.\ \eqref{later-4-comparison})
that
\[
\begin{aligned}
&
\left|  \int_\Omega \partial_t \log(\teta) w \dd x \right|
\\
& \berdue
=  \left|   \int_\Omega \left( H w -  \frac{\mathsf{K}(\teta)}\teta \nabla \teta
\cdot \nabla w   - \frac{\mathsf{K}(\teta)}{\teta^2}
|\nabla\teta|^2 w + Jw \right)   \dd x
 +\int_{\partial\Omega} h \frac{w}{\teta}    \dd S
\right| \erdue
\\
&
 \leq \left|\int_\Omega H w \dd x \right|
+ \left| \int_\Omega \frac{\mathsf{K}(\teta)}\teta \nabla \teta
\cdot \nabla w \dd x  \right| +
 \left| \int_\Omega  \frac{\mathsf{K}(\teta)}{\teta^2}
|\nabla\teta|^2 w   \dd x  \right| + \left|\int_\Omega J w \dd x \right|
+ \left|\int_{\partial\Omega} \berdue h\frac{w}\teta \erdue \dd S\right|
\\
&
 \doteq I_1+I_2+I_3+I_4 +I_5.
\end{aligned}
\]
Estimate \eqref{est5} yields that $\|H\|_{L^2(0,T; L^2(\Omega))} \leq C$,
therefore $|I_1| \leq \mathcal{H}(t) \|w \|_{L^2(\Omega)}$ with
$\mathcal{H}(t)= \| H(\cdot,t)\|_{L^2(\Omega)} \in L^2(0,T)$.
Analogously, also in view of \eqref{heat-source} and of
\eqref{teta-pos} we have  
 that
\begin{equation}
\label{q-big1} |I_4| \leq \frac{1}{\teta_*}\mathcal{J}(t)
\|w\|_{L^\infty(\Omega)} \qquad \text{with } \mathcal{J}(t) := \|
J(\cdot,t)\|_{L^1(\Omega)} \in L^1(0,T).
\end{equation}
Moreover, \berdue  $|I_5| \leq  \frac{1}{\teta_*}  \|h(t)\|_{L^2(\partial \Omega)} \| w\|_{L^2(\partial \Omega)} $, \erdue with $  \|h(t)\|_{L^2(\partial \Omega)}    \in L^1(0,T)$ thanks to \eqref{dato-h}.
Using the growth condition \eqref{hyp-K} for $\condu$, we estimate
\begin{equation}
\label{q-big2} |I_2| \leq C \int_\Omega \teta^{\kappa-1} |\nabla
\teta| |\nabla w| \dd x + C \int_\Omega \frac1\teta |\nabla \teta|
|\nabla w| \dd x \doteq I_{2,1} + I_{2,2}. \end{equation}
Thanks to the previously proved positivity \eqref{teta-pos}, we have
\[
I_{2,2} \leq \frac C{\teta^*} \mathcal{O}(t)  \| \nabla
w\|_{L^2(\Omega;\RR^d)} \qquad \text{with }  \mathcal{O}(t) := \| \nabla
\teta(t) \|_{L^2(\Omega;\RR^d)} \in L^2(0,T)
\]
by  \eqref{crucial-est3.2}. We estimate $I_{2,1}$ via the
H\"older inequality, taking into account \eqref{additional-info} and
\eqref{necessary-added}, whence, for $d \in \{2,3\}$,
\[
\begin{aligned}
 I_{2,1} \leq C &  \| \teta^{(\kappa
+\alpha-2)/2} \nabla \teta \|_{L^2(\Omega;\RR^d)} \|\teta^{(\kappa
-\alpha)/2} \|_{L^{6}(\Omega)}  \| \nabla w\|_{L^{{3}}(\Omega;\RR^d)} \doteq C \mathcal{O}^{*}(t) \| \nabla w\|_{L^{{3}}(\Omega;\RR^d)}
\\
&\quad   \text{with }  \mathcal{O}^{*}(t):=  \| \teta(t)^{(\kappa
+\alpha-2)/2} \nabla \teta (t) \|_{L^2(\Omega;\RR^d)} \|\teta(t)^{(\kappa
-\alpha)/2} \|_{L^{6}(\Omega)}  \in L^1(0,T).
\end{aligned}
\]
 Finally, we have
 \begin{equation}
\label{q-big3} |I_3| \leq C \int_\Omega \teta^{\kappa-2} |\nabla
\teta|^2 |w |\dd x + C \int_\Omega \frac1{\teta^2} |\nabla \teta|^2
 |w |\dd x \doteq I_{3,1} + I_{3,2}. \end{equation}
The positivity property \eqref{teta-pos}
again guarantees
\[
I_{3,2}  \leq \frac C{\teta_*^2}   \mathcal{O}(t)^2 \| w\|_{L^\infty (\berdue\Omega\erdue)}\qquad \text{with } \mathcal{O}(t)^2\in L^1(0,T)
\]
while, using that $\teta^{\kappa -2} \leq c \teta^{\kappa+\alpha-2} +c'$, we infer
\begin{equation}
\label{i32}
\begin{aligned}
I_{3,2} &  \leq
  \|w \|_{L^\infty(\Omega)} \left( c \int_\Omega \teta^{\kappa+\alpha-2} |\nabla \teta|^2 \dd x + c' \int_\Omega  |\nabla \teta|^2 \dd x
  \right)
  \doteq  \|w \|_{L^\infty(\Omega)}  \mathcal{O}_*(t)
  \\
  &
\qquad \text{with }  \mathcal{O}_*(t)= c \int_\Omega \teta(t)^{\kappa+\alpha-2} |\nabla \teta(t)|^2 \dd x + c' \int_\Omega  |\nabla \teta(t)|^2 \dd x \in L^1(0,T),
\end{aligned}
\end{equation}
thanks to \eqref{additional-info} and \eqref{crucial-est3.2}.

Collecting all of the  above calculations,
 we conclude that
\begin{equation}\label{est6}
\|\partial_t\log(\teta)\|_{L^1(0,T;  (W^{1,d}(\Omega) \cap L^\infty (\Omega))^*)} \leq C.
\end{equation}
\paragraph{\bf  Seventh  estimate [$\mu\in\{0,1\}$], $\kappa \in (1,5/3)$ if $d=3$ and $\kappa \in (1,2)$ if $d=2$}
Assume in addition \textbf{Hypothesis (V)}.
We multiply
 \eqref{eq0} by a test function $w \in  W^{1,\infty}(\Omega)$ (which e.g.\ holds if $w \in W^{2,d+\epsilon}(\Omega)$ for
 $\epsilon>0$).
  By comparison we have
 \[
\begin{aligned}
\left|  \int_\Omega \teta_t w \dd x \right|
 \leq \left|\int_\Omega L w \dd x \right|
+ \left| \int_\Omega \mathsf{K}(\teta) \nabla \teta
\cdot \nabla w \dd x  \right| + \left|\int_{\partial\Omega} hw \dd S\right| \doteq I_1+I_2+I_3,
\end{aligned}
\]
where we have set $L= -\chi_t\teta-\rho\teta \mathrm{div}(\uu_t)+g+a(\chi)\eps(\uu_t)\vism \eps(\uu_t) +|\chi_t|^2$.
Therefore,
\[
|I_1| \leq \mathcal{L}(t) \|w\|_{L^\infty (\Omega)} \quad \text{with } \mathcal{L}(t):=\|L(t)\|_{L^1(\Omega)} \in L^1(0,T), \quad
|I_3| \leq \| h(t) \|_{L^2(\partial
\Omega)} \| w\|_{L^2(\partial
\Omega)}  \text{ with } h\in L^1(0,T)
\]
thanks to  \eqref{crucial-est3.2},  \eqref{est5} and \eqref{dato-h}, respectively.  As for $I_2$,
in view of \eqref{hyp-K},
 taking into account \eqref{additional-info} and using  the  H\"older inequality, we obtain
\begin{equation}
\label{citata-dopo-ehsi}
|I_2|\leq   C\| \teta^{(\kappa-\alpha+2)/2} \|_{L^2(\Omega)} \|\teta^{(\kappa+\alpha-2)/2} \nabla \teta\|_{L^2(\Omega;\RR^d)} \|\nabla w\|_{L^\infty (\Omega;\RR^d)}
+   C \| \nabla \teta\|_{L^2(\Omega;\RR^d)}  \|\nabla w\|_{L^2 (\Omega;\RR^d)}.
 \end{equation}
Observe that, since $\alpha$ can be chosen arbitrarily close to $1$, in view of estimate
\eqref{estetainterp}
we have that $\teta^{(\kappa-\alpha+2)/2}$ is bounded in $L^2(0,T; L^2(\Omega))$ if and only if
$\kappa <\frac53$ if $d=3$, and $\kappa <2$ if $d=2$. 
Under this restriction on $\kappa$, we have that $|I_2|\leq C  \mathcal{L}^*(t) \|\nabla w \|_{L^\infty (\Omega)} $ for some
$\mathcal{L}^* \in L^1(0,T)$.
Ultimately, we conclude that
\begin{equation}
\label{bv-esti-temp}
\|\teta_t\|_{L^1(0,T; W^{1,\infty}(\Omega)^*)} \leq C.
\end{equation}
\paragraph{\bf  Eighth estimate  [$\mu=0$].}
In view of the previously obtained estimates \eqref{est1}, \eqref{crucial-est3.2},  \eqref{est5},
and \eqref{palla},
 a comparison in
equation \eqref{eqII} yields that  (recall that $\xi $ is a selection in $\beta(\chi)$ a.e.\ in $\Omega \times (0,T)$),
\[
\| A_p(\chi) +\xi \|_{L^2(0,T; L^2(\Omega))} \leq C.
\]
\berdue Now, in view of  the monotonicity of the operator $\beta: \R \rightrightarrows \R$   (cf., e.g.,
 \cite[Lemma 3.3]{akagi}), from the above estimate we
deduce \erdue
\begin{equation}
\| A_p(\chi)\|_{L^2(0,T; L^2(\Omega))}+ \|\xi \|_{L^2(0,T; L^2(\Omega))} \leq C.
\end{equation}
In view of the regularity results  \cite[Thm.\ 2, Rmk.\ 2.5]{savare98}, we finally infer the enhanced regularity
\eqref{enhanced-chi} for $\chi$.
\QED

\begin{remark}[\bf The  $p$-Laplacian regularization]
\upshape
\label{rmk:role-p-Lapl}
A close perusal at the above calculations shows that the fact that
 $p>d$ for the $p$-Laplacian  term in the $\chi$-equation \eqref{eqII} has been used only for carrying out the calculations in the \emph{Fifth estimate}. All the other estimates do not depend on the condition $p>d$, and
 would therefore hold if the operator $A_p$ in  \eqref{eqII} were replaced by the  Laplacian.

  In turn, the \emph{Fifth estimate} for $\uu$ will play a crucial role in  the limit passage arguments at the basis of the proofs of
 Theorems \ref{teor3} and \ref{teor1}: it will ensure \berdue compactness in the strong topology of \erdue $H^1 (0,T; \boY)$ (cf.\
  Lemma \ref{l:compactness}) for the sequences of  approximate solutions constructed in Sec.\ \ref{s:time-discrete}. Relying on this, we will be able to pass to the limit with the quadratic term $|\tensoret|^2$ on the right-hand side of \eqref{eq0}.

  Nonetheless, in Sec.\ \ref{s:final} we will show that, in the case $\mu=1$ of unidirectional evolution, it is ultimately possible to drop the constraint $p>d$ and in fact we will obtain an existence result for the entropic formulation of system \eqref{eq0}--\eqref{eqII},  in the case \eqref{eqII} simply features the  Laplacian (i.e. for $p=2$).  
\end{remark}


\section{\bf Time discretization}
\label{s:time-discrete} \noindent In Section \ref{ss:3.1} we set up
a \emph{single}
 time-discretization scheme
 for both the irreversible ($\mu=1$) and for the reversible ($\mu=0$) systems. We then show in Section \ref{ss:3.1bis}
  that the piecewise
  constant and piecewise linear interpolants of the discrete solutions satisfy
  the approximate versions of the total energy inequality, the entropy inequality, and
  of equations \eqref{eqI}--\eqref{eqII}.  Finally, in Section \ref{ss:3.2} we rigorously prove the a priori estimates from
  Section \ref{s:aprio} in the time-discrete context.
%
\begin{notation}
\upshape \label{not-alpha} In what follows, also in view of the
extension \eqref{1-homog} mentioned at the end of Sec.\
\ref{glob-irrev},  we will use $\widehat{\alpha}$ and $\alpha$ as
place-holders for $I_{(-\infty,0]}$ and $\partial I_{(-\infty,0]}$.
\end{notation}
\subsection{Setup of the time discretization}
\label{ss:3.1}
 We consider
an equidistant partition of $[0,T]$, with time-step $\tau>0$ and
nodes $t_\tau^k:=k\tau$, $k=0,\ldots,K_\tau$. In this framework, we
approximate the data $\mathbf{f}$, $g$, and $h$
 by local means, i.e.
setting for all $k=1,\ldots,K_{\tau}$
\begin{equation}
\label{local-means} \ftau{k}:=
\frac{1}{\tau}\int_{t_\tau^{k-1}}^{t_\tau^k} \mathbf{f}(s)\dd s\,,
\qquad  \gtau{k}:= \frac{1}{\tau}\int_{t_\tau^{k-1}}^{t_\tau^k} g(s)
\dd s\,, \qquad \htau{k}:= \frac{1}{\tau}\int_{t_\tau^{k-1}}^{t_\tau^k} h(s)
\dd s\,.
\end{equation}
Consider the following initial data
\begin{align}\label{IC2}
\wtau{0}:=\w_{0}, \qquad \utau{0}:=\uu_{0},\qquad
\utau{-1}:=\uu_{0}-\tau \vv_0, \qquad \chitau{0}:=\chi_{0}.
\end{align}

We construct discrete solutions to system \eqref{eq0}--\eqref{eqII}
by solving the  following  elliptic system, featuring the  operator $\mathcal{A}^k: X  \to H^1(\Omega)^*$, with
 \begin{equation}
\label{spaces}
\begin{gathered}
 X= \{ \theta  \in H^1(\Omega)\, : \  \int_\Omega \condu(\theta) \nabla \theta \cdot \nabla v  \dd x  \text{ is well defined for all } v \in H^1 (\Omega)\},
 \quad \mathcal{A}^k: X  \to  H^1(\Omega)^* \text{  defined by }\\
\pairing{}{H^1(\Omega)}{ \mathcal{A}^k(\theta) }{v}:=
  \int_\Omega \condu(\theta) \nabla \theta \cdot \nabla v \dd x - \int_{\partial \Omega} \htau{k} v \dd S\,.
\end{gathered}
\end{equation}
\begin{problem}
\label{prob:rhoneq0}
Starting from $(\utau{0},$ $\utau{-1},$
$\chitau{0},$ $\wtau{0})$ as in \eqref{IC2}, find
 $\{\wtau{k}, \utau{k}, \chitau{k}\}_{k=1}^{K_\tau}
\subset X \times \boY \times W^{1,p}(\Omega)$ fulfilling
\begin{align}
& \label{eq-discr-w} \frac{\wtau{k} -\wtau{k-1}}{\tau} +
\frac{\chitau{k} -\chitau{k-1}}{\tau}\wtau{k} +
\rho\dive\left(\frac{\utau{k}-\utau{k-1}}{\tau}\right)\wtau{k}
+  \mathcal{A}^k(\wtau{k})   = \gtau{k} \\
\no
&\qquad\qquad+
 a(\chitau{k-1} )   \eps\left(\frac{\utau{k}-\utau{k-1}}{\tau}\right)\vism\eps\left(\frac{\utau{k}-\utau{k-1}}{\tau}\right)+\left|\frac{\chitau{k}
-\chitau{k-1}}{\tau}\right|^2
 + \frac{ \tau^{1/2} }{2}  \left|\frac{\chitau{k}
-\chitau{k-1}}{\tau}\right|^2
\quad \text{in } H^1(\Omega)^*,
\\
& \label{eq-discr-u} \frac{\utau{k} -2\utau{k-1} +
\utau{k-2}}{\tau^2} + \opj{ a(\chitau{k-1} )  }{\frac{\utau{k}-\utau{k-1}}{\tau}} +
\oph{b(\chitau{k})}{\utau{k}}  + \ciro(\wtau{k}) 
= \ftau{k} \quad \aein\, \Omega,
\\
& \label{eq-discr-chi}
\begin{aligned}
 \frac{\chitau{k} -\chitau{k-1}}{\tau}   + \sqrt{\tau}   \frac{\chitau{k} -\chitau{k-1}}{\tau}  + \mu \zetau{k}
+ A_p(\chitau{k}) +\xitau{k}+\gamma(\chitau{k}) 
  \ni -
b'( \chitau{k} )\frac{\eps(\utau{k-1})\elm
\eps(\utau{k-1})}2 + \wtau{k}
  \quad \aein\, \Omega\,,
  \end{aligned}
\end{align}
where
$\mathbb{I}  \in \R^{d\times d \times d \times d}$ denotes the identity tensor and
\begin{align}
\label{xitau}
&
\xitau{k} \in  \beta(\chitau{k})  && \aein\, \Omega,
\\
&
\label{zetau}
\zetau{k} \in \alpha \left( \frac{\chitau{k} -\chitau{k-1}}{\tau}  \right)  && \aein\, \Omega.
\end{align}
\end{problem}
\begin{remark}[Features of the time-discretization scheme]
\upshape
\label{rmk:discrete-features}
A few observations on Problem \ref{prob:rhoneq0}
are in order.

 First of all, let us point out that  the scheme is fully implicit   and, in particular,  \eqref{eq-discr-chi} is coupled to the  system  \eqref{eq-discr-w}--\eqref{eq-discr-u} by the
 implicit term $ \wtau{k}$  on the right-hand side. This will be crucial for    proving the strict positivity
\eqref{strict-pos-wk} below for the discrete temperature $\wtau{k}$.
Indeed, our argument for \eqref{strict-pos-wk} is the discrete
version of the comparison argument  developed at the beginning of
Section \ref{s:aprio} and strongly relies on the structure of the
discrete temperature equation  \eqref{eq-discr-w}. However, in the
case of unidirectional evolution, we could have decoupled the
discrete equation for $\chi$ from
\eqref{eq-discr-w}--\eqref{eq-discr-u}, replacing
\eqref{eq-discr-chi} by
\begin{equation}
\label{chi-discre-irrv}
 \frac{\chitau{k} -\chitau{k-1}}{\tau}  + \mu \zetau{k}
+ A_p(\chitau{k}) +\xitau{k}+\gamma(\chitau{k}) 
  \ni -
b'( \chitau{k} )\frac{\eps(\utau{k-1})\elm
\eps(\utau{k-1})}2 + \wtau{k-1}
  \quad \aein\, \Omega\,,
\end{equation}
and, accordingly, replacing the coupling term $\frac{\chitau{k} -\chitau{k-1}}{\tau}\wtau{k} $ on the left-hand side of
\eqref{eq-discr-w} by $\frac{\chitau{k} -\chitau{k-1}}{\tau}\wtau{k-1}$. In Remark \ref{rmk:still-pos} below, we will show how  it is still possible to
prove the strict positivity of the discrete temperature for this partially decoupled scheme.

 Second, observe  that  $  \frac{\tau^{1/2}}{2}  \left|\frac{\chitau{k}
-\chitau{k-1}}{\tau}\right|^2$ appears on the right-hand side of \eqref{eq-discr-w} and, accordingly, $\sqrt \tau \frac {\chitau{k}
-\chitau{k-1}}{\tau}$ features on the left-hand side of \eqref{eq-discr-chi}.
 These terms have been added for technical reasons, related to the proof of the discrete version of the  total energy inequality \eqref{total-enid},
 cf.\  the  comments above  Proposition \ref{prop:discr-enid}.  Clearly, they will disappear
 when  passing to the limit with $\tau \down 0$.

Because of the implicit character of system \eqref{eq-discr-w}--\eqref{eq-discr-chi}, for the existence proof
(cf.\ Lemma \ref{lemma:ex-discr} below)
we shall have to resort to
a fixed-point type result from the theory for  elliptic systems featuring pseudo-monotone operators, drawn from  \cite[Chap.\ II]{roub-NPDE}.
Indeed, we will not apply it directly to system \eqref{eq-discr-w}--\eqref{eq-discr-chi}, but to an approximation of \eqref{eq-discr-w}--\eqref{eq-discr-chi},
 i.e.\ system \eqref{eq-discr-w-TRUNC}--\eqref{eq-discr-chi-TRUNC} below,
 obtained in the following way.
We will  need to
\begin{compactenum}
\item truncate $\condu$,  along the lines of \cite{gmrs},  in such a way as to have a bounded function in the elliptic operator in the temperature equation \eqref{eq-discr-w}. Therefore, the truncated operator   $\condu_M$, with $M$ a positive parameter,
shall be defined on $H^1(\Omega)$ (in place of $X$), with values in $H^1(\Omega)^*$ (in place of $X^*$). Accordingly, we shall truncate all occurrences of $\teta$
in a quadratic term;
\item  following \cite{roubiSIAM10},  add the higher order terms
 $-\nu \mathrm{div}(|\eps(\utau{k})|^{\eta-2} \mathbb{I} \eps(\utau{k})) $
 and $ \nu |\chitau{k}|^{\eta-2} \chitau k  $, with
  $\nu>0$ and $\eta>4$,  on the left-hand sides of \eqref{eq-discr-u} and \eqref{eq-discr-chi}, respectively. Their role is to compensate the quadratic terms
 on the right-hand side of  \eqref{eq-discr-w}. As a result,
 both for $d=2$ and for $d=3$
   the pseudo-monotone operator by means of which we will rephrase system
    \eqref{eq-discr-w-TRUNC}--\eqref{eq-discr-chi-TRUNC}
 will turn out to be coercive, in its $\teta$-component, with respect to the $H^{1}(\Omega)$-norm;
 \item in the case $\mu =1$, in order to cope with the (possible) unboundedness of the operator $\alpha$ we will have to replace it with its Yosida-regularization
 $\alpha_\nu$  (cf. \cite{brezis}), with $\nu$ the same parameter as above.
 \end{compactenum}

 Then, in the proof of Lemma \ref{lemma:ex-discr} we will
 \begin{compactenum}
\item prove the existence of solutions to  the approximate  discrete  system \eqref{eq-discr-w-TRUNC}--\eqref{eq-discr-chi-TRUNC};
\item pass to the limit in 
 \eqref{eq-discr-w-TRUNC}--\eqref{eq-discr-chi-TRUNC} first as the truncation parameter $M \to \infty$
 and conclude an existence result for  an approximation of system \eqref{eq-discr-w}--\eqref{eq-discr-chi}, still depending on the parameter $\nu>0$;
 \item
 pass to the limit  in this approximate system as $\nu \to 0$ and conclude the existence of solutions to \eqref{eq-discr-w}--\eqref{eq-discr-chi}.
\end{compactenum}
 We postpone to Remark \ref{rmk:2parameters} some comments on the reason why we need to keep the two limit passages as $M \to \infty$ and $\nu \to 0$ distinct.
\end{remark}

Our existence result for Problem \ref{prob:rhoneq0}  reads
\begin{lemma}[Existence for the time-discrete Problem~\ref{prob:rhoneq0}, $\mu\in \{0,1\}$]
\label{lemma:ex-discr}
 Assume   \beo \textbf{Hypotheses (0)--(III)}, \eo and assumptions
\eqref{bulk-force}--\eqref{datochi} on the data $\mathbf{f},\,g,\, h,\,
\teta_0,\, \uu_0,\,\vv_0,\, \chi_0$.
Then,
there exists $\bar{\tau}>0$ such that for all $0 <\tau \leq \bar{\tau}$
 Problem \ref{prob:rhoneq0},
admits at least one solution  $\{(\wtau{k}, \utau{k},
\chitau{k})\}_{k=1}^{K_\tau}$.

Furthermore,
  any so\-lu\-tion   $\{(\wtau{k}, \utau{k}, \chitau{k})\}_{k=1}^{K_\tau}$   of Problem
  \ref{prob:rhoneq0}
  fulfills
  \begin{equation}
\label{strict-pos-wk}
 \wtau{k}(x) \geq \underline{\teta}>0  \quad \foraa\, x \in \Omega
 \end{equation}
  for some $\underline{\teta} = \underline{\teta}(T)$.
\end{lemma}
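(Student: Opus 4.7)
The plan is to implement the three-stage regularization outlined in Remark~\ref{rmk:discrete-features}: (i) truncate the conductivity, replacing $\condu$ by a bounded $\condu_M$ and each occurrence of $\wtau{k}$ sitting inside a quadratic factor in \eqref{eq-discr-w} by its truncation $T_M(\wtau{k})$; (ii) add higher-order regularizations $-\nu\,\mathrm{div}(|\eps(\utau{k})|^{\eta-2}\eps(\utau{k}))$ and $\nu|\chitau{k}|^{\eta-2}\chitau{k}$ with $\eta>4$ on the left-hand sides of \eqref{eq-discr-u} and \eqref{eq-discr-chi}; and, when $\mu=1$, (iii) replace the multivalued $\alpha=\partial I_{(-\infty,0]}$ by its Yosida regularization $\alpha_\nu$. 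I would then view the regularized system as a single nonlinear elliptic equation for the triple $(\wtau{k},\utau{k},\chitau{k})\in H^1(\Omega)\times W^{1,\eta}_0(\Omega;\R^d)\times W^{1,\eta}(\Omega)$ and check that the associated operator is bounded, pseudo-monotone, and coercive: by design, the $\eta$-power terms with $\eta>4$ dominate the $M$-truncated quadratic right-hand sides of \eqref{eq-discr-w} in both $d=2$ and $d=3$, while $\condu_M\ge c_0$ together with the $1/\tau$ mass term in \eqref{eq-discr-w} provides coercivity in $\wtau{k}$. An existence theorem for pseudo-monotone operators such as \cite[Chap.~II]{roub-NPDE} then produces an approximate solution $(\wtau{k,M,\nu},\utau{k,M,\nu},\chitau{k,M,\nu})$.

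Next I would remove the regularizations one at a time. With $\nu>0$ fixed, I send $M\to\infty$: the discrete analogue of the \emph{Second estimate} of Section~\ref{s:aprio}, namely testing \eqref{eq-discr-w} by $(T_M(\wtau{k,M,\nu}))^{\alpha-1}$ for $\alpha\in(1/2,1)$ and invoking Gagliardo--Nirenberg, yields an $H^1(\Omega)$-bound on $\wtau{k,M,\nu}$ independent of $M$, whereas the $\eta$-regularizations keep $\utau{k,M,\nu}$ and $\chitau{k,M,\nu}$ bounded in $W^{1,\eta}_0(\Omega;\R^d)$ and $W^{1,\eta}(\Omega)$. Compactness allows me to identify the weak limit $\condu(\wtau{k,\nu})\nabla\wtau{k,\nu}$ of $\condu_M(\wtau{k,M,\nu})\nabla\wtau{k,M,\nu}$; at this second stage, Hypothesis~(0) (smoothness of $\partial\Omega$ and $\vism=\omega\elm$) together with the elliptic regularity \eqref{cigamma}--\eqref{reg-pavel-b} upgrades $\utau{k,\nu}$ to $\boY$. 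I finally send $\nu\downarrow 0$: the quadratic terms on the right-hand side of \eqref{eq-discr-w} are controlled in $L^1$ uniformly in $\nu$ thanks to the estimates coming from testing \eqref{eq-discr-u} by $(\utau{k,\nu}-\utau{k-1})/\tau$ and \eqref{eq-discr-chi} by $(\chitau{k,\nu}-\chitau{k-1})/\tau$, the $\eta$-power terms vanish, and $\zetau{k}\in\alpha((\chitau{k}-\chitau{k-1})/\tau)$ is recovered from the Yosida approximants by standard maximal-monotone arguments. The main obstacle lies precisely in making the $M\to\infty$ passage rigorous while keeping track of the cross-quadratic coupling: it is for this reason that the two limits cannot be fused, since the elliptic upgrade to $\boY$ presupposes that the temperature equation already has its physical (untruncated) form.

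For the strict positivity \eqref{strict-pos-wk}, I would run the discrete analogue of the comparison argument opening Section~\ref{s:aprio}. Using $\htau{k},\gtau{k}\ge 0$, the non-negativity of the two quadratic terms on the right-hand side of \eqref{eq-discr-w}, the lower bound $a(\chi)\ge c_2$, and the pointwise inequality \eqref{eps-estim}, equation \eqref{eq-discr-w} can be rewritten as the distributional inequality
\[
\frac{\wtau{k}-\wtau{k-1}}{\tau} \;-\; \mathrm{div}\bigl(\condu(\wtau{k})\nabla\wtau{k}\bigr) \;\ge\; -C\,(\wtau{k})^2 \qquad \aein\,\Omega,
\]
with $C$ independent of $\tau$. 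I then compare with the scalar implicit-Euler scheme $v^k=v^{k-1}-\tfrac{C\tau}{2}(v^k)^2$, $v^0=\teta_*$, which inherits from its continuous counterpart $\dot v=-Cv^2/2$ the property of remaining strictly positive on the whole grid, provided $\tau\le\bar\tau$ is small enough. Testing the difference inequality by $-(\wtau{k}-v^k)^-$ and exploiting the monotonicity of $r\mapsto r+\tfrac{C\tau}{2}r^2$ on $[0,+\infty)$ yields $\wtau{k}\ge v^k\ge\underline\teta>0$; here the implicit coupling of \eqref{eq-discr-chi} to $\wtau{k}$ (through the right-hand side $\wtau{k}$ rather than $\wtau{k-1}$) is crucial, because it is what places the harmful cross term $\chi_t\teta$ on the correct side for the comparison to close.
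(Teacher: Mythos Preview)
Your overall architecture---truncate $\condu$, add the $\eta$-power regularizations, replace $\alpha$ by its Yosida approximation $\alpha_\nu$, obtain existence via pseudo-monotone operator theory, then pass $M\to\infty$, then $\nu\to 0$, and run a discrete comparison for the positivity---matches the paper's proof. But two of your steps are misplaced, and as written neither would close.

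First, the Second estimate (test by $\teta^{\alpha-1}$) cannot deliver an $M$-independent $H^1$-bound on $\wtau{k,M,\nu}$ while $\condu$ is still truncated: the passage from a bound on $\nabla(\teta^{\alpha/2})$ to one on $\nabla\teta$ (the Third estimate) hinges on $\condu(\teta)\ge c_0\teta^\kappa$, which $\condu_M$ violates on $\{\teta>M\}$; testing by $(T_M\teta)^{\alpha-1}$ only yields information on $\{\teta\le M\}$. The paper closes the $M\to\infty$ limit instead by testing the approximate heat equation by $\mathcal{T}_M(\teta_M)$ and by $\teta_M$: with $\nu>0$ fixed, the $\eta$-regularizations force $|\eps(\uu_M)|^2,\,|\chi_M|^2\in L^2(\Omega)$, so the quadratic right-hand side is in $L^2$ and the plain coercivity $\condu_M\ge c_0$ suffices. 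The $\teta^{\alpha-1}$-test is deployed only \emph{after} de-truncating, at the $\nu\to 0$ stage, where the $\eta$-terms are gone and the right-hand side is merely $L^1$; this is precisely the content of Remark~\ref{rmk:2parameters}. Second, the $H^2(\Omega;\R^d)$-upgrade for $\utau{k}$ cannot be performed while the quasilinear term $-\nu\,\mathrm{div}(|\eps(\uu)|^{\eta-2}\eps(\uu))$ is still present, since it destroys the linear elliptic structure (a scalar coefficient times $\elm\eps(\uu)$, courtesy of $\vism=\omega\elm$) on which the regularity argument relies. In the paper this upgrade is the final Step~6, carried out only after both $M\to\infty$ and $\nu\to 0$.
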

%

\begin{proof}
We split the proof in some steps.

\paragraph{\bf Step $1$: approximation.}
 As already mentioned, we construct our approximation of system \eqref{eq-discr-w}--\eqref{eq-discr-chi}
by truncating $\condu$ in \eqref{eq-discr-w} and the quadratic terms in $\teta$, replacing $\alpha$ with its Yosida approximation $\alpha_\nu$, and adding higher order
terms to \eqref{eq-discr-u} and \eqref{eq-discr-chi}. Namely,
let
\begin{equation}
\label{def-k-m} \condu_M(r):=  \left\{ \begin{array}{ll} \condu(-M) & \text{if
} r <-M,
\\
\condu(r)   & \text{if } |r| \leq M,
\\
\condu(M) & \text{if } r >M
\end{array}
\right.
\end{equation}
and accordingly introduce the operator
\begin{equation}
\label{M-operator}
\mathcal{A}_M^k: H^1(\Omega)  \to H^1(\Omega)^*
\  \text{  defined by }
\pairing{}{H^1(\Omega)}{\mathcal{A}_M^k(\theta)}{v}:= \int_\Omega \condu_M(\theta) \nabla \theta \cdot \nabla v \dd x -
\int_{\partial \Omega} \htau{k}v \dd S.
\end{equation}
Observe that, thanks to \eqref{hyp-K}  there still holds $\condu_M(r) \geq c_{0} $ for all $r \in \R$, and therefore
\begin{equation}
\label{ellipticity-retained}
\pairing{}{H^1(\Omega)}{ \mathcal{A}_M^{k}  (\theta)}{\theta} \geq c_0 \int_\Omega |\nabla \theta|^2 \dd x \qquad \text{for all } \theta \in H^1(\Omega).
\end{equation}
We also introduce the truncation operator $\mathcal{T}_M : \R \to \R$
\begin{equation}
\label{def-truncation-m} \mathcal{T}_M(r):= \left\{ \begin{array}{ll} -M &
\text{if } r <-M,
\\
r   & \text{if } |r| \leq M,
\\
M  & \text{if } r >M.
\end{array}
\right.
\end{equation}
Furthermore,   for a given $\nu>0$ we denote by $\alpha_\nu$ the Yosida approximation of $\alpha$ with parameter $\nu$.

Then, we consider the
  following approximation of  system
\eqref{eq-discr-w}--\eqref{eq-discr-chi}:
\begin{align}
& \label{eq-discr-w-TRUNC} \frac{\wtau{k} -\wtau{k-1}}{\tau} +
\frac{\chitau{k} -\chitau{k-1}}{\tau}
\mathcal{T}_M(\wtau{k})  +
\rho\dive\left(\frac{\utau{k}-\utau{k-1}}{\tau}\right) \mathcal{T}_M(\wtau{k})
+  \mathcal{A}_M^k(\wtau{k})   = \gtau{k} \\
\no
&\qquad+
a( \chitau{k-1}  )\eps\left(\frac{\utau{k}-\utau{k-1}}{\tau}\right)\vism\eps\left(\frac{\utau{k}-\utau{k-1}}{\tau}\right)+\left|\frac{\chitau{k}
-\chitau{k-1}}{\tau}\right|^2   +  \frac{\tau^{1/2}}{2}   \left|\frac{\chitau{k}
-\chitau{k-1}}{\tau}\right|^2
 \quad \text{in $H^1(\Omega)^*$,}
\\
& \label{eq-discr-u-TRUNC}
\begin{aligned}
&
 \frac{\utau{k} -2\utau{k-1} +
\utau{k-2}}{\tau^2} + \opj{ a(\chitau{k-1} )}{\frac{\utau{k}-\utau{k-1}}{\tau}} +
\oph{b(\chitau{k})}{\utau{k}}  + \ciro(\mathcal{T}_M(\wtau{k}) ) - \nu  \mathrm{div}(|\eps(\utau{k})|^{\eta-2} \mathbb{I} \eps(\utau{k}))
= \ftau{k} \\
&  \hspace{12cm}   \text{in }  W_0^{1,\eta}(\Omega;\R^d)^*,
\end{aligned}
\\
& \label{eq-discr-chi-TRUNC}
\begin{aligned}
 &\frac{\chitau{k} -\chitau{k-1}}{\tau}   +  \sqrt \tau \frac {\chitau{k}
-\chitau{k-1}}{\tau}  + \mu \alpha_{\nu}   \left( \frac{\chitau{k} -\chitau{k-1}}{\tau}  \right)
+ A_p(\chitau{k}) + \xitau{k}+\gamma(\chitau{k})   +  \nu | \chitau{k}|^{\eta-2}\chitau k  \\
& \hspace{5cm}  =  -
b'( \chitau{k}  )\frac{\eps(\utau{k-1})\elm
\eps(\utau{k-1})}2 + \mathcal{T}_M(\wtau{k})\qquad \aein\, \Omega\,,
  \end{aligned}
\end{align}
with $\xitau{k} \in \beta(\chitau{k})$ a.e.\ in $\Omega$.

\paragraph{\bf Step $2$: existence of solutions for the approximate  system.}
Observe that system
\eqref{eq-discr-w-TRUNC}--\eqref{eq-discr-chi-TRUNC} can be recast
as
\begin{align}
&
\label{sistemone1}
\begin{aligned}
\wtau{k} &+\left(\chitau{k} -\chitau{k-1}\right) \mathcal{T}_M(\wtau{k}) +
\rho\dive\left({\utau{k}-\utau{k-1}}\right) \mathcal{T}_M(\wtau{k}) +\tau
\mathcal{A}_M^k(\wtau{k})
\\
& -
\tau a( \chitau{k-1} )\eps\left(\frac{\utau{k}-\utau{k-1}}{\tau}\right)\vism\eps\left(\frac{\utau{k}-\utau{k-1}}{\tau}\right) -  \tau
\left|\frac{\chitau{k}
-\chitau{k-1}}{\tau}\right|^2  -\frac{\tau^{3/2}}{2}  \left|\frac{\chitau{k}
-\chitau{k-1}}{\tau}\right|^2\\
&
=
\wtau{k-1}
+\tau\gtau{k} \quad \text{in $H^1(\Omega)^*$,}
\end{aligned}
\\
&
\label{sistemone2}
\begin{aligned}
 \utau{k}+\tau
\opj{a( \chitau{k-1} )}{(\utau{k}-\utau{k-1})}+\tau^2
\oph{b(\chitau{k})}{\utau{k}} &  + \tau^2 \ciro(\mathcal{T}_M(\wtau{k}))
- \nu \tau^2  \mathrm{div}(|\eps(\utau{k})|^{\eta-2}  \mathbb{I} \eps(\utau{k}))
\\
 & =
2\utau{k-1} - \utau{k-2}+\tau^2\ftau{k} \quad   \text{in }
W_0^{1,\eta}(\Omega;\R^d)^*,
\end{aligned}
\\
&
\label{sistemone3}
\begin{aligned}
  &
\begin{aligned}
  \chitau{k}  + \sqrt \tau \chitau k    & + \mu  \tau\alpha_{\nu}  \left( \frac{\chitau{k} -\chitau{k-1}}{\tau}  \right)
   + \tau A_p(\chitau{k})
\\
&
   + \tau \xitau{k} +\tau \gamma(\chitau{k})   + \nu\tau |\chitau{k}|^{\eta-2} \chitau{k}
 +\tau
b'(\chitau{k})\frac{\eps(\utau{k-1})\elm \eps(\utau{k-1})}2 - \tau
\mathcal{T}_M(\wtau{k})
\end{aligned}
\\
&
 =  \chitau{k-1}  + \sqrt \tau \chitau {k-1}
  \quad \aein\, \Omega\,.
  \end{aligned}
\end{align}

Denoting by $\mathcal{R}_{k-1}$ the operator acting on the unknown
$(\wtau k, \utau k, \chitau k)$ and by $H_{k-1}$ the vector of the
terms on the r.h.s.\ of the  above equations,  we can reformulate
system \eqref{sistemone1}--\eqref{sistemone3} in the abstract form
\begin{equation}
\label{abstract-sistemone} \mathcal{R}_{k-1}(\wtau k, \utau k,
\chitau k)= H_{k-1}.
\end{equation}

It can be checked that  $\mathcal{R}_{k-1}$ is a
  pseudo-monotone operator  (according to \cite[Chap.\ II, Def.\ 2.1]{roub-NPDE})  on
 $H^1(\Omega)\times W_0^{1,\eta} (\Omega;\R^d) \times H^1(\Omega)$.

    In order to check that  $\mathcal{R}_{k-1}$ is  coercive
on that space, it is sufficient to test \eqref{sistemone1} by
$\wtau{k}$, \eqref{sistemone2} by $\utau{k}$,  \eqref{sistemone3} by
$\chitau{k}$ and add the resulting equations. \beo  We will not
develop all the calculations in full detail, but rather point to the
most significant aspects.

Clearly, the term $- \nu \tau^2
\mathrm{div}(|\eps(\utau{k})|^{\eta-2}  \mathbb{I} \eps(\utau{k})) $
tested by $\utau{k}$ provides a bound for $\| \utau{k}
\|_{W^{1,\gamma} (\Omega;\R^d)}^{\gamma}$ via the Korn inequality.
Analogously we control $\| \chitau k\|_{H^1(\Omega)}^2$.  \eo
 To obtain a bound for
$\| \wtau{k}\|_{H^1(\Omega)}$  we use that $\mathcal{A}_M^k$ is
coercive (cf. \eqref{ellipticity-retained}). The additional terms
 $-\nu \mathrm{div}(|\eps(\utau{k})|^{\eta-2} \mathbb{I} \eps(\utau{k})) $
 and $ \nu |\chitau{k}|^{\eta-2}\chitau{k} $
 in  \eqref{sistemone2} and \eqref{sistemone3} enable us to control the quadratic terms on the right-hand side of \eqref{sistemone1}.
  More in detail, the test of \eqref{sistemone1} by $\wtau{k}$ gives rise, e.g., to the term
 $I_1 := \int_\Omega  a(\chitau{k-1}) \eps(\utau k) \vism   \eps(\utau k) \wtau{k} \dd x $, which can be estimated as follows
 \[
 \begin{array}{ll}
 |I_1| & \leq  C \|  a(\chitau{k-1})  \|_{L^\infty(\Omega)} \|  \eps(\utau k)  \|_{L^4(\Omega;\R^{d\times d})}^2 \| \wtau{k}\|_{L^2(\Omega)}
 \\ & \leq \frac14  \| \wtau{k}\|_{L^2(\Omega)}^2 + C  \|  \eps(\utau k)  \|_{L^4(\Omega;\R^{d\times d})}^4
\\ &  \leq \frac14  \| \wtau{k}\|_{L^2(\Omega)}^2 + \frac{\nu\tau^2}4  \|  \eps(\utau k)  \|_{L^{\eta}(\Omega;\R^{d\times d})}^{\eta} +C,
\end{array}
 \]
 where the first estimate follows from the H\"older inequality,  the second one from the fact that $ \|  a(\chitau{k-1})  \|_{L^\infty(\Omega)}\leq C$ since $\chitau{k-1} \in W^{1,p}(\Omega)$
 and $a \in \rmC^0 (\R)$, and the last one relies on
 the fact that
  $\eta>4$. Therefore,  for $\tau$ sufficiently small
   the right-hand side terms can be absorbed by the left-hand side ones, also resulting from the test of
 \eqref{sistemone2} by $\utau{k}$.
 With analogous calculations we estimate $I_2:= \int_\Omega (|\chitau k|^2+  \frac{\tau^{1/2}}2 |\chitau k|^2)  \wtau{k} \dd x$, exploiting the term $\nu\tau |\chitau{k}|^{\eta-2}\chitau{k}$ on the left-hand side
 of \eqref{sistemone3} \beo  which, tested by $\chitau{k}$, gives $\int_\Omega |\chitau{k}|^{\eta} \dd x
 $ on the left-hand side.

Since $\mathcal{R}_{k-1}$ is pseudo-monotone and coercive, \eo
the Leray-Lions type existence result of \cite[Chap.\ II, Thm.\
2.6]{roub-NPDE} applies, yielding the
existence of a solution $(\wtau k, \utau k, \chitau k)$ (whose dependence on  the parameters $M$ and $\nu$   
is not highlighted, for simplicity)
to \eqref{eq-discr-w-TRUNC}--\eqref{eq-discr-chi-TRUNC}.

\paragraph{\bf Step $3$: proof of the strict positivity \eqref{strict-pos-wk}.}
Observe first  that,  for $\wtau k$ solving \eqref{eq-discr-w-TRUNC}--\eqref{eq-discr-chi-TRUNC} the strict positivity
\eqref{strict-pos-wk} holds   for $k=0$  with $\underline\teta:=\teta_*$  due to \eqref{datoteta}.
 In order to prove that $\wtau{k} \geq \underline{\teta}>0$ a.e. in
$\Omega$,  for every $k\geq 1$, we proceed  in the same spirit of the proof of the strict positivity of $\teta$ in Sec.\ \ref{s:aprio} (cf. also \cite[Sec. 5.2]{kr-ro}). Namely,  we start by deducing  from \eqref{eq-discr-w}  that
\begin{equation}\label{ineqtetak}
\io \frac{\wtau{k} -\wtau{k-1}}{\tau} w \dd x +\io
 \condu_M(\wtau{k})\nabla\wtau{k}\nabla w \dd x \geq -C \io (\wtau{k})^2w  \dd x \quad \text{ for every  $w \in W^{1,2}_+(\Omega)$}
\end{equation}
 (cf.\ \eqref{label-added} for $W^{1,2}_+(\Omega)$), where
$C$ is independent of $k$. We now consider the
 decreasing sequence  $\{v_k\}\subseteq \RR$ defined recursively  as
\begin{equation}\label{eqvk}
\frac{v_k-v_{k-1}}{\tau}=-Cv_k^2, \quad v_0={\teta}_*>0\,,
\end{equation}
where $C$ is the same constant of \eqref{ineqtetak}.
We write now \eqref{eqvk},  adding the term $-\dive(\condu_M(\teta_\tau^k)\nabla v_k)=0$,  in the form
\[
\frac{1}{\tau}\int_\Omega (v_k-v_{k-1}) w\dd x+\int_\Omega
\condu_M(\teta_\tau^k)\nabla v_k\cdot \nabla w\dd x=-C\int_\Omega v_k^2 w\dd x\quad \text{ for every $w
 \in W^{1,2}_+(\Omega)$}.
\]
Subtracting \eqref{ineqtetak} from \eqref{eqvk} and testing the
difference by $w=H_\e(v_k-\teta_k)$, where
\[H_\e(v)=\begin{cases}
0&\quad \hbox{if }v\leq 0\\
v/\e&\quad \hbox{if }v\in (0,\e)\\
1&\quad \hbox{if }v\geq \e
\end{cases}
\]
we obtain, since $v_k<v_{k-1}$ that
\begin{equation}
\label{boh}
\io\left((v_k-v_{k-1})-(\teta^k_\tau-\teta^{k-1}_\tau)\right)H_\e(v_k-\teta^k_\tau) \dd x \leq
0\,.
\end{equation}
Assume now that $\teta_\tau^{k-1}\geq v_{k-1}$  a.e.\ in $\Omega$  (which is true for $k=1$). Taking $\e\searrow 0$, \eqref{boh} yields $\teta_\tau^k\geq v_k$
 a.e.\ in $\Omega$,
 and, by induction,  $\teta^k_\tau\geq v_k>v_{K_\tau}$   a.e.\ in $\Omega$   for every $k=1, \dots, K_{\tau}$.
  We now prove that
there exists $\underline\teta>0$ such  that
 $v_{K_\tau} \geq \underline \teta$ a.e.\ in $\Omega.$ To this aim, observe that $v_{K_\tau}$
 rewrites as
  $v_{K_\tau}=G^{-1}(G(v_{K_\tau}))$, where $G(z):=-\int_z^{v_0}\frac{1}{s^2}\dd s$ is \beruno monotonically \eruno increasing on $(0,v_0]$, $G(0+)=-\infty$, $G(v_0)=0$, hence, by the mean value theorem, for  every $k=1, \ldots, K_\tau$  there exists $s_k\in [v_{k}, v_{k-1}]$ such that
\[
\frac{G(v_k)-G(v_{k-1})}{v_k-v_{k-1}}=G'(s_k)=\frac{1}{s_k^2}\leq \frac{1}{v_k^2},
\]
from which we deduce, using \eqref{eqvk},
\[
\frac{G(v_k)-G(v_{k-1})}{-C\tau v_k^2}\leq \frac{1}{v_k^2}
\Longrightarrow G( v_{K_\tau} )\geq -C\tau K_{\tau}\,,
\]
\beo where the implication is also due to the fact that $G(v_0)=0$. \eo
 Hence, we get
\begin{equation}\label{low-bou-discr}
\teta_\tau^k>v_{K_\tau}=G^{-1}(G(v_{K_\tau}))\geq G^{-1}(-C\tau K_{\tau})=G^{-1}(-CT)=:\underline\teta(T).
\end{equation}
Thus, we conclude \eqref{strict-pos-wk} with $\underline\teta=G^{-1}(-CT)$.

\paragraph{\bf Step $4$: passage to the limit as $M \to \infty$.}
We now pass to the limit in
\eqref{eq-discr-w-TRUNC}--\eqref{eq-discr-chi-TRUNC}
 as $M \to \infty$, for
  $\nu>0$ fixed.
  In this framework, we will denote by $(\teta_M,\uu_M,\chi_M)$ the   solutions of \eqref{eq-discr-w-TRUNC}--\eqref{eq-discr-chi-TRUNC},
  with $(\wtau{k-1},\utau{k-1}, \chitau{k-1})$ given and $\nu>0$ fixed.
 First of all, we derive a bunch of estimates for $(\teta_M,\uu_M, \chi_M)_M, $ holding for constants independent of $M>0$
 (but possibly depending on $\tau>0$,  as well as  on norms of  $(\wtau{k-1},\utau{k-1}, \chitau{k-1})$).

We test \eqref{eq-discr-w-TRUNC} by $1$,  \eqref{eq-discr-u-TRUNC}
\beo by $\frac{\uu_M -\utau{k-1}}{\tau}$,
\eqref{eq-discr-chi-TRUNC} by $\frac{\chi_M - \chitau{k-1}}{\tau}$, \eo
and add the resulting relations. Taking into account all
cancellations, conditions \eqref{bulk-force}--\eqref{datochi}, as
well as the fact that the Yosida approximation
$\widehat{\alpha}_{\nu}$   of $\widehat{\alpha}=I_{(\infty,0]}$  is
a  positive function,
 we obtain that
\begin{equation}
\label{prelim-bound}
\exists\, C >0 \ \forall\, M>0\, : \quad
\| \teta_M\|_{L^1(\Omega)} + \| \uu_M\|_{H^1(\Omega;\R^d)}   +  \nu^{1/\eta}  \| \eps (\uu_M)\|_{L^{\eta}(\Omega;\R^{d\times d})}   + \| \chi_M\|_{W^{1,p}(\Omega)}\leq C.
\end{equation}
\beo We now introduce the notation
\[
\mathcal{S}_M:= \{ x \in \Omega\, : \  \teta_M (x) \leq M \}, \qquad
\mathcal{O}_M:= \Omega \setminus \mathcal{S}_M.
\]
In view of Markov's inequality and of estimate \eqref{prelim-bound},
we have that
\begin{equation}
\label{meas-converg}
|\mathcal{O}_M| \leq \int_{\mathcal{O}_M} \frac{\teta_M}{M} \dd x
\leq \frac1{M} \| \teta_M\|_{L^1(\Omega)} \leq \frac CM \to 0 \text{
as } M \to \infty.
\end{equation}
\eo

We now  test \eqref{eq-discr-w-TRUNC} by $\mathcal{T}_M (\teta_M)$. Observing that
\[
\left.
\begin{array}{ll}
 & \condu_M(\teta_M) \nabla \teta_M \nabla (\mathcal{T}_M (\teta_M)) = \condu (\mathcal{T}_M (\teta_M)) |\nabla (\mathcal{T}_M (\teta_M)))|^2
\\
&
\teta_M \mathcal{T}_M (\teta_M) \geq  |\mathcal{T}_M (\teta_M) |^2
\end{array}
\right\}
\quad \text{a.e.\ in } \Omega,
\]
we get
\begin{equation}
\label{calc-prelim1}
\begin{aligned}
\frac1\tau \int_\Omega  |\mathcal{T}_M (\teta_M) |^2\dd x + \int_\Omega   \condu (\mathcal{T}_M (\teta_M)) |\nabla (\mathcal{T}_M (\teta_M)))|^2
\dd x
 & \leq \int_\Omega |\gtau{k} + \wtau{k-1}| | \mathcal{T}_M (\teta_M) |  \dd x + \int_{\partial\Omega} \htau{k}  | \mathcal{T}_M (\teta_M) |  \dd S
\\ & \quad + \int_\Omega  |\elltaum{k}|  |\mathcal{T}_M (\teta_M)|^2    \dd x  + \int_\Omega | \jtaum{k}|  |\mathcal{T}_M (\teta_M)| \dd x
\end{aligned}
\end{equation}
with the place-holders
\begin{align}\no
\elltaum{k}:= &-  \frac{\chi_M -\chitau{k-1}}\tau -  \rho\dive\left(\frac{\uu_M-\utau{k-1}}{\tau}\right),\,\\
\no
\jtaum{k}:= &a( \chitau{k-1} )\eps\left(\frac{\uu_M-\utau{k-1}}{\tau}\right)\vism\eps\left(\frac{\uu_M-\utau{k-1}}{\tau}\right)+\left|\frac{\chi_M
-\chitau{k-1}}{\tau}\right|^2 +   \frac{\tau^{1/2}}{2} \left|\frac{\chi_M
-\chitau{k-1}}{\tau}\right|^2 .
\end{align}
We now deal with the second term on  the left-hand side of
\eqref{calc-prelim1}   in the same way as in the proof of
\cite[Thm.\ 2]{rocca-rossi-deg} (see also \cite[Rmk.\
2.10]{rocca-rossi-deg} and \cite{gmrs}). In fact, combining the
growth condition \eqref{hyp-K} on $\condu$ with  the Poincar\'e
inequality \eqref{poincare-type}, and taking into account estimate
\eqref{prelim-bound}, we deduce that
\begin{equation}
\label{2nd-label-added}
\begin{aligned}
  \exists\, c,\, C>0 \ \ \forall\, M>0 \, :  \ \
  &  \int_\Omega
\condu (\mathcal{T}_M (\teta_M)) |\nabla (\mathcal{T}_M
(\teta_M)))|^2 \dd x
 \\
 & \quad
 \geq c  \|\nabla (\mathcal{T}_M (\teta_M)))\|_{L^2(\Omega;\R^d)}^2  + \|   \mathcal{T}_M (\teta_M)\|_{L^{3\kappa +6}(\Omega)}^{\kappa +2}- C.
 \end{aligned}
\end{equation}
Let us now consider the terms on he right-hand side of
\eqref{calc-prelim1}. We have
\[
\begin{aligned}
\int_\Omega  |\elltaum{k}|  |\mathcal{T}_M (\teta_M)|^2    \dd x &\leq \| \elltaum{k}\|_{L^2(\Omega)} \|  \mathcal{T}_M (\teta_M)\|_{L^3(\Omega)}
 \|  \mathcal{T}_M (\teta_M)\|_{L^6(\Omega)}\\
  & \leq \frac{c}4  \|\nabla (\mathcal{T}_M (\teta_M)))\|_{L^2(\Omega;\R^d)}^2   + C  \|  \mathcal{T}_M (\teta_M)\|_{L^3(\Omega)}^2
 \\
 &
 \leq \frac{c}2  \|\nabla (\mathcal{T}_M (\teta_M)))\|_{L^2(\Omega;\R^d)}^2   + C'    \|  \mathcal{T}_M (\teta_M)\|_{L^1(\Omega)}^2,
\end{aligned}
\]
where  we have used that $\sup_M \| \elltaum{k}\|_{L^2(\Omega)} \leq
C$ thanks to \eqref{prelim-bound}. The last inequality   with
different constant $C'$
 follows from
the fact  that $H^1(\Omega) \Subset L^3(\Omega) \subset
L^1(\Omega)$, yielding that for all $\rho>0$ there exists $C_\rho>0$
such that $  \|  \mathcal{T}_M (\teta_M)\|_{L^3(\Omega)} \leq \rho
\|  \mathcal{T}_M (\teta_M)\|_{H^1(\Omega)} + C_\rho  \|
\mathcal{T}_M (\teta_M)\|_{L^1(\Omega)} $. In the same way, estimate
\eqref{prelim-bound} ensures that $\| \jtaum{k}
\|_{L^2(\Omega)} \leq C$, whence
\[
\int_\Omega | \jtaum{k}|  |\mathcal{T}_M (\teta_M)| \dd x \leq  C\|\mathcal{T}_M (\teta_M)\|_{L^2(\Omega)}.
\]
 All in all, from
\eqref{calc-prelim1}, taking into account \eqref{prelim-bound} and conditions \eqref{heat-source} and \eqref{dato-h} on $g$ and $h$,
we deduce that
\begin{equation}
\label{second-bound} \exists\, C>0 \ \ \forall\, M>0 \, : \qquad
\|\mathcal{T}_M (\teta_M)\|_{H^1(\Omega)} + \|   \mathcal{T}_M
(\teta_M)\|_{L^{3\kappa +6}(\Omega)}\leq C,
\end{equation}
where the bound for $\|   \mathcal{T}_M (\teta_M)\|_{L^{3\kappa
+6}(\Omega)}$ is due to \eqref{2nd-label-added}.

Let us finally  test \eqref{eq-discr-w-TRUNC} by $\teta_M$.  We rely
on the coercivity  \eqref{ellipticity-retained} of
$\mathcal{A}_M^k$, \beo  and on the previously obtained estimates
\eqref{prelim-bound} and \eqref{second-bound}, and
 we  use  essentially the same arguments as for
 treating \eqref{calc-prelim1}, estimating the terms $\elltaum{k}$ and $\jtaum{k}$ by means of
 \eqref{prelim-bound}. This leads to \eo
 \begin{equation}
\label{est-M-tetaM}
 \sup_{M>0}
 \left(
 \|
 \teta_M \|_{H^1(\Omega)} + \|\teta_M \|_{L^{3\kappa +6}(\mathcal{S}_M)} \right)\leq
 C,
 \end{equation}
\beo  cf.\ \eqref{2nd-label-added} for the bound on $ \|\teta_M \|_{L^{3\kappa
 +6}(\mathcal{S}_M)}$. \eo

In the end, it remains to estimate the terms  $\alpha_{\nu} ((\chi_M -\chitau{k-1})/\tau)$,  $A_p(\chi_M)$ and $\xi_M$ in
\eqref{eq-discr-chi}. First of all, we may suppose that the terms $A_p (\chitau{k-1})$, $\xitau{k-1}\in \beta(\chitau{k-1})$ from the previous step are
bounded in $L^2(\Omega)$ by a constant independent of $M$. Then, we test \eqref{eq-discr-chi} by $(A_p(\chi_M) -A_p(\chitau{k-1} )
+ (\xi_M -\xitau{k-1})) $, thus obtaining
\[
\begin{aligned}
&
\int_\Omega \lambda_{M} (A_p(\chi_M) -A_p(\chitau{k-1})
+ \xi_M -\xitau{k-1}) \dd x +
\| A_p(\chi_M) +\xi_M \|_{L^2(\Omega)}^2
\\&
 =
 \int_\Omega (A_p(\chi_M) +\xi_M) (A_p(\chitau{k-1}) + \xitau{k-1}) \dd x+
 \int_\Omega \mu_{M}  (A_p(\chi_M) -A_p(\chitau{k-1})
+ \xi_M -\xitau{k-1}) \dd x  \doteq I_1 +I_2.
\end{aligned}
\]
Here, we have used the place-holders $\lambda_M:= (\chi_M - \chitau{k-1})/\tau
 + \sqrt{\tau}   (\chi_M - \chitau{k-1})/\tau +  \alpha_{\nu}  ((\chi_M -\chitau{k-1})/\tau)$ and $\mu_M:= \teta_M  -
 b'(\chi_M)  \frac{\eps(\utau{k-1})\elm
\eps(\utau{k-1})}2 - \gamma(\chi_M)   -  \nu (\chi_M)^{\eta-2}\eta  $. With monotonicity arguments, we see that the first integral on the left-hand side is positive. We estimate
\[
I_1 \leq \frac12 \| A_p(\chi_M) +\xi_M \|_{L^2(\Omega)}^2  + \frac12\| A_p(\chitau{k-1}) + \xitau{k-1} \|_{L^2(\Omega)}^2\,.
\]
It follows from the estimates on $\utau{k-1},\, \chitau{k-1}$, from
\eqref{prelim-bound} for $\chi_M$,   and from \eqref{est-M-tetaM}
for $\teta_M$  that $\| \mu_M\|_{L^2(\Omega)} \leq C$ for a constant
independent of $M>0$. Therefore we have
\[
I_2 \leq \frac14  \| A_p(\chi_M)+\xi_M \|_{L^2(\Omega)}^2+ \frac14
\| A_p(\chitau{k-1}) + \xitau{k-1} \|_{L^2(\Omega)}^2  +C\,.
\]
  With this, we conclude that
  $ \| A_p(\chi_M) +\xi_M \|_{L^2(\Omega)} \leq C$ for a constant independent of $M$. By the monotonicity of the operator $\beta$
    \berdue (cf.\ \cite[Lemma 3.3]{akagi}), \erdue  we find
  $ \| A_p(\chi_M)\|_{L^2(\Omega)} \leq C$ and $\|\xi_M \|_{L^2(\Omega)} \leq C$. Then, a comparison argument in \eqref{eq-discr-chi} yields
 \begin{equation}
\label{comparison-estim-chim}
\mu \left\| \alpha_{\nu} \left( \frac{\chi_M -\chitau{k-1}}{\tau}  \right) \right\|_{L^2(\Omega)}
+\| A_p(\chi_M) \|_{L^2(\Omega)} + \|\xi_M\|_{L^2(\Omega)} \leq C.
\end{equation}

Standard compactness arguments together with \eqref{est-M-tetaM} imply that
there exists $\teta \in H^1(\Omega)$ such that,
up to a (not relabeled) subsequence,
\begin{equation}
\label{H1weak}
\teta_M \weakto \teta \ \text{ in } H^1(\Omega),
\qquad \teta_M \to \teta \ \text{ in } L^{q} (\Omega)  \text{ for all }
q< \begin{cases}
\infty & \text{if } d=2,
\\
6 & \text{if } d=3.
\end{cases}
\end{equation}
In particular, $\teta_M \to \teta$ in measure. Combine this with
\eqref{meas-converg} we infer that $ \mathcal{T}_M (\teta_M) \to
\teta$ in measure. Therefore,  in view of estimate
\eqref{second-bound} and of the Egorov theorem we ultimately have
that
\begin{equation}
\label{convergences-troncata}
\teta \in L^{3\kappa +6}(\Omega), \quad  \mathcal{T}_M (\teta_M) \weakto \teta \text{ in } H^1(\Omega) \cap L^{3\kappa +6}(\Omega),
 \quad \mathcal{T}_M (\teta_M) \to \teta  \text{ in } L^q(\Omega) \text{ for all } 1\leq q< 3\kappa +6.
\end{equation}
Therefore, taking into account the growth condition \eqref{hyp-K} for $\condu$, we have
\[
\condu_M (\teta_M) = \condu ( \mathcal{T}_M (\teta_M))  \to \condu(\teta)  \ \text{in } L^q(\Omega) \text{ for all } 1\leq q< 3 +\frac{6}\kappa.
\]
We combine this with the fact that $\nabla \teta_M \weakto \nabla
\teta $ in  $L^2(\Omega;\RR^d)$. On the one hand, we  infer
 that  for some sufficiently big $s>0$,
$\mathcal{A}_M^k (\teta_M)$  weakly converges
 in the space $W^{1,s}(\Omega)^*$
 to the operator  $\widetilde{\mathcal{A}}^k(\teta)$ defined by
 $\pairing{}{W^{1,s}(\Omega)}{\widetilde{\mathcal{A}}^k(\teta)}{v}: = \int_\Omega \condu (\teta) \nabla \teta \nabla v \dd x - \int_{\partial\Omega} h_\tau^k v \dd x $
 for all $v \in W^{1,s}(\Omega)$.
 On the other hand,  a comparison in
 \eqref{eq-discr-w-TRUNC} shows that
 $(\mathcal{A}_M^k (\teta_M) )_M$ is bounded in $H^1(\Omega)^*$.
 Therefore, it is not difficult to infer that  the operator  $\widetilde{\mathcal{A}}^k(\teta)$  extends to $H^1(\Omega)$
and coincides with the operator $\mathcal{A}^k$ from \eqref{spaces},
and that
 \begin{equation}
\label{convergences-condu} \mathcal{A}_M^k (\teta_M)\weakto
\mathcal{A}^k (\teta) \quad \text{in } H^1(\Omega)^* \quad \text{as
} M \to \infty.
\end{equation}
This allows us to pass to the limit in the elliptic operator in
\eqref{eq-discr-w-TRUNC}. Let us now comment the limit passage in
the other nonlinear terms featuring in
\eqref{eq-discr-w-TRUNC}--\eqref{eq-discr-chi-TRUNC}.

From estimates \eqref{prelim-bound} and \eqref{comparison-estim-chim} we also deduce that
there exist $\uu,\, \chi, \, \xi $ and, if $\mu=1$, $ \zeta $  such that, up to a subsequence,
$\uu_M \weakto \uu$ in  $W_0^{1,\eta} (\Omega;\R^d)$,   $\chi_M\to \chi $ in $W^{1,p}(\Omega)$ (this follows from the fact that
$(\chi_M)_M$ is bounded in $W^{1+\sigma,p }(\Omega)$ for all $0 <\sigma < \frac1p$ by  \cite[Thm.\ 2, Rmk.\ 2.5]{savare98}),
$\xi_M \weakto \xi $ in $L^2(\Omega)$, and, if $\mu=1$,
$   \alpha_{\nu} ( (\chi_M -\chitau{k-1})/\tau )  \weakto  \zeta $ in $L^2(\Omega)$.
  By the strong-weak closedness in the sense of graphs of $\alpha_\nu$ (viewed as a maximal monotone graph in
$L^2(\Omega ) \times L^2(\Omega ) $), we infer, in the case $\mu=1$,  that $\zeta =\alpha_\nu  ( (\chitau{k} -\chitau{k-1})/\tau )$
a.e.\ in $\Omega$. Analogously,  the  strong-weak closedness property of  $\beta$  yields that $\xi \in \beta(\chi)$.

Combining  the above convergences with
\eqref{convergences-troncata}--\eqref{convergences-condu} we
conclude that the functions $\teta, \, \uu,\, \chi, \, \xi, \zeta $
fulfill  a.e. in $\Omega$
\[
 \frac{\chi -\chitau{k-1}}{\tau}   + \sqrt{\tau} \frac{\chi -\chitau{k-1}}{\tau} + \mu \alpha_\nu  ( (\chi -\chitau{k-1})/\tau )  + A_p(\chi) + \xi+\gamma(\chi)   + \nu |\chi|^{\eta-2}\chi   = -
 b'(\chi) \frac{\eps(\utau{k-1})\elm
\eps(\utau{k-1})}2 + \teta
\]
as well as
\begin{align}
& \label{eq-discr-w-TRUNC-half} \frac{\teta -\wtau{k-1}}{\tau} +
\frac{\chi -\chitau{k-1}}{\tau}
\teta  +
\rho\dive\left(\frac{\uu-\utau{k-1}}{\tau}\right) \teta
+  \mathcal{A}^k(\teta)
\\
& \qquad \qquad
   = \gtau{k}
+ a( \chitau{k-1}  )\Lambda_k+\left|\frac{\chi
-\chitau{k-1}}{\tau}\right|^2 + \frac{\tau^{1/2}}{2} \left|\frac{\chi
-\chitau{k-1}}{\tau}\right|^2 \quad \text{in $H^1(\Omega)^*$,}
\\
& \label{eq-discr-u-TRUNC-half} \frac{\uu -2\utau{k-1} +
\utau{k-2}}{\tau^2} + \opj{ a(\chitau{k-1} )}{\frac{\uu-\utau{k-1}}{\tau}} +
\oph{b(\chi)}{\uu}  + \ciro(\teta ) - \nu  \mathrm{div}(\Gamma_k)
= \ftau{k}
  \quad \text{in }  W_0^{1,\eta}(\Omega;\R^d)^*,
\end{align}
where $\Lambda_k$ denotes the weak limit of $\eps\left(\frac{\uu_M-\utau{k-1}}{\tau}\right)\vism\eps\left(\frac{\uu_M-\utau{k-1}}{\tau}\right)$ in
$L^2(\Omega)$, and $\Gamma_k$  stands for the weak limit of $|\eps(\uu_M)|^{\eta-2} \mathbb{I} \eps(\uu_M )$ in $L^{\eta/(\eta-1)}(\Omega;\R^d)$. In order to identify them, it is sufficient to test
\eqref{eq-discr-u-TRUNC} by $\uu_M$ and show that
\begin{equation}
\label{limsup-arg} \begin{aligned} \limsup_{M\to \infty}
\pairing{}{W^{1,\eta}( \Omega;\R^d)}{-\mathrm{div}
(|\eps(\uu_M)|^{\eta-2} \mathbb{I} \eps(\uu_M ))}{\uu_M} &  =
\limsup_{M\to \infty} \int_\Omega |\eps(\uu_M)|^\eta \dd x \\ & \leq
\pairing{}{W^{1,\eta}( \Omega;\R^d)}{-\mathrm{div} (\Gamma_k)}{\uu},
\end{aligned}
\end{equation}
which we  can do, exploiting that $\uu$ solves
\eqref{eq-discr-u-TRUNC-half}. This enables us to conclude that
$\Gamma_k = -\mathrm{div} (|\eps(\uu)|^{\eta-2} \mathbb{I} \eps(\uu
))$ and that $\uu_M \to \uu$ \emph{strongly} in
$W^{1,\eta}(\Omega;\R^d)$. The latter convergence clearly allows us
to  conclude that $ \Lambda_k  =
\eps\left(\frac{\uu-\utau{k-1}}{\tau}\right)\vism\eps\left(\frac{\uu-\utau{k-1}}{\tau}\right)$.
All in all,  the triple
 $(\teta,\uu,\chi)$ solves the  system
\begin{align}
& \label{eq-discr-w-TRUNC-approx} \frac{\teta -\wtau{k-1}}{\tau} +
\frac{\chi -\chitau{k-1}}{\tau}
\teta  +
\rho\dive\left(\frac{\uu-\utau{k-1}}{\tau}\right)\teta
+  \mathcal{A}^k(\teta)   = \gtau{k} \\
\no
&\qquad\qquad\qquad\qquad+
a( \chitau{k-1}  )\eps\left(\frac{\uu-\utau{k-1}}{\tau}\right)\vism\eps\left(\frac{\uu-\utau{k-1}}{\tau}\right)+\left( 1+ \frac{\tau^{1/2}}{2}\right)\left|\frac{\chi
-\chitau{k-1}}{\tau}\right|^2\quad \text{in $H^1(\Omega)^*$,}
\\
& \label{eq-discr-u-TRUNC-approx}
\begin{aligned}
&
\frac{\uu -2\utau{k-1} +
\utau{k-2}}{\tau^2} + \opj{ a(\chitau{k-1} )}{\frac{\uu-\utau{k-1}}{\tau}} +
\oph{b(\chi)}{\uu}  + \ciro(\teta ) - \nu  \mathrm{div}(|\eps(\uu)|^{\eta-2} \mathbb{I} \eps(\uu))
= \ftau{k} \\
& \hspace{12cm} \text{in } W_0^{1,\eta}(\Omega;\R^d)^*,
\end{aligned}
\\
& \label{eq-discr-chi-TRUNC-approx}
\begin{aligned}
 & (1+\sqrt \tau)\frac{\chi -\chitau{k-1}}{\tau}  + \mu \alpha_{\nu}   \left( \frac{\chi -\chitau{k-1}}{\tau}  \right)
+ A_p(\chi) + \xi +\gamma(\chi)   +  \nu | \chi|^{\eta-2}\chi \ni -
b'(\chi  )\frac{\eps(\utau{k-1})\elm
\eps(\utau{k-1})}2 + \teta \\
&\hspace{12cm}\aein\, \Omega\,,
  \end{aligned}
\end{align}
with  $\xi \in \beta(\chi)$  a.e.\ in $\Omega$.
 It follows from Step $3$ and convergences
\eqref{H1weak} that
$\teta$ also fulfills the strict positivity property  \eqref{strict-pos-wk}.
 \paragraph{\bf Step $5$: passage to the  limit as $\nu \to 0$.}
We now pass to the limit in
\eqref{eq-discr-w-TRUNC-approx}--\eqref{eq-discr-chi-TRUNC-approx}
 as $\nu \to 0$.  We denote
   by $(\teta_\nu,\uu_\nu,\chi_\nu)$ the   solutions of \eqref{eq-discr-w-TRUNC-approx}--\eqref{eq-discr-chi-TRUNC-approx}
   and, as before, obtain a series of estimates independent of the parameter $\nu$.

First, we test \eqref{eq-discr-w-TRUNC-approx} by $1$,
\eqref{eq-discr-u-TRUNC-approx}  \beo by $\frac{\uu_\nu
-\utau{k-1}}{\tau}$, \eqref{eq-discr-chi-TRUNC-approx} by
$\frac{\chi_\nu - \chitau{k-1}}{\tau}$, \eo   and add the resulting
relations. We thus conclude that
\begin{equation}
\label{prelim-bound-nu}
\exists\, C >0 \ \forall\, \nu>0\, : \quad
\| \teta_\nu\|_{L^1(\Omega)} + \| \uu_\nu\|_{H^1(\Omega;\R^d)}   +  \nu^{1/\eta}  \| \eps (\uu_\nu)\|_{L^{\eta}(\Omega;\R^d)}   + \| \chi_\nu\|_{W^{1,p}(\Omega)}\leq C.
\end{equation}

Second, we test \eqref{eq-discr-w-TRUNC-approx} by  $\teta_\nu^{\alpha-1}$, with $\alpha \in (0,1)$.
With the very same calculations as for the \emph{Second a priori estimates}, cf.\ also the proof of Prop.\
\ref{prop:discrete-aprio} ahead, we conclude that
(cf.\ \eqref{calc2.1}) that
\[
c\int_\Omega \condu(\teta_\nu) |\nabla \teta_\nu^{\alpha/2}|^2 \dd x + c\int_\Omega \left| \eps\left(  \frac{\uu_\nu-\utau{k-1}}{\tau} \right) \right|^2
\teta_\nu^{\alpha-1}\dd x + c \int_\Omega \left| \frac{\chi_\nu - \chitau{k-1}}\tau \right|^2
\teta_\nu^{\alpha-1}\dd x \leq C + C\int_\Omega \teta_\nu^{\alpha+1} \dd x
\]
whence, with the same arguments as throughout
\eqref{stima-aux1}--\eqref{additional-info}, we arrive at
$\int_\Omega |\nabla \teta_\nu^{(\kappa+\alpha)/2}|^2 \dd x \leq C$
for a constant independent of $\nu$. Then, choosing $\alpha \in
 (1/2, 1)$ such that     $\kappa+\alpha \geq 2$,
 we conclude that
\begin{equation}
\label{H1-discre-teta}
\| \teta_\nu\|_{H^1(\Omega)} \leq C
\end{equation}
and, again arguing via the nonlinear Poincar\'e inequality
\eqref{poincare-type}, we also have that
\begin{equation}
\label{K-tetanu}
\| \teta_\nu^{(\kappa+\alpha)/2} \|_{H^1(\Omega)} \leq C\,.
\end{equation}

We then test \eqref{eq-discr-chi-TRUNC-approx} by $(A_p(\chi_\nu ) - A_p (\chitau{k-1}) + \xi_\nu - \xitau{k-1})$ and, arguing in the very same way as in Step $4$, conclude that
 \begin{equation}
\label{comparison-estim-chinu}
\mu \left\| \alpha_{\nu} \left( \frac{\chi_\nu -\chitau{k-1}}{\tau}  \right) \right\|_{L^2(\Omega)}
+\| A_p(\chi_\nu) \|_{L^2(\Omega)} + \|\xi_\nu\|_{L^2(\Omega)} \leq C.
\end{equation}

We can now pass to the limit in system
\eqref{eq-discr-w-TRUNC-approx} --\eqref{eq-discr-chi-TRUNC-approx}
as $\nu \down 0$.  It follows from the previously proved a priori
estimates  and from the same arguments as in Step $4$  that,
along a (not relabeled) subsequence, $\uu_\nu \weakto \uu$ in
$H_0^1(\Omega;\R^d)$, $\chi_\nu \to \chi$ in $W^{1,p}(\Omega)$, and
$\teta_\nu \weakto \teta$ in $H^1(\Omega)$. Using these
convergences, it is not difficult to pass to the limit in
\eqref{eq-discr-u-TRUNC-approx} and conclude that $\uu$ fulfills
\eqref{eq-discr-u},  \beo with  test functions in
$W_0^{1,\eta}(\Omega;\R^d)$. We  then conclude \eqref{eq-discr-u}
with test functions in $H_0^1(\Omega;\R^d)$ by a density argument. \eo

 With the same
argument as in Step $4$  (cf.\ \eqref{limsup-arg}), testing
\eqref{eq-discr-u-TRUNC-approx} by $\uu_\nu$ we conclude that
\[
\limsup_{\nu \to 0} \int_\Omega \eps(\uu_\nu) \elm \eps(\uu_\nu)  \dd x \leq \int_\Omega \eps(\uu) \elm \eps(\uu)  \dd x,
\]
yielding that $\uu_\nu \to \uu$ \emph{strongly} in $H^1 (\Omega;\R^d)$. Therefore,
\begin{equation}
\label{strong-L1-unu}
 a(\chitau{k-1}  )\eps\left(\frac{\uu_\nu-\utau{k-1}}{\tau}\right)\vism\eps\left(\frac{\uu_\nu-\utau{k-1}}{\tau}\right) \to
  a(\chitau{k-1}  )\eps\left(\frac{\uu-\utau{k-1}}{\tau}\right)\vism\eps\left(\frac{\uu-\utau{k-1}}{\tau}\right) \qquad \qquad \text{in } L^1(\Omega).
\end{equation}
We use this information to pass to the limit in  the heat
equation
 \eqref{eq-discr-w-TRUNC-approx}. Moreover,
 estimate \eqref{K-tetanu} allows us to conclude that, up to a subsequence,
$\teta_\nu^{(\kappa+\alpha)/2} \weakto \teta^{(\kappa+\alpha)/2} $ in $H^1(\Omega)$, hence
$\teta_\nu^{(\kappa+\alpha)/2} \to \teta^{(\kappa+\alpha)/2}$ in $L^{6-\epsilon}(\Omega)$ for all $\epsilon>0$, whence, taking into account the growth condition on $\condu$, that
\[
\condu(\teta_\nu) \to \condu(\teta) \qquad \text{in } 
 L^\gamma(\Omega)  \quad \text{with } \gamma =
 \frac{(6-\epsilon)(\kappa+\alpha)}{2\kappa}  \quad \text{for all }
\epsilon>0.
\]
This allows us to pass to the limit in the term $\condu(\teta_\nu)
\nabla \teta_\nu$, tested  against   $v \in W^{1,s}(\Omega)$ for
some sufficiently big $s>0$.  All in all, we infer that
$(\teta,\uu,\chi)$ satisfies  \eqref{eq-discr-w} in some dual space
$W^{1,s}(\Omega)^*$, such that, also, $W^{1,s}(\Omega) \subset \
 L^\infty(\Omega) $ in accord with the $L^1$-convergence
\eqref{strong-L1-unu}. Finally, we pass to the limit in  the
flow rule  \eqref{eq-discr-chi-TRUNC-approx}. Due to estimate
\eqref{comparison-estim-chinu}, we have that there exist $\xi \in
L^2(\Omega)$ and, if $\mu=1$, $\zeta \in L^2(\Omega)$ such that
\[
 \alpha_{\nu} \left( \frac{\chi_\nu -\chitau{k-1}}{\tau}  \right)  \weakto \zeta, \qquad \xi_\nu \weakto \xi \qquad \text{in } L^2(\Omega).
\]
The strong-weak closedness of $\beta$ yields that $\xi \in
\beta(\chi)$ a.e.\ in $\Omega$. In order to conclude that,
in the case $\mu=1$, $\zeta \in \alpha ((\chi-\chitau{k-1})/\tau)$
a.e.\ in $\Omega$, we show that
\[
\limsup_{\nu \down 0} \int_\Omega  \alpha_{\nu} \left( \frac{\chi_\nu -\chitau{k-1}}{\tau}  \right) \left( \frac{\chi_\nu -\chitau{k-1}}{\tau} \right) \dd x
\leq \int_\Omega \zeta \left(  \frac{\chi -\chitau{k-1}}{\tau}\right) \dd x
\]
and invoke well-knows results from the theory of maximal monotone operators.

All in all, we infer that $(\teta,\uu,\chi)$ solves system
\eqref{eq-discr-w}--\eqref{eq-discr-chi}, where  the heat
equation \eqref{eq-discr-w} is to be understood in
$W^{1,s}(\Omega)^*$.
\paragraph{\bf Step $6$: $H^2(\Omega;\R^d)$-regularity for $\utau{k}$ and conclusion.} \beo We
follows the steps of the regularity argument in the proof of
\cite[Lemma 4.1]{HR}
 proceed
by induction and suppose that $\uu_\tau^{k-1} \in H_{\mathrm{Dir}}^2
(\Omega;\R^d)$.   First of all, we rewrite  the discrete momentum
equation  \eqref{eq-discr-u} in the following form
            \begin{align*}
                \int_\Omega \big(\tau a(\chi_\tau^{k-1})\vism+\tau^2 b(\chi_\tau^k)\elm\big)\varepsilon(\uu_\tau^k):\varepsilon(\zeta)\dd x
                =\int_\Omega {\bf h}_\tau^k\cdot\zeta\dd x\,,
            \end{align*}
            where $\zeta\in H_0^1(\Omega;\R^d)$ and the right-hand side
            (note that  $\uu_\tau^{k-1}\in  H_{\mathrm{Dir}}^2(\Omega;\R^d)$) is defined as
            $$
                {\bf h}_\tau^k:=-\uu_\tau^k-\tau a(\chi_\tau^{k-1})\vism\varepsilon(\uu_\tau^{k-1})-
                 \tau^2
                \ciro(\teta_\tau^k)+2\uu_\tau^{k-1}-\uu_\tau^{k-2} +  \tau^2 \mathbf{f}_\tau^k
                \in L^2(\Omega;\R^d).
            $$
            Condition \eqref{eqn:visc} shows
            \begin{align}
            \label{eqn:ellipticEq}
                \int_\Omega \big(\tau a(\chi_\tau^{k-1})\gamma+\tau^2 b(\chi_\tau^k)\big)\elm\varepsilon(\uu_\tau^k):\varepsilon(\zeta)\dd x
                =\int_\Omega {\bf h}_\tau^k\cdot\zeta\dd x.
            \end{align}
            Since the coefficient function $\tau a(\chi_\tau^{k-1})\gamma+\tau^2 b(\chi_\tau^k)\in W^{1,p}(\Omega)$ in \eqref{eqn:ellipticEq}
            is scalar-valued and bounded from below by a positive constant (see \eqref{data-a}), we get
            $
                \left(\tau a(\chi_\tau^{k-1})\gamma+\tau^2 b(\chi_\tau^k)\right)^{-1}\in W^{1,p}(\Omega).
            $
            Testing \eqref{eqn:ellipticEq} with $\zeta=\left(\tau a(\chi_\tau^{k-1})\gamma+\tau^2 b(\chi_\tau^k)\right)^{-1}\varphi$ where $\varphi\in H_0^1(\Omega;\R^d)$ is another test-function yields
            \begin{align}
            \label{eqn:ellipticEqTrans}
                \int_\Omega \elm\varepsilon(\uu_\tau^k):\varepsilon(\varphi)\dd x
                =\int_\Omega \widehat {\bf h}_\tau^k\cdot\varphi\dd x
            \end{align}
            with the new right-hand side
            \begin{align}
            \label{eqn:hatG}
                \widehat {\bf h}_\tau^k:=\frac{1}{\tau a(\chi_\tau^{k-1})\gamma+\tau^2 b(\chi_\tau^k)}{\bf h}_\tau^k
                    +\elm\varepsilon(\uu_\tau^k)\cdot\frac{\tau a'(\chi_\tau^{k-1})\gamma+\tau^2 b'(\chi_\tau^k)}{\tau a(\chi_\tau^{k-1})\gamma+\tau^2 b(\chi_\tau^k)}\nabla\chi_\tau^k\,.
            \end{align}
            Since $\nabla\chi_\tau^k\in L^p(\Omega;\R^d)$ and $\varepsilon(\uu_\tau^k)\in L^2(\Omega;\R^{d\times d})$, we get
            $\widehat {\bf h}_\tau^k\in L^{2p/(2+p)}(\Omega;\R^d)$.
            Now, we can refer to   the proof of \cite[Lemma 4.1]{HR}
              where an iteration argument
             leads to
$\widehat {\bf h}_\tau^k\in L^2(\Omega;\R^d)$. Then, the regularity
result \cite[Lemma 3.2]{necas} yields   $\uu_\tau^k\in
H_{\mathrm{Dir}}^2(\Omega;\R^d)$  as desired. \eo

 In the end, exploiting that $\utau{k} \in H^2(\Omega;\R^d)$,   a comparison argument in the heat equation allows us to conclude that
$\int_\Omega \condu(\teta) \nabla \teta \cdot \nabla v \dd x$ is well defined for all test functions $v \in H^1(\Omega)$, hence \eqref{eq-discr-w}
is solved in $H^1(\Omega)^*$.
\end{proof}
\begin{remark}
\label{rmk:still-pos}
\upshape
In the case $\mu=1$, as mentioned  in Remark \ref{rmk:discrete-features}, the discrete $\chi$-equation could be decoupled from the discrete equations for $\teta$ and $\uu$, cf.\ \eqref{chi-discre-irrv}. This would lead to having the term $\frac{\chitau k - \chitau{k-1}}{\tau} \wtau{k-1}$. The argument for the strict positivity of $\wtau k$ in Step $3$ in this case would not go through. Nonetheless, it would be possible to prove that $\wtau k \geq 0$ a.e.\ in $\Omega$, by testing the discrete heat equation by $-(\wtau k)^-$, and using that
$
\int_\Omega \frac{\chitau k - \chitau{k-1}}{\tau} \wtau{k-1} (-(\wtau k)^-) \dd x \geq 0
$
since $\chitau k \leq \chitau{k-1}$ a.e.\ in $\Omega$.
\end{remark}
\begin{remark}
\label{rmk:2parameters}
\upshape
 We briefly comment on
the reason why we need to perform two distinct passages to the limit in the proof of Lemma \ref{lemma:ex-discr}.
As the above proof shows, in the passage to limit as $\nu \to 0$ we \beruno loose \eruno the information that the right-hand side of the
equation for
$\teta$ is  estimated in  $L^2(\Omega)$. Hence, we need to carry out refined estimates on the $\teta$-equation (i.e., testing it by $\teta^{\alpha-1}$), where we fully exploit the growth of $\condu$ to carry out the related calculations.
Clearly, to do so we first have to pass to the limit with the truncation parameter.
\end{remark}

\subsection{Approximate entropy and total energy inequalities}
\label{ss:3.1bis}
Preliminarily, we establish the
\begin{notation}[Interpolants and discrete integration-by-parts formula]
\upshape
 Hereafter, for a given Banach space $B$ and a
$K_\tau$-tuple $( \mathfrak{h}_\tau^k )_{\beruno k=0\eruno}^{K_\tau}
\subset B$, we shall use the short-hand notation
\[
\dtau{k}{\mathfrak{h}}:= \frac{\mathfrak{h}_\tau^k-\mathfrak{h}_\tau^{k-1}}{\tau}, \qquad
\duetau{k}{\mathfrak{h}}:= \dtau{k}{\dtau{k}{\mathfrak{h}}}= \frac{\mathfrak{h}_\tau^k -2
\mathfrak{h}_\tau^{k-1} + \mathfrak{h}_\tau^{k-2}}{\tau^2}.
\]
We recall the well-known \emph{discrete by-part integration} formula
for all  \beruno $\{\mathfrak{h}_\tau^k \}_{\beruno
k=0\eruno}^{K_\tau} \subset B,\, \{ v_\tau^k \}_{\beruno
k=0\eruno}^{K_\tau} \subset B^*$ \eruno \beruno
\begin{equation}
\label{discr-by-part} \sum_{k=1}^{K_\tau} \tau
\pairing{}{B}{\vtau{k}}{\dtau{k}{\mathfrak{h}}} =
\pairing{}{B}{\vtau{K_\tau}}{\btau{K_\tau}}
-\pairing{}{B}{\vtau{\beruno0\eruno}}{\btau{0}} -\sum_{\beruno
k=1\eruno}^{K_\tau}\tau\pairing{}{B}{\dtau{k}{v} }{ \btau{k-1}}\,.
\end{equation}
\eruno

 We  introduce
 the left-continuous and  right-continuous piecewise constant, and the piecewise linear interpolants
 of the values  $\{ \mathfrak{h}_\tau^k
\}_{k=1}^{K_\tau}$ by
\[
\left.
\begin{array}{llll}
& \pwc  {\mathfrak{h}}{\tau}: (0,T) \to B  & \text{defined by}  &
\pwc {\mathfrak{h}}{\tau}(t): = \mathfrak{h}_\tau^k,
\\
& \upwc  {\mathfrak{h}}{\tau}: (0,T) \to B  & \text{defined by}  &
\upwc {\mathfrak{h}}{\tau}(t) := \mathfrak{h}_\tau^{k-1},
\\
 &
\pwl  {\mathfrak{h}}{\tau}: (0,T) \to B  & \text{defined by} &
 \pwl {\mathfrak{h}}{\tau}(t):
=\frac{t-t_\tau^{k-1}}{\tau} \mathfrak{h}_\tau^k +
\frac{t_\tau^k-t}{\tau}\mathfrak{h}_\tau^{k-1}
\end{array}
\right\}
 \qquad \text{for $t \in
(t_\tau^{k-1}, t_\tau^k]$.}
\]
We also introduce the piecewise linear interpolant    of the values
$\{ (\mathfrak{h}_\tau^k - \mathfrak{h}_{\tau}^{k-1})/\tau\}_{k=1}^{K_\tau}$ (namely, the
values taken by  the -piecewise constant- function $\pwl
{\mathfrak{h}}{\tau}'$), viz.
\[
\pwwll  {\mathfrak{h}}{\tau}: (0,T) \to B \qquad \pwwll
{\mathfrak{h}}{\tau}(t) :=\frac{(t-t_\tau^{k-1})}{\tau}
\frac{\mathfrak{h}_\tau^k - \mathfrak{h}_{\tau}^{k-1}}{\tau} +
\frac{(t_\tau^k-t)}{\tau} \frac{\mathfrak{h}_\tau^{k-1} -
\mathfrak{h}_{\tau}^{k-2}}{\tau} \qquad \text{for $t \in
(t_\tau^{k-1}, t_\tau^k]$.}
\]
Note that $ {\pwwll  {\mathfrak{h}}{\tau}}'(t) =\duetau{k}{\mathfrak{h}}$ for $t \in
(t_\tau^{k-1}, t_\tau^k]$.

Furthermore, we   denote by  $\pwc{\mathsf{t}}{\tau}$ and by
$\upwc{\mathsf{t}}{\tau}$ the left-continuous and right-continuous
piecewise constant interpolants associated with the partition, i.e.
 $\pwc{\mathsf{t}}{\tau}(t) := t_\tau^k$ if $t_\tau^{k-1}<t \leq t_\tau^k $
and $\upwc{\mathsf{t}}{\tau}(t):= t_\tau^{k-1}$ if $t_\tau^{k-1}
\leq t < t_\tau^k $. Clearly, for every $t \in [0,T]$ we have
$\pwc{\mathsf{t}}{\tau}(t) \downarrow t$ and
$\upwc{\mathsf{t}}{\tau}(t) \uparrow t$ as $\tau\to 0$.
\end{notation}

In view of \eqref{bulk-force}, \eqref{heat-source}, and
\eqref{dato-h}, it is easy to check that the piecewise constant
interpolants $ (\pwc {\mathbf{f}}{\tau} )_\tau$, $(\pwc  g{\tau}
)_{\tau}$, $(\pwc  h{\tau} )_{\tau}$  of the values $\ftau{k}$,
$\gtau{k}$,  $\htau{k}$ \eqref{local-means} fulfill as $\tau \down
0$
\begin{align}
 \label{converg-interp-f}  & \pwc {\mathbf{f}}{\tau}  \to \mathbf{f}
  \text{ in $L^2(0,T;L^2(\Omega;\R^d))$,}
 \\
\label{converg-interp-g}  & \pwc g{\tau}  \to g
  \text{ in $L^1(0,T;L^1(\Omega))\cap L^2(0,T;H^1(\Omega)')$.}
  \\
  \label{converg-interp-h}  & \pwc h{\tau}  \to h
  \text{ in $L^1(0,T;L^2(\partial\Omega))$.}
\end{align}

We now rewrite the discrete equations
\eqref{eq-discr-w}--\eqref{eq-discr-chi} in terms of the interpolants
 $\pwc\teta\tau,$  $\pwl\teta\tau,$   $\pwc\uu\tau,$
$\upwc\uu\tau,$ $\pwl\uu\tau,$ $\pwwll\uu\tau,$ $\pwc\chi\tau,$
$\upwc\chi\tau,$  $\pwl\chi\tau,$
   $\pwc \xi\tau$, and $\pwc \zeta\tau$
of the elements $(\wtau{k},\utau{k},\chitau{k},\xitau{k},\zetau{k})_{k=1}^{K_\tau}$.
Indeed, we have for almost all $t\in (0,T)$
\begin{align}
&
\label{eq-w-interp}
\begin{aligned}
\partial_t \pwl \teta{\tau}(t) +\partial_t \pwl \chi{\tau}(t) \pwc{\teta}\tau (t)
 & +
\rho\dive(\partial_t \pwl \uu{\tau}(t) ) \pwc{\teta}\tau (t)
+   \mathcal{A}^{\frac{\bar{\mathsf{t}}_\tau(t)}{\tau}}( \pwc{\teta}\tau (t) )   = \pwc g{\tau}(t)+
\\
& \qquad + a( \upwc \chi\tau(t) )\eps\left(\partial_t \pwl
\uu{\tau}(t) \right)\vism\eps\left(\partial_t \pwl \uu{\tau}(t)
\right)+\left(1+\frac{\tau^{1/2}}2 \right)\left|\partial_t \pwl
\chi{\tau}(t)\right|^2\quad \text{in $B^*$,}
\end{aligned}
\\
&
 \label{eq-u-interp}
 \begin{aligned}
 \partial_t\pwwll {\uu}{\tau}(t)+
\opj{ a(\upwc\chi\tau (t)) }{\partial_t \pwl \uu\tau(t)} +
\oph{b(\pwc\chi\tau (t))}{\pwc \uu{\tau}(t)}   +
\ciro(\pwc\teta{\tau}) 
  & = \pwc
{\mathbf{f}}{\tau}(t) \\
&\quad  \aein\, \Omega,
\end{aligned}\\
& \label{eq-chi-interp}
\begin{aligned}
(1+\sqrt{\tau}) \partial_t \pwl
{\chi}{\tau}(t) +\mu \pwc\zeta\tau(t) + A_p\pwc\chi\tau(t) +
\pwc\xi\tau(t)+&\gamma(\pwc\chi\tau(t))  
\\ &  = -
b'(\pwc\chi\tau(t) )\frac{\eps(\upwc\uu\tau(t))\elm
\eps(\upwc\uu\tau(t))}2 + \pwc\teta\tau(t)\\
&\qquad\aein\, \Omega, 
  \end{aligned}
\end{align}
with  $\pwc \xi\tau \in \beta(\pwc \chi \tau)$ and
$\pwc\zeta\tau \in \partial I_{(-\infty,0]}(\partial_t \pwl
{\chi}{\tau})$ a.e.\ in $\Omega \times (0,T)$.

Our next result states that  the  interpolants of suitable discrete solutions to system \eqref{eq-discr-w}--\eqref{eq-discr-chi}
also satisfy the approximate versions of the
entropy inequality
\eqref{entropy-ineq}
and of the
 total \beruno energy \eruno inequality \eqref{total-enid}. 

 For stating the discrete entropy inequality  \eqref{entropy-ineq-discr} below, we need to introduce  \emph{discrete} test functions.
 Namely, with every 
 test function $\varphi \in \mathrm{C}^0 ([0,T]; W^{1,d+\epsilon}(\Omega)) \cap H^1 (0,T; L^{6/5}(\Omega)) $
 we associate
 \begin{equation}
 \label{discrete-tests-phi}
\text{for } k=1, \ldots, K_\tau \quad  \varphi_\tau^k:= \varphi(t_\tau^k) 
 \end{equation}
and consider the piecewise constant and  linear interpolants
$\pwc \varphi\tau$ and $\pwl \varphi\tau$ of the values
$(\varphi_\tau^k)_{k=1}^{K_\tau}$.
It can be shown that  the following convergences hold as $\tau \to 0$
\begin{equation}
\label{convergences-test-interpolants}
\pwc \varphi\tau  \to \varphi \quad
\text{ in } L^\infty (0,T; W^{1,d+\epsilon}(\Omega))
\text{ and }  \quad \partial_t \pwl \varphi\tau  \to \partial_t \varphi \quad \text{ in } L^2 (0,T; L^{6/5}(\Omega)).
\end{equation}
Then, \eqref{entropy-ineq-discr} is obtained by testing
\eqref{eq-discr-w} by $\frac{\varphi_\tau^k}{\wtau{k}}$, for
$k=1,\ldots,K_\tau$.

As for the total energy inequality \eqref{total-enid-discr} below, let us mention that it results  from our carefully designed time-discretization scheme,
 observing in addition that \eqref{eq-discr-chi} is indeed the Euler-Lagrange equation for a suitable minimum problem, cf.\ \eqref{min-prob-chi} below,
where the additional term
\[
\frac{\tau^{3/2}}{2}\int_\Omega \left| \frac{\chitau k - \chitau{k-1}}{\tau}\right|^2 \dd x
\]
has the role to ``compensate'' for the possible non-convexity of $\int_\Omega \widehat{\gamma}(\chi) \dd x $.
Therefore, to get the discrete total energy inequality \eqref{total-enid-discr}
 we have  added the term $\frac{\tau^{1/2}}2 \left| \frac{\chitau k - \chitau{k-1}}{\tau}\right|^2 $
 to the right-hand side of \eqref{eq-discr-w}. This will lead to the necessary cancellations,
 cf.\ \eqref{cancellation} below.
 \begin{proposition}[Discrete entropy and total energy inequalities, $\mu \in \{0,1\}$]
\label{prop:discr-enid}
 Under \textbf{Hypotheses (I)--(III)},
 for $\tau>0$ sufficiently small,
  the  discrete solutions
 $(\wtau{k},  \utau{k}, \chitau{k} )_{k=1}^{K_\tau}$ to Problem \ref{prob:rhoneq0}
 fulfill
 \begin{itemize}
 \item[-]
  the \emph{discrete} entropy inequality \beruno
 \begin{equation}\label{entropy-ineq-discr}
\begin{aligned}
& \int_{\pwc{\mathsf{t}}{\tau}(s)}^{\pwc{\mathsf{t}}{\tau}(t)}
\int_\Omega (\log(\upwc\teta\tau (r)) + \upwc\chi\tau (r))
\partial_t \pwl \varphi\tau (\tau) \dd x \dd r + \rho
\int_{\pwc{\mathsf{t}}{\tau}(s)}^{\pwc{\mathsf{t}}{\tau}(t)}
\int_\Omega \dive(\partial_t \pwl\uu\tau(r)) \pwc\varphi\tau(r)  \dd
x \dd r
 \\ & \qquad - \int_{\pwc{\mathsf{t}}{\tau}(s)}^{\pwc{\mathsf{t}}{\tau}(t)}  \int_\Omega  \condu(\pwc\teta\tau (r)) \nabla \log(\pwc\teta\tau(r)) \cdot \nabla \pwc\varphi\tau(r)  \dd x \dd r
\\ &
\leq \int_\Omega (\log(\pwc\teta\tau(t))
+ \pwc\chi\tau (t)) 
\pwc\varphi\tau(t) \dd x - \int_\Omega
(\log(\pwc\teta\tau(s)) 
+ \pwc\chi\tau(s)) 
\pwc\varphi\tau(s) \dd x
\\
& \qquad
 -  \int_{\pwc{\mathsf{t}}{\tau}(s)}^{\pwc{\mathsf{t}}{\tau}(t)}  \int_\Omega \condu(\pwc \teta\tau(r)) \frac{\pwc\varphi\tau(r)}{\pwc \teta\tau(r)}
\nabla \log(\pwc \teta\tau(r)) \cdot \nabla \pwc \teta\tau (r) \dd x \dd r
\\ & \qquad
 -
  \int_{\pwc{\mathsf{t}}{\tau}(s)}^{\pwc{\mathsf{t}}{\tau}(t)}   \int_\Omega \left( \pwc g{\tau} (r) +a(\upwc\chi\tau(r)) \eps(\partial_t \pwl \uu\tau(r))\vism \eps(\partial_t \pwl \uu\tau(r))
  + |\partial_t \pwl \chi\tau(r)|^2 + \frac{\tau^{3/2}}{2}  |\partial_t \pwl \chi\tau(r)|^2  \right)
 \frac{\pwc\varphi\tau(r)}{\pwc\teta\tau(r)} \dd x \dd r \\
 &\qquad -  \int_{\pwc{\mathsf{t}}{\tau}(s)}^{\pwc{\mathsf{t}}{\tau}(t)}   \int_{\partial\Omega} \pwc h{\tau} (r)   \frac{\pwc\varphi\tau(r)}{\pwc\teta\tau(r)} \dd S \dd r
\end{aligned}
\end{equation}
for all $0 \leq s \leq t \leq T$ and
for all $\varphi \in \mathrm{C}^0 ([0,T]; W^{1,d+\epsilon}(\Omega)) \cap H^1 (0,T; L^{6/5}(\Omega)) $ with
$\varphi \geq 0$;
\item[-] the \emph{discrete} total energy inequality for all $ 0 \leq s \leq t
\leq  T$, viz.
\begin{equation}
\label{total-enid-discr}
\begin{aligned}
 \mathscr{E}(\pwc\teta\tau(t),\pwc\uu\tau(t), \partial_t \pwl\uu\tau(t), \pwc\chi\tau(t))\leq
 \mathscr{E}(\pwc\teta\tau(s),\pwc\uu\tau(s), \partial_t \pwl\uu\tau (s),\pwc \chi\tau(s))  &  + \int_{\pwc{\mathsf{t}}{\tau}(s)}^{\pwc{\mathsf{t}}{\tau}(t)}
\int_\Omega  (\pwc g\tau + \pwc{\mathbf{f}}\tau \cdot
\partial_t \pwl\uu\tau) \dd x \dd r
\\
& \quad
+ \int_{\pwc{\mathsf{t}}{\tau}(s)}^{\pwc{\mathsf{t}}{\tau}(t)}
 \int_{\partial\Omega} \pwc h\tau  \dd S \dd r \,,
 \end{aligned}
\end{equation}
 with $\mathscr{E}$ from \eqref{total-energy}.\eruno
 \end{itemize}
 \end{proposition}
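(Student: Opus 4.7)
I would prove the two discrete inequalities separately, both by appropriate testing of the discrete system \eqref{eq-discr-w}--\eqref{eq-discr-chi}, followed by summation over $k$ and the discrete integration-by-parts formula \eqref{discr-by-part}.

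For the entropy inequality \eqref{entropy-ineq-discr}, the natural test of \eqref{eq-discr-w} is the positive function $\varphi_\tau^k/\wtau{k}$, admissible thanks to the strict positivity \eqref{strict-pos-wk}. The diffusion term produces, via the chain rule $\nabla(\varphi_\tau^k/\wtau k)=\nabla\varphi_\tau^k/\wtau k - (\varphi_\tau^k/\wtau k)\nabla\log\wtau k$, the two contributions $\int\condu(\wtau k)\nabla\log\wtau k\cdot\nabla\varphi_\tau^k \, \mathrm{d}x$ and $-\int\condu(\wtau k)(\varphi_\tau^k/\wtau k)\nabla\log\wtau k\cdot\nabla\wtau k\, \mathrm{d}x$, together with the boundary contribution $-\int_{\partial\Omega}\htau{k}\varphi_\tau^k/\wtau k\, \mathrm{d}S$. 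The crux of the argument is the concavity inequality
\[
\frac{\wtau{k}-\wtau{k-1}}{\wtau{k}}\le \log\wtau{k}-\log\wtau{k-1}\qquad\aeO,
\]
which, multiplied by $\varphi_\tau^k\ge 0$, turns the time-discrete identity into an inequality of the correct direction. Multiplying by $\tau$, summing from $k_0{+}1$ to $k_1$, and applying \eqref{discr-by-part} to the $\log$- and $\chi$-time-difference terms produces the boundary contributions $[\int \pwc\varphi\tau(\log\pwc\teta\tau+\pwc\chi\tau)\, \mathrm{d}x]_s^t$ and the $\partial_t\pwl\varphi\tau$-integrals against the left-continuous interpolants $\log\upwc\teta\tau$ and $\upwc\chi\tau$, as required.

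For the total energy inequality \eqref{total-enid-discr}, I would test \eqref{eq-discr-w} by $1$, \eqref{eq-discr-u} by $\dtau{k}{\uu}$, and \eqref{eq-discr-chi} by $\dtau{k}{\chi}$, then sum. Several cancellations occur by design: the $\rho\int\mathrm{div}(\dtau k\uu)\wtau k\, \mathrm{d}x$ terms cancel between heat and momentum tests, the $\int \dtau k\chi\,\wtau k\, \mathrm{d}x$ between heat and $\chi$-tests, and the viscous dissipation $\bilj{a(\chitau{k-1})}{\dtau{k}{\uu}}{\dtau{k}{\uu}}$ together with $\int|\dtau{k}{\chi}|^2\, \mathrm{d}x$ cancel against the same quantities on the RHS of \eqref{eq-discr-w}. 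For the kinetic term I would use the identity $(a{-}b)\cdot a\ge \tfrac12(|a|^2-|b|^2)$; for the elastic term, convexity of $b$ (Hypothesis (II)) combined with the symmetric-bilinear identity
\[
\bilh{b(\chitau k)}{\utau k}{\utau k-\utau{k-1}}
 \ge \tfrac12\bigl[\bilh{b(\chitau k)}{\utau k}{\utau k}-\bilh{b(\chitau{k-1})}{\utau{k-1}}{\utau{k-1}}\bigr]
  -\int_\Omega b'(\chitau k)(\chitau k{-}\chitau{k-1})\tfrac{\eps(\utau{k-1})\elm\eps(\utau{k-1})}{2}\, \mathrm{d}x,
\]
whose last term cancels exactly with the $-\int b'(\chitau k)\tfrac{\eps(\utau{k-1})\elm\eps(\utau{k-1})}{2}\dtau{k}{\chi}$ arising from the $\chi$-test. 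The $p$-Laplacian term is handled by convexity of $\chi\mapsto\frac1p\|\nabla\chi\|_{L^p}^p$, the $\widehat\beta$-term by convexity, while $\mu\,\zetau{k}\dtau{k}{\chi}=0$ since $\zetau{k}\in\partial I_{(-\infty,0]}(\dtau{k}{\chi})$.

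The main delicate point will be absorbing the non-convex contribution from $\widehat\gamma$: $\lambda$-convexity (cf.\ \eqref{lambda-convex}) yields the residual $-\tfrac{\lambda\tau}{2}\int|\dtau{k}{\chi}|^2\, \mathrm{d}x$, which must be controlled by the ad hoc terms $\sqrt{\tau}|\dtau{k}{\chi}|^2$ on the LHS of \eqref{eq-discr-chi} and $\tfrac{\sqrt{\tau}}{2}|\dtau{k}{\chi}|^2$ on the RHS of \eqref{eq-discr-w}, precisely introduced for this compensation. A short calculation shows that the net coefficient on $\int|\dtau{k}{\chi}|^2\, \mathrm{d}x$ after all cancellations equals $\tfrac{\sqrt\tau}{2}(1-\lambda\sqrt\tau)$, which is nonnegative as soon as $\tau\le\bar\tau:=1/\lambda^2$, and can therefore be discarded. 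Multiplying the surviving telescoping inequality by $\tau$, summing, and interpreting the boundary contributions in terms of the piecewise-constant interpolants yields \eqref{total-enid-discr}. The only nontrivial bookkeeping is to verify that the smallness threshold $\bar\tau$ depends only on $\lambda$ (hence on $\widehat\gamma$) and not on $k$, which guarantees the conclusion uniformly on $[0,T]$.
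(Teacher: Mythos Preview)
Your treatment of the entropy inequality is exactly the paper's: test \eqref{eq-discr-w} by $\varphi_\tau^k/\wtau{k}$, invoke the concavity bound $(\wtau{k}-\wtau{k-1})/\wtau{k}\le \log\wtau{k}-\log\wtau{k-1}$, and discrete-integrate by parts via \eqref{discr-by-part}.

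For the total energy inequality you take a genuinely different route. The paper does \emph{not} test \eqref{eq-discr-chi} by $\dtau{k}{\chi}$. Instead it recognizes \eqref{eq-discr-chi} as the Euler--Lagrange equation of the minimum problem \eqref{min-prob-chi} (with $\chitau{k}$ already fixed in the linear term), argues that this functional is strictly convex once $\tfrac{1}{2\sqrt\tau}>\lambda$, so $\chitau{k}$ is its unique minimizer, and then chooses $\chitau{k-1}$ as competitor. This produces \eqref{to-be-quoted-later}, where the increments of $\widehat\beta$, $\widehat\gamma$, $\tfrac1p\|\nabla\cdot\|_{L^p}^p$ and, crucially, of $\int b(\cdot)\tfrac{\eps(\utau{k-1})\elm\eps(\utau{k-1})}{2}$ appear \emph{directly}, with no residual term-by-term convexity bookkeeping. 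The $b$-difference then cancels exactly with the extra term in the elastic estimate \eqref{est-1.3}. Your approach instead handles each nonlinearity by its own convexity/$\lambda$-convexity inequality; in particular, your displayed elastic bound already absorbs the convexity of $b$ into the lower bound so that the $b'(\chitau{k})(\chitau{k}-\chitau{k-1})$ term cancels with the one coming from the $\chi$-test. Your computation of the residual coefficient $\tfrac{\sqrt\tau}{2}(1-\lambda\sqrt\tau)\ge0$ for $\tau\le 1/\lambda^2$ is correct and plays the same role as the strict-convexity threshold in the paper. Both approaches are valid; the paper's minimization device packages the convexity arguments more compactly (and also transparently explains \emph{why} the auxiliary $\sqrt\tau$-terms were added to the scheme), while yours is a more hands-on Euler--Lagrange computation that avoids the slightly circular-looking step of identifying $\chitau{k}$ as minimizer of a functional that already contains $\chitau{k}$.
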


 For the proof  of the discrete entropy inequality,  we will  rely on a crucial inequality satisfied by any \underline{concave}
 (differentiable)
  function $\psi: \mathrm{dom}(\psi) \to \R,$ i.e.
 \begin{equation}
 \label{inequality-concave-functions}
 \psi(x) - \psi(y) \leq \psi'(y) (x-y) \qquad \text{for all } x,\, y \in \mathrm{dom}(\psi).
 \end{equation}
\begin{proof}
We split the proof in two steps.
 \paragraph{\bf Step $1$: proof of the total energy inequality.}
 Let us consider the minimum problem
  \begin{equation}
 \label{min-prob-chi}
 \begin{aligned}
 \min_{\chi \in W^{1,p}(\Omega)} \Big\{
 \int_\Omega \Big(
 \frac{\tau^{3/2}}2   \left|\frac{\chi - \chitau{k-1}}\tau\right|^2
 + \left( \frac{\chitau k -\chitau {k-1}}{\tau}\right)\chi &   + \mu   \widehat{\alpha} \left( \frac{\chi - \chitau{k-1}}{\tau} \right)+ \frac{|\nabla \chi|^p}p
 +\widehat{\beta}(\chi)
 \\ &
 +\widehat{\gamma}(\chi) 
 + b(\chi) \frac{\eps(\utau{k-1}) \elm \eps(\utau{k-1}) }2 - \wtau{k} \chi
 \Big) \dd x
\Big \}\,
\end{aligned}
\end{equation}
where $\chitau k$ and $\wtau{k}$ are the discrete solutions
from Lemma \ref{lemma:ex-discr} and $\utau{k-1}, \, \chitau{k-1}$
are given from the previous step,
 and let $\lambda>0$ such that $\widehat{\gamma}'' \geq -\lambda$ as in \eqref{lambda-convex}. Then, the function
 \begin{equation}
 \label{strict-convex}
  r \mapsto \widehat{\gamma}(r) + \lambda |r|^2 \qquad \text{is strictly convex.}
 \end{equation}
 Let $\bar\tau>0$ such that $\tfrac{1}{2\tau} >\lambda$ for all $0<\tau \leq \bar \tau$.
Adding and subtracting $\int_\Omega\lambda
|\chi-\chitau{k-1}|^2 \dd x$,
 we may rewrite the minimum problem \eqref{min-prob-chi} as
  \begin{equation}
 \label{min-prob-chi-2}
 \begin{aligned}
 \min_{\chi \in W^{1,p}(\Omega)} \Big\{
 \int_\Omega \Big(    &\left(\frac{1}{2\sqrt{\tau}}-\lambda\right)  |\chi - \chitau{k-1}|^2
 +   \left( \frac{\chitau k -\chitau {k-1}}{\tau}\right)\chi +  \mu \widehat{\alpha} \left( \frac{\chi - \chitau{k-1}}{\tau} \right)+ \frac{|\nabla \chi|^p}p
 +\widehat{\beta}(\chi)   +\widehat{\gamma}(\chi)
 \\
 &
+ \lambda |\chi|^2
 + b(\chi) \frac{\eps(\utau{k-1}) \elm \eps(\utau{k-1}) }{2} - \wtau{k} \chi  + \lambda |\chitau{k-1}|^2 + 2\lambda \chi \chitau{k-1}
 \Big) \dd x
\Big \}.
\end{aligned}
\end{equation}
 Observe that the Euler-Lagrange equation for \eqref{min-prob-chi-2} is exactly \eqref{eq-discr-chi}.
Using  the convexity of $\widehat \alpha$, $\widehat \beta$, $b$, and the $\lambda$-convexity of $\widehat\gamma$
(whence \eqref{strict-convex}),
 it is not difficult to check that  \eqref{eq-discr-chi} has a unique solution.
 We may thus conclude that the minimum problem \eqref{min-prob-chi-2} has a unique solution,
 which coincides with  the discrete solution $\chitau{k}$ from Lemma \ref{lemma:ex-discr}.


Now, choosing $\chitau{k-1}$ as a competitor for $\chitau k$ in the minimum problem \eqref{min-prob-chi} yields
 \begin{equation}
\label{to-be-quoted-later}
\begin{aligned}
   &\tau \int_\Omega \left| \frac{\chitau k - \chitau{k-1}}{\tau}\right|^2 \dd x
   +   \int_\Omega \frac{\tau^{3/2}}2   \left|\frac{\chitau k - \chitau{k-1}}\tau\right|^2\dd x
    + \mu \int_\Omega
 \widehat{\alpha} \left(\frac{\chitau k - \chitau{k-1}}{\tau} \right) \dd x
  + \io\frac{|\nabla\chitau{k}|^p}{p} \dd x + \int_{\Omega}  \widehat{\beta}(\chitau{k}) \dd
x
\\ & \hspace{5cm}
+ \int_{\Omega}  \widehat{\gamma}(\chitau{k}) \dd
x  + \int_\Omega b(\chitau{k}) \frac{\eps(\utau{k-1}) \elm \eps(\utau{k-1}) }2 \dd x  -\int_\Omega \wtau{k} \chitau k  \dd x
\\
 & \leq \io\frac{|\nabla\chitau{k-1}|^p}{p}\dd x + \int_{\Omega}
\widehat{\beta}(\chitau{k-1}) \dd x
+ \int_{\Omega}  \widehat{\gamma}(\chitau{k-1}) \dd
x
\\
&
\hspace{6cm}
+ \int_\Omega b(\chitau{k-1}) \frac{\eps(\utau{k-1}) \elm \eps(\utau{k-1}) }2 \dd x  -\int_\Omega \wtau{k} \chitau {k-1}  \dd x.
\end{aligned}
\end{equation}

 Hence, we test   \eqref{eq-discr-u}  by
$\utau{k}-\utau{k-1}$
and observe that , for all $k=1,\ldots,K_\tau$,
\begin{align}
\label{est-1.1}
& \tau\int_{\Omega} \duetau{k}{\uu} \cdot
\dtau{k}{\uu}\dd x \geq \frac1{2} \|\dtau{k}{\uu}\|_{L^2(\Omega)}^2
-\frac1{2} \|\dtau{k-1}{\uu}\|_{L^2(\Omega;\R^d)}^2
\\
\label{est-1.2}
&
\pairing{}{H^1(\Omega)}{\opj{a(\chitau{k-1})}{\dtau{k}{\uu}}}{\utau{k}-\utau{k-1}} =
\tau \int_\Omega a(\chitau{k-1})\eps\left(\frac{\utau{k}-\utau{k-1}}{\tau}\right)\vism\eps\left(\frac{\utau{k}-\utau{k-1}}{\tau}\right) \dd x.
\end{align}
Furthermore, we have
\begin{equation}
\label{est-1.3}
\begin{aligned}
\pairing{}{H^1(\Omega;\R^d)}{ \oph{b(\chitau{k})
}{\utau{k}} } {\frac{\utau{k}-\utau{k-1}}\tau  }&\geq
\frac12 \int_\Omega  b(\chitau{k})\eps(\utau{k})
\elm \eps(\utau{k})\dd x
-\frac12 \int_\Omega  b(\chitau{k})\eps(\utau{k-1})
\elm \eps(\utau{k-1})\dd x
\\
&
= \frac12 \int_\Omega  b(\chitau{k})\eps(\utau{k})
\elm \eps(\utau{k})\dd x
-\frac12 \int_\Omega  b(\chitau{k-1})\eps(\utau{k-1})
\elm \eps(\utau{k-1})\dd x\\
&\quad
-\frac12\int_\Omega (b(\chitau{k}) - b(\chitau{k-1})) \eps(\utau{k-1})
\elm \eps(\utau{k-1})\dd x\,.
\end{aligned}
\end{equation}
Finally,
\begin{equation}
\label{est-1.4}
\tau \pairing{}{H^1(\Omega;\R^d)}{\mathcal{C}_\rho (\wtau{k})}{\frac{\utau{k}-\utau{k-1}}\tau  }
=-\rho\int_\Omega \wtau{k} \mathrm{div}\left( \frac{\utau{k}-\utau{k-1}}\tau \right) \dd x\,.
\end{equation}

 Next, we multiply
  \eqref{eq-discr-w} by $\tau$ and integrate over $\Omega$.
We add the
resulting relation to   the equation obtained testing
\eqref{eq-discr-u-TRUNC}  by $\utau{k}-\utau{k-1}$ and to
\eqref{to-be-quoted-later}.
 The terms
 \begin{equation}
 \label{cancellation}
 \begin{array}{lll}
 & \displaystyle
 \tau\int_{\Omega}\dtau{k}{\chi}
\wtau{k}\dd x, \qquad    & \displaystyle  \rho \tau \int_\Omega
\wtau{k} \dive(\dtau{k}{\uu})\dd x,
\\
 & \displaystyle
\tau \int_\Omega a(\chitau{k-1} )\eps\left(\frac{\utau{k}-\utau{k-1}}{\tau}\right)\vism\eps\left(\frac{\utau{k}-\utau{k-1}}{\tau}\right) \dd x,
 & \displaystyle
 \tau \int_\Omega \left| \frac{\chitau k -\chitau {k-1}}{\tau}\right|^2 \dd x
 \\
 & \displaystyle
 \frac{\tau^{3/2}}2 \int_\Omega \left| \frac{\chitau k -\chitau {k-1}}{\tau}\right|^2 \dd x,
&\displaystyle  \frac12\int_\Omega (b(\chitau{k}) - b(\chitau{k-1})) \eps(\utau{k-1})
\elm \eps(\utau{k-1})\dd x
 \end{array}
 \end{equation}
 cancel out.


We sum over the index
$k=m,\ldots,j$,
for any couple of indexes $1\leq m <j\leq K_\tau$.
 Taking into account \eqref{to-be-quoted-later}--\eqref{est-1.4}, we  ultimately obtain
\begin{equation}
\label{est-2.1}
\begin{aligned}
& \int_\Omega \left(  \wtau{j}
 + \frac1{2} |\dtau{j}{\uu}|^2
+ \frac12 b(\chitau{j}) \eps(\utau{j}) \elm \eps(\utau{j})
 + \frac{|\nabla\chitau{j}|^p}{p} +\widehat{\beta}(\chitau{j})
+  \widehat{\gamma} (\chitau{j}) \right)
 \dd x
 \\
&  \leq \int_\Omega \left(  \wtau{m}   + \frac1{2} |\dtau{m}{\uu}|^2
+ \frac12
b(\chitau m)\eps(\utau m)
\elm   \eps(\utau m)
 +\frac{|\nabla\chitau m|^p}{p}
+\widehat{\beta}(\chitau m) +  \widehat{\gamma}(\chitau m) \right)
\dd x
\\
& \qquad
+\sum_{k=m}^j \tau
  \left( \int_\Omega\left(   \gtau{k} + \ftau{k} \cdot   \dtau{k}{\uu}  \right) \dd x + \int_{\partial \Omega}  \htau{k}  \dd S\right),
\end{aligned}
\end{equation}
which yields \eqref{total-enid-discr}.
\paragraph{\bf Step $2$: proof of the entropy inequality.}
Let us fix  an arbitrary \emph{\beruno positive \eruno }  test  function
\[
\varphi \in \mathrm{C}^0 ([0,T]; W^{1,d+\epsilon}(\Omega)) \cap H^1 (0,T; L^{6/5}(\Omega))
\]
 with $(\phitau{k})_{k=1}^{K_\tau}$ defined by \eqref{discrete-tests-phi}.
We multiply \eqref{eq-discr-w} by $\frac{\phitau k}{\wtau k} \in H^1(\Omega)$
(hence, an admissible test function for  \eqref{eq-discr-w})
and integrate over $\Omega$.
We obtain
\begin{equation}
\label{w-concavita}
\begin{aligned}
&
\int_\Omega  \left( \gtau{k} +
a(\chitau{k})\eps\left(\frac{\utau{k}-\utau{k-1}}{\tau}\right)\vism\eps\left(\frac{\utau{k}-\utau{k-1}}{\tau}\right)+\left|\frac{\chitau{k}
-\chitau{k-1}}{\tau}\right|^2  +  \frac{\tau^{1/2}}{2}  \left|\frac{\chitau{k}
-\chitau{k-1}}{\tau}\right|^2 \right) \frac{\phitau k}{\wtau k} \dd  x \\
&\qquad+ \int_{\partial \Omega} \htau{k}\, \frac{\phitau k}{\wtau k} \dd S
\\ & =
\int_\Omega  \left( \frac{\wtau{k} -\wtau{k-1}}{\tau}
+\frac{\chitau{k} -\chitau{k-1}}{\tau}\wtau{k} +
\rho\dive\left(\frac{\utau{k}-\utau{k-1}}{\tau}\right)\wtau{k}
\right) \frac{\varphi_\tau^k}{\wtau{k}}\dd x +\int_\Omega
\condu(\wtau{k}) \nabla \wtau{k} \cdot \nabla \left( \frac{\phitau
k}{\wtau k} \right) \dd x
\\
& \leq \int_\Omega
\left(
\frac{\log(\wtau k) - \log(\wtau {k-1})}{\tau}   +\frac{\chitau{k} -\chitau{k-1}}{\tau}
+
\rho\dive\left(\frac{\utau{k}-\utau{k-1}}{\tau}\right) \right)
 {\phitau k }  \dd x
 \\
 & \qquad
  +\int_\Omega \left( \frac{\condu(\wtau{k}) }{\wtau k}\nabla \wtau{k} \cdot \nabla {\phitau k}   -  \frac{\condu(\wtau{k}) }{|\wtau k|^2} |\nabla \wtau{k}|^2 \phitau {k}
  \right) \dd x
\end{aligned}
\end{equation}
where we have used that (cf.\ \eqref{inequality-concave-functions})
\[
\frac{\wtau k- \wtau{k-1}}{\wtau k} \leq \log(\wtau k) - \log(\wtau {k-1})\qquad \text{ a.e.\ in } \Omega.
\]
Note that this inequality is preserved by the positivity of the
discrete test function $\phitau k$. We now sum \eqref{w-concavita},
multiplied by $\tau$, over $k=m, \ldots, j$, for any couple of
indexes $1\leq m <j\leq K_\tau$. We use the discrete
by-part-integration  formula \eqref{discr-by-part}, yielding \beruno
\[
\begin{aligned}
&
 \sum_{k=m}^{j} \tau \int_\Omega \dtau{k}{\log(\wtau k)} \phitau{k} \dd x =
\int_\Omega \log(\wtau{j})\phitau{j} \dd x-
\int_\Omega\log(\wtau{m})\phitau{m}\dd x -\sum_{k=m}^{j}\tau
\int_\Omega  \log(\wtau{k-1}) \dtau{k}{\varphi} \dd x
\\
& \sum_{k=m}^{j} \tau \int_\Omega \dtau{k}{\chitau k} \phitau{k} \dd
x = \int_\Omega \chitau{j}\phitau{j} \dd x-
\int_\Omega\chitau{m}\phitau{m}\dd x -\sum_{k=m}^{j}\tau \int_\Omega
\chitau{k-1} \dtau{k}{\varphi} \dd x.
\end{aligned}
\]
\eruno
Inserting the two above inequalities in \eqref{w-concavita}
(summed  up over $k=m, \ldots, j$), rearranging terms,
 we conclude
\eqref{entropy-ineq-discr}.
\end{proof}
 \begin{remark}
\label{b-l-convex}
\upshape
A close perusal of the proof of Proposition
\ref{prop:discr-enid} reveals that,
 $b$ is only $\lambda$-convex, in place of convex, it is still possible to prove that the
 discrete equation for $\chi$ \eqref{eq-discr-chi} admits a unique solution, and therefore conclude that $\chitau k$
 is the unique minimizer for \eqref{min-prob-chi}. This, provided we replace the $p$-Laplacian operator
 in \eqref{eq-discr-chi} with its \emph{non-degenerate} version, cf.\ Remark \ref{rmk:uni-rev}.
\end{remark}
\subsection{A priori estimates revisited}
\label{ss:3.2}
The following result collects all the a priori estimates for the
approximate solutions constructed via time discretization.
  In particular, the proof renders on the discrete level the \emph{Second} estimate, which has a \emph{nonlinear}
character and  thus  needs to be suitably translated 
within the frame of the discrete system
\eqref{eq-discr-w}--\eqref{eq-discr-chi}.

\beo
 Instead,  we are not able to render
 the \emph{Sixth  estimate}. The ultimates reason for this is that  this estimate is  based on a comparison argument  in the
 heat equation divided by $\teta$. On the time-discrete level, the  analogue   of the latter
 relation is  somehow
 represented
 by the entropy inequality \eqref{entropy-ineq-discr}. Essentially, since \eqref{entropy-ineq-discr} holds as an \emph{inequality},
 only, we are not able to recover from it the full information provided by the rescaled heat equation.
Nonetheless,  with
careful calculations  we will deduce from   the entropy inequality \eqref{entropy-ineq-discr} the following \emph{weaker version} of  estimate
\eqref{bv-esti-temp}, namely
\begin{equation}
\label{weaker-sixth}
\exists\, S>0  \ \ \forall\, \tau>0 \, : \qquad
\sup_{\varphi \in W^{1,d+\epsilon}(\Omega), \ \| \varphi \|_{W^{1,d+\epsilon}(\Omega)}\leq 1}
\mathrm{Var}(\pairing{}{W^{1,d+\epsilon}(\Omega)}{\log(\pwc\teta\tau)}{\varphi}; [0,T]) \leq S
\end{equation}
for every $\epsilon>0$, where  we have used the notation
\begin{equation}
\label{definition-Var}
\begin{aligned}
&
\mathrm{Var}(\pairing{}{W^{1,d+\epsilon}(\Omega)}{\log(\pwc\teta\tau)}{\varphi}; [0,T]):=
\\
& \qquad
 \sup_{0 =\sigma_0 < \sigma_1 < \ldots < \sigma_J =T }
\sum_{i=1}^J \left | \pairing{}{W^{1,d+\epsilon}(\Omega)} {\log(\pwc\teta\tau (\sigma_{i})) }{\varphi} -
 \pairing{}{W^{1,d+\epsilon}(\Omega)} {\log(\pwc\teta\tau (\sigma_{i-1})) }{\varphi} \right|.
 \end{aligned}
\end{equation}
Thanks to a suitable abstract compactness result proved in the Appendix, Theorem \ref{th:mie-theil}, estimate \eqref{weaker-sixth} turns out to be sufficient to develop the compactness arguments that will allow us to pass to the time-continuous  limit and thus prove Theorems \ref{teor3} and
\ref{teor1}. We postpone to Remark \ref{rmk:after6} some comments on how the  $\mathrm{BV}([0,T];W^{1,d+\epsilon}(\Omega)^*)$-estimate for
$\partial_t \log(\teta)$,  on the time-continuous level, might be recovered.
\eo
\begin{proposition}
\label{prop:discrete-aprio}
Assume  \beo \textbf{Hypotheses (0)--(III)} \eo and \eqref{bulk-force}--\eqref{datochi}. Let $\mu \in \{0,1\}$.
Then,
there exists a constant $S>0$ such that for all $\tau>0$  the following estimates
\begin{subequations}
\label{a-priori}
\begin{align}
& \label{aprio1bis}
\|\pwc \uu{\tau}\|_{L^\infty(0,T;\boY)}
 \leq S,
\\
& \label{aprio1-discr}
\|\pwl \uu{\tau}\|_{H^1(0,T;\boY) \cap
W^{1,\infty}(0,T;\boZ)}
 \leq S,
\\
& \label{aprio2-discr} \|\pwwll \uu{\tau}
\|_{H^1(0,T;L^{2}(\Omega;\R^d))} \leq S,
\\
& \label{aprio3-discr}
\|\pwc
\chi{\tau}\|_{L^\infty(0,T;W^{1,p}(\Omega))} \leq S,
\\
& \label{aprio4-discr} \|\pwl
\chi{\tau}\|_{L^\infty(0,T;W^{1,p}(\Omega)) \cap H^1
(0,T;L^2(\Omega))} \leq S,
\\
&
\label{log-added}
\| \log(\pwc \teta{\tau}) \|_{L^2 (0,T; H^1(\Omega))}
 \leq S,
\\
& \label{aprio6-discr}
 \|\pwc
\teta{\tau}\|_{L^2(0,T; H^1(\Omega)) \cap L^{\infty}(0,T;L^{1}(\Omega))} \leq S,
\end{align}
hold, as well as \underline{estimate \eqref{weaker-sixth}}.  Furthermore, under \textbf{Hypothesis (V)} (i.e.\ if
$\kappa$ from \eqref{hyp-K} fulfills
 $1<\kappa <5/3$), we have in addition
\begin{align}
\label{aprio7-discr}
&
\sup_{\tau>0} \| \pwl \teta{\tau} \|_{\mathrm{BV} ([0,T]; W^{2,d+\epsilon} (\Omega)^*)} \leq S  \quad \text{for all }\epsilon>0.
\end{align}
Finally, if $\mu=0$ we also have
\begin{align}
\label{aprio8-discr}
&
\sup_{\tau>0}\left(  \| \pwc \chi \tau\|_{L^2 (0,T; W^{1+\sigma, p} (\Omega))} +\| \pwc \xi\tau \|_{L^2(0,T; L^2(\Omega))} \right)
  \leq S \quad \text{for all } 1\leq \sigma<\frac1p.
\end{align}
\end{subequations}
\end{proposition}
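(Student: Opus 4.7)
The plan is to render the formal computations of Section~\ref{s:aprio} on the time-discrete level, exploiting Proposition~\ref{prop:discr-enid} and the strict positivity \eqref{strict-pos-wk}. The First estimate is immediate: choosing $s=0$ and $t\in [0,T]$ in the discrete total energy inequality \eqref{total-enid-discr}, using the positivity of $\pwc\teta\tau$, the lower bound on $W$, and conditions \eqref{bulk-force}--\eqref{datochi} on the data via \eqref{converg-interp-f}--\eqref{converg-interp-h}, I would obtain bounds for $\pwc\teta\tau$ in $L^\infty(0,T;L^1(\Omega))$, for $\pwl\uu\tau$ in $W^{1,\infty}(0,T;L^2(\Omega;\R^d))$, for $b(\pwc\chi\tau)^{1/2}\eps(\pwc\uu\tau)$ in $L^\infty(0,T;L^2)$, and for $\pwc\chi\tau$ in $L^\infty(0,T;W^{1,p}(\Omega))$; note the terms in \eqref{cancellation} cancel exactly to reproduce the time-continuous energy balance.

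Next, I would render the Second and Third estimates by testing the discrete heat equation \eqref{eq-discr-w} by $\tau(\wtau{k})^{\alpha-1}$ with $\alpha\in(1/2,1)$, which is admissible thanks to \eqref{strict-pos-wk} and $\wtau{k}\in X$. The discrete chain-rule inequality
\[
\frac{\wtau{k}-\wtau{k-1}}{\tau}(\wtau{k})^{\alpha-1} \geq \frac{1}{\alpha}\frac{(\wtau{k})^\alpha-(\wtau{k-1})^\alpha}{\tau}
\]
(by concavity of $x\mapsto x^\alpha/\alpha$) telescopes after summation, while the elliptic term produces $\int\condu(\wtau{k})|\nabla(\wtau{k})^{\alpha/2}|^2\dd x$ up to a factor. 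After absorbing the troublesome $\chi_t\teta$ and $\rho\teta\dive(\uu_t)$ terms as in \eqref{calc2.1}, applying the Gagliardo-Nirenberg inequality \eqref{gn-ineq} on the piecewise constant interpolant, and invoking \eqref{hyp-K}, I recover \eqref{log-added} and \eqref{aprio6-discr}, together with the discrete counterpart of \eqref{est5}, i.e.\ $L^2$-bounds for $\partial_t\pwl\chi\tau$ and $a(\upwc\chi\tau)^{1/2}\eps(\partial_t\pwl\uu\tau)$, by subtracting the heat equation tested by $1$ from the total energy inequality.

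The Fifth estimate is based on testing the discrete momentum equation \eqref{eq-discr-u} by $-\dive(\vism\eps(\dtau{k}{\uu}))$, which is rigorous here because Lemma~\ref{lemma:ex-discr} yields $\utau{k}\in\boY$ together with \eqref{cigamma} and \eqref{eqn:visc}. Mimicking \eqref{added-4-clarity}--\eqref{est-to-fill-3} with $p>d$ via \eqref{interp}, and applying a discrete Gronwall argument, I would obtain \eqref{aprio1bis}--\eqref{aprio1-discr}, and then \eqref{aprio2-discr} by comparison in \eqref{eq-discr-u}. In the case $\mu=0$, a comparison in \eqref{eq-discr-chi} tested by $A_p(\pwc\chi\tau)+\pwc\xi\tau$, combined with the monotonicity of $\beta$ and the savari\'e-type regularity \cite[Thm.~2]{savare98}, yields \eqref{aprio8-discr}.

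The main obstacle is the Sixth estimate: on the time-continuous level it relies on a comparison argument in the equation for $\teta$ divided by $\teta$, but on the discrete level only the one-sided entropy inequality \eqref{entropy-ineq-discr} is available. My plan is to choose in \eqref{entropy-ineq-discr} time-independent test functions $\varphi\in W^{1,d+\epsilon}(\Omega)$ with $\|\varphi\|_{W^{1,d+\epsilon}}\leq 1$, so that the $\partial_t\pwl\varphi\tau$ term disappears, and to estimate the resulting right-hand side term by term exactly as in \eqref{q-big1}--\eqref{i32}, using \eqref{hyp-K}, \eqref{strict-pos-wk}, \eqref{aprio6-discr}, and \eqref{necessary-added} (whose discrete analogue follows from the Third estimate). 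This controls each single increment $|\langle\log(\wtau{k})-\log(\wtau{k-1}),\varphi\rangle|$ by a $\tau$-times an $\ell^1$-in-$k$ quantity, and summing along an arbitrary partition of $[0,T]$ (since the bounds are partition-independent) yields \eqref{weaker-sixth}. Under Hypothesis~(V), the same strategy applied directly to the discrete heat equation tested by $\varphi\in W^{2,d+\epsilon}(\Omega)$, exploiting the sharper Gagliardo-Nirenberg interpolation in \eqref{citata-dopo-ehsi} and the restriction \eqref{range-k-admissible}, gives \eqref{aprio7-discr}.
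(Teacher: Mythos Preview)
Your plan for the First through Fifth estimates, and for \eqref{aprio7-discr} and \eqref{aprio8-discr}, matches the paper's approach essentially line by line; in particular the discrete concavity inequality for $F(\theta)=\theta^\alpha/\alpha$ and the telescoping of the Second estimate, the subtraction argument for the Fourth estimate, and the elliptic-regularity test of \eqref{eq-discr-u} by $-\dive(\vism\eps(\dtau{k}{\uu}))$ are exactly what the paper does.

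There is, however, a genuine gap in your treatment of the Sixth estimate \eqref{weaker-sixth}. The discrete entropy inequality \eqref{entropy-ineq-discr} is only admissible for \emph{nonnegative} test functions $\varphi\geq 0$, and it is an \emph{inequality}, not an identity. Consequently, for a fixed $\varphi\geq 0$ it yields only the one-sided bound
\[
\int_\Omega\big(\log(\pwc\teta\tau(\sigma_i))+\pwc\chi\tau(\sigma_i)-\log(\pwc\teta\tau(\sigma_{i-1}))-\pwc\chi\tau(\sigma_{i-1})\big)\varphi\,\dd x \;\geq\; -\Lambda_{i,\tau}(\varphi),
\]
and there is no a priori upper bound on the same quantity. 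So you \emph{cannot} directly ``control each single increment $|\langle\log(\wtau{k})-\log(\wtau{k-1}),\varphi\rangle|$ by a $\tau$-times an $\ell^1$-in-$k$ quantity'' as you claim; the estimates \eqref{q-big1}--\eqref{i32} bound $|\Lambda_{i,\tau}(\varphi)|$, not the increment itself.

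The paper closes this gap by a decomposition-plus-telescoping trick. For a general $\varphi\in W^{1,d+\epsilon}(\Omega)$ one writes $\varphi=\varphi^+-\varphi^-$, applies the entropy inequality with $\varphi^+\geq 0$ and with $\varphi^-\geq 0$ separately, and observes that the two one-sided bounds combine (after adding and subtracting $\Lambda_{i,\tau}(\varphi^\pm)$) into
\[
\big|\langle \frakh_{i,\tau}-\frakh_{i-1,\tau},\varphi\rangle\big|
\;\leq\;
\int_\Omega(\frakh_{i,\tau}-\frakh_{i-1,\tau})\,|\varphi|\,\dd x
\;+\;\Lambda_{i,\tau}(|\varphi|)+|\Lambda_{i,\tau}(\varphi^+)|+|\Lambda_{i,\tau}(\varphi^-)|,
\]
where $\frakh_{i,\tau}:=\log(\pwc\teta\tau(\sigma_i))+\pwc\chi\tau(\sigma_i)$. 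The crucial point is that the first term on the right carries \emph{no} absolute value: summed over $i$ it telescopes to $\int_\Omega(\frakh_{J,\tau}-\frakh_{0,\tau})|\varphi|\,\dd x$, which is bounded via the $L^\infty(0,T;L^1(\Omega))$ estimate on $\pwc\teta\tau$ and the positivity \eqref{strict-pos-wk}. The remaining $\Lambda$-terms are then handled exactly by your proposed termwise estimates. Without this telescoping step the argument does not close, and this is precisely why the paper can only obtain the weaker form \eqref{weaker-sixth} rather than a genuine $\mathrm{BV}([0,T];W^{1,d+\epsilon}(\Omega)^*)$ bound (cf.\ Remark~\ref{rmk:after6}).
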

We now sketch the proof, showing how the formal a priori estimates in Section \ref{s:aprio}
 can be translated in the framework of the time-discretization scheme;  we shall only detail the argument    leading to estimate \eqref{weaker-sixth}.
\begin{proof}
From the discrete total energy
inequality \eqref{total-enid-discr}, arguing in the very same way as for the \textbf{First a priori estimate}, we deduce
\begin{equation}
\label{crucial-first} \| \pwc\teta\tau \|_{L^\infty
(0,T;L^1(\Omega))} + \| \pwl\uu\tau\|_{W^{1,\infty}
(0,T;L^2(\Omega;\R^d))} + \beo \| \nabla\pwc\chi\tau \|_{L^\infty
(0,T;L^{p}(\Omega))}  \eo \leq C.
\end{equation}
We also infer
 that
 $\|b(\pwc\chi\tau)^{1/2} \eps(\pwc \uu\tau)
\|_{L^\infty (0,T;L^2(\Omega;\R^{d\times d}))} \leq C$ which gives, via \eqref{data-a} and Korn's inequality, that
\[
\| \pwc \uu\tau \|_{L^\infty (0,T;H_0^1(\Omega; \R^{d}))} \leq C.
\]

Next,
along the lines of the  \textbf{Second a priori estimate},
we test
\eqref{eq-discr-w} by $F'(\wtau{k}) = (\wtau{k})^{\alpha-1}$, with $\alpha \in (0,1)$.
Since $F(\teta)=\teta^\alpha/\alpha$ is concave, by \eqref{inequality-concave-functions} we have
\[
(\wtau k- \wtau{k-1}) F'({\wtau k} )\leq F(\wtau k) - F(\wtau {k-1})\qquad \text{ a.e.\ in } \Omega,
\]
therefore we obtain
\begin{equation}
\label{F-concavita}
\begin{aligned}
&
\int_\Omega  \left( \gtau{k} +
a(\chitau{k})\eps\left(\frac{\utau{k}-\utau{k-1}}{\tau}\right)\vism\eps\left(\frac{\utau{k}-\utau{k-1}}{\tau}\right)+\left( 1+ \frac{\tau^{1/2}}{2} \right)  \left|\frac{\chitau{k}
-\chitau{k-1}}{\tau}\right|^2
\right) F'(\wtau k) \dd  x
+\int_{\partial \Omega} \htau{k} F'(\wtau k) \dd  S
\\
& \leq \int_\Omega
\left(
\frac{F(\wtau k) - F(\wtau {k-1})}{\tau}   +\frac{\chitau{k} -\chitau{k-1}}{\tau} \wtau{k} F'(\wtau k)
+\rho\dive\left(\frac{\utau{k}-\utau{k-1}}{\tau}\right)  \wtau{k} F'(\wtau k)  +
\condu(\wtau{k}) \nabla \wtau{k} \nabla(F'(\wtau k))
 \right)
  \dd x\,.
\end{aligned}
\end{equation}
Then, we multiply \eqref{F-concavita}
 by $\tau$. Summing over the index $k$ and recalling that $g\geq 0$ and $h\geq 0$, we obtain  for all $t \in (0,T]$
 \[
 \begin{aligned}
 &
 \frac{4(1-\alpha)}{\alpha^2} \int_0^{\pwc{\mathsf{t}}{\tau}(t)} \int_\Omega \condu(\pwc \teta{\tau}) |\nabla ((\pwc \teta{\tau})^{\alpha/2}) |^2 \dd x \dd s\\
&\qquad
+ \int_0^{\pwc{\mathsf{t}}{\tau}(t)}  \int_\Omega   \left( c_2  |\eps (\partial_t \pwl {\uu}{\tau})|^2 F'(\pwc \teta \tau) + \left(1+ \frac{\tau^{1/2}}{2} \right)|\partial_t\pwl {\chi}{\tau}|^2 F'(\pwc \teta \tau)
\right) \dd x \dd s
\\
&
\leq \int_\Omega F(\pwc\teta\tau(t)) \dd x - \int_\Omega F(\teta_0) \dd x  +   \int_0^{\pwc{\mathsf{t}}{\tau}(t)}  \int_\Omega
\left(    \partial_t \pwl {\chi}{\tau} \pwc\teta\tau F'(\pwc\teta\tau)  +\rho
\dive (\partial_t \pwl {\uu}{\tau}) \pwc\teta\tau F'(\pwc\teta\tau) \right)\dd x\dd s\,.
 \end{aligned}
 \]
Starting from this inequality, we develop calculations completely analogous to the ones in Section \ref{s:aprio}  {\bf for the Second a priori estimate}.
In particular,
we conclude that
\begin{equation}
\label{crucial-third}
\int_0^{\pwc{\mathsf{t}}{\tau}(t)} \int_\Omega \condu(\pwc \teta{\tau}) |\nabla ((\pwc \teta{\tau})^{\alpha/2}) |^2 \dd x \dd s  \leq C\,.
\end{equation}
The same calculations as for the \textbf{Third estimate} allow us
then to deduce from \eqref{crucial-third} and \eqref{crucial-first}
estimate \eqref{aprio6-discr}. As a byproduct of these calculations,
we again  have for all  $\alpha \in (1/2,1)$
\begin{equation}
\label{crucial-third-bis}
\| (\pwc \teta{\tau})^{(\kappa-\alpha)/2}\|_{L^2 (0,T; H^1(\Omega))},  \,  \| (\pwc \teta{\tau})^{(\kappa+\alpha)/2}\|_{L^2 (0,T; H^1(\Omega))}  \leq C\,.
\end{equation}
Moreover, since
\begin{equation}
\label{positiv-interp}
 \pwc\teta \tau(t) \geq \underline{\teta}\qquad \text{a.e.\ in } \Omega \quad \text{ for all $t \in [0,T]$,}
 \end{equation}
 (with $\underline{\teta}$ from \eqref{low-bou-discr}),
  we also have  \eqref{log-added}.

As for the \textbf{Fourth estimate}, we subtract from the discrete total energy inequality \eqref{total-enid-discr} the discrete heat equation
\eqref{eq-discr-w} multiplied by $\tau$ and summed over the index $k$. Therefore, we obtain  for all $ t \in [0, T]$
\begin{align}\no
& \frac 12\io |\partial_t \pwl {\uu}{\tau} (\pwc {\mathsf{t}}{\tau}(t))|^2\dd x
+\int_0^{\pwc {\mathsf{t}}{\tau}(t)} \bilj{a(\upwc\chi\tau )}{\partial_t\pwl {\uu}{\tau}}{\partial_t\pwl {\uu}{\tau}}\dd s
+\frac12
\bilh{b(\pwc {\chi}{\tau}(\pwc{\mathsf{t}}{\tau}(t)))}{\pwc{\uu}{\tau}(\pwc{\mathsf{t}}{\tau}(t))}{\pwc{\uu}{\tau}(\pwc{\mathsf{t}}{\tau}(t))}
\\
& \quad
+ \left(1+ \frac{\tau^{1/2}}{2}  \right) \int_0^{\pwc {\mathsf{t}}{\tau}(t)}
\io |\partial_t\pwl \chi\tau|^2\dd x\dd s
+\io \frac1p |\nabla\pwc\chi\tau(\pwc {\mathsf{t}}{\tau}(t))|^p + W(\pwc\chi\tau(\pwc {\mathsf{t}}{\tau}(t)))\dd x
\no
\\
& = I_0 +  \int_0^{\pwc {\mathsf{t}}{\tau}(t)}  \io \pwc\teta\tau  \left(\rho \dive (\partial_t\pwl\uu\tau)+\partial_t\pwl\chi\tau\right)\dd x \dd
s+ \int_0^{\pwc {\mathsf{t}}{\tau}(t)}  \io \pwc{\mathbf{f}}\tau \cdot  \partial_t\pwl\uu\tau \dd x \dd s\,,
\no
\end{align}
where we have used the place-holder   $I_0=
\frac12\bilh{b(\chi_0)}{\uu_0}{\uu_0} +
\io (\frac12 |{\bf v}_0|^2 +
 \frac1p|\nabla\chi_0|^p+ W(\chi_0))\dd x$.
Exploiting
\eqref{bulk-force} and
 estimate  \eqref{aprio6-discr}, we control  the second term on the right-hand side with
 $\int_0^t\int_\Omega |\partial_t\pwl \chi\tau|^2\,\dd x  \dd s $ and
  the second term  on the left-hand side, which bounds  $ \int_0^{\pwc {\mathsf{t}}{\tau}(t)}
  \| \partial_t\pwl {\uu}{\tau}\|_{H^1(\Omega;\R^d)}^2\, \dd s $ thanks to
  \eqref{korn}. Therefore, we conclude that $\|  \partial_t\pwl {\uu}{\tau}\|_{L^2 (0,T;H^1(\Omega;\R^d))} \leq C$, as well as
  estimates \eqref{aprio3-discr}--\eqref{aprio4-discr}.

  The  \textbf{Fifth estimate}  is performed on the time-discretization scheme by testing \eqref{eq-discr-u}
  by $-\dive (\beo\vism\eo\eps (\utau{k} - \utau{k-1}))$. For all the calculations, we refer to \cite[(3.61)--(3.67)]{rocca-rossi-deg}: therein,
  the equation for $\uu$ was the same  as our  own \eqref{eqI}, but the elasticity and viscosity tensors $\elm$ and $\vism$  were assumed
  to be independent of  the space variable $x$. Nonetheless, the computations from \cite{rocca-rossi-deg} carry over to the present setting, cf.\ also the formal calculations for the  {\bf Fourth a priori estimate}  in Sec.\ \ref{s:aprio}. Therefore, we conclude estimates \eqref{aprio1bis} and \eqref{aprio1-discr}.
  A comparison argument in \eqref{eq-discr-u}, joint with \eqref{reg-pavel-b},  yields \eqref{aprio2-discr}.

We will now  render the \emph{weaker version}
\eqref{weaker-sixth}
 of the  \textbf{Sixth estimate} in the time-discrete setting. To do so,
let us fix a partition $0 =\sigma_0 < \sigma_1 < \ldots < \sigma_J =T$ of the interval $[0,T]$. Preliminarily,  from the discrete  entropy inequality
\eqref{entropy-ineq-discr}, written on the  interval $[\sigma_{i-1},\sigma_i]$
 and for a \emph{constant-in-time} test function $\varphi \in W^{1,d+\epsilon}(\Omega)$ for some $\epsilon>0$, we deduce that
 \begin{align}
 &
 \label{ell-i-pos}
 \int_\Omega (\frakh_{i,\tau} - \frakh_{i-1,\tau}) \varphi \dd x + \Lambda_{i,\tau} (\varphi) \geq 0 && \text{for all } \varphi \in W_+^{1,d+\epsilon}(\Omega),
 \\
   &
 \label{ell-i-neg}
 \int_\Omega (\frakh_{i-1,\tau} - \frakh_{i,\tau}) \varphi \dd x - \Lambda_{i,\tau} (\varphi) \geq 0 && \text{for all } \varphi \in W_-^{1,d+\epsilon}(\Omega),
 \end{align}
 where we have used the place-holders
 \[
\begin{aligned}
&
\frakh_{i,\tau}= \log(\pwc\teta\tau (\sigma_{i})) + \pwc\chi\tau (\sigma_{i}),
\\
&
\begin{aligned}
\Lambda_{i,\tau} (\varphi) =
 &   \int_{\pwc{\mathsf{t}}{\tau}(\sigma_{i-1})}^{\pwc{\mathsf{t}}{\tau}(\sigma_i)}  \int_\Omega  \condu(\pwc\teta\tau) \nabla \log(\pwc\teta\tau) \cdot \nabla \varphi \dd x \dd r - \rho  \int_{\pwc{\mathsf{t}}{\tau}(\sigma_{i-1})}^{\pwc{\mathsf{t}}{\tau}(\sigma_i)} \int_\Omega \dive(\partial_t \pwl\uu\tau) \varphi  \dd x \dd r
\\
& \quad -  \int_{\pwc{\mathsf{t}}{\tau}(\sigma_{i-1})}^{\pwc{\mathsf{t}}{\tau}(\sigma_i)} \int_\Omega  \condu(\pwc\teta\tau) \frac{\varphi}{\pwc\teta\tau} \nabla (\log(\pwc\teta\tau)) \nabla \pwc\teta\tau \dd x \dd r -  \int_{\pwc{\mathsf{t}}{\tau}(\sigma_{i-1})}^{\pwc{\mathsf{t}}{\tau}(\sigma_i)}   \int_{\partial\Omega} \pwc h{\tau}  \frac{\varphi}{\pwc\teta\tau} \dd S \dd r
 \\
 &\quad - \int_{\pwc{\mathsf{t}}{\tau}(\sigma_{i-1})}^{\pwc{\mathsf{t}}{\tau}(\sigma_i)} \int_\Omega \left(\pwc g\tau+ a(\upwc \chi\tau)  \eps(\partial_t \pwl \uu\tau)\vism \eps(\partial_t \pwl \uu\tau)
  + \left( 1 +   \frac{\tau^{1/2}}{2}   \right) |\partial_t \pwl \chi\tau|^2  \right)
 \frac{\varphi}{\pwc\teta\tau} \dd x \dd r.
\end{aligned}
\end{aligned}
\]
For later use,  we also introduce the place-holder
\[
\mathcal R_{\tau}:=
    \rho\dive(\partial_t \pwl\uu\tau)   + \condu(\pwc\teta\tau)|\nabla (\log(\pwc\teta\tau)) |^2
+\left(\pwc g\tau +a(\upwc \chi\tau)  \eps(\partial_t \pwl \uu\tau)\vism \eps(\partial_t \pwl \uu\tau)
  + \left( 1 +  \frac{\tau^{1/2}}{2}   \right) |\partial_t \pwl \chi\tau|^2 \right) \frac1{\pwc \teta\tau},
  \]
  so that $\Lambda_{i,\tau}(\varphi)$ rewrites as
  \begin{equation}
\label{resto-Lambda_i}
\Lambda_{i,\tau}(\varphi) = \int_{\pwc{\mathsf{t}}{\tau}(\sigma_{i-1})}^{\pwc{\mathsf{t}}{\tau}(\sigma_i)}  \int_\Omega \left(  \condu(\pwc\teta\tau) \nabla \log(\pwc\teta\tau) \cdot \nabla \varphi  - \mathcal{R}_\tau \varphi \right) \dd x \dd r -  \int_{\pwc{\mathsf{t}}{\tau}(\sigma_{i-1})}^{\pwc{\mathsf{t}}{\tau}(\sigma_i)}   \int_{\partial\Omega} \pwc h{\tau}  \frac{\varphi}{\pwc\teta\tau} \dd S \dd r\,.
\end{equation}

\beo
 We now estimate   the total variation $\mathrm{Var}(\pairing{}{W^{1,d+\epsilon}(\Omega)}{\log(\pwc\teta\tau) + \pwc\chi\tau}{\varphi}; [0,T])$
 cf.\ \eqref{definition-Var}, for
 $\varphi \in W^{1,d+\epsilon} (\Omega)$ with $\| \varphi\|_{W^{1,d+\epsilon} (\Omega)}\leq 1$,
by proceeding as follows. We observe that  for every fixed $\varphi \in W^{1,d+\epsilon}(\Omega)$
there holds
\begin{equation}
\label{neg-pos-part}
\begin{aligned}
  \left| \pairing{}{W^{1,d+\epsilon}(\Omega)}{\frakh_{i,\tau} - \frakh_{i-1,\tau}}{\varphi} \right|
 &  \leq
\left| \int_\Omega (\frakh_{i,\tau} - \frakh_{i-1,\tau}) \varphi^+\dd x + \Lambda_{i,\tau}(\varphi^+)  \right| +   |\Lambda_{i,\tau} (\varphi^+)|
 \\
 &  \quad  +  \left| \int_\Omega ( \frakh_{i-1,\tau} - \frakh_{i,\tau} ) (-\varphi^-) \dd x - \Lambda_{i,\tau} (-\varphi^-)  \right| + | \Lambda_{i,\tau} (\varphi^-) |
 \\
 &
= \int_\Omega (\frakh_{i,\tau} - \frakh_{i-1,\tau}) |\varphi| \dd x + \Lambda_{i,\tau} (|\varphi|) +   |\Lambda_{i,\tau} (\varphi^+)|  + | \Lambda_{i,\tau} (\varphi^-) |,
\end{aligned}
\end{equation}
where
$\varphi^+$ ($\varphi^-$, resp.) denotes the positive (negative) part of $\varphi$. The last equality ensues from   \eqref{ell-i-pos}--\eqref{ell-i-neg}, allowing us  to remove the absolute values in the first and second lines of \eqref{neg-pos-part}, and from the linearity of the map $\varphi \mapsto \Lambda_{i,\tau}(\varphi)$, yielding
$ \Lambda_{i,\tau}(\varphi^+) - \Lambda_{i,\tau} (-\varphi^-)   = \Lambda_{i,\tau} (|\varphi|)  $.
Therefore,
\begin{equation}
\label{chain-1}
\begin{aligned}
&
\sum_{i=1}^J  
\left| \pairing{}{W^{1,d+\epsilon}(\Omega)}{\frakh_{i,\tau} - \frakh_{i-1,\tau}}{\varphi}\right|
\\
& \stackrel{(1)}\leq  \sum_{i=1}^J   
\int_\Omega (\frakh_{i,\tau} - \frakh_{i-1,\tau}) |\varphi| \dd x
+ \Lambda_{i,\tau} (|\varphi|) + |\Lambda_{i,\tau} (\varphi^+)| + |\Lambda_{i,\tau} (\varphi^-)|.
\end{aligned}
\end{equation}
Next,   rewriting $\Lambda_i (|\varphi|)$ by means of
 \eqref{resto-Lambda_i} we
 find that
\begin{equation}
\label{chain-1.1}
\begin{aligned}
\Lambda_i (|\varphi|) =
 & \sum_{i=1}^J  \int_{\pwc{\mathsf{t}}{\tau}(\sigma_{i-1})}^{\pwc{\mathsf{t}}{\tau}(\sigma_i)}  \int_\Omega
\condu(\pwc\teta\tau) \nabla \log(\pwc\teta\tau) \cdot \nabla (|\varphi|) \dd x \dd r
\\
&\quad
-
\sum_{i=1}^J
\int_{\pwc{\mathsf{t}}{\tau}(\sigma_{i-1})}^{\pwc{\mathsf{t}}{\tau}(\sigma_i)}   \int_{\partial\Omega} \pwc h{\tau}  \frac{|\varphi|}{\pwc\teta\tau} \dd S \dd r
-
\sum_{i=1}^J
\int_{\pwc{\mathsf{t}}{\tau}(\sigma_{i-1})}^{\pwc{\mathsf{t}}{\tau}(\sigma_i)}
\int_\Omega  \mathcal{R}_\tau  |\varphi|\dd x   \dd r
\doteq I_1 -I_2 -I_3.
\end{aligned}
\end{equation}
We observe that (due to {\bf Hypothesis (I)})
 \begin{equation}
 \label{chain-4}
 \begin{aligned}
 &
\left| I_1 \right| \leq  \sum_{i=1}^J    \sup_{\|\varphi \|_{ W^{1,d+\epsilon}(\Omega)}  \leq     1  }  \left|  \int_{\pwc{\mathsf{t}}{\tau}(\sigma_{i-1})}^{\pwc{\mathsf{t}}{\tau}(\sigma_i)}  \int_\Omega
\condu(\pwc\teta\tau) \nabla \log(\pwc\teta\tau) \cdot \nabla (|\varphi|) \dd x \dd r \right|
\\
 & \leq   \sum_{i=1}^J    \sup_{\|\varphi \|_{ W^{1,d+\epsilon}(\Omega)}  \leq     1  } \| \varphi\|_{W^{1,3}(\Omega)} \int_{\pwc{\mathsf{t}}{\tau}(\sigma_{i-1})}^{\pwc{\mathsf{t}}{\tau}(\sigma_i)}   \| (\pwc\teta\tau)^{(\kappa
+\alpha-2)/2} \nabla \pwc\teta\tau \|_{L^2(\Omega;\R^d)}  \| (\pwc\teta\tau)^{(\kappa
-\alpha)/2} \|_{L^{6}(\Omega)} \dd s
 \\ & \leq C \int_0^T    \| (\pwc\teta\tau)^{(\kappa
+\alpha-2)/2} \nabla \pwc\teta\tau \|_{L^2(\Omega;\R^d)}   \| (\pwc\teta\tau)^{(\kappa
-\alpha)/2} \|_{L^{6}(\Omega)} \dd s,
\end{aligned}
\end{equation}
while
we note that $-I_2 \leq 0$ by the positivity of $h$.
Moreover, taking into account the definition of $\mathcal{R}_\tau$,
the fact that $|1/\pwc\teta\tau|\leq C$  a.e.\ in $\Omega \times (0,T)$  by  \eqref{positiv-interp}, and the continuous embedding $W^{1,d+\epsilon}(\Omega)\subset L^\infty(\Omega)$,
 and arguing as for  \eqref{i32},
  we find
  \begin{equation}
 \label{chain-6}
 \begin{aligned}
\left|  I_3  \right|& \leq  \sum_{i=1}^J    \sup_{\|\varphi \|_{ W^{1,d+\epsilon}(\Omega)}  \leq     1  }
  \left|  \int_{\pwc{\mathsf{t}}{\tau}(\sigma_{i-1})}^{\pwc{\mathsf{t}}{\tau}(\sigma_i)}   \int_\Omega \mathcal{R}_\tau |\varphi|\dd x \dd r  \right|
  \\
  &
  \begin{aligned}
  \leq &C  \int_0^T \Big(  \| \partial_t \pwl\uu\tau \|_{H^1(\Omega;\R^d)}  + \int_\Omega |\pwc\teta\tau|^{\kappa +\alpha-2} |\nabla \pwc\teta\tau|^2 \dd  x
+ \int_\Omega|\nabla \pwc\teta\tau|^2 \dd  x
\\
&
 \qquad +  \| \pwc g \tau \|_{L^1(\Omega)} + \| \eps( \partial_t \pwl\uu\tau) \|_{L^2 (\Omega;\R^{d\times d})}^2
+ \|  \partial_t \pwl\chi\tau \|_{L^2 (\Omega)}^2  \Big)
  \dd s\,.
  \end{aligned}
  \end{aligned}
\end{equation}
 With the same calculations as throughout \eqref{chain-1.1}--\eqref{chain-6}
we also estimate the terms $|\Lambda_i (\varphi^+)|$ and $|\Lambda_i(\varphi^-)|$. Inserting
the above estimates into
\eqref{chain-1},  we find  for every $\varphi \in W^{1,d+\epsilon}(\Omega)$ with $\|\varphi \|_{W^{1,d+\epsilon}(\Omega)} \leq 1$,
\[
\begin{aligned}
  \sum_{i=1}^J  
\left| \pairing{}{W^{1,d+\epsilon}(\Omega)}{\frakh_{i,\tau} - \frakh_{i-1,\tau}}{\varphi}\right|
 & \stackrel{(1)}{\leq}    
\int_\Omega   \sum_{i=1}^J (\frakh_{i,\tau} - \frakh_{i-1,\tau}) |\varphi| \dd x
+ \overline{C}
\\
& =
  \int_\Omega \left( \log(\pwc\teta\tau(T)) + \pwc\chi\tau(T) - \log(\teta_0) - \chi_0 \right)|\varphi| \dd x
+ \overline{C}
\stackrel{(2)}{\leq} C.
\end{aligned}
\]
Here, $(1)$ with a positive constant $\bar{C}$,  uniform with respect to $\varphi $,  follows from the previously proved estimates
\eqref{aprio1-discr},  \eqref{aprio3-discr}, \eqref{aprio4-discr}, \eqref{aprio6-discr}, and
\eqref{crucial-third}. Finally, (2)
is due to
 \eqref{aprio3-discr} and to
 the fact
 that  $|\log(\pwc\teta\tau (t))| \leq  C \left(|\pwc\teta \tau(t)| + \frac{1}{|\pwc\teta\tau(t)|}\right) \leq   C \left(|\pwc\teta \tau(t)| +\frac1{\underline{\teta}(T)}\right)$
 a.e.\ in $\Omega$  for all $t \in [0,T]$ thanks  to \eqref{positiv-interp}. \eo
 Using  that
 $(\pwc\teta\tau)_\tau$ is bounded in $L^\infty (0,T; L^1(\Omega))$ by
 \eqref{crucial-first}, \beo we ultimately conclude that
 \[
 \exists\, C>0  \ \ \forall\, \tau>0 \, : \qquad
\sup_{\varphi \in W^{1,d+\epsilon}(\Omega), \ \| \varphi \|_{W^{1,d+\epsilon}(\Omega)}\leq 1}
\mathrm{Var}(\pairing{}{W^{1,d+\epsilon}(\Omega)}{\log(\pwc\teta\tau) + \pwc\chi\tau}{\varphi}; [0,T]) \leq C.
\]
Therefore, \eqref{weaker-sixth} follows, taking into account estimate \eqref{aprio4-discr}, which in particular yields a bound for
$(\pwc\chi\tau)_\tau$
in $\mathrm{BV}([0,T];L^2(\Omega))$. \eo

Under the additional Hypothesis (V), the same comparison argument in \eqref{eq-discr-w} as for the  \textbf{Seventh estimate}  yields \eqref{aprio7-discr}.

For the  \textbf{Eighth estimate},   in the case $\mu=0$ we perform a comparison in \eqref{eq-discr-chi}. Based on
 \eqref{aprio1bis}, \eqref{aprio3-discr}, \eqref{aprio4-discr}, and \eqref{aprio6-discr}
 we conclude
 \[
 \sup_{\tau>0}\left(  \| A_p (\pwc \chi \tau)\|_{L^2 (0,T; L^2 (\Omega))} +\| \pwc \xi\tau \|_{L^2(0,T; L^2(\Omega))} \right)
 \leq C
 \]
 whence \eqref{aprio8-discr} by the aforementioned regularity results from \cite{savare98}.
\end{proof}
\beo
\begin{remark}
\upshape
\label{rmk:after6}
Since we are not able to obtain an estimate in $\mathrm{BV}([0,T];W^{1,d+\epsilon}(\Omega)^*)$
for the family $(\log(\pwc\teta\tau))_\tau$,
in the time continuous limit
 the (albeit poor) regularity information
 \begin{equation}
 \label{log-teta-lost}
 \log(\teta) \in \mathrm{BV}([0,T];W^{1,d+\epsilon}(\Omega)^*)
 \end{equation}
  will be lost.
 Observe that it cannot be recovered from a comparison argument in the rescaled heat equation, since we will only obtain the entropic
 formulation of
 \eqref{eq0}.

 Still, the  formal calculations from the  \emph{Sixth estimate} in Section $3$ suggest that it should be possible to recover \eqref{log-teta-lost}. Possibly, this could be done via a double approximation procedure, where one first passes to the limit in a suitable modified version of the time-discrete scheme  \eqref{eq-discr-w}--\eqref{eq-discr-chi} and obtains in the time-continuous limit a \emph{regularized} version of system
 \eqref{eq0}--\eqref{eqII}, allowing for a \emph{rigorous} test of the heat equation by $\frac1{\teta}$. Thus, in the frame of this approximation of
  \eqref{eq0}--\eqref{eqII} it would be possible to prove the \emph{Sixth estimate}, and hence to conclude  \eqref{log-teta-lost} by a further limit passage.
\end{remark}
\eo
%
\section{Passage to the limit}
\label{s:pass-limit}
Let
$(\pwc\teta\tau,\pwl\teta\tau, \pwc\uu\tau, \upwc\uu\tau, \pwl\uu\tau, \pwwll\uu\tau, \pwc\chi\tau,
\upwc\chi\tau,  \pwl\chi\tau)_\tau$ be a family of approximate solutions,
fulfilling the discrete version of \eqref{eq-w-interp}--\eqref{eq-chi-interp}
of system \eqref{eq0}--\eqref{eqII},
the discrete entropy inequality \eqref{entropy-ineq-discr} and the discrete total energy inequality \eqref{total-enid-discr}: its existence  is ensured by Proposition~\ref{prop:discr-enid}. We derive a preliminary compactness result,
relying on the a priori estimates from Prop.\ \ref{prop:discrete-aprio} \beo  and on an auxiliary compactness result, Theorem \ref{th:mie-theil},
proved in the Appendix. \eo
\begin{lemma}[Compactness, $\mu \in \{0,1\}$]
\label{l:compactness}
Under \beo \textbf{Hypotheses (0)--(III)} \eo and conditions \eqref{bulk-force}--\eqref{datochi} on the data $\mathbf{f},  g,  h,  \teta_0,  \uu_0,  \vv_0,  \chi_0$, for any sequence $(\tau_k )_k \subset (0,+\infty)$ with $\tau_k \downarrow 0$ as
$k \to \infty$,  there exist a (not relabeled) subsequence, and  a triple $(\teta,\uu, \chi)$ such that the following
convergences hold
\begin{align}
&
\label{cnv-1}
 \pwl \uu{\tau_k} \weaksto \uu  &&  \text{ in $H^1(0,T;\boY) \cap
W^{1,\infty}(0,T;\boZ)$,}
\\
&
\label{cnv-2}
  \pwc \uu{\tau_k},\, \upwc \uu{\tau_k} \to \uu &&
 \text{ in $L^\infty(0,T;H^{2-\epsilon}(\Omega;\R^d)) $ for all $\epsilon \in (0,1]$,}
 \\
 &
 \label{cnv-2-added}
 \pwl \uu{\tau_k} \to \uu  && \text{ in $\mathrm{C}^0([0,T];H^{2-\epsilon}(\Omega;\R^d)) $ for all $\epsilon \in (0,1]$,}
\\
&
\label{cnv-3}
\partial_t \pwwll {\uu}{\tau_k} \weakto \uu_{tt} &&
 \text{ in $L^2(0,T;L^{2}(\Omega;\R^d)) $,}
 \\
 &
 \label{cnv-4}
 \partial_t \pwl{\uu}{\tau_k}\to \uu_t &&\text{ in $L^2(0,T;H^1(\Omega;\R^d)) $,}
\\
&
\label{cnv-5}
\pwc\chi{\tau_k}, \, \upwc\chi{\tau_k}, \, \pwl \chi{\tau_k} \weakto \chi && \text{ in $L^\infty (0,T; W^{1,p}(\Omega)) \cap H^1 (0,T;L^2(\Omega))$,}
\\
&
\label{cnv-6}
 \pwl \chi{\tau_k} \to \chi && \text{ in $\mathrm{C}^0 ([0,T]; X)$ for all $X$ such that $W^{1,p}(\Omega) \Subset X\subset L^2(\Omega)$},
 \\
 &
\label{cnv-5-added}
\pwc\chi{\tau_k}, \, \upwc\chi{\tau_k}\to \chi && \text{ in $L^\infty (0,T; X)$ for all $X$ such that $W^{1,p}(\Omega) \Subset X\subset L^2(\Omega)$,}
\\
 &
 \label{cnv-7}
 \pwc \teta{\tau_k}\weakto \teta && \text{ in $L^2 (0,T; H^1(\Omega))$},
 \\
&
\label{cnv-9}
\log(\pwc\teta{\tau_k})  \weaksto  \log(\teta)  &&
\text{ in } L^2 (0,T; H^1(\Omega))  \cap \beo L^\infty (0,T; W^{1,d+\epsilon}(\Omega))  \eo \quad \text{for every } \epsilon>0,
\\
&
\label{cnv-10}
\log(\pwc\teta{\tau_k})  \to  \log(\teta)  &&
\text{ in
}L^2(0,T;L^s(\Omega)) \text{ for all $ s \in [1,6)$ if $d=3$, and all $s\in [1,\infty) $ if $d=2$,}
\\
&
\label{cnv-mi-th}
 \log(\pwc\teta{\tau_k}(t)) \weakto \log(\teta(t))   && \beo  \text{ in $H^1(\Omega)$ for almost all $t \in (0,T)$,}  \eo
\\
&
\label{cnv-8}
\pwc\teta{\tau_k}\to \teta &&  \text{  in $L^h(\Omega\times (0,T))$
for all $h\in [1,8/3) $ for $d=3$ and all $h\in [1, 3)$ if $d=2$,}
\end{align}
and $\teta$  also fulfills
\begin{equation}
\label{additional-teta}
\teta \in L^\infty (0,T; L^1(\Omega)),
\quad
 \teta \geq \underline{\teta} \text{ a.e.\ in $\Omega \times (0,T)$}
 \end{equation}
  (with $\underline{\teta}$  from \eqref{strict-pos-wk}).

Under the additional \textbf{Hypothesis (V)}, we also have $\teta \in \mathrm{BV} ([0,T]; W^{2,d+\epsilon} (\Omega)^*)$ for all $\epsilon>0$, and
\begin{align}
& \label{cnv-11}
\pwc\teta{\tau_k} \to \teta && \text{ in } L^2 (0,T; Y) \text{ for all $Y$ such that $H^1(\Omega) \Subset Y \subset W^{2,d+\epsilon} (\Omega)^*$},
\\
&
\label{cnv-12}
\pwc\teta{\tau_k}(t) \to \teta(t) && \text{ in } W^{2,d+\epsilon} (\Omega)^* \text{ for all } t \in [0,T].
\end{align}
\end{lemma}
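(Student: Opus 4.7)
\medskip

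\noindent\textbf{Proof sketch for Lemma \ref{l:compactness}.}
The strategy is to extract converging subsequences directly from the uniform bounds of Proposition \ref{prop:discrete-aprio}, combining Banach-Alaoglu with Aubin-Lions/Simon compactness for the ``elastic'' and ``phase'' components, and invoking the Young-measure-based Theorem \ref{th:mie-theil} from the Appendix to cope with the weak BV-bound \eqref{weaker-sixth} on $\log(\pwc\teta\tau)$.

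First, the bounds \eqref{aprio1bis}--\eqref{aprio2-discr} give \eqref{cnv-1} and \eqref{cnv-3} by weak$^\ast$/weak compactness, and analogously \eqref{aprio3-discr}--\eqref{aprio4-discr} yield the weak$^\ast$ convergence in \eqref{cnv-5}. The weak convergences \eqref{cnv-7} for $\pwc\teta\tau$ and \eqref{cnv-9} for $\log(\pwc\teta\tau)$ follow from \eqref{aprio6-discr}, \eqref{log-added} and the variational estimate \eqref{weaker-sixth}; the fact that the weak limit of $\log(\pwc\teta\tau)$ is indeed $\log(\teta)$ will follow from the almost-everywhere convergence obtained below, together with the strict positivity \eqref{positiv-interp}. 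The identification of the limits of $\pwc\uu\tau$, $\upwc\uu\tau$, $\pwc\chi\tau$, $\upwc\chi\tau$ with those of $\pwl\uu\tau$, $\pwl\chi\tau$ is straightforward via
$\|\pwc f\tau-\pwl f\tau\|_{L^\infty(0,T;B)}\le \tau\|\partial_t\pwl f\tau\|_{L^\infty(0,T;B)}$
(and the analogue for $\upwc f\tau$), whose right-hand sides tend to $0$ thanks to \eqref{aprio1-discr}, \eqref{aprio4-discr}.

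For the strong convergences \eqref{cnv-2}, \eqref{cnv-2-added}, \eqref{cnv-4} for the displacements, I combine the $H^1(0,T;\boY)\cap W^{1,\infty}(0,T;\boZ)$ bound with the $H^2(0,T;L^2)$ bound \eqref{aprio2-discr} and apply the Aubin-Lions-Simon lemma with the compact embeddings $\boY\Subset H^{2-\epsilon}(\Omega;\R^d)$ and $\boZ\hookrightarrow L^2(\Omega;\R^d)$; the strong convergence \eqref{cnv-4} of $\partial_t\pwl\uu\tau$ in $L^2(0,T;H^1(\Omega;\R^d))$ follows since $\partial_t\pwl\uu\tau$ is bounded in $L^2(0,T;\boY)$ and its distributional time derivative coincides (up to piecewise-constant reparametrisation) with $\partial_t\pwwll\uu\tau$, which is bounded in $L^2(0,T;L^2(\Omega;\R^d))$. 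Likewise, \eqref{cnv-5}--\eqref{cnv-5-added}--\eqref{cnv-6} are obtained from \eqref{aprio4-discr} by the Ascoli-type compactness lemma in Simon \cite{simon}, using $W^{1,p}(\Omega)\Subset X\subset L^2(\Omega)$ and the equi-continuity in time provided by the $H^1(0,T;L^2(\Omega))$-bound on $\pwl\chi\tau$.

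The main difficulty lies in the temperature component. The almost-everywhere-in-time weak convergence \eqref{cnv-mi-th} cannot be deduced by a plain Helly argument, because the BV-bound \eqref{weaker-sixth} only controls $\log(\pwc\teta\tau)$ in the weak norm of $W^{1,d+\epsilon}(\Omega)^\ast$, whereas we need pointwise weak convergence in the strictly stronger reflexive space $H^1(\Omega)$ where $\log(\pwc\teta\tau)$ is uniformly bounded by \eqref{log-added}. This is exactly the situation handled by the Young-measure compactness result Theorem \ref{th:mie-theil} of the Appendix: coupling the weak $L^2(0,T;H^1(\Omega))$-bound with the variational BV-estimate \eqref{weaker-sixth} provides a (non-relabeled) subsequence and a limit $\log(\teta)\in L^\infty(0,T;H^1(\Omega))$ such that \eqref{cnv-mi-th} holds, whence the pointwise a.e.\ convergence $\pwc\teta\tau(x,t)\to\teta(x,t)$ by continuity of $\exp$. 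Combining this with the uniform $L^h$-bound \eqref{estetainterp} transferred to the discrete setting (via the Gagliardo-Nirenberg interpolation between \eqref{aprio6-discr} and \eqref{crucial-first}) and Vitali's convergence theorem yields the strong convergences \eqref{cnv-10} and \eqref{cnv-8} for every $h<8/3$ in dimension $3$ (resp.\ $h<3$ in dimension $2$). The strict-positivity property in \eqref{additional-teta} then passes to the limit from \eqref{positiv-interp}, and the $L^\infty(0,T;L^1(\Omega))$-regularity follows by weak$^\ast$ lower semicontinuity.

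Finally, under the additional Hypothesis~(V), the uniform $\mathrm{BV}([0,T];W^{2,d+\epsilon}(\Omega)^\ast)$-bound \eqref{aprio7-discr} permits a \emph{standard} Helly selection principle, producing \eqref{cnv-12}, and in turn, combined with the $L^2(0,T;H^1(\Omega))$-bound and the compact embedding $H^1(\Omega)\Subset Y\subset W^{2,d+\epsilon}(\Omega)^\ast$, it yields \eqref{cnv-11} via a dominated-convergence argument. The expected main obstacle is the use of Theorem \ref{th:mie-theil}: without that infinite-dimensional Young-measure tool, the BV-information contained in \eqref{weaker-sixth} would only give pointwise convergence in a very weak space, insufficient to pass to the limit later on in the nonlinear terms of the entropy inequality \eqref{entropy-ineq-discr}.
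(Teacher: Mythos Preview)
Your proof sketch is correct and follows essentially the same route as the paper: weak/weak$^*$ compactness from the a priori bounds, Aubin--Lions/Simon for the strong convergences of $\uu$ and $\chi$, and the Young-measure compactness Theorem~\ref{th:mie-theil} (applied with $V=H^1(\Omega)$, $Y=W^{1,d+\epsilon}(\Omega)$) to extract the pointwise-in-time weak $H^1$-convergence of $\log(\pwc\teta\tau)$, whence the a.e.\ convergence of $\pwc\teta\tau$ and, via uniform integrability, \eqref{cnv-8} and \eqref{cnv-10}. One minor slip: Theorem~\ref{th:mie-theil} places the limit $\log(\teta)$ in $L^2(0,T;H^1(\Omega))\cap L^\infty(0,T;W^{1,d+\epsilon}(\Omega)^*)$, not in $L^\infty(0,T;H^1(\Omega))$ as you wrote.
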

\begin{proof}
Due to  due to estimates \eqref{aprio1-discr} and \eqref{aprio2-discr}, there holds
 \begin{equation}
\label{e:stability-estimate}
\begin{array}{ll}
 &  \| \pwl \uu\tau - \pwc \uu\tau\|_{L^\infty (0,T;\boY)} \leq \tau^{1/2}  \| \partial_t \pwl
 {\uu}\tau \|_{L^2 (0,T;\boY)} \leq S \tau^{1/2},
 \\
&  \| \pwwll \uu\tau - \partial_t\pwl {\uu}\tau\|_{L^\infty
(0,T;L^2(\Omega;\R^d))}\leq \tau^{1/2}  \| \partial_t \pwwll
 {\uu}\tau \|_{L^2 (0,T;L^2(\Omega;\R^d))} \leq S \tau^{1/2}.
\end{array}
 \end{equation}
 Taking into account   estimates
 \eqref{aprio1bis}, \eqref{aprio1-discr}, \eqref{aprio2-discr},  applying well-known weak and strong compactness results (for the latter, cf.\ e.g.\ \cite{simon}),
 also relying on \eqref{e:stability-estimate}
  we conclude convergences \eqref{cnv-1}--\eqref{cnv-4}. The same kind of arguments yields  \eqref{cnv-5}--\eqref{cnv-5-added}
  on account of estimates
  \eqref{aprio3-discr} and \eqref{aprio4-discr}.

\beo Concerning the convergence of the   temperature variables, observe that the forthcoming Theorem \ref{th:mie-theil}
applies to the family $(\log(\pwc\teta{\tau}))_\tau$, with the choices $V= H^1(\Omega)$, $p=2$, $Y= W^{1,d+\epsilon}(\Omega)$.
Hence we conclude that, up to a subsequence  the functions $\log(\pwc\teta{\tau_k})$ weakly$^*$  converge
to some $\lambda \in L^2 (0,T; H^1(\Omega)) \cap L^\infty(0,T; W^{1,d+\epsilon}(\Omega)^*)$ for all $\epsilon>0$, and that
$\log(\pwc\teta{\tau_k}(t)) \weakto \lambda(t)$ in $H^1(\Omega)$ for almost all $t\in (0,T)$. Therefore, up to a further subsequence
we have  $\log(\pwc\teta{\tau_k}(\cdot, t)) \to \lambda(\cdot, t)$ almost everywhere in $\Omega$.  Thus,
\begin{equation}
\label{che-bello}
\pwc\teta{\tau_k} \to \teta:= e^{\lambda} \qquad \foraa\, (t,x) \in \Omega \times (0,T).
\end{equation}

Writing $\lambda = \log(\teta)$, we immediately deduce \eqref{cnv-9} and \eqref{cnv-mi-th}. Convergence \eqref{cnv-10} follows from  this argument: from \eqref{cnv-mi-th} we gather that
for almost all $t\in (0,T)$
 $\log(\pwc\teta{\tau_k}(t)) \to \log(\teta(t)) $ in every Banach space $Z$ such that $H^1(\Omega) \Subset Z$, in particular in $L^s(\Omega)$
 with $s$ as in  \eqref{cnv-10}. From the bound of
$(\log(\pwc\teta{\tau_k}) )_k$ in $L^2 (0,T; H^1(\Omega)) \cap L^\infty (0,T; W^{1,d+\epsilon}(\Omega)^*)$, combined with the interpolation inequality
(cf.\ e.g.\ \cite[Lemma 8]{simon})
\[
\forall\, \eta>0 \ \ \exists\, C_\eta>0 \ \ \forall\, \theta \in H^1(\Omega)\, : \ \ \|\theta \|_{L^s(\Omega)}\leq \eta  \|\theta \|_{H^1(\Omega)}
+C_\eta  \|\theta \|_{W^{1,d+\epsilon}(\Omega)^*},
\]
we also infer that the sequence $(\log(\pwc\teta{\tau_k}))_k$ is uniformly integrable in $L^2(0,T; L^s(\Omega))$. Then, by e.g.\
\cite[Thm.\ III.3.6]{edwards} the desired  \eqref{cnv-10} ensues.

Furthermore, from the bound \eqref{aprio6-discr} for $(\pwc\teta{\tau_k})_k$ we deduce by interpolation (cf.\
\eqref{estetainterp})
that the sequence  $(\pwc\teta{\tau_k})_k$ is uniformly integrable in    $L^h(\Omega\times (0,T))$
for all $h\in [1,8/3) $ for $d=3$ and all $h\in [1, 3)$ if $d=2$. Combining this with \eqref{che-bello} we deduce  convergence \eqref{cnv-8}.
By weak compactness arguments,   \eqref{aprio6-discr}  gives the weak convergence \eqref{cnv-7}.
\eo
 With a lower semicontinuity argument one also has that $\teta \in L^\infty (0,T; L^1(\Omega))$.
  Relying on \eqref{cnv-8} and on the approximate positivity property \eqref{positiv-interp},
we  also conclude the last of \eqref{additional-teta}.

Finally, under the additional Hypothesis (V), we also dispose of the $\mathrm{BV}$-estimate \eqref{aprio7-discr} for $(\pwc\teta\tau)_\tau$. Combining this with \eqref{aprio6-discr} and applying 
 an Aubin-Lions type compactness result for $\BV$-functions (see, for instance, \cite[Cor.\ 4]{simon} or \cite[Chap.\ 7, Cor.\ 4.9]{roub-NPDE})
 we conclude   \eqref{cnv-11}.  The pointwise convergence \eqref{cnv-12}  ensues from, e.g., \cite[Thm.\ 6.1]{MieTh04}.
\end{proof}

\noindent We are now in the position to develop the
\underline{\bf Proof of Theorem \ref{teor3}}, by passing to the limit
in the time-discrete scheme set up in Sec.\ \ref{s:time-discrete},  in the case $\mu=1$.
Let $(\tau_k)_k$ be a vanishing sequence of time-steps, and  let
$$(\pwc\teta{\tau_k},\pwl\teta{\tau_k}, \pwc\uu{\tau_k}, \upwc\uu{\tau_k}, \pwl\uu{\tau_k}, \pwwll\uu{\tau_k}, \pwc\chi{\tau_k},
\upwc\chi{\tau_k},  \pwl\chi{\tau_k})_k$$ be a sequence of approximate solutions.
 We can exploit the
 compactness results from Lemma \ref{l:compactness}.
We split the limit passage
 in the following steps.

\paragraph{\textbf{Ad the weak momentum equation \eqref{weak-momentum}}}
  Relying on  convergences \eqref{cnv-1}, \eqref{cnv-3}--\eqref{cnv-4},  on  \eqref{cnv-5-added}
  which yields that $a(\upwc\chi{\tau_k}) \to a(\chi)$  and $b(\pwc\chi{\tau_k}) \to b(\chi)$
  in
  $L^p( \Omega\times (0,T))$ for every $1\leq p <\infty$,
  and on \eqref{cnv-7}, as well as on \eqref{converg-interp-f} for $(\pwc {\mathbf{f}}{\tau_k})_k$, we pass to the limit in
  the discrete momentum equation
  \eqref{eq-u-interp} and  conclude that the triple $(\teta,\uu,\chi)$ fulfills
\eqref{weak-momentum}.

\paragraph{\textbf{Ad the weak formulation \eqref{constraint-chit}--\eqref{energ-ineq-chi} of the equation for $\chi$, $\mu=1$}}
The argument for obtaining \eqref{constraint-chit}--\eqref{energ-ineq-chi} in the limit follows exactly the same lines as the proof of \cite[Thms.\ 4.4, 4.6]{hk1} (see also \cite[Thm.\ 3]{rocca-rossi-deg}). Therefore we only recapitulate it, referring to the latter papers for all details.

First of all,  as we have pointed out in the proof of Proposition \ref{prop:discr-enid},
 the discrete flow rule
 \eqref{eq-discr-chi}  for $\chi$  can be interpreted as the Euler-Lagrange equation for the minimum problem  \eqref{min-prob-chi}, i.e.\ (recall that here $\mu=1$ and that  $\widehat\alpha=I_{(-\infty, 0]}$ and $\widehat\beta = I_{[0,+\infty)}$)
\begin{equation}
\label{minimum-prob}
\begin{aligned}
 \min_{\chi \in W^{1,p}(\Omega)} \Big\{
 \int_\Omega \Big(
 \frac{\tau^{3/2}}2   \left|\frac{\chi - \chitau{k-1}}\tau\right|^2
 + \left( \frac{\chitau k -\chitau {k-1}}{\tau}\right)\chi     & + I_{(-\infty, 0]}\left( \frac{\chi - \chitau{k-1}}{\tau} \right)+ \frac{|\nabla \chi|^p}p
 + I_{[0,+\infty)}(\chi)
 \\ &
 +\widehat{\gamma}(\chi) 
 + b(\chi) \frac{\eps(\utau{k-1}) \elm \eps(\utau{k-1}) }2 - \wtau{k} \chi
 \Big) \dd x
\Big \}
\end{aligned}
\end{equation}
  Writing necessary optimality conditions for the minimum problem \eqref{minimum-prob},
  with the very same calculations as  in the proof of  \cite[Thm.\ 3]{rocca-rossi-deg},
   we arrive
at
\begin{equation}
\label{eq-chi-ineq-better}
\begin{aligned}
&
  \int_\Omega  \Big( \partial_t \pwl
\chi{\tau}(t)  \psi + \sqrt{\tau}  \partial_t \pwl \chi{\tau}(t)
\psi + |\nabla{\pwc\chi\tau}(t)|^{p-2} \nabla{\pwc\chi\tau}(t) \cdot
\nabla \psi +     \gamma(\pwc \chi\tau (t) )   \psi
+ \pwc j\tau (t) \psi
 \Big) \, \mathrm{d}x   \geq 0
\\
&    \ \text{ for all } t \in [0,T] \text{ and all }
 \psi \in W^{1,p}(\Omega) \text{ s.t.\ there exists $\nu>0$ with  } 0
\leq \nu \psi + \pwc\chi\tau(t) \leq \upwc\chi\tau(t) \ \aein \,
\Omega,
\end{aligned}
\end{equation}
where 
where we have used the place-holder
\begin{equation}
\label{plc-holder}
 \pwc j\tau  :=
b'(\pwc\chi\tau) \frac{\eps(\upwc \uu\tau)\elm \eps(\upwc \uu\tau)}2
- \pwc\teta\tau.
 \end{equation}
Choosing $\psi= - \partial_t \pwl \chi{\tau}(t) $ in
\eqref{eq-chi-ineq-better} and summing over the index $k$ we deduce
the \emph{discrete version} of the \emph{\berdue
energy-dissipation  \erdue}  inequality \eqref{energ-ineq-chi} for
$\chi$, holding for all  $0 \leq s \leq t \leq T$, viz.
\begin{equation}
\label{energ-ineq-discrete}
\begin{aligned}
 &   \int_{\pwc{\mathsf{t}}{\tau}(s) }^{\pwc{\mathsf{t}}{\tau}(t) }
   \int_{\Omega}(1+ \tau^{1/2} ) |\partial_t \pwl \chi\tau|^2 \dd x \dd r
    +  \int_{\Omega} \left(
\frac1p |\nabla \pwc\chi\tau(\pwc{\mathsf{t}}{\tau}(t))) |^p +
  W(\pwl\chi\tau(\pwl{\mathsf{t}}{\tau}(t)))
   \right) \dd x\\
 & \leq
   \int_{\Omega} \left(
\frac1p |\nabla \pwc\chi\tau(\pwc{\mathsf{t}}{\tau}(s)) |^p +
  W(\pwl\chi\tau(\pwl{\mathsf{t}}{\tau}(s)))
   \right) \dd x\
  \\
& \qquad \qquad +  \int_{\pwc{\mathsf{t}}{\tau}(s) }^{\pwc{\mathsf{t}}{\tau}(t) } \int_\Omega \partial_t \pwl \chi\tau \left(- b'(\pwc\chi\tau)
  \frac{\varepsilon(\upwc\uu\tau)\mathrm{\elm}\varepsilon(\upwc\uu\tau)}2
+\pwc\w\tau\right)\dd x \dd r + C \tau \|\partial_t \pwl \chi\tau \|_{L^2 (0,T; L^2(\Omega))}^2\,,
\end{aligned}
\medskip
\end{equation}
where
we have used that
\[
\int_{\pwc{\mathsf{t}}{\tau}(s) }^{\pwc{\mathsf{t}}{\tau}(t) }\gamma(\pwc \chi\tau)  \partial_t \pwl \chi\tau  \dd x \dd r
=  \int_{\pwc{\mathsf{t}}{\tau}(s) }^{\pwc{\mathsf{t}}{\tau}(t) }\gamma(\pwl \chi\tau )  \partial_t \pwl \chi\tau  \dd x \dd r
+
\int_{\pwc{\mathsf{t}}{\tau}(s) }^{\pwc{\mathsf{t}}{\tau}(t) }\left(\gamma(\pwc \chi\tau  ) - \gamma(\pwl \chi\tau  ) \right)  \partial_t \pwl \chi\tau  \dd x \dd r \doteq I_1+I_2
\]
and that
\[
I_1 \stackrel{(1)}{=}  \int_\Omega \widehat \gamma(\pwl \chi\tau
(\pwc{\mathsf{t}}{\tau}(t)  )) \dd x -  \int_\Omega \widehat
\gamma(\pwl \chi\tau( \pwc{\mathsf{t}}{\tau}(s)  )) \dd x \stackrel{(2)}{=}
\int_\Omega W(\pwl \chi\tau (\pwc{\mathsf{t}}{\tau}(t) ) ) \dd x -
\int_\Omega W(\pwl \chi\tau \pwc{\mathsf{t}}{\tau}(s)  )) \dd x
\]
where $(1)$ follows from the chain rule and $(2)$ from the fact that $W= \widehat{\beta} + \widehat{\gamma}$ with
$\widehat{\beta} = I_{[0,+\infty)}$.  Finally,
\[
I_2
\leq \| \partial_t \pwl \chi\tau  \|_{L^2 (0,T;L^2(\Omega))}
\| \gamma(\pwl \chi\tau ) - \gamma(\pwc \chi\tau )\|_{L^2 (0,T;L^2(\Omega))} \leq C\tau \|\partial_t \pwl \chi\tau \|_{L^2 (0,T; L^2(\Omega))}^2
\]
thanks to the Lipschitz continuity of $\gamma$.

Second, repeating the ``recovery sequence'' argument from \cite[proof of Thm.\ 4.4]{hk1}, we
improve the weak convergence
\eqref{cnv-5}
to
\begin{equation}
\label{strong-w1p-chi}
\pwc\chi{\tau_k} \to \chi \quad \text{ in } L^p (0,T; W^{1,p}(\Omega)).
\end{equation}
We refer to \cite{hk1} and \cite{rocca-rossi-deg} for all the related calculations.

We are now in the position to take the  limit  as $\tau_k \down 0$
in the approximate  \berunodue energy-dissipation \erunodue energy inequality \eqref{energ-ineq-discrete}.
We pass to the limit on the left-hand side by lower semicontinuity,
\beo relying on     convergences
\eqref{cnv-5}--\eqref{cnv-6} and on the fact that
$\pwc\chi{\tau_k}(t) \to \chi(t)$ in $W^{1,p}(\Omega)$ for all $t\in [0,T]$.\eo

For the right-hand side, we exploit the  strong convergence \eqref{strong-w1p-chi},
yielding that $\pwc\chi{\tau_k} (s)\to
\chi(s) $  in $W^{1,p}(\Omega)$, whence $\pwc\chi{\tau_k}
(s)\to \chi(s) $  in $\mathrm{C}^0 (\overline\Omega)$, for almost
all $s \in (0,T)$. It follows from $\widehat \gamma \in
\mathrm{C}^2(\R)$ that $\widehat \gamma $ has at most quadratic
growth on bounded subsets of $\R$. We combine this with the uniform
convergence of $(\pwc\chi{\tau_k} (s))_k$ to conclude that
$\int_\Omega \widehat{\gamma} (\pwc\chi{\tau_k} (s)) \dd x \to
\int_\Omega \widehat{\gamma}(\chi(s)) \dd x $ for almost all $s \in
(0,T).$ Since $\widehat \beta = I_{[0,+\infty)}$, we have $
\int_\Omega W(\pwc\chi{\tau_k}(s)) \dd x  \to\int_\Omega W(\chi(s))
\dd x $
 for almost all $s \in (0,T)$.
 Since $(\pwl\chi\tau)_\tau$ is bounded in $H^1(0,T; L^2(\Omega))$, we also have
 \begin{equation}
 \label{citata-molto-dopo}
 \sqrt{\tau_k} \partial_t \pwl\chi{\tau_k} \to 0 \text{ in } L^2(0,T;
L^2(\Omega)).
\end{equation}
 Combining   the weak convergence \eqref{cnv-5} with the strong ones \eqref{cnv-2},   \eqref{cnv-5-added}
\beo (yielding that $b'(\pwc\chi{\tau_k}) \to b'(\chi)$ in $L^p( \Omega\times (0,T))$ for all $1\leq p <\infty$),  \eo
 and \eqref{cnv-8}, we also pass to the limit in the second integral term on the right-hand side of \eqref{energ-ineq-discrete}. The last summand obviously tends to zero. Therefore,
 we conclude the \berdue energy-dissipation \erdue inequality \eqref{energ-ineq-chi}.

 Clearly,  convergence   \eqref{cnv-5}  and the fact that $\partial_t{\pwl{\chi}{\tau}} \leq 0$ a.e.\ in $\Omega \times (0,T)$ ensure that $\chi_t \leq 0$
 .e.\ in $\Omega \times (0,T)$, i.e.\   \eqref{constraint-chit}. To obtain the variational inequality
 \eqref{ineq-chi}, together with
 \eqref{xi-def}, we proceed exactly as in \cite{hk1, rocca-rossi-deg}. The main steps are as follows:
passing to the limit in
\eqref{eq-chi-ineq-better}
as $\tau_k \down 0$ with suitable test functions from \cite[Lemma 5.2]{hk1},
also relying on \eqref{citata-molto-dopo},
we prove that for
almost all $t \in (0,T)$
\[
\begin{aligned}
&
 \int_\Omega  \Big( \chi_t(t) \tilde\psi 
  +
|\nabla\chi(t))|^{p-2}\nabla \chi(t) \cdot \nabla \tilde\psi +
\gamma(\chi(t)) \tilde\varphi + b'(\chi(t))\frac{\varepsilon(\ub(t))
\elm  \eps(\ub(t)) }{2}\tilde\psi  -\teta(t) \tilde\psi
\Big) \, \mathrm{d}x  \geq 0
\\ & \qquad \qquad \qquad \qquad\qquad \qquad \qquad \qquad\qquad \qquad
\text{for all } \tilde\psi \in W_-^{1,p}(\Omega) \text{ with }
\{\tilde\psi=0\} \supset \{\chi(t)=0\},
\end{aligned}
\]
where we have used the short-hand notation $\{ f=0\}$ for
$\{ x \in \Omega\, : \ f(x) =0\}$.
From this, arguing as in the proof of \cite[Thm. 4.4]{hk1} we deduce
that for almost all $t \in (0,T)$
\begin{equation}
\label{ineq-ssy2}
\begin{aligned}
&
 \int_\Omega  \Big( \chi_t(t) \psi  +
|\nabla\chi(t))|^{p-2}\nabla \chi(t) \cdot \nabla \psi +
\gamma(\chi(t)) \varphi + b'(\chi(t))\frac{\varepsilon(\ub(t))
\elm \eps(\ub(t))}{2}\psi  -\teta(t) \psi \Big) \,
\mathrm{d}x
\\ &
  \geq \int_{\{\chi(t)=0\}} \left( \gamma(\chi(t))  +
b'(\chi(t))\frac{\varepsilon(\ub(t))
\elm\varepsilon(\ub(t))}{2}  -\teta(t)  \right)^+ \psi
\, \mathrm{d}x
 \ \ \text{for all } \psi \in
W_-^{1,p}(\Omega).
\end{aligned}
\end{equation}
Relying on \eqref{ineq-ssy2},
 it is possible to check that  the function $\xi$ from
\eqref{xi-specific} complies with   \eqref{ineq-chi} and
 \eqref{xi-def}, cf.\ \cite{hk1}
for all details.

 \paragraph{\bf Ad the entropy inequality \eqref{entropy-ineq}}
 Let us fix a test function
 $\varphi \in \mathrm{C}^0 ([0,T]; W^{1,d+\epsilon} (\Omega)) \cap H^1 (0,T; L^{6/5}(\Omega))$ (for some $\epsilon>0$),
 for the entropy inequality \eqref{entropy-ineq}. We pass to the limit as $\tau_k \down 0$
 in the discrete entropy inequality \eqref{entropy-ineq-discr}, with the discrete test functions
 constructed from $\varphi$ in \eqref{discrete-tests-phi}. In order to pass to the limit in the
  first two integral terms on the left-hand side of \eqref{entropy-ineq-discr}, we combine convergences
 \eqref{cnv-1},  \eqref{cnv-5-added}, and \eqref{cnv-mi-th},   with
 the convergence \eqref{convergences-test-interpolants} for the
test functions.
 In order to deal with the last integral on the left-hand side, we observe that
 the family
 \begin{equation}
 \label{fist-bound}
\text{ $ (\condu (\pwc \teta\tau) \nabla \log(\pwc\teta\tau))_\tau$ is bounded
 in $L^{1+\delta} (Q;\R^d)$ for some $\delta >0$. }
\end{equation}
 Indeed,
the growth condition \eqref{hyp-K}
 implies that
 \[
 |\condu (\pwc \teta\tau) \nabla \log(\pwc\teta\tau)| \leq
 C \left(|\pwc\teta\tau|^{\kappa-1} + \frac1{\pwc\teta\tau} \right) |\nabla \pwc \teta\tau|\leq  | \leq
 C\left (|\pwc\teta\tau|^{\kappa-1} + \frac1{\underline{\teta}(T)} \right) |\nabla \pwc \teta\tau| \qquad  \aein \, \Omega \times (0,T)
\]
 (also due to   the strict positivity  \eqref{positiv-interp}).
 Thus, it remains to bound the term $|\pwc\teta\tau|^{\kappa-1}
|\nabla \pwc \teta\tau|$. To do so, we observe
\begin{equation}
\label{no-bound}
\begin{aligned}
 \iint_Q \left( |\pwc\teta\tau|^{\kappa-1} |\nabla
\pwc \teta\tau| \right)^{r} \dd x \dd t &  \leq \| (
|\pwc\teta\tau|^{(\kappa-\alpha)/2})^{r} \|_{L^{2/(2-r)}(Q)}  \| (
|\pwc\teta\tau|^{(\kappa+\alpha-2)/2} |\nabla \pwc
\teta{\tau_k})^{r} \|_{L^{2/r}(Q; \RR^d)} \\ & \leq C \| (
|\pwc\teta\tau|^{(\kappa-\alpha)/2})^{r} \|_{L^{2/(2-r)}(Q)}
\end{aligned}
\end{equation}
for some $r>0$ (to be chosen below), where we   have  exploited that
 $(
|\pwc\teta\tau|^{(\kappa+\alpha-2)/2} \nabla \pwc \teta{\tau})_\tau$
is bounded in $L^2(Q;\R^d)$ thanks to \eqref{necessary-added} (cf.\
also \eqref{additional-info}). Indeed the latter estimate yields
that $((\pwc\teta{\tau})^{(\kappa+\alpha)/2})_\tau$ is bounded in
$L^2(Q)$,  hence that $((\pwc\teta{\tau})^{(\kappa-\alpha)/2})_\tau$
is bounded in $L^{2(\kappa+\alpha)/(\kappa-\alpha)}(Q)$. Therefore,
it is sufficient to choose in \eqref{no-bound} $r$ such that
$2r/(2-r) = 2(\kappa+\alpha)/(\kappa-\alpha)$, i.e.\ $r= (\kappa
+\alpha)/\kappa$, which is strictly bigger than $1$.
 Hence,  up to some subsequence
$\condu (\pwc \teta{\tau_k}) \nabla \log(\pwc\teta{\tau_k}) $ weakly
converges to some $ \eta$   in $L^{1+\delta} (Q;\R^d)$.
 In
order to identify
 $\eta $ as  $\condu(\teta) \nabla \log(\teta)$, we use these facts.
 We first show that
 \begin{equation}
 \label{matamoros}
 |\pwc\teta{\tau_k}|^{(\kappa+\alpha-2)/2}  \nabla \pwc \teta{\tau_k} \weakto |\teta|^{(\kappa+\alpha-2)/2}  \nabla\teta \qquad \text{in }  L^2(Q;\R^d).
 \end{equation}
 Indeed, on the one hand, \eqref{cnv-7} gives
 \begin{equation}
 \label{weak-nabla-teta}
 \nabla\pwc\teta{\tau_k}  \weakto \nabla \teta \qquad \text{ in
  $L^2 (0,T; L^2(\Omega;\R^d))$.}
  \end{equation}
   On the other hand,
 the pointwise convergence $\pwc \teta{\tau_k} \to \teta$ a.e.\ in $\Omega \times (0,T)$
 combined with the fact that $(\pwc\teta{\tau_k})_k$ is bounded in $L^{\kappa +\alpha}(\Omega)$
 yields that $\pwc\teta{\tau_k} \to \teta$ in  $L^{\kappa +\alpha-\epsilon}(\Omega)$ for all $\epsilon>0$.
\beo Therefore $|\pwc\teta\tau|^{(\kappa+\alpha-2)/2} \to |\teta|^{(\kappa+\alpha-2)/2}$ in
  $L^{\eta_\epsilon}(\Omega)$, with $\eta_\epsilon:= \frac{2(\kappa+\alpha)}{\kappa+\alpha-2}-\epsilon $,
  for all $\epsilon>0$. \eo
  We may then choose $\epsilon>0$ such that $\eta_\epsilon >2$ and combine this with \eqref{weak-nabla-teta} to conclude
  \eqref{matamoros}, taking into account that
   $( |\pwc\teta{\tau_k}|^{(\kappa+\alpha-2)/2}  \nabla \pwc \teta{\tau_k})_k$ is bounded in $L^2(Q;\R^d)$.
 Second, we have that
 \begin{equation}
 \label{juramento}
 |\pwc\teta{\tau_k}|^{(\kappa-\alpha)/2} \to \teta^{(\kappa-\alpha)/2} \text{ in }  L^{2(\kappa+\alpha)/(\kappa-\alpha) - \epsilon}(\Omega) \quad \text{for all } \epsilon>0,
 \end{equation}
 again due to the pointwise convergence of $\pwc\teta{\tau_k}$ and to the fact  $(\pwc\teta{\tau_k})_k$ is bounded in $L^{\kappa +\alpha}(\Omega)$.
 It follows from \eqref{matamoros}, \eqref{juramento},  the growth condition on $\condu$,  and the Lebesgue Theorem,
 that
  \begin{equation}
  \label{evvai}
\condu (\pwc \teta{\tau_k}) \nabla \log(\pwc\teta{\tau_k})  \weakto
  \condu(\teta) \nabla \log(\teta) \qquad \text{in } L^{1+\delta} (Q;\R^d).
  \end{equation}
 This  and convergence \eqref{convergences-test-interpolants}
  for the discrete test functions
 enables us to take the limit in third term on the left-hand side of \eqref{entropy-ineq-discr}.
 The passage to the limit in the first  two
 integrals on the right-hand side results from convergences  \eqref{cnv-6},  \eqref{cnv-mi-th}
  and again
  \eqref{convergences-test-interpolants}. For the third term, we use that
 \[
\begin{aligned}
&
 \limsup_{k \to \infty}
 \left( - \int_{\pwc{\mathsf{t}}{\tau_k}(s)}^{\pwc{\mathsf{t}}{\tau_k}(t)}  \int_\Omega \condu(\pwc \teta{\tau_k}(r)) \frac{\pwc\varphi{\tau_k}(r)}{\pwc \teta{\tau_k}(r)}
\nabla \log(\pwc \teta{\tau_k}(r)) \cdot \nabla \pwc \teta{\tau_k} (r) \dd x \dd r
\right)
\\
 & =-
 \liminf_{k \to \infty}
  \int_{\pwc{\mathsf{t}}{\tau_k}(s)}^{\pwc{\mathsf{t}}{\tau_k}(t)}
    \int_\Omega \condu(\pwc \teta{\tau_k}(r)) \pwc\varphi{\tau_k}(r)
    \left|
    \nabla  \log(\pwc \teta{\tau_k}(r)) \right|^2
    \dd x \dd r
\leq
 - \int_{s}^{t}
    \int_\Omega \condu( \teta(r)) \varphi(r)
    \left|
    \nabla  \log(\teta(r)) \right|^2
   \dd x \dd r
   \end{aligned}
  \]
 which results from the weak convergence \eqref{cnv-9}, combined with the pointwise convergence $\pwc\teta{\tau_k} \to \teta$ a.e.\ in $\Omega \times (0,T)$,  \eqref{convergences-test-interpolants} for the discrete test functions,
  applying the Ioffe theorem \cite{ioffe}.
 With analogous  lower semicontinuity arguments  we pass to the limit in the last two integrals on the right-hand side of
 \eqref{entropy-ineq-discr}.

 \paragraph{\bf Ad the total energy inequality \eqref{total-enid}} It follows from passing to the limit as $\tau_k \down 0$ in
 the discrete total energy inequality
 \eqref{total-enid-discr}, based on convergences \eqref{converg-interp-f}--\eqref{converg-interp-h} for $\pwc {\mathbf{f}}{\tau_k}, \, \pwc g{\tau_k}, \, \pwc h{\tau_k}$, and on
 \eqref{cnv-2},
  \eqref{cnv-4},
  \eqref{cnv-6}, and  on the pointwise convergence
 \eqref{cnv-8}.
  Observe that convergences  \eqref{cnv-2},  \eqref{cnv-4},  and  \eqref{cnv-6} are sufficient to pass to the limit  on the left-hand side of
    \eqref{total-enid-discr}, by lower semicontinuity, \emph{for all}
  $t \in [0,T]$.  However, \eqref{cnv-8} only guarantees that $\pwc \teta{\tau_k}(t) \to \teta(t)$ in
  $L^1 (\Omega)$ \emph{for almost all} $t \in (0,T)$.

\paragraph{\bf Enhanced regularity and improved total energy inequality under Hypothesis (V)}
If in addition  Hyp.\ (V)
 holds, in view of Lemma \ref{l:compactness}
 $\teta$  is in $\BV ([0,T]; W^{2,d+\epsilon}(\Omega)^*)$ for every $\epsilon>0$, and the enhanced convergences \eqref{cnv-11} and \eqref{cnv-12} hold. The latter pointwise convergence allows  us to pass to the limit on the left-hand side of   \eqref{total-enid-discr}
\emph{for all}
  $t \in [0,T]$. This ends the proof.
 \QED

\noindent We conclude this section with the \underline{\bf Proof of Theorem \ref{teor1}},  in the case $\mu=0$.
Let $(\tau_k)_k$ be a vanishing sequence of time-steps, and
$(\pwc\teta{\tau_k},\pwl\teta{\tau_k}, \pwc\uu{\tau_k}, \upwc\uu{\tau_k}, \pwl\uu{\tau_k}, \pwwll\uu{\tau_k}, \pwc\chi{\tau_k},
\upwc\chi{\tau_k},  \pwl\chi{\tau_k})_k$ be a sequence of approximate solutions;
let $(\pwc\xi{\tau_k})_k$ be a sequence of selections in  $\beta (\pwc \chi{\tau_k})$, such that $(\pwc\chi{\tau_k}, \pwc \xi{\tau_k})$
satisfy for all $k \in \N$
 the approximate equation \eqref{eq-chi-interp}.

In the case $\mu=0$,
in addition to convergences \eqref{cnv-1}--\eqref{cnv-12},
 estimates
\eqref{aprio8-discr} yield, up to a subsequence, the further convergences
\begin{equation}
\label{further-chi}
\pwc \chi{\tau_k} \to  \chi  \quad \text{in $L^2 (0,T; W^{1+\sigma,p}(\Omega))$ for all } 1 \leq \sigma <\frac1p, \qquad \pwc \chi{\tau_k} \to \chi \quad  \text{ in } L^q  (0,T; W^{1,p}(\Omega)) \text{ for all } 1 \leq q <\infty.
\end{equation}
Furthermore,
there exists $\xi \in L^2 (0,T; L^2(\Omega))$ such that
\begin{equation}
\label{further-xi}
\pwc\xi{\tau_k} \weakto \xi \quad \text{in } L^2 (0,T; L^2(\Omega)).
\end{equation}
The strong convergence \eqref{further-chi} and the strong-weak closedness of $\beta$ (as a maximal monotone operator from
$L^2 (0,T;L^2(\Omega))$ to
$L^2 (0,T;L^2(\Omega))$) immediately yield that $\xi \in \beta(\chi)$ a.e.\ in $\Omega\times (0,T)$.

Therefore, also exploiting convergences \eqref{cnv-1}--\eqref{cnv-7}  we pass to the limit in the discrete equation for $\chi$
\eqref{eq-chi-interp} and immediately conclude that the quadruple $(\teta,\uu,\chi,\xi)$
fulfills  the pointwise formulation \eqref{weak-phase}--\eqref{xi-def-reversible} of the \underline{internal parameter equation  \eqref{eqII}}.

 The proof of the \underline{entropy inequality}, of the \underline{total energy inequality}, and of the \underline{momentum equation} is clearly the same as for Theorem \ref{teor3}.

Under the additional Hypothesis (V),  as previously seen $\teta$ is in $\mathrm{BV}([0,T]; W^{2,d+\eps}(\Omega)^*)$. We prove  the \underline{weak form  \eqref{eq-teta} of the heat equation}  by passing to the limit  as $\tau_k  \down 0$
 in the approximate heat equation \eqref{eq-w-interp}, tested by an arbitrary $\varphi \in \mathrm{C}^0 ([0,T]; W^{2,d+\epsilon}(\Omega)) \cap H^1 (0,T; L^{6/5}(\Omega))$.
 The passage to the limit in the first three terms on the left-hand side,
  and on the first two terms on the right-hand side, results from convergences \eqref{converg-interp-g},
   \eqref{converg-interp-h} for  $(\pwc g{\tau_k})_k$ and $(\pwc h{\tau_k})_k$, and from \eqref{cnv-1}--\eqref{cnv-2},
   \eqref{cnv-4}--\eqref{cnv-7}: in particular, we exploit that
  $\eps (\partial_t \pwl \uu{\tau_k}) \elm  \eps (\partial_t \pwl \uu{\tau_k})\to \eps(\uu_t) \elm   \eps(\uu_t) $    \emph{strongly} in $L^1(Q)$
 thanks to the strong convergence \eqref{cnv-4}.

In order to pass to the limit with the fourth term on the left-hand
side of \eqref{eq-w-interp}, we need to derive a finer estimate for
$(\condu(\pwc \teta{\tau_k}) \nabla \pwc \teta{\tau_k})_k$.
 Arguing as for  \eqref{citata-dopo-ehsi} we use that
 \begin{equation}
 \label{newK}
 | \condu(\pwc \teta{\tau_k}) \nabla \pwc \teta{\tau_k}  | \leq C |\pwc \teta{\tau_k}|^{(\kappa-\alpha+2)/2}\,
 |\pwc \teta{\tau_k}|^{(\kappa+\alpha-2)/2}\, |\nabla \pwc \teta{\tau_k}| + C|\nabla \pwc \teta{\tau_k}| .
 \end{equation}
Now, $ (\pwc \teta{\tau_k})^{(\kappa+\alpha-2)/2} \nabla \pwc
\teta{\tau_k} $ is bounded in   $L^2 (0,T; L^2(\Omega;\R^d))$   (thanks to
\eqref{crucial-third}). On the other hand, $(\pwc\teta{\tau_k})_k$
is bounded in $L^p (Q)$  for all  $1\leq p<8/3$, in the case $d=3$
(to which we confine this discussion). Therefore,
  choosing   $\alpha \in (1/2, 1)$   such that
$\alpha > \kappa - \frac23$ (this can be done since $\kappa <5/3$ by
assumption), we conclude that $( (\pwc
\teta{\tau_k})^{(\kappa-\alpha+2)/2} )_k$ is bounded in
$L^{2+\delta}(Q)$ for some $\delta>0$. Ultimately,
 in view of \eqref{newK}
 we conclude that
$(\condu(\pwc \teta{\tau_k}) \nabla \pwc \teta{\tau_k})_k$ is
bounded in $L^{1+\bar\delta} (0,T; L^{1+\bar\delta}(\Omega))$ for
some $\bar\delta>0$, hence
\begin{equation}
\label{dopo-yes} \exists\, \eta \in L^{1+\bar\delta} (0,T;
L^{1+\bar\delta}(\Omega))\,: \qquad \condu(\pwc \teta{\tau_k})
\nabla \pwc \teta{\tau_k} \weakto \eta \text{ in } L^{1+\bar\delta}
(0,T; L^{1+\bar\delta}(\Omega))\,.
\end{equation}
In order to identify the weak limit $\eta$, it is sufficient to
observe that   (cf.\  \cite{Marcus-Mizel})  $\condu(\pwc \teta{\tau_k}) \nabla
\pwc \teta{\tau_k} = \nabla \widehat{\condu}(\pwc \teta{\tau_k})$  a.e.\ in $\Omega \times (0,T)$.
Combining the growth property \eqref{hyp-K} of $\condu$ (where
$1\leq \kappa <5/3$), with the strong convergence \eqref{cnv-8} of $\pwc
\teta{\tau_k}$ in $L^p(Q)$ for all $1\leq p<8/3$, we ultimately
conclude that $(\widehat{\condu}(\pwc \teta{\tau_k}))_k$ strongly
converges to $\widehat{\condu} (\teta) $ in $L^{1+\tilde\delta}(Q)$
for some $\tilde \delta >0$. A standard argument then yields
\begin{equation}
\label{evvai-bis} \eta = \nabla \widehat{\condu} (\teta) =
\condu(\teta) \nabla \teta \qquad \aein\, \Omega \times (0,T).
\end{equation}
 Combining
\eqref{dopo-yes} and \eqref{evvai-bis} leads to
\[
\int_0^T\int_\Omega \condu(\pwc \teta{\tau_k}) \nabla \pwc \teta{\tau_k}\cdot \nabla \varphi \dd x \dd t \to  \int_0^T\int_\Omega \condu(\teta) \nabla\teta\cdot \nabla \varphi \dd x \dd t
\]
for every test function $\varphi \in \mathrm{C}^0 ([0,T]; W^{2,d+\epsilon}(\Omega)) $.

To  complete  the passage to the limit on the right-hand side of
 \eqref{eq-w-interp}, it remains to show that
 \begin{equation}
 \label{strong-chi-serve}
 \partial_t\pwl \chi{\tau_k} \to \chi_t \qquad \text{ in } L^2 (0,T; L^2(\Omega)).
 \end{equation}
 This follows from testing  the discrete equation for $\chi$ \eqref{eq-chi-interp}
by $\partial_t\pwl \chi{\tau_k} $, integrating in time, and passing
to the limit as $k \to \infty$. Indeed, exploiting convergences
\eqref{cnv-2} and  \eqref{cnv-5}--\eqref{cnv-7} we deduce that
\[
\limsup_{k \to \infty}\int_0^T \int_\Omega |\partial_t\pwl
\chi{\tau_k}|^2 \dd x \dd t \leq \int_0^T \int_\Omega |\chi_t|^2 \dd
x \dd t,
\]
whence \eqref{strong-chi-serve}.

\beo In this way, we conclude that the limit triple $(\teta,\uu,\chi)$ fulfills for all $t\in [0,T]$
\begin{align}
&
\begin{aligned}
\label{eq-teta-inter}   &
\pairing{}{W^{2,d+\epsilon}(\Omega)}{\teta(t)}{\varphi(t)}
-\int_0^t\io \teta \varphi_t \dd x \dd s   +\itt \io \chi_t
\teta\varphi \dd x \dd s + \rho\itt \io
\hbox{\rm div}(\ub_t) \teta\varphi \dd x \dd s \\
&\quad +\itt \io \condu(\teta) \nabla \teta\nabla\varphi \dd
x \dd s   =\itt \io \left(g+\frac{\eps(\uu_t) \vism \eps(\uu_t)}2
+|\chi_t|^2\right) \varphi  \dd x \dd s + \int_0^t \int_{\partial\Omega} h \varphi   \dd S \dd s
 +\io \ental_0\varphi(0)\dd x\\
 &\quad \text{for all }
\varphi\in C^0([0,T]; W^{2,d+\epsilon}(\Omega))\cap
H^1(0,T;L^{6/5}(\Omega)) \text{ for some $\epsilon>0$}, 
\end{aligned}
\end{align}
whence  for every $\bar{\varphi} \in  W^{2,d+\epsilon}(\Omega)$ and for every $0 \leq s \leq t \leq T$
\begin{equation}
\label{towards-ac}
\begin{aligned}
\pairing{}{W^{2,d+\epsilon}(\Omega)}{\teta(t)-\teta(s)}{\bar\varphi} & =
-\int_s^t \io \chi_t
\teta\bar\varphi \dd x \dd r - \rho\int_s^t \io
\hbox{\rm div}(\ub_t) \teta\bar\varphi \dd x \dd r
 -\int_s^t \io \condu(\teta) \nabla \teta\nabla\bar\varphi \dd
x \dd r
\\
& \quad +
\int_s^t \io \left(g+\frac{\eps(\uu_t) \vism \eps(\uu_t)}2
+|\chi_t|^2\right) \bar\varphi  \dd x \dd r + \int_s^t \int_{\partial\Omega} h \bar\varphi   \dd S \dd r\,.
\end{aligned}
 \end{equation}
Then, we deduce from
 \eqref{towards-ac}  that
 $\teta$ is absolutely continuous
 with values in $W^{2,d+\epsilon}(\Omega)^*$.
 Thus, we recover the improved regularity \eqref{better-4-w}, and the improved formulation of the heat  equation
\begin{align}
&
\begin{aligned}
\label{eq-teta-proof}   &
\itt
\pairing{}{W^{2,d+\epsilon}(\Omega)}{\partial_t\teta}{\varphi} \dd s
  +\itt \io \chi_t
\teta\varphi \dd x \dd s + \rho\itt \io
\hbox{\rm div}(\ub_t) \teta\varphi \dd x \dd s \\
&\quad +\itt \io \condu(\teta) \nabla \teta\nabla\varphi \dd
x \dd s   =\itt \io \left(g+\frac{\eps(\uu_t) \vism \eps(\uu_t)}2
+|\chi_t|^2\right) \varphi  \dd x \dd s + \int_0^t \int_{\partial\Omega} h \varphi   \dd S \dd s
\\
 &\quad \text{for all }
\varphi\in C^0([0,T]; W^{2,d+\epsilon}(\Omega))  \text{ for some $\epsilon>0$}
\quad \text{and for all } t\in [0,T].
\end{aligned}
\end{align}
 Clearly, from \eqref{eq-teta-proof}  we obtain \eqref{eq-teta}  by differentiating in time.
\eo

The total energy \emph{equality} \eqref{total-enid-asequality},
holding \underline{for every} $0 \leq s \leq t \leq T$,
ensues from testing  \eqref{eq-teta} by $\varphi=1$, the momentum
balance \eqref{weak-momentum} by $\uu_t$, and the (pointwise)
$\chi$-equation \eqref{weak-phase} by $\chi_t$, adding the resulting
relations, and integrating in time. \QED


\section{\bf From the $p$-Laplacian to the Laplacian}
\label{s:final}

In this Section we prove a global-in-time existence result for a suitable \emph{entropic formulation} of   the initial-boundary value problem for system
\eqref{eq0}--\eqref{eqII},  in the case the $p$-Laplacian operator $-\dive(|\nabla\chi|^{p-1}\nabla\chi)$
is replaced by the 
 Laplacian $-\Delta\chi$, i.e.\  for  $p=2$,
 keeping the evolution \emph{unidirectional} (i.e., $\mu=1$).
  Hence, \eqref{eqII} rewrites as
\begin{equation}\label{eqII2}
\chi_t +\partial I_{(-\infty,0]}(\chi_t) -\Delta\chi  +W'(\chi) \ni - b'(\chi)\frac{\tensore
\elm \tensore}2 + \teta \quad\hbox{in }\Omega \times
(0,T).
\end{equation}
We restrict, apparently for technical reasons (which however we cannot bypass), to the irreversible case $\mu=1$.
The main idea of the technique consists in passing to the limit as $\delta\searrow0$ in the following approximation of \eqref{eqII2}
\begin{equation}\label{eqIIdelta}
\chi_t +\partial I_{(-\infty,0]}(\chi_t) -\Delta\chi -\delta\dive(|\nabla\chi|^{p-1}\nabla\chi) +W'(\chi) \ni - b'(\chi)\frac{\tensore
\elm \tensore}2 + \teta \quad\hbox{in }\Omega \times
(0,T).
\end{equation}
Indeed, \RRRNEW under suitable conditions \EEE
  the existence result in   Thm.\  \ref{teor3} applies to the initial-boundary value problem for
 system  \eqref{eq0}--\eqref{eqI}, \eqref{eqIIdelta}, %
  with $p>d$ (supplemented with
 the boundary conditions \eqref{intro-b.c.}), yielding
  the existence of global-in-time entropic solutions for fixed
  $\delta>0$.
   In this entropic formulation we will  then
  pass to the limit as $\delta\searrow0$,  
  recovering  an  existence result for the case $p=2$. \RRRNEW In what
  follows, we will  in fact work under a set of assumptions
 suited to the limit passage as $\delta\searrow0$, but
  slightly
  weaker than the ones necessary to apply  the existence Theorem
  \ref{teor3}, cf.\ e.g.\ Remark \ref{omegaLip}. \EEE

  \par
   Let us  now  state  the notion of entropic solution for
   the limit
   system as $\delta \to 0$.  We mention in advance that the solution concept introduced below 
   is weaker
   than the one we have obtained in the case $p>d$ (cf.\ Definition \ref{prob-rev1}). In fact,
   the total energy inequality holds true  only on $(0,t)$ (cf.\  \eqref{total-enidSec6} below), and not on a generic interval
   $(s,t)$, and so does the \berdue energy-dissipation \erdue energy inequality in the weak formulation of the equation for $\chi$. Moreover, the momentum equation is no longer formulated pointwise a.e.\ in $\Omega \times (0,T)$, but   in $H^{-1}(\Omega;\R^d)$, a.e.\ in time, only. Let us also anticipate that we will confine to initial data $\chi_0\in H^1(\Omega)$
   such that $\chi_0 \geq 0$ a.e.\ in $\Omega$ (which gives $\widehat{\beta}(\chi_0) \in L^1(\Omega)$ as in
   \eqref{datochi}) and, at the same time,  $\chi_0 \leq 1$ a.e.\ in $\Omega$. This
   and the irreversible character of the evolution
   will ensure that $\chi \in [0,1]$ a.e.\ in $\Omega \times (0,T)$, in accord with  the  physical meaning of $\chi$.

\begin{definition}[Entropic solutions to the irreversible system with $p=2$]
\label{prob-rev1Sec6}
{\sl
 Given initial data    \RRRNEW $(\teta_0,\uu_0,\vv_0,\chi_0)$
 such that $\teta_0$ fulfills \eqref{datoteta}, $(\uu_0,\vv_0) \in H_0^1(\Omega;\R^d) \times L^2(\Omega;\R^d)$, \EEE  and
$\chi_0$ such that
\begin{equation}
\label{new-dato-chi}
\chi_0 \in H^1(\Omega), \quad 0 \leq \chi_0 \leq 1 \text{ a.e.\ in } \Omega,
\end{equation}
we call a triple $(\teta,\uu,\chi)$ an \emph{entropic solution} to the  Cauchy  problem
for system \eqref{eq0}--\eqref{eqI}, \eqref{eqII2}  with the
boundary conditions \eqref{intro-b.c.},
if
\begin{align}
\label{reg-tetaSec6}  & \teta \in L^2(0,T; H^1(\Omega))\cap L^\infty(0,T;L^1(\Omega))
\,,
\\
& \label{reg-uSec6} \uu \in    H^1(0,T;\boZ) \cap W^{1,\infty}
(0,T;L^2(\Omega;\RR^d))  \cap H^2 (0,T;H^{-1}(\Omega; \RR^d))\,,
\\
& \label{reg-chiSec6} \chi \in L^\infty (0,T;H^1 (\Omega)) \cap H^1
(0,T;L^2 (\Omega)),
\end{align}
$(\teta,\uu,\chi)$ complies with
 the initial conditions   \eqref{iniu}--\eqref{inichi},  
and with  the {\em entropic formulation} of \eqref{eq0}--\eqref{eqI}, \eqref{eqII2} consisting of
\begin{itemize}
\item[-]
the \emph{entropy}  inequality  \eqref{entropy-ineq};
\item[-] the \emph{total energy inequality}  for almost all $t \in (0,T]$:
\begin{equation}
\label{total-enidSec6} \mathscr{E}(\teta(t),\uu(t), \uu_t(t), \chi(t))
\leq
 \mathscr{E}(\teta_0,\uu_0, \vv_0, \chi_0)  + \int_0^t
\int_\Omega g \dd x \dd r
+ \int_0^t
\int_{\partial\Omega} h \dd S \dd r
  + \int_0^t \int_\Omega \mathbf{f} \cdot
\mathbf{u}_t \dd x \dd r \,,
\end{equation}
where
\begin{equation}
 \label{total-energySec6}
 \mathscr{E}(\teta,\uu,\uu_t,\chi):= \int_\Omega \teta \dd x +
  \frac12 \int_\Omega |\uu_t |^2 \dd x + \frac12\bilh{b(\chi(t))}{\uu(t)}{\uu(t)}
+\frac12 \int_\Omega |\nabla \chi|^2 \dd x + \int_\Omega W(\chi) \dd
x\,;
 \end{equation}
\item[-] the momentum equation
\begin{equation}
\label{weak-momentumSec6}
\ub_{tt}+\opj{a(\chi)}{\ub_t}+\oph{b(\chi)}{\uu} +
\ciro( \teta) =\mathbf{f} \quad
\text{  in }  H^{-1}(\Omega;\R^d)  \quad \aein \, (0,T),
\end{equation}
\item[-] the weak formulation of \eqref{eqII2}, viz.\
\begin{align}
\label{constraint-chitSec6}
&
\chi_t(x,t) \leq 0 \qquad \foraa\, (x,t) \in \Omega \times (0,T),
\\
 \label{ineq-chiSec6}
 &
\begin{aligned}
  \int_\Omega  \Big( \chi_t (t) \psi     +
\nabla\chi(t)\cdot \nabla \psi  + \xi(t) \psi +
\gamma(\chi(t)) \psi 
 & + b'(\chi(t))\frac{\varepsilon(\ub(t))
\elm\varepsilon(\ub(t))}{2}\psi  -\teta(t) \psi \Big)
\,
\mathrm{d}x 
  \geq 0 \\ &  \text{for all }  \psi
\in 
W_-^{1,2}(\Omega)\cap L^\infty(\Omega), \quad \foraa\, t \in (0,T),
\end{aligned}
\\
&
\nonumber
\text{where $\xi \in
\partial I_{[0,+\infty)}(\chi)$ in the  sense that}
\\
&
\label{xi-defSec6} \xi \in L^1(0,T;L^1(\Omega)) \qquad\text{and}\qquad
\pairing{}{W^{1,2}(\Omega)}{\xi(t)}{\psi-\chi(t)} \leq 0 \  \
\forall\, \psi \in W_+^{1,2}(\Omega)\cap L^\infty(\Omega), \ \foraa\, t \in (0,T),
\\
&
 \nonumber \text{ as well as  the energy
inequality for all $t \in (0,T]$:}
\\
&
\label{energ-ineq-chiSec6}
\begin{aligned}
 \int_0^t   \int_{\Omega} |\chi_t|^2 \dd x \dd r  & +\io\left(
 \frac12 |\nabla\chi(t)|^2 +  W(\chi(t))\right)\dd x\\ & \leq\io \left(
 \frac12|\nabla\chi_0|^2+ W(\chi_0)\right)\dd x
  +\int_0^t  \int_\Omega \chi_t \left(- b'(\chi)
  \frac{\varepsilon(\ub)\elm\varepsilon(\ub)}2
+\teta\right)\dd x \dd r.
\end{aligned}
\end{align}
\end{itemize}
}
\end{definition}

We are in the position now to state the main existence result of this section.
\begin{theorem}[Existence of entropic solutions, $\mu=1$ and $p=2$]
\label{teorSec6}  Let $\Omega$ be a bounded connected domain with \emph{Lipschitz boundary}. Assume  \textbf{Hypotheses (I)--(III)}  with
\begin{equation}
b'(x)\geq 0
\quad \text{for all } x \in \R,
\end{equation}
 and, in addition, \textbf{Hypothesis (IV)}
(i.e., $\widehat{\beta}=I_{[0,+\infty)}$), as well as
 conditions
\eqref{bulk-force}--\eqref{datou}   on the data $\mathbf{f},$ $g$, $h$,
$\teta_0,$ $\uu_0,$ $\vv_0,$  and \eqref{new-dato-chi} on
 $\chi_0$.
 Then, there exists an entropic solution (in the sense of Definition \ref{prob-rev1Sec6})
  $(\teta,\uu,\chi)$ to the initial-boundary value problem for system
\eqref{eq0}--\eqref{eqI}, \eqref{eqII2},  \beo such that
$\log(\teta)$ complies with \eqref{BV-log},
$\xi$ in \eqref{xi-defSec6} is given by \eqref{xi-specific} and
$\teta$ satisfies  the strict positivity property  \eqref{strictpos}.\eo
 \end{theorem}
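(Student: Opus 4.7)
My plan is to prove Theorem \ref{teorSec6} via a vanishing-viscosity-type approximation in which the Laplacian in \eqref{eqII2} is replaced by the operator $-\Delta\chi - \delta\,\dive(|\nabla\chi|^{p-2}\nabla\chi)$, for some fixed $p>d$ and a small parameter $\delta>0$. For each $\delta>0$, Theorem \ref{teor3} applied to the resulting system produces an entropic solution $(\teta_\delta,\uu_\delta,\chi_\delta)$ in the sense of Definition \ref{prob-rev1}, with $\chi_\delta\in L^\infty(0,T;W^{1,p}(\Omega))$ and all the regularity properties \eqref{reg-teta}--\eqref{reg-chi} (the $p$-Laplacian bound depending on $\delta$, the remaining bounds being $\delta$-uniform by Remark \ref{rmk:role-p-Lapl}). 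The initial datum $\chi_0\in H^1(\Omega)$ is not a priori in $W^{1,p}(\Omega)$, but this may be handled by a secondary approximation $\chi_0^\delta\to\chi_0$ in $H^1(\Omega)$ with $\chi_0^\delta\in W^{1,p}(\Omega)$ and $0\leq\chi_0^\delta\leq 1$, keeping $\tfrac{\delta}{p}\|\nabla\chi_0^\delta\|_{L^p}^p\to 0$.

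The next step is to recover $\delta$-uniform a priori estimates by revisiting Section \ref{s:aprio}: the strict positivity of $\teta_\delta$, the total energy bound, the Second/Third estimates on $\teta_\delta$, the Fourth estimate on $\chi_{t,\delta}$ and $a(\chi_\delta)^{1/2}\eps(\partial_t\uu_\delta)$, and the weak Sixth estimate all carry over since they never invoked $p>d$. The Fifth estimate is no longer available, so we only obtain $\uu_\delta\in W^{1,\infty}(0,T;L^2)\cap H^1(0,T;\boZ)$ and $\partial_{tt}\uu_\delta\in L^2(0,T;H^{-1}(\Omega;\R^d))$ by comparison; correspondingly, $\chi_\delta$ is bounded only in $L^\infty(0,T;H^1(\Omega))\cap H^1(0,T;L^2(\Omega))$. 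Standard compactness (as in Lemma \ref{l:compactness}) yields a (not relabeled) subsequence converging to some limit triple $(\teta,\uu,\chi)$ in the appropriate weak/strong topologies; in particular $\chi_\delta\to\chi$ in $\mathrm{C}^0([0,T];L^2(\Omega))$ and $a(\chi_\delta),b(\chi_\delta)\to a(\chi),b(\chi)$ strongly in all $L^q$ spaces, $\teta_\delta\to\teta$ in $L^h(Q)$ as in \eqref{cnv-8}, and $\log(\teta_\delta)\weakto\log(\teta)$ in $L^2(0,T;H^1(\Omega))$.

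The main obstacle, and the heart of the argument, is that without elliptic regularity for $\uu_\delta$ we cannot upgrade $\partial_t\uu_\delta\weakto\uu_t$ in $L^2(0,T;\boZ)$ to strong convergence by compactness alone; yet strong convergence is required to pass to the limit in the quadratic dissipation $a(\chi_\delta)\eps(\partial_t\uu_\delta)\vism\eps(\partial_t\uu_\delta)$ appearing in the entropy inequality \eqref{entropy-ineq}. Here the assumption $b'\geq 0$ together with irreversibility $\chi_{t,\delta}\leq 0$ becomes decisive: testing the approximate momentum equation \eqref{weak-momentum} by $\partial_t\uu_\delta$ and using the integration-by-parts formula \eqref{int-parts}, the ``production'' term $\tfrac12\int_0^t\!\int_\Omega b'(\chi_\delta)\chi_{t,\delta}\,\eps(\uu_\delta)\elm\eps(\uu_\delta)\dd x\,\dd r$ is nonpositive, so that
\begin{equation*}
\tfrac12\|\partial_t\uu_\delta(t)\|_{L^2}^2 + \tfrac12\bilh{b(\chi_\delta(t))}{\uu_\delta(t)}{\uu_\delta(t)} + \int_0^t\!\bilj{a(\chi_\delta)}{\partial_t\uu_\delta}{\partial_t\uu_\delta}\dd r \leq E_0 + \int_0^t\!\bigl[-\rho\!\int_\Omega\!\teta_\delta\dive(\partial_t\uu_\delta)+\int_\Omega\mathbf{f}\cdot\partial_t\uu_\delta\bigr]\dd r,
\end{equation*}
with $E_0$ depending only on the initial data. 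Testing the limit momentum equation \eqref{weak-momentumSec6} by $\uu_t$ (which is now admissible because $\uu_t\in L^2(0,T;H^1_0)$ and the equation holds in $H^{-1}$), we obtain the corresponding identity for the limit, \emph{with} the production term $\tfrac12\int b'(\chi)\chi_t\eps(\uu)\elm\eps(\uu)$ now retained. Combining both and exploiting weak lower semicontinuity at (a.a.) fixed time $t$ for $\|\partial_t\uu_\delta(t)\|_{L^2}$ and $\bilh{b(\chi_\delta(t))}{\uu_\delta(t)}{\uu_\delta(t)}$, together with Ioffe's theorem applied to the nonnegative integrand $-b'(\chi)\chi_t\eps(\uu)\elm\eps(\uu)$ and the convergence $\chi_{t,\delta}\to\chi_t$ (obtained from a parallel energy-dissipation argument for $\chi$), one deduces
\begin{equation*}
\limsup_{\delta\to 0}\int_0^T\!\bilj{a(\chi_\delta)}{\partial_t\uu_\delta}{\partial_t\uu_\delta}\dd r \leq \int_0^T\!\bilj{a(\chi)}{\uu_t}{\uu_t}\dd r.
\end{equation*}
Combined with weak convergence and the uniform coercivity $a(\chi_\delta)\geq c_2>0$, this yields $\eps(\partial_t\uu_\delta)\to\eps(\uu_t)$ strongly in $L^2(Q)$.

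Equipped with the strong convergences of $\partial_t\uu_\delta$, $\chi_{t,\delta}$, and $\teta_\delta$, the limit passage is largely routine along the lines of Section \ref{s:pass-limit}. The entropy inequality \eqref{entropy-ineq} follows by lower-semicontinuity (Ioffe) of the $|\nabla\log(\teta_\delta)|^2$ term and by strong $L^1$-convergence of the quadratic dissipations on the right-hand side; the term $\tfrac{\delta}{p}|\nabla\chi_\delta|^p$ produces no limit contribution there. The total energy inequality \eqref{total-enidSec6} is obtained only with $s=0$ by lower semicontinuity (in particular $\tfrac{\delta}{p}\int|\nabla\chi_\delta(t)|^p\geq 0$ is discarded on the LHS, and $\tfrac{\delta}{p}\int|\nabla\chi_0^\delta|^p\to 0$ on the RHS). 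The momentum balance \eqref{weak-momentumSec6} is passed to the limit in $H^{-1}(\Omega;\R^d)$ using \eqref{reg-pavel-a} (no elliptic regularity required). For the flow rule, the weak one-sided inequality \eqref{ineq-chiSec6}, the selection \eqref{xi-defSec6} with the explicit formula \eqref{xi-specific}, and the energy-dissipation inequality \eqref{energ-ineq-chiSec6} are obtained by the same Euler--Lagrange/recovery-sequence arguments of \cite{hk1, rocca-rossi-deg} adapted to the $\delta$-approximation, observing that the extra term $\tfrac{\delta}{p}\int|\nabla\chi_\delta|^{p-2}\nabla\chi_\delta\cdot\nabla\psi$ vanishes in the limit on negative test functions $\psi\in W^{1,2}_-(\Omega)\cap L^\infty(\Omega)$ thanks to the $\delta$-uniform bound for $\delta^{1/p}\nabla\chi_\delta$ in $L^\infty(0,T;L^p)$ coming from the total energy inequality. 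The constraint $0\leq\chi\leq 1$ follows from $\chi(t)\leq\chi_0\leq 1$ by irreversibility and from $\chi\geq 0$ via $\widehat\beta=I_{[0,+\infty)}$. This concludes the proof.
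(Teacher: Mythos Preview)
Your overall strategy matches the paper's: regularize \eqref{eqII2} by adding $-\delta\,\dive(|\nabla\chi|^{p-2}\nabla\chi)$ for some $p>d$, invoke Theorem~\ref{teor3} to get entropic solutions $(\teta_\delta,\uu_\delta,\chi_\delta)$, derive $\delta$-uniform estimates from Section~\ref{s:aprio} (all except the Fifth, Seventh, Eighth), and pass to the limit. You also correctly isolate the main difficulty---strong convergence of $\partial_t\uu_\delta$ in $L^2(0,T;H^1_0(\Omega;\R^d))$ in the absence of elliptic regularity.

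However, your $\limsup$-argument for this strong convergence has a genuine gap. Testing the $\delta$-momentum equation by $\partial_t\uu_\delta$ and dropping the nonnegative production term $P_\delta:=-\tfrac12\int_0^t\!\int_\Omega b'(\chi_\delta)\partial_t\chi_\delta\,\eps(\uu_\delta)\elm\eps(\uu_\delta)$ indeed yields an inequality; testing the limit equation by $\uu_t$ yields an identity in which the corresponding $P\geq 0$ is retained. Comparing the two, the $\limsup$ bound overshoots precisely by $P$, so the argument does not close unless you can show $\liminf P_\delta\geq P$. Your proposed remedies both fail: (i) Ioffe's theorem does not apply, because the integrand $(-\chi_t)\,b'(\chi)\,|\eps(\uu)|_\elm^2$ is \emph{not} jointly convex in the weakly converging pair $(\chi_t,\eps(\uu))$ (the model map $(s,\xi)\mapsto s|\xi|^2$ on $\{s\geq 0\}$ has indefinite Hessian); (ii) obtaining strong convergence $\partial_t\chi_\delta\to\chi_t$ in $L^2(Q)$ ``from a parallel energy-dissipation argument for $\chi$'' is circular---the right-hand side of the $\chi$-energy-dissipation inequality contains the same product $\partial_t\chi_\delta\cdot|\eps(\uu_\delta)|^2$, and moreover in the irreversible case $\mu=1$ the limit energy-dissipation relation for $\chi$ is only an \emph{inequality}, not the equality a $\limsup$-argument would require.

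The paper's fix is different and avoids both obstructions: one tests the $\delta$-momentum equation not by $\partial_t\uu_\delta$ but by $\partial_t(\uu_\delta-\uu)$. After the integration by parts \eqref{int-parts}, the production term becomes $-\tfrac12\int_0^t\!\int_\Omega b'(\chi_\delta)\partial_t\chi_\delta\,\eps(\uu_\delta-\uu)\elm\eps(\uu_\delta-\uu)\geq 0$, which now sits on the left-hand side of an identity whose remaining right-hand side terms (pairings of a strongly converging factor with $\partial_t(\uu_\delta-\uu)\weakto 0$) all tend to zero. One directly obtains
\[
\tfrac12\|\partial_t(\uu_\delta-\uu)(t)\|_{L^2}^2+\int_0^t\bilj{a(\chi_\delta)}{\partial_t(\uu_\delta-\uu)}{\partial_t(\uu_\delta-\uu)}+\tfrac12\bilh{b(\chi_\delta(t))}{(\uu_\delta-\uu)(t)}{(\uu_\delta-\uu)(t)}\to 0,
\]
hence $\partial_t\uu_\delta\to\uu_t$ strongly in $L^2(0,T;H_0^1(\Omega;\R^d))$, with no need for Ioffe or for strong convergence of $\partial_t\chi_\delta$. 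In fact the paper never proves (and does not need) strong convergence of $\partial_t\chi_\delta$: the $|\chi_t|^2$-term in the entropy inequality is handled by lower semicontinuity alone.
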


\begin{remark}\label{omegaLip}
Let us note that in Thm.~\ref{teorSec6} we are able to deal with the case of a Lipschitz domain $\Omega$ and we do not need  $\mathrm{C}^2$-regularity
of $\Omega$ \eqref{smoothness-omega}. The latter condition  was exploited in the previous sections in order to perform the
elliptic regularity estimate on $\uu$ (cf.\ the {\em Fifth estimate} \eqref{palla}), which is not carried out here. Indeed the regularity requirement \eqref{reg-uSec6}  on $\uu$ we ask for in Definition~\ref{prob-rev1Sec6},  and prove in Theorem \ref{teorSec6},  is weaker than the one prescribed in Section~\ref{s:main} (cf., e.g.,  \eqref{reg-u}).
Moreover, for the same reason, in this case we could also consider more general boundary conditions on $\uu$ than the homogeneous Dirichlet \eqref{intro-b.c.}: for example mixed Dirichlet-Neumann conditions could be taken into account,  without any restriction on the geometry of the domain.
\end{remark}

\begin{proof}
 Let $(\teta_\delta,\uu_\delta,\chi_\delta)$ be a  suitable  family of
 \emph{entropic}
solutions to
 the initial-boundary value problem for
\eqref{eq0}--\eqref{eqI}, supplemented with initial data $(\teta_0,\uu_0,\vv_0)$ fulfilling
\eqref{datoteta}--\eqref{datou}, and with a sequence of
data $(\chi_0^\delta)_\delta$ such that
\begin{equation}
\label{conv-ini-chi}
(\chi_0^\delta)_\delta \subset W^{1,p}(\Omega),\quad
0 \leq \chi_0^\delta(x) \leq 1 \text{ for all  $x \in \Omega$  for all } \delta>0,  \quad \chi_0^\delta \to
 \chi_0 \text{ in } H^1(\Omega).
\end{equation}
  Observe that we cannot rigorously perform on the entropic formulation of \eqref{eq0}--\eqref{eqI}
 the a priori estimates in Section \ref{s:aprio}. Therefore we need to confine the discussion only to the  entropic solutions which arise from the time-discretization scheme set up in Sec.\ \ref{s:time-discrete}.
 In the present framework (i.e.\ with $p=2$ and $\mu=1$, and no upper bound on $\kappa$, cf.\ Hypothesis (V)),
 the a priori estimates for the time-discrete solutions
in Prop.\ \ref{prop:discrete-aprio} are inherited in the time-continuous limit by the entropic solutions, with the exception of those corresponding to the \emph{Fifth}, the \emph{Seventh}, and the \emph{Eighth a priori estimates} in Sec.\ \ref{s:aprio}, cf.\ also Remark \ref{rmk:role-p-Lapl}.
\beo Concerning the \emph{Sixth estimate}, as pointed out in Sec.\ \ref{ss:3.2} we are only able to render a surrogate of it (i.e.\ \eqref{weaker-sixth}) on the time-discrete level. Still, this provides  sufficient information to pass to the limit, cf.\ Lemma \ref{l:compactness}. We shall exploit this
also within the present proof.\eo

\par
The convergences from Lemma \ref{l:compactness} combined with lower semicontinuity arguments indeed ensure that
the strict positivity of $\teta_\delta$ (cf.\ \eqref{teta-pos}), as well as
estimates
 \eqref{est1}, \eqref{crucial-est3.2}, \eqref{necessary-added},
  \eqref{est5}, \beo\eqref{est5-added}, \eo
 \eqref{est6}, hold with constants uniform w.r.t.\ $\delta$. Moreover, combining the fact that $\widehat\beta = I_{[0,+\infty)}$ with the unidirectional character of the evolution
 and with the fact that $\chi_\delta (0) = \chi_0^\delta \in [0,1]$ on $\Omega$,
  we infer that
 \begin{equation}
 \label{chi-infty-sec6}
 \exists\, C>0 \ \ \forall\, \delta>0\, : \qquad
 \| \chi_\delta \|_{L^\infty (Q)}\leq C.
   \end{equation}
   Therefore, repeating the compactness arguments in the proof of Lemma \ref{l:compactness}, based on the compactness results in \cite{simon}
   \beo (cf.\ also Theorem \ref{th:mie-theil} in the Appendix), \eo
 for every vanishing sequence $\delta_k \down 0$ as $k \to \infty$ there exist a not relabeled subsequence and
a triple $(\teta,\uu,\chi)$, along which
 there  holds as $k \to \infty$:
\begin{align}\label{convteta}
&\teta_{\delta_k}\weaksto\teta \quad \hbox{in }  L^2(0,T;H^1(\Omega))\,,\\  
\label{convu1}
& \uu_{\delta_k}\weaksto \uu \quad \hbox{in } H^2(0,T; H^{-1}(\Omega; \RR^d))\cap W^{1,\infty}(0,T;L^2(\Omega; \RR^d))\cap H^1(0,T;H^1(\Omega; \RR^d))\,,\\
\label{convu2}
&\dt \uu_{\delta_k}\to \dt \uu \quad \hbox{in } L^2(0,T; L^2(\Omega; \RR^d))\,,\\
\label{convchi}
&\chi_{\delta_k}\weaksto\chi\quad\hbox{in }H^1(0, T; L^2(\Omega))\cap L^\infty (0,T; H^1(\Omega))\,,\\
\label{convchis}
&\chi_{\delta_k}\to\chi\quad\hbox{in }L^h(\Omega\times (0,T))\quad
\text{for all } h\in [1, +\infty)\,,\\
\label{convlog} &\log(\teta_{\delta_k})\to  \log(\teta) \quad\hbox{in
}L^2(0,T;L^s(\Omega))\quad  \text{for all $s\in (1,6)$ for $d=3$ and for all $s\in (1, +\infty)$ for
$d=2$}\,,
\\
&
\label{convtetas}
 \teta_{\delta_k}\to \teta\quad \hbox{in $L^h(\Omega\times (0,T))$,
for every $h\in [1,8/3) $ for $d=3$ and $h\in [1, 3)$ if $d=2$,}
\end{align}
and in addition $\teta \in L^\infty (0,T; L^1(\Omega))$.
%

Now, in order to pass to the limit as $\delta\searrow0$ we need to prove,  in addition, that
$\partial_t \uu_{\delta_k} \to \partial_t \uu$ \emph{strongly} in $L^2 (0,T; H^1(\Omega;\R^d))$.
 Observe that, in the case of the $p$-Laplacian regularization for $\chi$,
  we were able to prove an additional
 the strong convergence for  (the sequence approximating)
$\partial_t \uu$ in $L^2 (0,T; H^1(\Omega;\R^d))$. Our argument
resulted  from  compactness arguments, relying on the \emph{Fifth a priori estimate} (i.e.\ the elliptic regularity estimate on $\uu$). The latter is no longer at our disposal, now.  The argument we will develop in the following lines is instead direct, and    strongly based on the irreversible character of our system.

\paragraph{\bf Strong convergence of  $\partial _t \uu_{\delta_k}$  in $L^2 (0,T; H^1 (\Omega;\R^d))$.}
Let us test the
 weak formulation
\eqref{weak-momentum} of  momentum
equation
fulfilled by the approximate solutions $(\teta_{\delta_k},\uu_{\delta_k},\chi_{\delta_k})_k$,
 by
$\dt(\uu_{\delta_k}-\uu)$, where $\uu$ is the limit of $(\uu_{\delta_k})_k$ as in
\eqref{convu1}--\eqref{convu2}. We get
\begin{align}\no
&0=\itt\io \partial_{tt}^2\uu_{\delta_k}\dt(\uu_{\delta_k}-\uu)\dd x \dd s+\itt \bilj{a(\chi_{\delta_k})}{\dt \uu_{\delta_k}}{\dt(\uu_{\delta_k}-\uu)}\dd s\\
\no &+\itt \bilh{b(\chi_{\delta_k})}{\uu_{\delta_k}}{\dt(\uu_{\delta_k}-\uu)}\dd
s- \rho\itt\io \teta_{\delta_k}\dive(\dt(\uu_{\delta_k}-\uu))\dd x\dd s-\itt \io {\bf
f}\dt(\uu_{\delta_k}-\uu)\dd x\dd s=:\sum_{i=1}^{5} I_i\,.
\end{align}
Let us now deal separately with the single integrals  $I_1, \ldots, I_5$:
\begin{align}\no
I_1:&= \itt\io \partial_{tt}^2\uu_{\delta_k}\dt(\uu_{\delta_k}-\uu)\dd x \dd s=\itt\io \partial_{tt}^2(\uu_{\delta_k}-\uu)\dt(\uu_{\delta_k}-\uu)\dd x \dd s \\
\no
&\quad+\itt \pairing{}{H^1(\Omega;\RR^d)}{ \partial_{tt}^2\uu}{\dt(\uu_{\delta_k}-\uu)} \dd s\\
\no & = \frac12 \|\dt(\uu_{\delta_k}-\uu)(t)\|_{L^2(\Omega;\RR^d)}^2-\frac12
\|\dt(\uu_{\delta_k}-\uu)(0)\|_{L^2(\Omega;\RR^d)}^2+\itt
\pairing{}{H^1(\Omega;\RR^d)}{\partial_{tt}^2\uu}{\dt(\uu_{\delta_k}-\uu)}\dd s\,,
\end{align}
and the third integral tends to 0 when ${\delta_k}\searrow0$ due to
\eqref{convu1}. Moreover,
\begin{align}\no
I_2:&=\itt \bilj{a(\chi_{\delta_k})}{\dt \uu_{\delta_k}}{\dt(\uu_{\delta_k}-\uu)}\dd s\\
\no &=\itt \bilj{a(\chi_{\delta_k})}{\dt(\uu_{\delta_k}-\uu)}{\dt(\uu_{\delta_k}-\uu)}\dd
s+\itt \bilj{a(\chi_{\delta_k})}{\dt \uu}{\dt(\uu_{\delta_k}-\uu)}\dd s\,.
\end{align}
 Now, observe that
\begin{equation}
\label{strong-a-chi}
a(\chi_{\delta_k}) \partial_t \uu \to a(\chi)\partial_t \uu \qquad \text{in
$L^2 (0,T; H^1(\Omega;\R^d))$.}
\end{equation}
 This follows from the fact that
$a(\chi_{\delta_k}) \uu_t \to a(\chi)\uu_t$ and
$a(\chi_{\delta_k})\eps(\uu_t)\to a(\chi)\eps(\uu_t)$ a.e.\ in $\Omega \times (0,T)$,
in view of convergence \eqref{convchis} and of the continuity of $a$. Moreover, also due to
\eqref{chi-infty-sec6}, we have  that
$\|a(\chi_{\delta_k})\uu_t\|_{H^1(\Omega;\R^d)}\leq
C\|\uu_t\|_{H^1(\Omega;\R^d)}$ for a constant  independent of  $k\in \N$. Therefore, using the Lebesgue theorem
 the desired convergence
 \eqref{strong-a-chi} ensues.
  This implies that   $ \itt \bilj{a(\chi_{\delta_k})}{\dt \uu}{\dt(\uu_{\delta_k}-\uu)}\dd s$
 tends to 0 when ${\delta_k}\searrow0$, due to
\eqref{convu1}. 
Integrating by parts in time, we get
\begin{align}\no
I_3:&=\itt \bilh{b(\chi_{\delta_k})}{\uu_{\delta_k}}{\dt (\uu_{\delta_k}-\uu)}\dd s\\
\no
&=\itt \bilh{b(\chi_{\delta_k})}{(\uu_{\delta_k}-\uu)}{\dt(\uu_{\delta_k}-\uu)}\dd s+\itt \bilh{b(\chi_{\delta_k})}{\uu}{\dt(\uu_{\delta_k}-\uu)}\dd s\\
\no
&=-\itt\io b'(\chi_{\delta_k})\dt \chi_{\delta_k}\frac{\eps(\uu_{\delta_k}-\uu)\elm\eps(\uu_{\delta_k}-\uu)}{2}\dd x \dd s+\frac12\bilh{b(\chi_{\delta_k}(t)}{(\uu_{\delta_k}-\uu)(t)}{(\uu_{\delta_k}-\uu)(t)}\\
\no
&\quad-\frac12\bilh{b(\chi_{\delta_k}(0))}{(\uu_{\delta_k}-\uu)(0)}{(\uu_{\delta_k}-\uu)(0)}+
\itt\bilh{b(\chi_{\delta_k})}{\uu}{\dt(\uu_{\delta_k}-\uu)}\dd
s\,,
\end{align}
where the last integral tends to 0 (this can be shown arguing in the same way as  for
 the last term contributing to
 $I_2$), while the first integral is non-negative
due to the fact that $\dt \chi_{\delta_k}\leq 0$ a.e.\ on $\Omega \times (0,T)$ and that $b'\geq 0$. 
 This is the point where  we exploit the unidirectional character of the system  (i.e. $\mu=1$).
Finally,
\[
I_4:=-\itt\io \teta_{\delta_k}\eps(\dt(\uu_{\delta_k}-\uu))\dd x\dd s\to 0\,,\quad
I_5:=-\itt \io {\bf f}\dt(\uu_{\delta_k}-\uu)\dd x\dd s\to 0\,,
\]
as ${\delta_k}\searrow0$, due to the convergences \eqref{convu1},
\eqref{convtetas}, as well as assumption \eqref{bulk-force} on ${\bf
f}$. Ultimately, we  get
\[
\|\dt(\uu_{\delta_k}-\uu)(t)\|_{L^2(\Omega; \RR^d)}^2+\itt
\bilj{a(\chi_{\delta_k})}{\dt(\uu_{\delta_k}-\uu)}{\dt(\uu_{\delta_k}-\uu)}\dd
s+\bilh{b(\chi_{\delta_k}(t)}{(\uu_{\delta_k}-\uu)(t)}{(\uu_{\delta_k}-\uu)(t)}\to 0
\]
as ${\delta_k}\searrow0$, which entails
\begin{equation}\label{convus}
\uu_{\delta_k}\to \uu \quad \hbox{strongly in
} W^{1,\infty}(0,T;L^2(\Omega;\RR^d))\cap H^1(0,T; H^1(\Omega; \RR^d))\,.
\end{equation}

\paragraph{\bf Conclusion of the proof.} Using this strong convergence, we can now pass to the limit  as $k \to \infty$ in
 the  energy-dissipation inequality
 \eqref{ineq-chi}  featuring  in the weak formulation of the equation for $\chi_{\delta_k}$
 as follows. 
  We have to identify the weak limit of
 \begin{equation}
 \label{xi-deltak}
 \xi_{\delta_k}(x,t) = -   \mathcal{I}_{\{\chi_{\delta_k}=0\}}  (x,t) \left(\gamma(\chi_{\delta_k}(x,t)) + b'(\chi_{\delta_k}(x,t)) \frac{\eps(\uu_{\delta_k}(x,t)) \elm(x)  \eps(\uu_{\delta_k}(x,t))  }{2}  - \teta_{\delta_k}(x,t)\right)^+.
 \end{equation}
 First of all note that  $(\mathcal{I}_{\{\chi_{\delta_k}=0\}})_k$   is bounded in $L^\infty(Q)$ independently of $k\in \N$. Hence, we can select a subsequence  $(\mathcal{I}_{\{\chi_{\delta_k}=0\}})_k$  weakly star converging  in $L^\infty(Q)$  to some   $ \mathcal{J}$.   Observe that we cannot establish that $ \mathcal{J} =  \mathcal{I}_{\{\chi=0\}}$.
 On the other hand, it follows from the previously proved convergences that
$(\gamma(\chi_{\delta_k}) + b'(\chi_{\delta_k}) \frac{\eps(\uu_{\delta_k}) \elm  \eps(\uu_{\delta_k})  }{2}  - \teta_{\delta_k})^+$ strongly converges in $L^1(Q)$
to $(\gamma(\chi) + b'(\chi) \frac{\eps(\uu) \elm  \eps(\uu)  }{2}  - \teta)^+$. Hence we identify
\begin{equation}
\label{xi-limite} \xi =- \mathcal{J} (x,t)(\gamma(\chi(x,t)) +
b'(\chi(x,t)) \frac{\eps(\uu(x,t)) \elm \eps(\uu(x,t)) }{2} -
\teta(x,t))^+
\end{equation}
and observe that $\xi_{\delta_k} \weakto \xi $ in $L^1(Q). $
 Then, integrating \eqref{ineq-chi}$_{\delta_k}$ from 0 to   $T$ and passing to the limit as $k\to\infty$, using the fact that or all $\psi\in L^p(0,T;W^{1,p}_-(\Omega))\cap L^\infty(Q)$
\[
\left|\int_0^T\int_\Omega\delta_k|\nabla\chi_{\delta_k}|^{p-2}\nabla\chi_{\delta_k}\cdot\nabla\psi \dd x \dd t \right|\leq \delta_k\|\nabla\chi_{\delta_k}\|_{L^{p-1}(Q;\R^d)}^{p-1}\|\nabla\psi\|_{L^p(Q;\R^d)}\to0\,,
\]
we get
\begin{equation}\label{conv-chineq}
\int_0^T\int_\Omega  \Big( \chi_t (t) \psi     +
\nabla\chi(t)\cdot \nabla \psi  +
\gamma(\chi(t)) \psi 
 + b'(\chi(t))\frac{\varepsilon(\ub(t))
\elm\varepsilon(\ub(t))}{2}\psi  -\teta(t) \psi \Big)
\,
\mathrm{d}x \,
\mathrm{d}t
  \geq -\int_0^T\int_\Omega\xi(t) \psi\,
\,
\mathrm{d}x\mathrm{d}t\,,
\end{equation}
for all $\psi\in L^p(0,T;W^{1,p}_-(\Omega))\cap L^\infty(Q)$, where
$\xi$ is defined in  \eqref{xi-limite}.  From
\eqref{conv-chineq}, we get \eqref{ineq-chiSec6}.

It remains to show that $\chi$ complies with the variational
inequality \eqref{xi-defSec6}. To do so, we have to pass to the
limit in \eqref{xi-def}$_{\delta_k}$, whence   we have
\[
\int_0^T\left( \int_\Omega
\xi_{\delta_k}(\psi-\chi_{\delta_k}(t))\,\mathrm{d}x \right) \zeta(t)\,\mathrm{d}t
\geq 0 \quad \text{for all   $\psi\in W^{1,p}_+ (\Omega) $  
and all
$\zeta\in L^\infty(0,T)$ with $\psi,\,\zeta\geq 0$.}
\]
Observe that the two weak convergences $\chi_{\delta_k} \weaksto
\chi$ in $L^\infty (Q)$ and $\xi_{\delta_k} \weakto \xi $ in $L^1(Q)
$ do not allow for a direct limit passage in the term $\iint_Q
\xi_{\delta_k} \chi_{\delta_k} \zeta \dd x \dd t$, which equals zero
for all $k\in \N$ due to \eqref{xi-deltak}. Indeed, we need to
argue in a more refined way. It follows from \eqref{convchis} that
$\chi_{\delta_k}$ converges almost uniformly to $\chi$ in $Q$, i.e.
for every $\epsilon>0$ there exists $Q_\epsilon \subset Q$ such that
$|Q\setminus Q_\epsilon|<\epsilon$ and $\chi_{\delta_k} \to \chi$
uniformly on $Q_\epsilon$. The latter property implies that
\begin{equation}
\label{speriamo-bene}
  \mathcal{J} \equiv 0 \ \text{ on } \ Q_\epsilon \cap \{  \mathcal{I}_{\{\chi =0\}}  \equiv 0 \} \,.
%
\end{equation}
Indeed, $\mathcal{I}_{\{\chi =0\}} (x,t) = 0$   implies $\chi(x,t) \neq 0$.
Since  $\chi_{\delta_k}$ converges to $\chi$ uniformly on
$Q_\epsilon$, there exists an index $\bar k $, independent of $(x,t)$, such that for all $k
\geq \bar k$, 
 $\chi_{\delta_k}(x,t)
\neq 0$, hence  $\mathcal{I}_{\{\chi_{\delta_k} =0\}} (x,t) =0$.
With this argument we conclude that
$\mathcal{I}_{\{\chi_{\delta_k} =0\}} \equiv 0$  on  $ Q_\epsilon
\cap \{   \mathcal{I}_{\{\chi =0\}}  \equiv 0 \} $, whence
\eqref{speriamo-bene}.
%
 It
follows from \eqref{speriamo-bene} and \eqref{xi-limite} that
\[
\xi(x,t) \chi(x,t) =0 \quad \foraa\, (x,t) \in Q_\epsilon,
\text{
whence
}
\iint_{Q_\epsilon}\xi(x,t)  \chi(x,t) \zeta(t)\dd x \dd t =0\,.
\]
On the other hand, using the properties of the Lebesgue integral we
have that
\[
\forall\, \eta >0  \ \ \exists \epsilon=\epsilon_\eta>0 \ \, : \ \
|Q \setminus Q_\epsilon|< \epsilon \ \ \Rightarrow \ \ \iint_{Q
\setminus Q_\epsilon} |\xi(x,t)  \chi(x,t) \zeta(t)| \dd x \dd t
<\eta.
\]
Therefore we conclude that
\[
\forall\, \eta >0  \qquad \left|\iint_Q \xi(x,t)  \chi(x,t) \zeta(t) \dd
x \dd t\right| <\eta,
\]
i.e.\
\[
\iint_Q \xi(x,t)  \chi(x,t) \zeta(t) \dd x \dd t=0 = \lim_{k \to
\infty}\iint_Q \xi_{\delta_k} \chi_{\delta_k} \zeta \dd x \dd t
\]
Hence
\begin{align}\no
0\leq
\iint_Q\xi_{\delta_k}(\psi-\chi_{\delta_k})\zeta\,\mathrm{d}x\,\mathrm{d}t
 \to\iint_Q \xi(\psi-\chi)\zeta\,\mathrm{d}x\,\mathrm{d}t=\int_0^T\left(\int_\Omega \xi(\psi-\chi(t))\,\mathrm{d}x\right)\zeta(t)\,\mathrm{d}t,
\end{align}
 which implies
\[
\int_\Omega\xi(t)(\psi-\chi(t))\, \mathrm{d}x  \geq 0  \quad\hbox{for
a.e. }t\in (0,T) \qquad \text{for all } \psi \in   W_+^{1,p}(\Omega).
\]
 With  a density argument we get \eqref{xi-defSec6} for all  $\psi \in  W_+^{1,2}(\Omega) \cap L^\infty (\Omega)$.

Convergences \eqref{convteta}--\eqref{convtetas} also guarantee the passage to the limit in the momentum equation, whence \eqref{weak-momentumSec6}.

Finally, we pass to the limit in the entropy inequality \eqref{entropy-ineq} and in the total energy inequality
\eqref{total-enid}
by
the very same  compactness/lower semicontinuity arguments as in the proof of Theorem \ref{teor3}, thus deducing
\eqref{entropy-ineq} and  the total energy inequality \eqref{total-enidSec6} on the generic interval $(0,t)$.
\QED


\begin{remark}
Notice that,
 we have been able to obtain
 the energy inequalities  \eqref{energ-ineq-chiSec6}  and  \eqref{total-energySec6}
 only on  intervals of the type $(0,t)$, and not on the generic interval $(s,t)\subset (0,T)$,
  due to the weak convergence of $(\nabla\chi_{\delta_k})$ in $L^2(Q;\RR^d)$, which does not yield  the pointwise-in-time convergence
  required to take the limit of the right-hand sides of \eqref{energ-ineq-chi}  and  \eqref{total-enid}. It is an open problem to improve the convergence of
   $(\nabla\chi_{\delta_k})$ to a strong one.

This limit passage also reveals that the notion of entropic solution enjoys stability properties. It   seems to be  the right one in the
present framework, and, apparently, the entropy inequality cannot be improved to  a suitable variational formulation of the heat equation like in the case of Theorem \ref{teor1},  at least with these techniques.
%
\end{remark}

\end{proof}

\RRRNEW
\appendix
\section{Auxiliary compactness results}
\label{s:a-1}
 \noindent

 The main compactness result of this Appendix, Theorem
 \ref{th:mie-theil} below,
 hinges
 on a compactness argument drawn from the theory of
  \emph{parameterized} (or \emph{Young}) measures with values in an infinite-dimensional  space.

 Hence,
 for the reader's convenience, we preliminarily  collect here the definition
 of Young measure with values in a \underline{reflexive} Banach space $X$. We then
 recall
   the Young measure
 compactness result from \cite{MRS2013}, which was proved in  \cite{Rossi-Savare06} in the case when $X$ is a Hilbert space, extending to the frame of the
 weak topology
classical results within Young measure theory (see  e.g.\ \cite[Thm.\,1]{Balder84}, \cite{Ball89}
\cite[Thm.\,16]{Valadier90}).

We start by fixing some
\begin{notation}
\label{not-append-1}
\upshape
Given an interval $I \subset \R$, we  denote by $\mathscr{L}_{I}$
the $\sigma$-algebra of the Lebesgue measurable subsets of $I$ and,
given
   a reflexive Banach space $X$,
 by $\mathscr B(X)$ its Borel $\sigma$-algebra.
 \end{notation}
 \begin{definition}[\bf (Time-dependent) Young measures]
  \label{parametrized_measures}
  A \emph{Young measure} in the space $X$
  is a family
  $\bfmu:=\{\mu_t\}_{t \in (0,T)} $ of Borel probability measures
  on $X$
  such that the map on $(0,T)$
\begin{equation}
\label{cond:mea} t \mapsto \mu_{t}(A) \quad \mbox{is}\quad
{\mathscr{L}_{(0,T)}}\mbox{-measurable} \quad \text{for all } A \in
\mathscr{B}(X).
\end{equation}
We denote by $\mathscr{Y}(0,T; X)$ the set of all Young
measures in $X $.
\end{definition}

The following result
subsumes part of the statements of  \cite[Theorems A.2, A.3]{MRS2013}: its
crucial finding  for our purposes concerns the characterization of the limit points in the weak topology of
$L^p (0,T;X)$, $p \in (1,+\infty] $, of a bounded sequence $(\ell_n)_n \subset L^p (0,T;X)$. Every limit point arises as the barycenter
of the limiting Young measure $\bfmu=(\mu_t)_{t\in (0,T)}$
 associated with  (a suitable subsequence $(\ell_{n_k})_k$ of) $(\ell_n)_n$. In turn, for almost all $t\in (0,T)$
 the support of the measure $\mu_t$ is concentrated in the set of limit points of $(\ell_{n_k}(t))_k$ with respect to the weak topology of $X$.
This information will play a crucial role in the proof of Theorem \ref{th:mie-theil} ahead.
\begin{theorem}{\cite[Theorems A.2, A.3]{MRS2013}}
\label{thm.balder-gamma-conv}
Let $p>1$ and let
  $(w_n)_n \subset L^p
(0,T;X)$ be a  bounded sequence.
Then, there exist a subsequence $(w_{n_k})_k$ and a Young
measure $\bfmu=\{\mu_t\}_{t \in (0,T)}$ such that for a.a. $t \in
(0,T)$
\begin{equation}
\label{e:concentration}
\begin{gathered}
  \mbox{$ \mu_{t} $ is
      concentrated on
      the set
      $
      \bigcap_{p=1}^{\infty}\overline{\big\{w_{n_k}(t)\,: \ k\ge p\big\}}^{\mathrm{weak}{\text{-}X}}$}
      \end{gathered}
  \end{equation}
of the limit points of the sequence $(w_{n_k}(t))$ with respect to
the weak topology of $X$ and,
  setting
  \[
w(t):=\int_{X}
\omega \, \dd \mu_t (\omega)  \qquad \foraa\, t
\in (0,T)\,,
  \]
there holds
\begin{equation}
  \label{eq:35}
w_{n_k} \weakto w \ \ \text{ in $L^p (0,T;X)$} \qquad \text{as } k \to \infty
\end{equation}
with $\weakto$ replaced by $\weaksto$ if $p=\infty$.
\end{theorem}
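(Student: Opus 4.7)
The plan is to reduce Theorem \ref{thm.balder-gamma-conv} to a classical Young measure compactness statement in a Polish setting, exploiting the reflexivity of $X$. By Pettis' measurability theorem each $w_n$ is essentially separably valued, so I may assume without loss of generality that $X$ itself is separable; then, for every $R > 0$, the closed ball $B_R \subset X$ endowed with the weak topology is compact (Kakutani) and metrizable, hence a Polish compactum. The uniform $L^p$ bound $\sup_n \|w_n\|_{L^p(0,T;X)} \le M$ combined with Chebyshev's inequality ensures that for every $\varepsilon > 0$ there exists $R = R(\varepsilon)$ with $|\{t \in (0,T) : \|w_n(t)\|_X > R\}| \le \varepsilon$ uniformly in $n$; in the Young measure language on the Polish union $X_{\mathrm{weak}} = \bigcup_R B_R^{\mathrm{weak}}$, this is exactly a Prokhorov-type tightness condition for the elementary Young measures $\{\delta_{w_n(t)}\}_{t \in (0,T)}$.

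With this tightness in hand, a Balder--Valadier-type Young measure compactness theorem (cf.\ \cite{Balder84,Ball89,Valadier90}), adapted to the Polish weak-topology setting, will deliver a subsequence $(w_{n_k})_k$ and a Young measure $\bfmu = \{\mu_t\}_{t \in (0,T)}$ satisfying the Fatou-type inequality
\[
\liminf_{k \to \infty} \int_0^T \varphi(t,w_{n_k}(t))\, \dd t \ge \int_0^T\!\int_X \varphi(t,\omega)\, \dd\mu_t(\omega)\, \dd t
\]
for every $\mathscr{L}_{(0,T)} \otimes \mathscr{B}(X)$-measurable integrand $\varphi$ that is bounded below and sequentially weakly lower semicontinuous in the $X$-variable. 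Applying this inequality to indicator-type integrands associated with weakly closed neighbourhoods of the complements of the cluster sets in \eqref{e:concentration} yields the support characterization: $\mu_t$ is concentrated on the weak cluster points of $(w_{n_k}(t))_k$ for almost every $t$.

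Finally, to obtain \eqref{eq:35} I would define the barycentre $w(t) := \int_X \omega\, \dd\mu_t(\omega)$, which is well-posed as a Bochner integral since $\mu_t$ is supported on a $\sigma$-compact union of balls and the function $t \mapsto \int_X \|\omega\|_X\, \dd\mu_t(\omega)$ belongs to $L^p(0,T)$ by the Fatou inequality above applied to $\varphi(t,\omega) = \|\omega\|_X$. Testing against integrands of the form $\varphi(t,\omega) = \pm\,\eta(t)\langle \xi,\omega\rangle_{X^*,X}$ with $\xi \in X^*$ and $\eta \in L^{p'}(0,T)$ — where $\omega \mapsto \langle \xi,\omega\rangle$ is simultaneously weakly lower and weakly upper semicontinuous — yields $\langle \xi, w_{n_k}\rangle \weakto \langle \xi, w\rangle$ in $L^p(0,T)$, which, together with the uniform $L^p(0,T;X)$ bound, upgrades to $w_{n_k} \weakto w$ in $L^p(0,T;X)$ (with $\weaksto$ replacing $\weakto$ in the case $p = \infty$). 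The hard part will be step two — transferring the classical Young measure compactness, normally formulated in norm-Polish targets, to the weak topology of the reflexive Banach space $X$: one must verify that the weak Borel structure on each $B_R$ is compatible with the $\mathscr{L}_{(0,T)} \otimes \mathscr{B}(X)$-measurability of the integrands and with the Bochner setting used to define the barycentre. This compatibility is exactly the technical content of \cite{Rossi-Savare06} in the Hilbert case and of its extension \cite{MRS2013} to reflexive Banach spaces.
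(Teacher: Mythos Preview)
The paper does not prove this theorem: it is stated in the Appendix with an explicit citation to \cite[Theorems A.2, A.3]{MRS2013} and is then used as a black box in the proof of Theorem~\ref{th:mie-theil}. So there is no ``paper's own proof'' to compare against.

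That said, your sketch is a faithful outline of the strategy actually carried out in \cite{Rossi-Savare06} (Hilbert case) and \cite{MRS2013} (reflexive Banach case): reduce to separable $X$ via Pettis, exploit that closed balls in a reflexive separable space are weakly compact and metrizable (hence Polish), derive tightness from the $L^p$ bound by Chebyshev, invoke a Balder--Valadier-type compactness theorem to produce $\bfmu$, read off the support property from the Fatou inequality, and recover the weak $L^p$ convergence of the barycentre by testing with integrands linear in $\omega$. You have also correctly flagged the genuinely technical point --- making the classical Young measure machinery, normally set up for norm-Polish targets, compatible with the weak Borel structure and with Bochner integrability --- and located it in the right references. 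Two minor remarks: the union $\bigcup_R B_R^{\mathrm{weak}}$ is not itself compact Polish but rather $\sigma$-compact Lusin, which is still enough for the Prokhorov-type argument; and deriving the support characterization \eqref{e:concentration} from the Fatou inequality is slightly more delicate than ``indicator-type integrands'' suggests (one typically works with a countable base of weakly open sets and a diagonal argument), but the idea is the standard one.
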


The  statement  of Theorem
\ref{th:mie-theil} ahead
features two  reflexive Banach spaces $V$ and $Y$.  Further, we will use
 the following
 \begin{notation}
\label{not-append-2}
\upshape
We denote by
 $\overline{B}_{1,Y}(0)$  the closed unitary ball in $Y$, and we will work with the space
 \begin{equation}
 \label{measurable-allt}
 \mathrm{B}([0,T];Y^*) := \{ \ell: [0,T]\to Y^*\, : \  \text{measurable, such that } \ell(t) \text{ is defined at every } t \in [0,T]\}.
 \end{equation}
 Moreover, for given $\ell \in
\mathrm{B}([0,T];Y^*)$, $\varphi \in Y$, and $[a,b]\subset [0,T]$, we set
\begin{equation}
\label{var-notation}
\begin{aligned}
\mathrm{Var}(\pairing{}{Y}{\ell}{ \varphi}; [a,b] ) : =  \sup \{  \sum_{i=1}^J
\left |\pairing{}{Y}{\ell(\sigma_{i})}{ \varphi} - \pairing{}{Y}{\ell(\sigma_{i-1})}{ \varphi}  \right|
\, :  \
a =\sigma_0 < \sigma_1 < \ldots < \sigma_J =b \} \,.
\end{aligned}
\end{equation}
\end{notation}
We are now in the position to state and  prove the main result of this section, combining Thm.\ \ref{thm.balder-gamma-conv}
with ideas from  \cite[Thm.\ 6.1]{MieTh04}.
\begin{theorem}
\label{th:mie-theil}
Let $V$ and $Y$ be two (separable) reflexive Banach spaces  such that
$V \subset Y^*$ continuously. Let
 $(\ell_n)_n \subset L^p
(0,T;V) \cap \mathrm{B} ([0,T];Y^*)$ be  bounded  in $L^p
(0,T;V) $ and suppose in addition  that
\begin{align}
\label{ell-n-0}
&
\text{$(\ell_n(0))_n\subset Y^*$ is bounded},
\\
&
\label{BV-bound}
\exists\, C>0 \  \ \forall\, \varphi \in \overline{B}_{1,Y}(0)  \ \  \forall\, n \in \N\, : \quad
 \mathrm{Var}(\pairing{}{Y}{\ell_n}{ \varphi}; [0,T] )  \leq C.
\end{align}

Then, there exists  a subsequence
 $(\ell_{n_k})_k$ of $(\ell_n)_n$
 and a function $\ell \in L^p (0,T;V) \cap L^\infty (0,T; Y^*) $ 
 such that  as $k\to \infty$
 \begin{align}
 \label{weak-LpB}
 &
 \ell_{n_k} \weaksto \ell \quad \text{ in } L^p (0,T;V) \cap L^\infty (0,T;Y^*),
 \\
\label{weak-ptw-B}
&
\ell_{n_k}(t) \weakto \ell(t) \quad \text{ in } V\quad \foraa\, t \in (0,T).
\end{align}
\end{theorem}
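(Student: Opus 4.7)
The plan is to combine a Helly-type selection principle, applied to the scalar BV functions $t \mapsto \langle \ell_n(t), \varphi\rangle_Y$ with $\varphi$ ranging in a countable dense subset of $Y$, with the Young-measure machinery of Theorem \ref{thm.balder-gamma-conv} applied in $V$; the pointwise and the $L^p$-weak limits will then be forced to coincide via the continuous embedding $V \subset Y^*$.

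First, I would observe that the BV hypothesis \eqref{BV-bound} together with \eqref{ell-n-0} immediately yields a uniform bound in $L^\infty(0,T;Y^*)$: for every $\varphi \in \overline{B}_{1,Y}(0)$ and every $t \in [0,T]$,
\[
|\langle \ell_n(t),\varphi\rangle_Y| \leq |\langle \ell_n(0),\varphi\rangle_Y| + \mathrm{Var}(\langle\ell_n,\varphi\rangle_Y;[0,T]) \leq \|\ell_n(0)\|_{Y^*} + C,
\]
so that $\sup_{n}\sup_{t \in [0,T]} \|\ell_n(t)\|_{Y^*}< \infty$. Fixing a countable dense subset $\mathscr{D} \subset Y$ and applying the classical Helly selection theorem to the uniformly BV real-valued maps $t \mapsto \langle\ell_n(t),\varphi\rangle_Y$ for each $\varphi \in \mathscr{D}$, a standard diagonal extraction produces a (not relabeled) subsequence such that $\langle\ell_n(t),\varphi\rangle_Y$ has a limit for every $t \in [0,T]$ and every $\varphi \in \mathscr{D}$. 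Together with the $L^\infty(0,T;Y^*)$ bound, a density argument in $Y$ shows that this limit extends to a linear functional $\widetilde\ell(t) \in Y^*$, with
\[
\ell_n(t) \weaksto \widetilde\ell(t) \quad \text{in } Y^* \qquad \text{for every } t \in [0,T].
\]

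Second, since $(\ell_n)_n$ is bounded in the reflexive space $L^p(0,T;V)$, I extract a further (not relabeled) subsequence such that $\ell_n \weaksto \ell$ in $L^p(0,T;V)$ for some $\ell$; applying Theorem \ref{thm.balder-gamma-conv} to this subsequence produces a Young measure $\bfmu=\{\mu_t\}_{t \in (0,T)}$ in $V$ whose barycenter coincides with $\ell(t)$ for a.e.\ $t$ and which satisfies the concentration property \eqref{e:concentration}.

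The main technical point — and the one I expect to be the actual crux — is to identify $\mu_t$ with the Dirac mass $\delta_{\widetilde\ell(t)}$ for a.e.\ $t$. By \eqref{e:concentration}, $\mathrm{supp}(\mu_t)$ is contained in the set of weak-$V$ cluster points of the bounded sequence $(\ell_n(t))_n \subset V$. Because the embedding $V \hookrightarrow Y^*$ is continuous, any such weak-$V$ cluster point is also a weak-$Y^*$ cluster point of $(\ell_n(t))$; but by the first step the latter sequence converges in the weak-$\ast$ topology of $Y^*$ to the \emph{single} element $\widetilde\ell(t)$. Hence $\mathrm{supp}(\mu_t)\subset\{\widetilde\ell(t)\}$, so $\mu_t = \delta_{\widetilde\ell(t)}$, whence $\ell(t) = \widetilde\ell(t)$ for a.e.\ $t \in (0,T)$ and $\ell_n(t) \weakto \ell(t)$ in $V$ for a.e.\ $t$, giving \eqref{weak-ptw-B}. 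Finally, from $\ell(t)=\widetilde\ell(t)$ a.e.\ and the $L^\infty(0,T;Y^*)$ bound obtained at the outset, one deduces $\ell \in L^\infty(0,T;Y^*)$ and, by a standard dominated argument, the weak-$\ast$ convergence in $L^\infty(0,T;Y^*)$ in \eqref{weak-LpB}.
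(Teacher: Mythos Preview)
Your proposal is correct and follows essentially the same two-step architecture as the paper: a Helly-type selection to obtain pointwise weak-$*$ convergence in $Y^*$, followed by the Young-measure argument (Theorem~\ref{thm.balder-gamma-conv}) to force $\mu_t$ to be a Dirac mass and hence upgrade to weak-$V$ pointwise convergence. The one noteworthy difference is that you apply Helly directly to the scalar BV functions $t\mapsto\langle\ell_n(t),\varphi\rangle_Y$ for $\varphi$ in a countable dense set of $Y$, whereas the paper applies Helly to the monotone variation functions $\mathcal V_n^\varphi(t)=\mathrm{Var}(\langle\ell_n,\varphi\rangle;[0,t])$, then extracts weak-$Y^*$ limits along a countable dense set of times containing the jump set of the limit variations, and finally extends by continuity; your route is more economical and avoids this intermediate construction, but the identification step in $V$ via the embedding $V\subset Y^*$ is identical in both proofs.
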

\begin{proof}
We split the proof in two claims. For the first one, we closely follow the arguments from the proof of \cite[Thm.\ 6.1]{MieTh04}.

\underline{\textbf{Claim $1$:}}
\emph{Let $\mathcal{F} \subset \overline{B}_{1,Y}(0)$ be countable and  dense in
$ \overline{B}_{1,Y}(0)$. There exist a subsequence $(\ell_{n_k})_k$ of $(\ell_n)_n$,  and for every $\varphi \in \mathcal{F}$
a function
$\mathscr{L}_\varphi: [0,T] \to \R$
such that   for every $ \varphi \in \mathcal{F} $
\begin{equation}
\label{mie-th-conv}
\pairing{}{Y}{\ell_{n_k}(t)}{\varphi} \to \mathscr{L}_\varphi(t)
 \quad \text{as $k \to \infty$}
 \quad \text{for every } t \in [0,T].
\end{equation}
 }
 \\
With every  $\varphi \in \overline{B}_{1,Y}(0)$ we may associate the monotone functions
$\mathcal{V}_{n}^{\varphi}: [0,T] \to [0,+\infty)$
defined by
$\mathcal{V}_{n}^{\varphi}(t) :=  \mathrm{Var}(\pairing{}{Y}{\ell_n}{ \varphi}; [0,t] )  $
for every $t\in [0,T]$.
Let now $\mathcal{F} \subset \overline{B}_{1,Y}(0)$ be countable and dense and let us consider the family of functions
$(\mathcal{V}_{n}^{\varphi})_{n\in \N, \, \varphi \in \mathcal{F}}$.
 It follows from estimate \eqref{BV-bound} and  from Helly's principle, combined with a  diagonalization procedure based on the countability of
 $\mathcal{F}$, that
there exist a sequence of indexes  $(n_k)_k$ and for every $\varphi \in \mathcal{F}$ a  monotone function
$\mathcal{V}_\infty^{\varphi} : [0,T]\to [0,+\infty)$
 such that
\begin{equation}
\label{helly+diag}
\mathcal{V}_{n_k}^{\varphi}(t) \to \mathcal{V}_\infty^{\varphi}(t) \quad \text{as } k \to \infty \quad \text{for every } t \in [0,T].
\end{equation}
Since $\mathcal{V}_\infty^{\varphi} $ is monotone, it has an at most countable jump set $J_\varphi$. The set $J:= \cup_{\varphi \in \mathcal{F}}
J_\varphi$ is still countable, and therefore we may choose a sequence $\mathcal{T}: = (t_m)_m \subset [0,T]$, dense in $[0,T]$, such that
$J \subset \mathcal{T} $.

Observe that \eqref{ell-n-0} and \eqref{BV-bound} yield that
\begin{equation}
\label{sup-n-t}
\exists\, C>0 \ \ \forall\, n \in \N \ \forall\, t \in [0,T]\, : \quad \|\ell_n(t)\|_{Y^*} \leq C.
\end{equation}
Since $Y^*$ is reflexive, with a diagonal argument we may extract a further, not relabeled, subsequence such that for every $t \in \mathcal{T} = (t_m)_{m}$
\begin{equation}
\label{1st-convergence}
\ell_{n_k}(t)\weakto \tilde{\ell}(t)\qquad \text{  in $Y^*$ as $k \to \infty$}
\end{equation}
for some $\tilde{\ell}(t) \in Y^*$.
We now show that
for every $\varphi \in \mathcal{F}$ the map
$t \mapsto \pairing{}{Y}{\tilde{\ell}(t)}{\varphi}$ is weakly continuous at every point  $t \in \mathcal{T}\setminus  J$. Indeed,
for every $t_1,\, t_2 \in \mathcal{T}$ with $t_1 \leq t_2$  and for every $\varphi \in \mathcal{F}$ one has
\begin{equation}
\label{var-infty-lim}
\begin{aligned}
& \left| \pairing{}{Y}{\tilde{\ell}(t_1) - \tilde{\ell}(t_2)}{\varphi}  \right|  = \lim_{k\to \infty} \left| \pairing{}{Y}{\ell_{n_k}(t_1) - \ell_{n_k}(t_2)}{\varphi}\right|
\\ &\qquad  \leq  \lim_{k\to \infty}
\mathrm{Var}(\pairing{}{Y}{\ell_{n_k}}{ \varphi}; [t_1,t_2] ) =
 \lim_{k\to \infty}\mathcal{V}_{n_k}^{\varphi}(t_2) - \lim_{k\to \infty} \mathcal{V}_{n_k}^{\varphi}(t_1) =  \mathcal{V}_{\infty}^{\varphi}(t_2) -  \mathcal{V}_{\infty}^{\varphi}(t_1)\,.
 \end{aligned}
\end{equation}
Therefore, for every $\varphi \in \mathcal{F}$ one extends the map $t\mapsto \pairing{}{Y}{\tilde{\ell}(t) }{\varphi}$ to all of $[0,T]$ by continuity
and thus obtains a function $\mathscr{L}_\varphi : [0,T]\to \R$ such that
\begin{equation}
\label{l-revealed}
\mathscr{L}_\varphi(t) =  \pairing{}{Y}{\tilde{\ell}(t) }{\varphi} \qquad \text{for every } t\in \mathcal{T}.
\end{equation}

Let us now prove that
\begin{equation}
\label{weak-strange-convergence}
\pairing{}{Y}{\ell_{n_k}(t) }{\varphi} \to
\mathscr{L}_\varphi(t) 
 \qquad \text{as } k \to \infty \quad \text{for every } t \in [0,T] \text{ and } \varphi \in \mathcal{F}.
\end{equation}
In view of \eqref{1st-convergence}, we may assume
$t\in [0,T]\setminus J$. Then, we have
\[
\begin{aligned}
&
\left|  \pairing{}{Y}{\ell_{n_k}(t) }{\varphi} - 
\mathscr{L}_\varphi(t)
\right|
\\
&
\leq \left|  \pairing{}{Y}{\ell_{n_k}(t) }{\varphi} - \pairing{}{Y}{\ell_{n_k}(t_m) }{\varphi} \right|
+ \left|  \pairing{}{Y}{\ell_{n_k}(t_m) }{\varphi} - \pairing{}{Y}{\tilde{\ell}(t_m) }{\varphi} \right|
+\left|  \pairing{}{Y}{\tilde{\ell}(t_m) }{\varphi} - \mathscr{L}_\varphi(t) \right|
\\
&
\doteq \Delta_1 + \Delta_2+\Delta_3
\end{aligned}
\]
with $t_m$  in the dense set $ \mathcal{T}$ suitably chosen.
Now, on account of \eqref{helly+diag} for every fixed $\eps>0$ there exists $k_\eps^1\in \N$ such that for
$k \geq k_\eps^1$ one has
\[
\Delta_1 \leq \left| \mathcal{V}_{n_k}^{\varphi}(t) - \mathcal{V}_{n_k}^{\varphi}(t_m) \right| \leq  \left| \mathcal{V}_{\infty}^{\varphi}(t) - \mathcal{V}_{\infty}^{\varphi}(t_m) \right| + \frac{\eps}4.
\]
Using that $\mathscr{L}_\varphi(t) = \lim_{t_j \to t} \pairing{}{Y}{\tilde{\ell}(t_j)}{\varphi}$ for some $(t_j)_j \subset \mathcal{T}$ with $t_j \to t$,
we also have
\[
\begin{aligned}
\Delta_3   = \left|  \pairing{}{Y}{\tilde{\ell}(t_m) }{\varphi} - \lim_{j\to \infty}  \pairing{}{Y}{\tilde{\ell}(t_j)}{\varphi} \right|
&
 \stackrel{(1)}{\leq}
\lim_{j\to \infty}   \left| \mathcal{V}_{\infty}^{\varphi}(t_m) - \mathcal{V}_{\infty}^{\varphi}(t_j) \right|
\\
&
 \stackrel{(2)}{=} \left| \mathcal{V}_{\infty}^{\varphi}(t_m) - \mathcal{V}_{\infty}^{\varphi}(t) \right|,
 \end{aligned}
\]
where $(1)$ follows from \eqref{var-infty-lim} and $(2)$ from the fact that $t \in [0,T]\setminus J$ is a
is a continuity point of
$\mathcal{V}_\infty^{\varphi}$. In view of the latter fact, we may choose $t_m $ sufficiently close to $t$ such that $| \mathcal{V}_{\infty}^{\varphi}(t) - \mathcal{V}_{\infty}^{\varphi}(t_m) |\leq \frac{\eps}4$.  Finally, in view of \eqref{1st-convergence}, there exists $k_\eps^2 \in \N$ such that
$\Delta_2 \leq \frac{\eps}4$ for $k\geq k_\eps^2$.
All in all, we conclude that for $k \geq \max\{ k_\eps^1, k_\eps^2
\}$ we have $\left|  \pairing{}{Y}{\ell_{n_k}(t) }{\varphi} - \mathscr{L}_\varphi(t) \right|\leq \eps$, which yields \eqref{weak-strange-convergence}.

\underline{\textbf{Claim $2$:}}
\emph{Let
 $(\ell_{n_k})_k$ be a (not relabeled) subsequence of the sequence from Claim $1$,  with which a limiting Young measure $\bfmu= \{\mu_t\}_{t \in (0,T)}$ is associated according to Theorem
\ref{thm.balder-gamma-conv}.
Then, for almost all $t\in (0,T)$ the probability measure $\mu_t$ is a Dirac mass $\delta_{\ell(t)}$, and  \eqref{weak-ptw-B} holds as $k\to\infty$.}
\\
In order to show that  $\mu_t$
is a Dirac mass, we are going to prove that the set of the limit points of $(\ell_{n_k}(t))_{k}$ in the weak topology of $V$ is a singleton.
To this aim, let
us pick two points $\ell_\infty^1, \, \ell_\infty^2 \in \bigcap_{p=1}^{\infty}\overline{\big\{\ell_{n_k}(t)\,: \ k\ge p\big\}}^{\mathrm{weak}{\text{-}V}}$,
and two subsequences
$(\ell_{n_k^1}(t))_k, \, (\ell_{n_k^2}(t))_k$, possibly depending on $t$, such that
$\ell_{n_k^i}(t) \weakto \ell_\infty^i$ in $V$ as $k\to\infty$ for $i=1,2$.
Then
$\ell_{n_k^i}(t) \weakto \ell_\infty^i$ in $Y^*$. In view of \eqref{mie-th-conv} conclude that
\begin{equation}
\label{crucial-for-argum}
\pairing{}{Y}{\ell_\infty^1(t)}{\varphi} = \mathscr{L}_\varphi(t)= \pairing{}{Y}{\ell_\infty^2(t)}{\varphi} \qquad \text{for every } \varphi \in \mathcal{F}.
\end{equation}
Since $\mathcal{F}$ is dense in $\overline{B}_{1,Y}(0)$, we  deduce that $\ell_\infty^1$ and $\ell_\infty^2$ coincide on all the elements in
$\overline{B}_{1,Y}(0)$. But then by linearity we have that $\ell_\infty^1 = \ell_\infty^2 $ in $Y^*$, hence in $V$.
Therefore
for almost all $t\in (0,T)$
 the set $\bigcap_{p=1}^{\infty}\overline{\big\{\ell_{n_k}(t)\,: \ k\ge p\big\}}^{\mathrm{weak}{\text{-}V}}$
is a singleton $\{\ell(t)\}$, and \eqref{weak-ptw-B} ensues.
Observe that the functions $\ell$ and $\tilde{\ell}$ from  Claim $1$ need not coincide, as
\eqref{l-revealed} only holds on the set $\mathcal{T}$ with zero Lebesgue measure.

Finally, \eqref{weak-LpB} results from
\eqref{eq:35}
taking into account that $
\ell(t)=\int_{V}
l \, \dd \delta_{\ell(t)} (l)  $ and that the sequence $(\ell_n)$ is also bounded in $L^\infty (0,T; Y^*)$, cf.\ \eqref{sup-n-t}.
\end{proof}

\noindent
{\bf \large Acknowledgments.} The work of Elisabetta Rocca was
supported by the FP7-IDEAS-ERC-StG Grant \#256872
(EntroPhase) and by GNAMPA (Gruppo Nazionale per l'Analisi Matematica, la Probabilit\`a e le loro Applicazioni) of INdAM (Istituto Nazionale di Alta Matematica). Riccarda Rossi was
partially supported by a MIUR-PRIN'10-'11 grant for the project
``Calculus of Variations'',  and by GNAMPA (Gruppo Nazionale per l'Analisi Matematica, la Probabilit\`a e le loro Applicazioni)  of  INdAM (Istituto Nazionale di Alta Matematica). The authors thank the anonymous referees for their careful reading of the manuscript, which greatly improved the quality of the paper.

\bibliographystyle{alpha}

\end{document}